\newcommand{\address}[1]{\gdef\@address{#1}}
\newcommand{\email}[1]{\gdef\@email{\url{#1}}}
\newcommand{\@endstuff}{\par\vspace{\baselineskip}\noindent\small
\begin{tabular}{@{}l}\scshape\@address\\\textit{E-mail address:} \@email\end{tabular}}
\author{Damien Junger\footnote{This work has been written in a great part during the author PhD thesis at the ENS Lyon. His work are currently funded by the Deutsche Forschungsgemeinschaft (DFG, German Research Foundation) under Germany's Excellence Strategy EXC 2044 –390685587, Mathematics Münster: Dynamics–Geometry–Structure.}}
\address{Mathematisches Institut, Universität Münster,\\ Fachbereich Mathematik und Informatik der Universität Münster,  Orléans-Ring 10, 48149 Münster, Germany.}
\email{djunger@uni-muenster.de}
\title{Cohomologie mod $p$ des fibrés en droites équivariants sur le demi-plan de Drinfeld}
\newtheorem{theointro}{Th\'eor\`eme}
\newtheorem{corointro}[theointro]{Corollaire}
\newtheorem{lemintro}[theointro]{Lemme}
\newtheorem{theo}{Th\'eor\`eme}[section]
\newtheorem{lem}[theo]{Lemme}
\newtheorem{coro}[theo]{Corollaire}
\newtheorem{prop}[theo]{Proposition}
\newtheorem{propintro}[theointro]{Proposition}
\theoremstyle{definition}
\newtheorem{defi}[theo]{D\'efinition}
\newtheorem*{defi*}{D\'efinition}
\newtheorem{claim}[theo]{Fait}
\theoremstyle{remark}
\newtheorem{rem}[theo]{Remarque}
\newtheorem*{rem*}{Remarque}
\newtheorem{const}[theo]{Construction}
\newtheorem{ex}[theo]{Exemple}
\DeclareMathOperator{\cind}{c-ind}
\DeclareMathOperator{\ind}{ind}
\DeclareMathOperator{\spf}{Spf}
\DeclareMathOperator{\spg}{Sp}
\DeclareMathOperator{\mat}{M}
\DeclareMathOperator{\gln}{GL}
\DeclareMathOperator{\pgln}{PGL}
\DeclareMathOperator{\sln}{SL}
\DeclareMathOperator{\stab}{Stab}
\DeclareMathOperator{\vectot}{-Vect}
\DeclareMathOperator{\ens}{Ens}
\DeclareMathOperator{\nilp}{Nilp}
\DeclareMathOperator{\hhh}{H}
\DeclareMathOperator{\rrr}{R}
\DeclareMathOperator{\homm}{Hom}
\DeclareMathOperator{\en}{End}
\DeclareMathOperator{\aut}{Aut}
\DeclareMathOperator{\ost}{Ost}
\DeclareMathOperator{\gal}{Gal}
\DeclareMathOperator{\id}{Id}
\DeclareMathOperator{\pic}{Pic}
\DeclareMathOperator{\ord}{ord}
\DeclareMathOperator{\lie}{Lie}
\DeclareMathOperator{\rg}{rg}
\DeclareMathOperator{\Length}{Length}
\DeclareMathOperator{\sym}{Sym}
\DeclareMathOperator{\std}{Std}
\DeclareMathOperator{\stb}{Stb}
\DeclareMathOperator{\imm}{Im}
\DeclareMathOperator{\supp}{Supp}
\DeclareMathOperator{\fro}{Fr}
\DeclareMathOperator{\leg}{Leg}
 \newcommand{\iso}{\stackrel{\sim}{\fl}}
\font\tengoth=eufb10
\font\sevengoth=eufb7
\font\fivegoth=eufb5
\def\A{{\mathbb{A}}}
\def\C{{\mathbb{C}}}
\def\F{{\mathbb{F}}}
\def\H{{\mathbb{H}}}
\def\N{{\mathbb{N}}}
\def\P{{\mathbb{P}}}
\def\Q{{\mathbb{Q}}}
\def\R{{\mathbb{R}}}
\def\Z{{\mathbb{Z}}}
\def\BC{{\mathcal{B}}}
\def\CC{{\mathcal{C}}}
\def\IC{{\mathcal{I}}}
\def\GC{{\mathcal{G}}}
\def\LC{{\mathcal{L}}}
\def\OC{{\mathcal{O}}}
\def\TC{{\mathcal{T}}}
\def\UC{{\mathcal{U}}}
\def\XG{{\mathfrak{X}}}
\def\YG{{\mathfrak{Y}}}
\def\ZG{{\mathfrak{Z}}}
\def\mG{{\mathfrak{m}}}
\def\pG{{\mathfrak{p}}}
\def\Ff{{\mathscr{F}}}
\def\Hf{{\mathscr{H}}}
\def\Lf{{\mathscr{L}}}
\def\Mf{{\mathscr{M}}}
\def\Of{{\mathscr{O}}}
\def\bar#1{\overline{#1}}
\def\han#1{{\rm H}^{#1}_{\rm an}}
\def\hgal#1{{\rm H}^{#1}_{\rm Gal}}
\def\hcech#1{\check{\rm H}^{#1}}
\def\ccech#1{\check{\CC}^{#1}}
\def\et{\text{ et }}
\def\si{\text{ si }}
\def\sinon{\text{ sinon }}
\def\and{\text{ and }}
\def\fl{\rightarrow}
\def\fln#1#2{\xrightarrow[#2]{#1}}
\def\flinj{\hookrightarrow}
\def\flsur{\twoheadrightarrow}
\def\limp{\varprojlim}
\begin{document}

\maketitle

\begin{abstract}
We give a classification of all equivariant line of bundles on the semi-stable model $\hat{\mathbb{H}}$ of the Drinfeld upper half plane $\mathbb{H}$ on $\mathbb{Q}_p$  for a certain subgroup $[G]_2$ of ${\rm GL}_2(\mathbb{Q}_p)$ of index $2$. Then we study the cohomology groups of these line bundles that we restrict on the special fiber $\bar{\mathbb{H}}$ and this process provides a whole family of mod $p$ representations. In particular, we exhibit a class of $[G]_2$-equivariant line bundles on $\bar{\mathbb{H}}$ (the so-called positives of weight $-1$) and show that they are in one-to-one correspondence with the irreducible supersingular representations of $[G]_2$ (a notion we define) via the  map ${\Lf}\mapsto \hhh^0(\bar{\H},{\Lf})^\vee$.
\end{abstract}

\tableofcontents
 
\section*{Introduction}

Soit  $p$ un premier et $K$ une extension finie de $\Q_p$, le but de ce travail est d'étudier les représentations fournies par la cohomologie cohérente des fibrés en droite $G:=\gln_2(K)$-équivariant sur le demi-plan de Drinfeld $\H$ sur $K$ (en dimension $1$). Il s'agit de l'ouvert admissible de la droite projective rigide  $\P^1_{\breve{K}}$ \footnote{Avec $\breve{K}$ la complétion de l'extension maximale non ramifiée de $K$}  obtenu en retirant tous les points $K$-rationnels de $\P^1_{\breve{K}}$. Cette démarche a d'abord été réalisée par Morita et Murase (\cite{mormur1,mormur2,mormur3})  en dimension $1$ pour les puissances tensorielles\footnote{Par abus, nous écrirons encore $\Of(k)$ pour la restriction du faisceau $G$-équivariant $\Of(k)$ sur $\P^1_{\breve{K}}$} de $\Omega^1_\H\cong \Of(-2)$ et a été prolongée à quelques fibrés vectoriels pour l'espace symétrique de Drinfeld de dimension quelconque par Schneider-Stuhler \cite{scst}. Toujours en dimension quelconque, Orlik \cite{orl2} a pu décrire toutes les représentations provenant de fibrés vectoriels qui sont  restriction d'un fibré vectoriel équivariant sur l'espace projectif ambiant $\P^d_{\breve{K}}$ (voir aussi \cite{orlstrau}). 

Teitelbaum, dans \cite{teit4},  a quant à lui étudié les structures entières des fibrés $\Of(2k)$ (avec $k>0$) sur le modèle entier $\hat{\H}$ de l'espace symétrique $\H$ en dimension $1$. Pour les fibrés de poids impair $\Of(2k+1)$, cela a été étendu par Grosse-Kl\"onne  dans \cite{GK6} et voir même à certains fibrés en droite en dimension quelconque dans \cite{GK7}. Il est important de noter que les structures entières trouvées pour $\Of(2k+1)$ ne sont pas des fibrés en droite et possèdent des pathologies autour des points singuliers du modèle $\hat{\H}$. De plus certains fibrés « qui  proviennent du premier revêtement » (que nous décrirons un peu plus tard) ne sont pas inclus dans cette étude. 

Dans ce travail, nous voulons reprendre ces constructions en donnant une approche plus systématique qui permettra d'étudier le groupe  $\pic_G(\hat{\H})$ des fibrés $G$-équivariants  sur $\hat{\H}$ tout entier  et qui se débarrassera des pathologies aux singularités qui apparaissaient pour les structures entières de $\Of(2k+1)$. Cela nécessitera d'affaiblir les conditions imposées par l'équivariance et de n'étudier que les fibrés en droite $\pic_{[G]_2}(\hat{\H})$ munis d'une action du sous-groupe $[G]_2\subset G$ d'indice $2$ formé des matrices dont la valuation du déterminant est pair $[G]_2=(v_K\circ \det)^{-1}(2\Z)$. Se concentrer sur cette condition plus faible nous autorisera à considérer des fibrés équivariants supplémentaires, certains d'entre eux provenant naturellement de l'interprétation modulaire de $\hat{\H}$. Donnons en quelques exemples et pour cela, rappelons que $\hat{\H}$ répond à un problème modulaire qui classifie des déformations à isogénie près de $ \OC_D$-modules formels « spéciaux » de dimension $2$ et de hauteur $4$ avec $D$ l'algèbre de quaternion d'invariant $1/2$ d'uniformisante $\Pi_D$. Nous appellerons $\XG$ le $\OC_D$-module universel sur $\bar{\H}$.

\begin{itemize}
\item L'algèbre de Lie $\lie(\XG)$ du module formel universel $\XG/\hat{\H}$ est un  fibré vectoriel de rang $2$ sur $\hat{\H}$ qui se décompose en la somme directe de deux  fibrés en droite $\lie(\XG)_0$ et $\lie(\XG)_1$ qui correspondent aux parties isotypiques pour l'action de $\OC_{(2)}\subset D$ avec $\OC_{(2)}$ l'anneau des entiers de $K_{(2)}$ l'extension non-ramifiée de $K$ de degré $2$. Ces deux fibrés sont stables pour l'action de $[G]_2$ mais sont échangés par tout élément de $G$ qui n'est pas dans $[G]_2$. De plus, en fibre générique, ils sont tous deux  isomorphes à $\Of(1)$ fournissant ainsi la structure entière voulue pour les fibrés de poids impairs. Nous appellerons $\omega_0$, $\omega_1$ les fibrés duaux de $\lie(\XG)_0$ et $\lie(\XG)_1$.
\item Les points de $\Pi_D$-torsion $\XG[\Pi_D]$ du module universel $\XG$ définissent ce que l'on appelle un schéma de Raynaud (voir la section \ref{ssecrayn} pour leur définition et leur classification). Cela entraîne par définition que chaque terme de la décomposition en partie isotypique $\IC=\bigoplus_\chi \Lf_\chi$ de l'idéal d'augmentation $\IC$ de $\XG[\Pi_D]$ pour l'action de $\F_{q^2}^*\cong\OC_D^*/1+\Pi_D\OC_D$ est localement libre de rang $1$. De plus,  parmi tous ces objets, nous distinguerons $2$ fibrés $\Lf_0$, $\Lf_1$ qui sont associés aux deux caractères $\chi_0,\chi_1 :\F_{q^2}\to \OC_{\breve{K}}$ additifs\footnote{Où on a imposé $\chi_i(0)=0$. } modulo $\varpi$. Tous les autres fibrés $\Lf_\chi$ sont des produits tensoriels $\Lf_0$ et $\Lf_1$. De par les symétries du problème modulaire, chacune de ces parties isotypiques est stable sous l'action de $[G]_2$ et  l'action d'un élément $g$ qui n'est pas dans $[G]_2$  échange $\Lf_0$ et $ \Lf_1$.
\item Nous avons aussi l'exemple les formes différentielles à pôle logarithmique $\Omega^1(\log)$ qui est cette fois-ci $G$-équivariant et a été étudié dans \cite{teit4,GK6}.
\end{itemize}

Notons aussi que pour chaque caractère $\chi :[G]_2\to \breve{K}^*$, on peut définir un fibré $\Of(\chi)$ qui est le faisceau structural $\Of$ de $\hat{\H}$ auquel on a tordu l'action naturelle par $\chi$.

Inspiré par les définitions ci-dessus, nous noterons $\pic_{(mod)}(\hat{\H})$ l'ensemble des fibrés  modulaires qui est le sous-groupe de $\pic_{[G]_2}(\hat{\H})$ engendré par $\Of(\chi)$, $\omega_0$, $\omega_1$, $\Lf_0$, $\Lf_1$. Le premier résultat de classification est le suivant :
\begin{theointro}\label{theointroclass}

Les fibrés modulaires sont les fibrés $[G]_2$-équivariants i.e. \[\pic_{[G]_2}(\hat{\H})=\pic_{(mod)}(\hat{\H})\] et $\pic_G(\hat{\H})$ est engendré par les caractères et $\Omega^1(\log)$.

En particulier, le fibré $\Of(1)$ sur $\H$ n'admet pas de structure entière $G$-équivariante.
\end{theointro}

Expliquons la stratégie de la preuve. Pour cela, nous avons besoin d'une description plus concrète de $\pic_{[G]_2}(\hat{\H})$. Nous avons une flèche $\beta:\pic_{[G]_2}(\hat{\H})\to\pic(\hat{\H})^{[G]_2}$ à valeurs dans les fibrés invariants qui correspond à l'oubli de l'action de $[G]_2$. Elle est rarement injective ou surjective pour un espace quelconque  mais elle s'inscrit dans une suite exacte
\[0\to \hgal{1}({[G]_2}, \Of^*(\hat{\H})) \to \pic_{[G]_2}(\hat{\H}) \to \pic(\hat{\H})^{[G]_2} \to \hgal{2}({[G]_2}, \Of^*(\hat{\H}))  \]
où le noyau est l'ensemble des caractères $\hgal{1}({[G]_2}, \Of^*(\hat{\H}))\cong \homm({[G]_2}, \OC_{\breve{K}}^*)$ (les fonctions sur $\hat{\H}$ sont constantes). Déterminer  l'image de la flèche $\beta$ est le point le plus technique de la classification. La première étape vise à comprendre $\pic(\hat{\H})$ puis déterminer les ${[G]_2}$-invariants. Pour énoncer le résultat dans ce sens, nous avons besoin de comprendre la géométrie de la fibre spéciale $\bar{\H}$ du modèle entier $\hat{\H}$. Toutes les composantes irréductibles de $\bar{\H}$ sont isomorphes à  des droites projectives  $\P_s\cong\P^1$ indexées par les sommets $s\in\BC\TC_0$ de l'arbre de Bruhat-Tits $\BC\TC$ de $\pgln_2(K)$. Ces droites projectives se rencontrent transversalement le long de leurs points ${\F}$-rationnels et ces intersections  sont en bijections avec les arêtes $\BC\TC_1$ de l'arbre $\BC\TC$. Le résultat est le suivant :

\begin{propintro}

La flèche naturelle de  restriction à chaque composante irréductible de la fibre spéciale est un isomorphisme : \[\pic(\hat{\H})\iso\prod_{s\in \BC\TC_0}\pic(\P_s)\cong \prod_{s\in\BC\TC_0}\Z\]

\end{propintro}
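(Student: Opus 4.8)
The plan is to identify $\pic(\hat{\H})$ by combining the known geometry of the special fiber $\bar{\H}$ with a formal-scheme Picard-group argument, then check that the restriction map is an isomorphism. Since $\hat{\H}$ is a formal scheme with special fiber $\bar{\H}$, and line bundles on a formal scheme are the same as compatible families of line bundles on the truncations, the key point is that $\pic(\hat{\H})$ is controlled by $\pic(\bar{\H})$ together with vanishing of higher obstruction groups $\hhh^i(\bar{\H},\OC_{\bar{\H}})$ for $i\geq 1$ — these would control deformations of line bundles across the thickenings. So the first step is to compute $\pic(\bar{\H})$ and the relevant coherent cohomology of the structure sheaf of $\bar{\H}$.

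**First I would** analyze $\bar{\H}$ via its description as a union of projective lines $\P_s\cong\P^1$ (indexed by $s\in\BC\TC_0$) meeting transversally at $\F$-rational points indexed by edges $\BC\TC_1$. This is a (infinite) semi-stable curve whose dual graph is the Bruhat–Tits tree $\BC\TC$, which is contractible. Using the normalization exact sequence
\[
0\to \OC_{\bar{\H}}\to \prod_{s\in\BC\TC_0}\OC_{\P_s}\to \prod_{e\in\BC\TC_1}\F_e\to 0,
\]
where $\F_e$ is the skyscraper sheaf at the node corresponding to the edge $e$, and taking the long exact sequence in cohomology, one reads off $\hhh^0(\bar{\H},\OC_{\bar{\H}})$ from the kernel of the combinatorial map $\prod_s\F\to\prod_e\F$ (which computes $\hhh^0$ and $\hhh^1$ of the tree $\BC\TC$ with coefficients in $\F$) and, since each $\hhh^1(\P_s,\OC_{\P_s})=0$ and the skyscrapers have no higher cohomology, one gets $\hhh^1(\bar{\H},\OC_{\bar{\H}})\cong \hhh^1(\BC\TC,\F)=0$ because the tree is contractible (acyclic). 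The same normalization sequence applied to $\OC^*$ gives $\pic(\bar{\H})\cong\prod_s\pic(\P_s)\cong\prod_s\Z$, again because $\hhh^1(\BC\TC,\F^*)$ vanishes by contractibility of the tree — this is the ``restriction to each component is an isomorphism'' statement at the level of the special fiber. One must be slightly careful here with the infinite product and with the fact that $\bar{\H}$ is not quasi-compact, but the tree being a filtered union of finite subtrees lets one pass to the limit, and contractibility is preserved.

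**Then** I would lift this to $\hat{\H}$. For each $n$, let $\bar{\H}_n$ be the $n$-th infinitesimal neighborhood (the reduction mod $\varpi^{n+1}$); the ideal sheaf $\varpi^n\OC/\varpi^{n+1}\OC$ on $\bar{\H}_n$ is a coherent sheaf on $\bar{\H}$, in fact a line bundle twisted appropriately, and the obstruction to lifting a line bundle from $\bar{\H}_n$ to $\bar{\H}_{n+1}$ lives in $\hhh^2(\bar{\H},-)$ while the ambiguity lives in $\hhh^1(\bar{\H},-)$. Since $\bar{\H}$ is a curve, $\hhh^2$ automatically vanishes, and the $\hhh^1$ of the relevant sheaf can be shown to vanish by the same normalization-plus-contractible-tree argument (each component contributes $\hhh^1(\P^1,\OC(m))$ which vanishes for $m\geq -1$, and the line bundle structures arising from the semi-stable model are of this shape). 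Therefore $\pic(\hat{\H})\to\pic(\bar{\H})$ is an isomorphism, and composing with the restriction $\pic(\bar{\H})\to\prod_s\pic(\P_s)$ established above gives the claim. Finally, one must check that the composite is really ``restriction to each irreducible component of the special fiber'', i.e. compatible with the maps $\hat{\H}\to\hat{\H}\leftarrow \P_s$; this is a diagram-chase, the content being that restricting a formal line bundle to a component factors through its special-fiber restriction.

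**The hard part will be** the non-quasi-compactness and the infinite product: making rigorous the passage to the limit over finite subtrees of $\BC\TC$, ensuring that $\pic$ of the infinite formal scheme is genuinely $\varprojlim$ (or $\varprojlim^1$-free) over the finite pieces, and that the vanishing of $\hhh^1$ is uniform enough. In practice one writes $\hat{\H}=\bigcup \hat{\UC}_i$ as an increasing union of quasi-compact opens (preimages of finite subtrees), proves the isomorphism for each $\hat{\UC}_i$ by the argument above, and then checks the transition maps are surjective with the $\varprojlim^1$ vanishing — the surjectivity being exactly the statement that a line bundle on a finite sub-configuration of $\P^1$'s extends, which again comes down to contractibility of the tree and the fact that a finite subtree is a deformation retract.
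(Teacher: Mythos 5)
Your proof is essentially correct, but it takes a genuinely different route from the paper for the key step, namely the passage from $\hat{\H}$ to its special fiber. The paper never uses the infinitesimal thickenings $\bar{\H}_n$: it computes $\pic(\hat{\H})$ as a Čech $\hcech{1}$ for the covering by the edge opens $\hat{\H}_a$ (legitimate because $\pic(\hat{\H}_a)=\pic(\hat{\H}_s)=0$), compares it with $\pic(\bar{\H})$ through the unit exact sequence $0\to\OC^*_{\breve{K}}\Of^{**}(\hat{\H}_\sigma)\to\Of^*(\hat{\H}_\sigma)\to\Of^*(\bar{\H}_\sigma)/\bar{\F}^*\to 0$ of \ref{proposthsig}, and kills the error term by the acyclicity $\han{1}({\H},\OC^*_{\breve{K}}\Of^{**}_{\H})=0$, a non-trivial vanishing theorem on the rigid generic fiber imported from \cite{J1}. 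You instead stay entirely on the special fiber: identifying $\pic(\hat{\H})$ with $\varprojlim_n\pic(\bar{\H}_n)$ and using the square-zero sequences $0\to\varpi^n\OC/\varpi^{n+1}\OC\to\OC^*_{\bar{\H}_{n+1}}\to\OC^*_{\bar{\H}_n}\to 1$, you reduce everything to $\hhh^1(\bar{\H},\OC_{\bar{\H}})=\hhh^2(\bar{\H},\OC_{\bar{\H}})=0$ (note the ideal is simply isomorphic to $\OC_{\bar{\H}}$ by flatness of $\hat{\H}$ over $\OC_{\breve{K}}$, no twist is needed), and these vanishings follow from the normalization sequence plus contractibility of the tree — the same mechanism both proofs use to compute $\pic(\bar{\H})\cong\prod_s\pic(\P_s)$, where your argument and the paper's coincide. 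What your route buys is independence from the generic-fiber input of \cite{J1}; what the paper's route buys is the explicit cocycle description of integral bundles reused later, and it avoids the limit bookkeeping you rightly single out as the delicate point: since $\bar{\H}$ is not quasi-compact, Grothendieck vanishing for $\hhh^2$ is not literally automatic, cohomology must be checked to commute with the locally finite infinite products, and the surjection $\pic(\hat{\H})\twoheadrightarrow\varprojlim_n\pic(\bar{\H}_n)$ with vanishing $\varprojlim^1$ of the $\hhh^0(\bar{\H}_n,\OC^*)$ has to be justified (the latter follows from the same $\hhh^1$-vanishing, which makes the transition maps on $\hhh^0$ surjective). All of this is handled by the exhaustion by preimages of finite subtrees that you sketch, so the plan goes through, but those verifications are genuinely part of the proof rather than routine remarks.
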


Comme l'action de ${[G]_2}$ a deux orbites $[G]_2\cdot s_0$, $[G]_2\cdot s_1$ sur $\BC\TC_0$, chaque fibré en droite  $\Lf\in\pic_{[G]_2}(\hat{\H})$ est déterminé à un caractère près par sa restriction aux $2$ composantes irréductibles $\P_{s_0}$, $\P_{s_1}$ et donc par deux entiers $\ord_{s_0}(\Lf)$, $\ord_{s_1}(\Lf)$ que l'on appellera les ordres en $s_0$ et $s_1$. Dit autrement, \[\pic(\hat{\H})^{[G]_2}\cong\Z^2.\] Établir le théorème \ref{theointroclass} revient à déterminer les couples possibles $(\ord_{s_0}(\Lf),\ord_{s_1}(\Lf))$ quand $\Lf$ varie. Pour les fibrés modulaires, cela peut se réaliser explicitement :



\begin{lemintro}

On a
\begin{enumerate}
\item $\ord_{s_0}(\omega_0)=- 1 $, $\ord_{s_1}(\omega_0)=  q$, $\ord_{s_0}(\omega_1)= q$  et $\ord_{s_1}(\omega_1)= -1$.
\item $\ord_{s_0}(\Lf_0)= 1 $ et $\ord_{s_1}(\Lf_0)=-1 $, $\ord_{s_0}(\Lf_1)= -1$ et $ \ord_{s_1}(\Lf_1)=1 $.
\end{enumerate}

En particulier, la restriction de la flèche d'oubli $\beta:\pic_{(mod)}(\hat{\H})\to\Z^2$ a pour image \[\{(k_0,k_1)\in \Z^2 : k_0+k_1\equiv 0\pmod{q-1}\}\]

\end{lemintro}

La stratégie employée pour ce résultat intermédiaire est la suivante. L'interprétation modulaire de $\hat{\H}$ fournit deux   flèches \[\Pi, F : \omega_i|_{\P_{s_j}}\to  \omega_{i+1}|_{\P_{s_j}}.\] Le point technique consiste à calculer les points et les ordres d'annulation de ces flèches. Ainsi chaque morphisme, lorsqu'il est  non-nul, donne lieu à une équation entre les entiers recherchés  et le résultat s'obtient en résolvant le système obtenu. Pour $\Lf_i$, cela a déjà été fait dans \cite{pan} quand $K=\Q_p$ et étendu à un corps quelconque dans \cite{J3}. La preuve du théorème \ref{theointroclass} se termine en montrant que l'on a $\ord_{s_0}(\Lf)+\ord_{s_1}(\Lf)\equiv 0\pmod{q-1}$ pour tout  $\Lf\in\pic_{[G]_2}(\hat{\H})$ et $\ord_{s_0}(\Lf)\equiv\ord_{s_1}(\Lf)\equiv 0\pmod{q-1}$ pour tout  $\Lf\in\pic_{G}(\hat{\H})$  ce qui est fait dans la proposition  \ref{proppoids}.

Les résultats précédant ont aussi quelques conséquences sur les fibrés $[G]_2$-équivariants de $\H$ et de $\bar{\H}$. En effet, tout fibré en niveau entier $\Lf$ donne lieu à un fibré en fibre générique $\Lf[1/\varpi]$ et  un  en fibre spéciale $\bar{\Lf}:=\Lf/\varpi\Lf$. L'image de ces applications est relativement facile à décrire et est une conséquence directe du théorème \ref{theointroclass} :

\begin{corointro}

Tous les fibrés  $\Lf[1/\varpi]\in\pic_{[G]_2}({\H})$ qui admettent une structure entière $\Lf$ sont de la forme \[\Lf[1/\varpi]\cong \Of(\chi)\otimes \Of(k)\otimes\Lf_0[1/\varpi ]^t\] De plus, toutes les structures entières de $\Lf[1/\varpi]$ sont : \[\Lf \cong \Of(\chi)\otimes \omega^{-k}_i\otimes\Lf_0^{t+(q-1)s}\] pour tout $s\in\Z$. Les fibrés $[G]_2$-équivariants sur $\hat{\H}$ triviaux en fibre générique sont engendrés par $\Lf_0^{q-1}$.

Les fibrés $\bar{\Lf}\in\pic_{[G]_2}(\bar{\H})$ proviennent tous d'un fibré de $\pic_{[G]_2}(\hat{\H})$ qui est unique à multiplication par un caractère trivial sur $\id+\varpi \mat_2(K)$ près.
\end{corointro}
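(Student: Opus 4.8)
\emph{A presentation of $\pic_{[G]_2}(\hat\H)$.} The plan is to read everything off Theorem~\ref{theointroclass}, by first turning it into an explicit presentation of $\pic_{[G]_2}(\hat\H)$ and then computing the two restriction maps $\Lf\mapsto\Lf[1/\varpi]$ and $\Lf\mapsto\bar\Lf$ on a few generators. By Theorem~\ref{theointroclass} the group is generated by the characters $\Of(\chi)$ together with $\omega_0,\omega_1,\Lf_0,\Lf_1$; since $\ker\beta=\homm([G]_2,\OC^*_{\breve{K}})$ and the image of $\beta$ inside $\pic(\hat\H)^{[G]_2}\cong\Z^2$ (coordinates $(\ord_{s_0},\ord_{s_1})$) equals $\{(k_0,k_1):k_0+k_1\equiv0\pmod{q-1}\}$ (the Lemma together with Theorem~\ref{theointroclass}), the introduction's extension splits and, by the order table, $\{\omega_0,\Lf_0\}$ is a $\Z$-basis of the free part: $\pic_{[G]_2}(\hat\H)\cong\homm([G]_2,\OC^*_{\breve{K}})\oplus\Z\,\omega_0\oplus\Z\,\Lf_0$. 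I would then note that $\Lf_0\otimes\Lf_1$ and $\omega_0\otimes\omega_1^{-1}\otimes\Lf_0^{q+1}$ have vanishing orders, hence are characters, so modulo characters $\Lf_1\cong\Lf_0^{-1}$ and $\omega_1\cong\omega_0\otimes\Lf_0^{q+1}$; it is thus enough to follow $\Of(\chi),\omega_0,\Lf_0$.

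\emph{Restriction to $\H$.} Here I would use that $\pic(\H)=0$ and the classical description $\Of^*(\H)/\breve{K}^*\cong\mathrm{Div}^0(\P^1(K))$, a $[G]_2$-module with no nonzero invariant; this gives $\homm([G]_2,\breve{K}^*)=\hgal{1}([G]_2,\breve{K}^*)\flinj\hgal{1}([G]_2,\Of^*(\H))$ and an injective connecting map $\Z=\hgal{0}([G]_2,\Z)\flinj\hgal{1}([G]_2,\mathrm{Div}^0(\P^1(K)))$. The cocycle $g\mapsto g^*\ell/\ell$ of $\Of(1)|_\H$ maps, modulo characters, to the image of $1$ under this map, so $\Of(1)|_\H$ has infinite order in $\pic_{[G]_2}(\H)$ modulo characters. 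Writing $L_0:=\Lf_0[1/\varpi]$, the explicit form of $\lie(\XG)_i$ over $\H$ (going back to \cite{teit4,GK6}) puts $\omega_i|_\H[1/\varpi]$ in the subgroup generated by $\Of(-1)|_\H$, characters and $L_0$; hence the image of $\Lf\mapsto\Lf[1/\varpi]$ is generated by $\Of(\chi)$, $\Of(1)|_\H$ and $L_0$. This yields the first assertion $\Lf[1/\varpi]\cong\Of(\chi)\otimes\Of(k)\otimes L_0^{t}$, and lifting each factor back to $\hat\H$ produces an integral structure of the shape $\Of(\chi)\otimes\omega_i^{-k}\otimes\Lf_0^{t}$.

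\emph{All integral structures.} The integral structures of a fixed $\Lf[1/\varpi]$ form a torsor under $N:=\ker(\pic_{[G]_2}(\hat\H)\to\pic_{[G]_2}(\H))$, and I claim $N=\langle\Lf_0^{q-1}\rangle\cong\Z$. The inclusion ``$\supseteq$'' is the one genuinely hard point, and is where I would quote \cite{pan} (for $K=\Q_p$) and \cite{J3} (general $K$): there $L_0$ is shown to have order exactly $q-1$ in $\pic_{[G]_2}(\H)$ — equivalently $L_0$ cuts out the connected first covering $\Sigma^1\to\H$ — and its powers $L_0^{j}$ with $0<j<q-1$ are not characters; in particular $\Lf_0^{q-1}[1/\varpi]\cong\Of$. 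For ``$\subseteq$'', writing an element of $N$ as $\Of(\psi)\otimes\omega_0^{e}\otimes\Lf_0^{f}$, its restriction has the shape $\Of(\psi')\otimes\Of(-e)\otimes L_0^{g}$ and is trivially equivariant; projecting to $\hgal{1}([G]_2,\mathrm{Div}^0(\P^1(K)))$, the $\Of(-e)$-part is $-e$ times the non-torsion class above while $L_0^{g}$ is torsion, which forces $e=0$, and then $\Of(\psi')\otimes L_0^{g}\cong\Of$ forces $(q-1)\mid g$ and $\psi'=1$. Hence the integral structures of $\Lf[1/\varpi]$ form a single $\langle\Lf_0^{q-1}\rangle$-torsor, as stated, and the $[G]_2$-equivariant bundles on $\hat\H$ trivial on $\H$ are exactly the powers of $\Lf_0^{q-1}$.

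\emph{Restriction to $\bar\H$.} By the Proposition, restriction to the irreducible components identifies $\pic(\hat\H)\iso\prod_s\pic(\P_s)$, and the same holds for $\bar\H$ (same components $\P_s$, dual graph a tree), so reduction is an isomorphism $\pic(\hat\H)\iso\pic(\bar\H)$ and hence $\pic(\hat\H)^{[G]_2}\iso\pic(\bar\H)^{[G]_2}$. I would then compare the two exact sequences $0\to\hgal{1}([G]_2,\Of^*)\to\pic_{[G]_2}(-)\to\pic(-)^{[G]_2}\to\hgal{2}([G]_2,\Of^*)$ for $\hat\H$ (with $\Of^*(\hat\H)=\OC^*_{\breve{K}}$) and for $\bar\H$ (with $\Of^*(\bar\H)=\bar{\F}_q^*$), along the Teichmüller-split surjection $\OC^*_{\breve{K}}\twoheadrightarrow\bar{\F}_q^*$ of trivial $[G]_2$-modules, which makes every $\hgal{i}([G]_2,\OC^*_{\breve{K}})\to\hgal{i}([G]_2,\bar{\F}_q^*)$ split surjective; a diagram chase then gives that $\pic_{[G]_2}(\hat\H)\to\pic_{[G]_2}(\bar\H)$ is surjective with kernel $\homm([G]_2,1+\mG_{\breve{K}})$ — the characters becoming trivial modulo $\varpi$, i.e.\ those trivial on $\id+\varpi\mat_2(\OC_K)$. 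The bulk of the difficulty is thus concentrated in the quoted fact about $\Lf_0$; everything else is bookkeeping at the level of $\Z^2$ with the order table.
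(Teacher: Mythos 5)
Your overall route --- presenting $\pic_{[G]_2}(\hat{\H})$ on the generators $\Of(\chi),\omega_0,\Lf_0$ via the order table, passing to the generic fibre through $\omega_i[1/\varpi]\cong\Of(\mp 1)$ (Remarque \ref{RemPullbackPeriodes}), and comparing the two exact sequences of Proposition \ref{propactforte} for $\hat{\H}$ and $\bar{\H}$ along the reduction $\pic(\hat{\H})\iso\pic(\bar{\H})$ --- is essentially the one the paper takes in Corollaires \ref{corogen} and \ref{coromodspe}, and those parts are sound. The genuine problem sits exactly at the point you single out as the hard one: the claim that $L_0=\Lf_0[1/\varpi]$ has order exactly $q-1$ in $\pic_{[G]_2}(\H)$ and hence that $\ker\bigl(\pic_{[G]_2}(\hat{\H})\to\pic_{[G]_2}(\H)\bigr)=\langle\Lf_0^{q-1}\rangle$. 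This is false, and the references you invoke do not say it: \cite{pan} and \cite{J3} are used in the paper only for $\ord_{s_i}(\Lf_0)=\pm1$ and for $\Lf_0\otimes\Lf_1\cong\Of(\leg)$. The correct order is $q+1$ (up to character): the paper states it in \S\ref{ssecex} (« de torsion d'ordre $q+1$ »), in Corollaire \ref{corogen} (« $t$ l'est seulement modulo $q+1$ »), and proves $\Lf_0^{q+1}[1/\varpi]\cong\Of(\leg)$ in Proposition \ref{propinvchar}, via the fact that the Raynaud multiplication $d_i:\Lf_{i+1}^{\otimes q}[1/\varpi]\to\Lf_i[1/\varpi]$ is an isomorphism on $\H$. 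One can also see the value directly: an element of the kernel is, after forgetting the action, a divisor supported on the special fibre; since $\P_s\cdot\P_{s'}=1$ along an edge and $\P_s^2=-(q+1)$ (the whole special fibre being principal), the $[G]_2$-invariant such divisors have orders in $(q+1)\Z\cdot(1,-1)$. Your « $\subseteq$ » step, which extracts $(q-1)\mid g$ from $\Of(\psi')\otimes L_0^{g}\cong\Of$, therefore actually yields $(q+1)\mid g$.

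To be fair, the statement you were asked to prove carries the same $q-1$ in two places; this is inconsistent with the body of the paper and with the order table, and should read $q+1$. But the proof should not manufacture a false intermediate lemma (and misattribute it to the literature) in order to match it. With $q+1$ in place of $q-1$ the rest of your argument goes through: the splitting $\pic_{[G]_2}(\hat{\H})\cong\homm([G]_2,\OC_{\breve{K}}^*)\oplus\Z\omega_0\oplus\Z\Lf_0$ is correct (the determinant of the order matrix is $-(q-1)$, matching the index of the image of $\beta$), the uniqueness of $k$ does follow from the non-torsion of the class of $\Of(1)$ in $\hgal{1}([G]_2,\Of^*(\H))$ modulo characters, and the mod-$\varpi$ comparison identifying the kernel of $\pic_{[G]_2}(\hat{\H})\to\pic_{[G]_2}(\bar{\H})$ with the characters with values in $1+\mG_{\breve{K}}$ is exactly the content of Corollaire \ref{coromodspe}.
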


Motivé par l'énoncé ci-dessus, nous introduisons les notions suivantes :
\begin{defi*}

Étant donné un fibré $[G]_2$-équivariant $\Lf$ sur $\hat{\H}$ ou sur  $\bar{\H}$, nous appellerons le poids $w(\Lf)$ et le type $t_{i,j}(\Lf)$ (avec $i,j\in\{0,1\}$) $\Lf$ les entiers suivants
\[
w(\Lf)=  \frac{-1}{q-1}(\ord_{s_0}(\Lf)+\ord_{s_1}(\Lf))\et
t_{i,j}(\Lf)= \ord_{s_j}(\Lf)+\ord_{s_j}(\omega_i)w(\Lf). 
\]
\end{defi*}
Ces constructions sont introduites pour que l'on ait les relations suivantes :
\[\Lf\cong \Of(\chi)\otimes \omega_i^{-w(\Lf)}\otimes\Lf_j^{t_{i,j}(\Lf)}\et \Lf[1/\varpi]\cong \Of(\chi)\otimes \Of(w(\Lf))\otimes\Lf_j[1/\varpi ]^{t_{i,j}(\Lf)}.\]
De plus, en fibre générique, tous les fibrés $\Lf[1/\varpi]$ de poids $k$ fixé sont les parties isotypiques de $\pi_*\pi^*\OC(k)$ pour l'action de $\OC_D^*/(1+\Pi_D\OC_D)\cong \F^*_{q^2}$ (a un caractère de $[G]_2$ près) avec $\pi :\Sigma^1\to\H$ la projection du premier revêtement $\Sigma^1:=(\XG[\Pi_D]\backslash\{0\})^{rig}$. 




Maintenant que nous pouvons déterminer entièrement des fibrés en droite $\Lf\in\pic_{[G]_2}(\hat{\H})$ par des couples d'entiers $(\ord_{s_0}(\Lf),\ord_{s_1}(\Lf))$ ou $(\ord_{s_0}(\Lf),w(\Lf))$ (tout cela a un caractère près qui ne changera pas drastiquement la structure des représentations étudiées), nous essaierons de lire la structure et les propriétés de la cohomologie cohérente des fibrés pour un couple donné. On commence par quelques théorèmes d'annulation dont la preuve n'est qu'une généralisation directe de \cite[Theorem 2.1]{GK6}.

\begin{propintro}\label{propintronul}

Soit un fibré $[G]_2$-équivariant $\Lf$ sur $\hat{\H}$, si $\hhh^i(\bar{\H},\bar{\Lf})=0$ pour un certain $i=0,1$, alors on a $\hhh^i(\hat{\H},{\Lf})=0$ et $\hhh^{1-i}(\hat{\H},{\Lf})$ est plat sur $\OC_K$ (la platitude est toujours vérifié en degré $0$) ainsi que $\hhh^{1-i}(\bar{\H},\bar{\Lf})=\hhh^{1-i}(\hat{\H},{\Lf})/\varpi$. 

Le résultat  précédent est applicable uniquement dans  l'un des trois cas suivants

\begin{itemize}
\item Le fibré est positif i.e. les entiers $(\ord_{s_0}(\Lf),\ord_{s_1}(\Lf))$ sont positifs. Dans ce cas, le terme $\hhh^1(\bar{\H},\bar{\Lf})$ s'annule.
\item Le fibré est négatif i.e. les entiers $(\ord_{s_0}(\Lf),\ord_{s_1}(\Lf))$ sont négatifs. Ici, $\hhh^0(\bar{\H},\bar{\Lf})=0$.
\item Le fibré est mixte (i.e. les entiers $(\ord_{s_0}(\Lf),\ord_{s_1}(\Lf))$ signes opposé) et $\ord_{s_j}(\Lf)\le p-1$ pour  $j=0,1$. Ces hypothèses entraînent $\hhh^0(\bar{\H},\bar{\Lf})=0$.
\end{itemize}

\end{propintro}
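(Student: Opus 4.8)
The plan is to follow the argument of \cite[Theorem 2.1]{GK6} almost verbatim for the part concerning the formal scheme $\hat{\H}$, the genuinely new input being the computation of $\hhh^\bullet(\bar{\H},\bar{\Lf})$ in terms of the pair $(\ord_{s_0}(\Lf),\ord_{s_1}(\Lf))$. Since $\hat{\H}$ is flat over $\OC_K$ and $\Lf$ is locally free, multiplication by $\varpi$ is injective on $\Lf$, so one has the short exact sequence
\[0\to \Lf\xrightarrow{\varpi}\Lf\to\bar{\Lf}\to 0\]
on $\hat{\H}$, hence a six-term long exact sequence relating $\hhh^\bullet(\hat{\H},\Lf)$ and $\hhh^\bullet(\bar{\H},\bar{\Lf})$ once one knows $\hhh^{\ge 2}(\hat{\H},\Lf)=0$. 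For this one works over the formal subschemes $\hat{\H}_Y$ attached to the finite subtrees $Y\subset\BC\TC$, which are proper of relative dimension $1$ over $\OC_K$, so coherent cohomology vanishes there in degrees $\ge 2$; the tower $\{\hhh^1(\hat{\H}_Y,-)\}_Y$ has surjective transition maps, so passing to the limit along the exhaustion kills the relevant $\limp^1$-term, exactly as in \cite{GK6}.

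Now assume $\hhh^i(\bar{\H},\bar{\Lf})=0$. Writing $\hat{\H}_n=\hat{\H}\times_{\OC_K}\OC_K/\varpi^n$ and $\Lf_n=\Lf/\varpi^n\Lf$, an induction on $n$ using $0\to\bar{\Lf}\to\Lf_{n+1}\to\Lf_n\to 0$ gives $\hhh^i(\hat{\H}_n,\Lf_n)=0$ for all $n$: one has $\hhh^i(\hat{\H}_{n+1},\Lf_{n+1})\flinj\hhh^i(\hat{\H}_n,\Lf_n)$ for $i=0$, resp. for $i=1$ using $\hhh^2(\bar{\H},\bar{\Lf})=0$, and in the case $i=1$ the tower $\{\hhh^0(\hat{\H}_n,\Lf_n)\}_n$ has surjective transition maps so its $\limp^1$ vanishes; in both cases $\hhh^i(\hat{\H},\Lf)=\limp_n\hhh^i(\hat{\H}_n,\Lf_n)=0$. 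The remaining two assertions — $\OC_K$-flatness of $\hhh^{1-i}(\hat{\H},\Lf)$ and $\hhh^{1-i}(\bar{\H},\bar{\Lf})\cong\hhh^{1-i}(\hat{\H},\Lf)/\varpi$ — then drop out of the six-term sequence (for degree $0$ one also uses that $\hhh^0(\hat{\H},\Lf)$ is $\varpi$-adically separated, which follows from the $\varpi$-adic separatedness of $\Of_{\hat{\H}}(U)$ on an affine cover of a flat formal model, so that $M=\varpi M$ forces $M=0$).

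For the applicability statement I would compute $\hhh^\bullet(\bar{\H},\bar{\Lf})$ from the combinatorics of $\bar{\H}$: it is the (infinite) union of the lines $\P_s\cong\P^1_{\F}$, $s\in\BC\TC_0$, glued transversally at their $\F$-rational points along the edges of $\BC\TC$, with $\bar{\Lf}|_{\P_s}\cong\Of_{\P_s}(\ord_s(\Lf))$. A Mayer--Vietoris computation along this configuration (again over finite subtrees, then in the limit), using that the dual graph $\BC\TC$ is a tree and hence contributes nothing to $\hhh^1$, yields $\hhh^1(\bar{\H},\bar{\Lf})=0$ as soon as $\ord_s(\Lf)\ge 0$ for all $s$ (the positive case, take $i=1$), while $\hhh^0(\bar{\H},\bar{\Lf})$ embeds into $\prod_s\hhh^0(\P_s,\Of_{\P_s}(\ord_s(\Lf)))$, which vanishes in the negative case (take $i=0$). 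In the mixed case one invokes the structural fact that the two $[G]_2$-orbits on $\BC\TC_0$ are precisely the two sides of the bipartition of $\BC\TC$: each line $\P_s$ meets only lines indexed by the other orbit, so every positive-order component is surrounded by $q+1$ negative-order components, on which any global section of $\bar{\Lf}$ vanishes; the restriction of such a section to $\P_s$ is then a section of $\Of_{\P_s}(\ord_s(\Lf))$ vanishing at $q+1$ distinct points, hence zero as soon as $\ord_s(\Lf)\le q$, a fortiori under the hypothesis $\ord_s(\Lf)\le p-1$. Thus $\hhh^0(\bar{\H},\bar{\Lf})=0$ in the mixed case as well, and one again takes $i=0$.

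The main obstacle is not conceptual but organisational: $\hat{\H}$ and $\bar{\H}$ are not quasi-compact, so every cohomological statement must be obtained as a (derived) limit along the exhaustion by finite subtrees of $\BC\TC$, and one must verify at each stage that the $\limp^1$-terms one accumulates vanish — this is exactly the framework imported wholesale from \cite{GK6}. Once it is in place, the three-case analysis reduces to the elementary cohomology of $\Of(n)$ on $\P^1$ together with the bipartite/orbit structure of the Bruhat--Tits tree.
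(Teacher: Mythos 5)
Votre proposal is correct and follows essentially the same route as the paper: the exact sequence attached to $0\to\Lf\xrightarrow{\varpi}\Lf\to\bar{\Lf}\to 0$, the dévissage through the $\Lf/\varpi^{n}$ with the $\rrr^1\limp$ argument and the $\varpi$-adic separatedness of $\hhh^0(\hat{\H},\Lf)$ reproduce the paper's Proposition \ref{propannul}, while your three-case analysis via the decomposition of $\bar{\H}$ into the components $\P_s$, the contractibility of $\BC\TC$, the cohomology of $\Of_{\P^1}(n)$ and the vanishing of a section of $\Of_{\P_s}(\ord_s(\Lf))$ at the $q+1$ intersection points in the mixed case is exactly Lemma \ref{lemcomb} and Corollaire \ref{coroAnnulation}. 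The only slip is the claim that the pieces $\hat{\H}_Y$ attached to finite subtrees are proper over $\OC_K$ (they are merely quasi-compact, with non-proper special fibre in general), but the vanishing of coherent cohomology in degrees $\ge 2$ that you need holds anyway because the special fibres are one-dimensional, so this does not affect the validity of the argument.
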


Nous pouvons aussi décrire la cohomologie mod $p$ des fibrés $[G]_2$-équivariants sur $\bar{\H}$. Plus précisément, nous faisons apparaître une filtration où les termes de la graduation font apparaître des représentations \[\chi\otimes\ind_{\gln_2(\OC_K)\varpi^\Z}^{[G]_2}\sym^r \bar{\F}^2, \ \ \ind_{I\varpi^\Z}^{[G]_2}\psi\] avec $I$ l'Iwahori, $\psi$ un caractère de $I$ et $\chi$ un caractère de ${[G]_2}$. Nous renvoyons à \ref{theofilt} pour l'énoncé précis dont la reproduction ici n'apportera guère peu de clarté à l'exposition. Cependant, ce que nous obtenons dans le cas des fibrés en droite positifs (voir la proposition \ref{propintronul}) de poids $-1$ quand $K=\Q_p$ est relativement frappant et nous terminerons cette introduction en expliquant ce résultat. Pour bien comprendre l'énoncé, nous allons rappeler la classification des mod $p$ représentations irréductibles admissibles  de $G$ par Barthel-Livne \cite{BL1, BL2}, Breuil \cite{breuil1} ainsi que celle de ${[G]_2}$ qui en découlera aisément. Pour $G$, les représentations irréductibles admissibles se distinguent alors en quatre familles disjointes (voir la section \ref{ssecg} pour leur description explicite) \nocite{breuil2,breuilmez2}

\begin{enumerate}
\item Les caractères
\item La série spéciale
\item La série principale
\item Les représentations supersingulières
\end{enumerate}

 Pour relier  les représentations irréductibles de  $G$ et de $[G]_2$, on rappelle le  fait standard qui affirme que la restriction à $[G]_2$ d'une représentation irréductible de $G$ est soit irréductible, soit la somme de deux représentations irréductibles et que l'on obtient toute représentation irréductible de $[G]_2$ comme un facteur direct de la restriction d'une représentation irréductible $G$. Décrire les représentations irréductibles de revient alors à comprendre comment chaque représentation des familles précédentes se décompose sous la restriction. On en déduit par restriction  quatre familles analogues de ${[G]_2}$-représentations irréductibles admissibles qui les exhaustent toutes : les caractères, la série spéciale, la série principale et les représentations supersingulières. D'après la description des représentations irréductibles de $\sln_2(K)$ prouvé indépendamment par Abdellatif \cite{abd1}, Cheng \cite{cheng}, on sait que les restrictions des représentations des familles $1$, $2$ et $3$ sont toutes irréductibles sur ${[G]_2}$ et la restriction des supersingulières de $G$ se décomposent\footnote{D'après ce qui vient d'être dit, on peut quand $K=\Q_p$, donner une autre caractérisation des représentations supersingulières qui évitent d'introduire certains opérateurs $T$ dans une algèbre de Hecke. En effet, on peut directement montrer qu'une représentation irréductible de $G$ est supersingulière si et seulement si sa restriction à ${[G]_2}$ se décompose. Savoir si un tel résultat est vrai sur un corps quelconque est une question ouverte à la connaissance de l'auteur.} quand $K=\Q_p$.

Le résultat suivant montre que les représentations supersingulières peuvent être géométriquement réalisées comme des sections globales de certains fibrés en droite $\bar{\Lf}\in\pic_{[G]_2}(\bar{\H})$.

\begin{theointro}

L'application  \[\bar{\Lf}\mapsto \hhh^0(\bar{\H},\bar{\Lf})^\vee\] fournit une bijection entre les fibrés en droite $\bar{\Lf}\in\pic_{[G]_2}(\bar{\H})$ positifs de poids $-1$ et les représentations supersingulières de $[G]_2$.

\end{theointro}

Ce résultat a la conséquence remarquable

\begin{corointro}

La représentation de $[G]_2$ \[ \hhh^0(\hat{\H},{\Lf})^\vee[1/\varpi]\] est irréductible pour un fibré en droite ${\Lf}\in\pic_{[G]_2}(\hat{\H})$ positif de poids $-1$.

\end{corointro}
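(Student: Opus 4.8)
The statement to prove is the corollary that for a positive line bundle $\Lf \in \pic_{[G]_2}(\hat{\H})$ of weight $-1$, the representation $\hhh^0(\hat{\H},\Lf)^\vee[1/\varpi]$ is irreducible. The natural strategy is to bootstrap from the theorem immediately preceding it (the bijection between positive weight $-1$ line bundles on $\bar{\H}$ and supersingular representations of $[G]_2$ via $\bar{\Lf} \mapsto \hhh^0(\bar{\H},\bar{\Lf})^\vee$), combined with the vanishing/flatness statement of Proposition \ref{propintronul}. So the proof is essentially a reduction: relate $\hhh^0(\hat{\H},\Lf)$ to $\hhh^0(\bar{\H},\bar{\Lf})$, then relate the integral cohomology to its generic fiber, and conclude that an irreducible mod $p$ representation "lifts" to an irreducible characteristic-$0$ (or rather $\breve K$-coefficient) representation by a Nakayama/length argument.

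Let me sketch the steps. First, since $\Lf$ is positive of weight $-1$, apply Proposition \ref{propintronul}: the positivity gives $\hhh^1(\bar{\H},\bar{\Lf}) = 0$, hence $\hhh^1(\hat{\H},\Lf) = 0$, the group $\hhh^0(\hat{\H},\Lf)$ is flat (torsion-free) over $\OC_K$, and crucially $\hhh^0(\bar{\H},\bar{\Lf}) = \hhh^0(\hat{\H},\Lf)/\varpi$. Dualizing, $\hhh^0(\hat{\H},\Lf)^\vee$ is a $\varpi$-adically complete, $\varpi$-torsion-free $\OC_K[[G]_2]$-module whose reduction mod $\varpi$ is $\hhh^0(\bar{\H},\bar{\Lf})^\vee$, which by the preceding theorem is an irreducible (admissible) supersingular representation of $[G]_2$. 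The key point is now purely representation-theoretic: if $M$ is an $\OC_K$-lattice in a $\breve K$-representation $V = M[1/\varpi]$ of $[G]_2$ and $M/\varpi M$ is irreducible, then $V$ is irreducible. Indeed, any nonzero closed $[G]_2$-stable subspace $W \subseteq V$ gives a lattice $W \cap M$ whose reduction injects into $M/\varpi M$ (after possibly saturating), and is nonzero, so it must be all of $M/\varpi M$, forcing $W = V$ by Nakayama. One has to be a little careful about the topological/admissibility setup (smooth vs.\ continuous duals, completeness), but this is the standard "reduction mod $p$ is irreducible $\Rightarrow$ generic fiber is irreducible" argument.

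**The main obstacle.** The delicate part is not the representation-theoretic lemma but making sure the duality and the base-change identities interact correctly: $\hhh^0(\hat{\H},\Lf)$ is a (possibly large, pro-discrete) $\OC_{\breve K}$-module with a smooth-ish $[G]_2$-action, and one must check that $(-)^\vee$ (whichever duality is used in the statement of the preceding theorem — presumably the smooth dual, or a Pontryagin-type dual for these admissible representations) commutes with $/\varpi$ in the needed sense, i.e.\ that $\left(\hhh^0(\hat{\H},\Lf)^\vee\right)/\varpi \cong \left(\hhh^0(\hat{\H},\Lf)/\varpi\right)^\vee = \hhh^0(\bar{\H},\bar{\Lf})^\vee$. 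This follows from the flatness in Proposition \ref{propintronul} together with admissibility (the relevant modules are finitely generated over the relevant Iwasawa-type algebra, so duality is exact on them), but it should be spelled out. Once that compatibility is in hand, and once one knows $\hhh^0(\bar{\H},\bar{\Lf})^\vee$ is irreducible and admissible from the cited theorem, irreducibility of $\hhh^0(\hat{\H},\Lf)^\vee[1/\varpi]$ is a formal consequence.

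**Summary of the argument.** In order: (1) invoke positivity + Proposition \ref{propintronul} to get flatness of $\hhh^0(\hat{\H},\Lf)$ and the identity $\hhh^0(\hat{\H},\Lf)/\varpi \cong \hhh^0(\bar{\H},\bar{\Lf})$; (2) transport this through duality to get that $\hhh^0(\hat{\H},\Lf)^\vee$ is a $\varpi$-torsion-free lattice reducing mod $\varpi$ to the irreducible supersingular $\hhh^0(\bar{\H},\bar{\Lf})^\vee$ given by the preceding theorem; (3) apply the general lemma that a $[G]_2$-lattice with irreducible reduction has irreducible generic fiber, using admissibility/finiteness to run the Nakayama argument on stable sub-lattices; conclude.
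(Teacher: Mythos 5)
Votre argument est correct et correspond exactement à la démarche sous-entendue par l'article (qui énonce ce corollaire sans démonstration explicite) : positivité et poids $-1$ donnent, via la proposition d'annulation, la platitude de $\hhh^0(\hat{\H},\Lf)$ et l'identification $\hhh^0(\hat{\H},\Lf)/\varpi\cong\hhh^0(\bar{\H},\bar{\Lf})$, puis le théorème précédent fournit l'irréductibilité (supersingulière) de la réduction, et l'argument standard de réseau à réduction irréductible conclut. Les précautions que vous signalez (compatibilité de la dualité avec la réduction mod $\varpi$, sous-espaces fermés et complétude $\varpi$-adique pour le « Nakayama ») sont bien les seuls points à vérifier et votre traitement en est adéquat.
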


\begin{rem}

\begin{itemize}
\item Malheureusement, le présent travail ne décrit pas la représentation ci-dessus. Toutefois, les deux résultats précédents suggèrent très fortement que la flèche \[{\Lf}\mapsto \ind_{[G]_2}^G \hhh^0(\hat{\H},{\Lf})^\vee\] mettra en bijection les fibrés en droite\footnote{Notons que $2$ fibrés $\Lf_1$, $\Lf_2$ définissent la même représentation $\ind_{[G]_2}^G \hhh^0(\hat{\H},{\Lf}_1)^\vee\cong \ind_{[G]_2}^G \hhh^0(\hat{\H},{\Lf}_2)^\vee$ si l'une est obtenue par rapport à l'autre en tordant l'action de $[G]_2$ par la conjugaison d'un élément de $w\in G$ qui n'est pas dans $[G]_2$. Le quotient  $\pic_{[G]_2}(\hat{\H})/\sim$ apparaissant dans l'énoncé  est alors l'ensemble des classes d'isomorphismes de fibrés équivariants modulo cette torsion.} $\Lf\in\pic_{[G]_2}(\hat{\H})/\sim$ positifs de poids $-1$ avec les représentations irréductibles de $G$ qui correspondent par  Langlands local aux représentations galoisiennes de poids de Hodge-Tate $(0,0)$. Cela fera l'objet d'un travail ultérieur. 
\item Un tel résultat conjectural est à comparer avec ceux obtenus dans \cite{brasdospi,pan} où, en fibre générique, ils étudient les formes différentielles des revêtements de la tour de Drinfeld (qui correspondent aux fibres de poids $-2$) et obtiennent ainsi les représentations qui correspondent aux représantations galoisiennes de poids de Hodge-Tate $(0,1)$.
\item Les deux représentations $\hhh^0(\hat{\H},{\Lf})^\vee[1/\varpi]$, $\hhh^0({\H},{\Lf}[1/\varpi])^\vee_b$ vivent sur des espaces topologiques de nature très différente. L'espace vectoriel sous-jacent de la première est un espace de Banach alors que c'est un espace de type compact  pour la seconde. Pour l'étude de l'admissibilité les représentations pour certains fibrés nous renvoyons à \cite{psm}.
\item Le résultat est évidemment faux quand le corps $K$ n'est pas $\Q_p$ même s'il est sûrement encore  possible de décrire explicitement $\hhh^0(\bar{\H},\bar{\Lf})^\vee$. C'est ce que nous avons fait quand $K$ est totalement ramifié où on voit que toutes représentations supersingulières est un quotient d'une telle représentation. 
\end{itemize}

\end{rem}

Les arguments utilisés pour prouver les résultats précédents se basent sur la suite exacte \[0\to \Lf\fln{\times\varpi}{}\Lf\to\bar{\Lf}\to 0\] et la suite exacte longue associée pour relier la cohomologique cohérente à la cohomologie mod $p$ et la suite exacte \[0 \fl \overline{\Lf}(\bar{\H})  \fl \prod_{s\in \BC\TC_0} \overline{\Lf}(\P_s) \fl \prod_{a=(s,s')\in \BC\TC_1} \overline{\Lf}(\P_s\cap \P_{s'}) \fl \hhh^1(\bar{\H}, \overline{\Lf})\fl \prod_{s\in \BC\TC_0} \hhh^1(\P_s, \overline{\Lf}) \fl 0\] qui peut être vue comme résultant d'un analogue de la suite spectrale de Cech pour le recouvrement $\bar{\H}\cong\bigcup_{s\in\BC\TC_0}\P_s$ par les composantes irréductibles.

\subsection*{Notations}

Dans tout l'article, on fixe un nombre premier\footnote{Beaucoup de résultat de cet article sont sûrement vrais quand $p=2$. Toutefois, la plupart des arguments faisant apparaître des caractères et se faisant apparaître des constants de la forme $(p^f-1)/2$ sont ou bien faux ou demande quelques adaptations.} $p\neq 2$ et une extension finie $K$ de $\Q_p$. On note $\mathcal{O}_K$ son anneau des entiers, $\varpi$ une uniformisante et $\F=\F_q$ son corps r\'esiduel. On note $C=\hat{\bar{K}}$ la complétion d'une clôture algébrique de $K$ et $\breve{K}$ la complétion de l'extension maximale non ramifiée de $K$. La plupart des espaces rigides considérés dans cet article vivront sur $\breve{K}$. On notera $K_{(2)}\subset\breve{K}$ l'extension non ramifiée de degré $2$ sur $K$ d'anneau des entiers $\OC_{(2)}$ et de corps résiduels $\F_{(2)}=\F_{q^2}$. Il pourra être  utile de poser $N=q^2-1=(q-1)\tilde{N}$ et  d'introduire $\varpi^{1/N}$ une racine $N$-ième de l'uniformisante $\varpi$. Nous étudierons aussi l'algèbre des quaternions $D$ sur $K$ d'invariant $1/2$ qui contient $K_{(2)}$. Nous pouvons la voir comme $D=K_{(2)}[\Pi_D]$ soumis aux relations $\Pi_D^2=\varpi$, $\Pi_D a=\sigma(a)\Pi_D$ pour $a\in K_{(2)}$ où $\sigma$ est l'unique endomorphisme non triviale de $\gal(K_{(2)}/K)$. L'anneau des entiers de $D$ sera noté $\OC_D$ et le corps résiduel  est $\OC_D/\Pi_D\cong\F_{(2)}$.

Le but de cet article est d'étudier les représentations du groupe $G=\gln_2(K)$. Il contient un sous-groupe d'indice de constituer des matrices dont la valuation du déterminant est pair $[G]_2=(v_K\circ \det)^{-1}(2\Z)$. Nous exhibons deux représentants  $w= \left( \begin{matrix}
0 & 1\\ \varpi & 0
\end{matrix} \right)$, $\alpha= \left( \begin{matrix}
1 & 0\\ 0 & \varpi
\end{matrix} \right)$ de la classe non triviale de $G/[G]_2$. Nous essaierons de comprendre la théorie des représentations de $G$ via celle de $[G]_2$ (voir le paragraphe \ref{ssecgdeux}  pour plus de précisions sur cette philosophie). Pour toute représentation $\rho$ de $H$ un sous-groupe de $[G]_2$,  on pourra considérer la  représentation $\rho^w$ de $wHw^{-1}$ caractérisée par \begin{equation}\label{eq:w}
\rho^w(h)=\rho(w^{-1}hw)
\end{equation} pour tout $h\in H$. La contragrédiente  d'une représentation $\rho$ sera notée $\rho^\vee$.

 Le centre de $G$ sera noté $Z=\{\lambda \id_2\}$ et on posera $G^{\circ}:=\gln_2(\OC_K)$ et $\bar{G}=\gln_2(\F)=\gln_2(\OC_K)/(1+\varpi M_2(\OC_K))$. On notera  $T$ le tore de $G$ formé des matrices diagonales  et $B$ le Borel des matrices triangulaires supérieures et $\bar{T}$, $\bar{B}$ le tore et le Borel analogue de $\bar{G}$   (à ne pas confondre avec le Borel opposé à $B$ dans $G$). Les sous-groupes d'Iwahori $I$ et le pro-$p$-Iwahori $I_1$ seront les  pré-images de $\bar{B}$ et $\bar{T}$ dans $G^\circ$. 

Étant donné deux groupes $H_1\subset H_2$, et $\sigma$ une représentation de $H_1$ d'espace sous-jacent $V_\sigma$, on identifiera les espaces sous-jacents de l'induction $\ind_{H_1}^{H_2}\sigma$ et de l'induction compacte $\cind_{H_1}^{H_2}\sigma$ avec \begin{equation}\label{eq:ssjac}
\prod_{s\in H_2/H_1}V_s\ \et\ \bigoplus_{s\in H_2/H_1}V_s
\end{equation} où chaque terme $V_s$ est isomorphe (en tant qu'espace vectoriel) non canoniquement à $V_\sigma$. Pour tout élément $v=(v_s)_{s\in H_2/H_1}$ de $\prod_{s\in H_2/H_1}V_s$ ou de  $\bigoplus_{s\in H_2/H_1}V_s$, on appellera support de $v$ l'ensemble $\supp(v)\subset H_2/H_1$ \begin{equation}\label{eq:supp}
\supp(v):=\{s\in H_2/H_1:v_s\neq 0\}.
\end{equation}

Nous confondrons les caractères $\chi: \Gamma\to R^*$ (avec $\Gamma =K^*,\OC_K^*\varpi^{2\Z},{\F}^*$ et   $R=\breve{K}, \OC_{\breve{K}},\bar{\F}$)  avec les caractères $\chi\circ\det$ de $G$, $[G]_2$, $\bar{G}$ associés. Par exemple, on notera $\delta_a$ avec $a\in R^*$  le caractère de $\OC_K^*\varpi^{2\Z}$ et donc de $[G]_2$ et $G^{\circ}\varpi^\Z$ défini par \begin{equation}\label{eq:del}
\delta_a(\OC_K^*)=\{1\},\ \ \delta_a(\varpi^{2\Z})=a.
\end{equation} De même, le symbole de Legendre définit des caractères de $\F^*$ et $\OC_K^*\varpi^{2}$ via \begin{equation}\label{eq:leg}
\OC_K^*\varpi^{2\Z}\fln{\pmod{(1+\varpi\OC_K)\varpi^{2\Z}]}}{}\F^*\to \{\pm 1\}\subset R^*
\end{equation} et donc des caractères  de $[G]_2$ et $\bar{\F}$ que l'on appellera caractère de Legendre et que l'on  notera $\leg$. On peut aussi voir  tout caractère  $\mu$ de $T$ (resp. $\bar{T}$) pourra être vu comme un caractère de $B$ (resp. $\bar{B}$ ou de $I$) par inflation. Par exemple, nous écrirons $\mu_{m,n}$ avec $m$, $n$ dans $\Z/(q-1)\Z$ les caractères de $T$, $\bar{T}$, $B$, $\bar{B}$, $I$ à valeurs dans $\bar{\F}^*$ définis par \begin{equation}\label{eq:mu}
\mu_{m,n}\left( \begin{matrix}
a & 0\\ 0 & d
\end{matrix} \right)=a^m d^n.
\end{equation}

\section{Conventions sur le demi-plan de Drinfeld}

\subsection{L'arbre de Bruhat-Tits} 

Intéressons-nous à l'arbre de Bruhat-Tits $\BC\TC$ le graphe dont les  sommets $\BC\TC_0$ s'identifient à $\gln_2(K) / K^*\gln_2(\OC_K)$ l'ensemble des réseaux de $K^2$ à homothétie près et les arêtes $\BC\TC_1$ sont les  couples $a=(s_0,s_1)\in  \BC\TC_0^2$  pour lesquels  on peut trouver des représentants  $M_0, M_1\subset K^2$ de $s_0,  s_1$ tels que  
\[
\varpi M_1\subsetneq M_0\subsetneq   M_1. 
\]
   En réduisant modulo $\varpi$ la chaîne d'inclusion ci-dessus, on obtient un drapeau qui caractérise l'arête $a= (s_0,s_1)$ quand $s_0$ est fixé:
\[
0 \subsetneq M_0/ \varpi M_1\subsetneq M_1/\varpi M_1.
\]
Ainsi les arêtes adjacentes à $s_1$ s'identifient aux points $\F$-rationnels de la droite projective $\P^1_\F$.

On a une action de $G:= \gln_2(K)$  sur les sommets qui préserve les relations d'incidence.   Elle est transitive sur $\BC\TC_0$ et les stabilisateurs sont des conjugués de $\gln_2(\OC_K) \varpi^{\Z} :=G^{\circ} \varpi^{\Z}$.  De plus,  $G^{\circ} \varpi^{\Z}$ agit transitivement sur $\P^1(F)$ et donc sur  les arêtes incidentes au sommet standard $s_1=[\OC_K^2]$.  D'où la transitivité de $G$ sur $\BC\TC_1$.  On appellera\footnote{Il peut paraître surprenant et contre-intuitif d'imposer que $s_1$ est le sommet standard et non $s_0$ (cela interviendra aussi pour les notions de parité). Cela provient du faite que l'on a décrit les arêtes $([M_0],[M_1])$ comme des chaînes de la forme $\varpi M_1\subsetneq M_0\subsetneq   M_1$ et non $\varpi M_0\subsetneq M_1\subsetneq   M_0. $. Cela est motivé par la volonté de suivre les notations existantes dans la littérature que nous citons tout au long de ce travail.}  $(s_0,s_1)=( [\OC_K\times \varpi \OC_K], [\OC_K^2])$ l'arête standard. 

On a une notion de distance sur les sommets qui est donnée par la longueur du plus court chemin sur le graphe $\BC\TC$. Nous appellerons sommets pairs $\BC\TC_0^{(0)}$ ceux qui sont à distance paire de $s_0$ et sommets impairs $\BC\TC_0^{(1)}$ les autres.  Le sous-groupe maximal de $G$ qui préserve la parité est l'ensemble des matrices dont la valuation du déterminant est paire $[G]_2=(v_K\circ \det)^{-1}(2\Z)$  si bien que les  sous-ensembles $\BC\TC_0^{(0)}$ et $\BC\TC_0^{(1)}$ sont exactement les orbites de $[G]_2$.  Notons $\hat{\BC\TC}_1$ les arêtes muni d'une orientation.  Elle sera dite positive si la source est un sommet pair et négative dans le cas contraire.  Le sous-groupe $[G]_2$ agit transitivement sur les arêtes d'orientation positive $\hat{\BC\TC}_1^+$ et  négative $\hat{\BC\TC}_1^-$ (et en particulier sur les arêtes non orientées). 

Nous écrirons $w= \left( \begin{matrix}
0 & 1\\ \varpi & 0
\end{matrix} \right)$ et $\alpha= \left( \begin{matrix}
 1 &0\\ 0 & \varpi
\end{matrix} \right)$ deux représentants de  l'élément non trivial de $G/[G]_2\cong \Z/2\Z$. Ils  permutent $\BC\TC^{(0)}_0$ et $\BC \TC_0^{(1)}$ ainsi que $\hat{\BC \TC}_1^+$ et $\hat{\BC \TC}_1^{-}$ l'un en permutant les deux sommets standards, l'autre en agissant par translation sur l'appartement standard $\{[\OC_K\times \varpi^n \OC_K]=\alpha^n s_1\}\subset\BC\TC_0$.  La décomposition de Cartan \begin{equation}\label{eq:cartan}
G=\bigsqcup_{n\in\N} G^\circ  \alpha^nG^\circ \varpi^\Z
\end{equation}  montre que les orbites de $\BC\TC_1$ sous l'action de  $G^\circ \varpi^\Z$ sont $G^\circ \varpi^\Z \alpha^n s_1$ et sont formés des sommets à distance fixée $n$ du sommet standard $s_1$.

\subsection{Le demi-plan de Drinfeld}

\'Ecrivons $\H$ l'ouvert admissible de la droite projective rigide sur $\breve{K}$ obtenu en retirant les points $K$-rationnelles.  La construction suivante justifiera la structure d'espace rigide.    On a une flèche de réduction
 \[\tau : \H(\C_p)\to \{\text{normes sur } K^{2}\}/\{\text{homothéties}\} \] 
 donnée par 
 $$\tau (z): v\mapsto |z_0v_0+z_1v_1|$$ 
 si $z=[z_0,z_1] \in \H (\C_p)$. L'image $\tau(z)$ ne dépend pas du représentant de $z$ dans $\C_p^2$ car les normes sont vues à  homothétie près. Le fait de prendre le complémentaire des hyperplans $K$-rationnels assure que $\tau(z)$ est bien une norme sur $K^{2}$.

D'après un résultat classique de Iwahori-Goldmann \cite{iwgo} l'espace des normes sur $K^{2}$ à  homothétie près s'identifie bijectivement (et de manière $G$-équivariante) à  l'espace topologique $|\BC\TC|$, ce qui permet de voir $\tau$ comme une application 
$$\tau: \H(\C_p)\to |\BC\TC|.$$ 

L'observation cruciale est de voir que les espaces $\tau^{-1}(s)=: \H_s$ et $\tau^{-1}(a)=: \H_a$ pour $s\in \BC\TC_0$ et $a\in \BC\TC_1$ sont les affinoides (cf.  \cite[§1.4]{J2})
\begin{equation}\label{eq:ouvsimp}
\H_s= \spg(\breve{K}\langle X, \frac{1}{X^q-X} \rangle) \et  \H_a= \spg(\breve{K}\langle  X,Y, \frac{1}{X^{q-1}-1} , \frac{1}{Y^{q-1}-1} \rangle/ (XY-\varpi) ).
\end{equation}

Notons que si $s\in a$,  $\H_s$ s'obtient comme l'ouvert de $\H_a$ où on a imposé  $|X|=1$ ou $|Y|=1$.   

Les espaces $\hat{\H}_s= \spf \Of^+(\H_s)$ et $\hat{\H}_a= \spf \Of^+(\H_a)$ se recollent pour définir un modèle entier de réduction semi-stable $\hat{\H}$ de $\H$.  



\subsection{Fibre spéciale de modèle entier $\hat{\H}$ et fonctions inversibles en fibre spéciale}

Nous décrivons ici les composantes irréductibles de la fibre spéciale $\bar{\H}$ de $\hat{\H}$ et leur combinatoire.    Pour cela nous avons besoin de définir l'étoile  ouverte  $\ost(s)$ d'un sommet $s$.  Il s'agit de l'ouvert de $\BC\TC$ constitué de l'union de toutes les arêtes qui rencontrent $s$ privée de  l'autre extrémité i.e. $\ost(s)=\bigcup_{a\in \BC\TC_1 :a=(s,s')}a\backslash s'$.  Chaque  préimage d'une étoile ouverte décrit le tube au-dessus  d'une composante irréductible de $\bar{\H}$.    Ainsi,  les composantes sont en bijection avec les sommets et les intersections avec les arêtes.  De plus, 
 chacune d'elles s'identifient à une droite projective que l'on notera $\P_s$.   Deux composantes s'intersectent en un unique  point qui est $\F$-rationnel et on fixe, pour tout sommet $s$, une bijection  entre $\P_s(\F)$ et l'ensemble des arêtes rencontrent $s$. 
 
 Le lieu lisse d'une composante $\P_s\backslash \P_s(\F)$ s'identifie à $\bar{\H}_s$ la fibre spéciale de l'ouvert $\hat{\H}_s$ introduit précédemment.   De manière similaire,  $\bar{\H}_a= \P_s\cup \P_{s'} \backslash (\P_s(F)\Delta \P_{s'}(F))$  pour $a=(s,s')$. Toujours dans ce cas,  $\bar{\H}_a$  admet deux composantes irréductibles $V(X)$ et $V(Y)$ qui correspondent à $\bar{\H}_a\cap \P_{s}$ et $\bar{\H}_a\cap \P_{s'}$. 
 
Nous terminons cette section par ce résultat qui décrit le lien entre les fonctions inversibles en niveau entier et celles en fibre générique et en fibre spéciale.

\begin{prop}\label{proposthsig}

Soit $s\in\BC\TC_0$ et $a=\{s,s'\}\in\BC\TC_1$, on a 
\[\Of^*(\hat{\H}_s)/\OC^*_{\breve{K}}\Of^{**}(\hat{\H}_s)\cong \Of^*(\bar{\H}_s)/\bar{\F}^*=\prod_{b\in \F} (X-b)^{\Z},\]
\[\Of^*(\hat{\H}_a)/\OC^*_{\breve{K}}\Of^{**}(\hat{\H}_a)\cong \Of^*(\bar{\H}_a)/\bar{\F}^*= \prod_{b\in \F^*} (X-b)^{\Z}\times \prod_{b\in \F^*}(Y-b)^{\Z}\] (les variables $X\in \Of(\hat{\H}_s)$ et $X,Y\in \Of(\hat{\H}_a)$ sont les mêmes que dans  \eqref{eq:ouvsimp}). On a aussi \[\Of^*({\H}_s)=\Of^*(\hat{\H}_s)\varpi^\Z\et \Of^*({\H}_a)=\Of^*(\hat{\H}_a)X^\Z Y^\Z\]
\end{prop}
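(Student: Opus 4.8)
The statement has two essentially independent parts: an identification of the reduction of units with explicit products of linear factors, and a description of the units in the generic fiber. I will treat $\hat\H_s$ (the other case being the same with two variables).

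First I would write down the explicit rings. From \eqref{eq:ouvsimp} we have $\Of^+(\H_s)=\OC_{\breve K}\langle X,\frac{1}{X^q-X}\rangle$, so $\Of^*(\bar\H_s)$ are the units of $\bar\F[X,\frac1{X^q-X}]=\bar\F[X,\frac1{\prod_{b\in\F}(X-b)}]$. This is a localization of a PID (in fact of $\bar\F[X]$) at the multiplicative set generated by the $X-b$, $b\in\F$; its units are exactly $\bar\F^*\cdot\prod_{b\in\F}(X-b)^{\Z}$, since a rational function $f/g$ in lowest terms is a unit in the localization iff both $f$ and $g$ are products of the $(X-b)$ up to a scalar — this is just unique factorization in $\bar\F[X]$ together with the fact that the only irreducible polynomials inverted are the $X-b$. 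That gives the right-hand equalities $\Of^*(\bar\H_s)/\bar\F^*=\prod_{b\in\F}(X-b)^{\Z}$ and similarly $\Of^*(\bar\H_a)/\bar\F^*=\prod_{b\in\F^*}(X-b)^{\Z}\times\prod_{b\in\F^*}(Y-b)^{\Z}$ (here one also uses $XY=\varpi\equiv 0$, so $X,Y$ are zero-divisors and not units, but $X-b$, $Y-b$ for $b\neq 0$ are units; a short argument with the two minimal primes $V(X)$, $V(Y)$ shows there are no further relations).

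The heart of the proposition is the isomorphism $\Of^*(\hat\H_s)/\OC_{\breve K}^*\Of^{**}(\hat\H_s)\iso\Of^*(\bar\H_s)/\bar\F^*$, where $\Of^{**}$ denotes the units congruent to $1$ mod $\varpi$ (principal units). The map is reduction mod $\varpi$. For \emph{surjectivity}, one lifts each $X-b$ (with $b\in\F$, choosing a Teichmüller-type lift to $\OC_{\breve K}$) to $X-\tilde b\in\Of^*(\hat\H_s)$, which is a unit because its reduction is; so the reduction map on units is surjective mod $\bar\F^*$. For \emph{injectivity} of the induced map, I must show: if $u\in\Of^*(\hat\H_s)$ reduces into $\bar\F^*$ then $u\in\OC_{\breve K}^*\Of^{**}(\hat\H_s)$. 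Multiplying by a Teichmüller constant we reduce to $\bar u=1$, i.e. $u\equiv 1\pmod\varpi$, which is exactly $u\in\Of^{**}(\hat\H_s)$ — but one must check this principal-unit set really is a subgroup and that $\Of^+(\H_s)$ is $\varpi$-adically complete and separated, which holds because $\Of^+(\H_s)$ is a Tate algebra over $\OC_{\breve K}$ (so $\pi$-adically complete, reduced, with $\Of^+/\varpi=\Of(\bar\H_s)$). The kernel of reduction on $\Of^+(\H_s)^*$ is then $1+\varpi\Of^+(\H_s)$, and that is precisely $\Of^{**}(\hat\H_s)$. I expect the main technical obstacle to be exactly this point: verifying that $\Of^+(\H_s)=\Of^+(\H_s)$ is $\varpi$-adically complete and reduced, i.e. that $\hat\H_s=\spf\Of^+(\H_s)$ is well-behaved, and that reduction of units is exactly $1+\varpi\Of^+$; this is where the reference \cite[§1.4]{J2} and the semistable nature of $\hat\H$ do the real work.

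Finally, for the generic-fiber statement, any $f\in\Of^*(\H_s)$ can be scaled by a power of $\varpi$ to lie in $\Of^+(\H_s)$ with nonzero reduction; applying the first part, $\varpi^n f$ equals (scalar)$\cdot\prod_{b}(X-\tilde b)^{m_b}\cdot(\text{principal unit})$, and the principal unit is already in $\Of^+(\H_s)^*$, hence $\Of^*(\H_s)=\Of^*(\hat\H_s)\varpi^\Z$. For $\H_a$ the same argument applies after noting $\Of^+(\H_a)=\OC_{\breve K}\langle X,Y,\frac1{X^{q-1}-1},\frac1{Y^{q-1}-1}\rangle/(XY-\varpi)$: a unit of the generic fiber, cleared of denominators and of powers of $X$ and $Y$ (which now play the role that $\varpi$ played before, since $XY=\varpi$), reduces to a unit of $\bar\H_a$, and we conclude $\Of^*(\H_a)=\Of^*(\hat\H_a)X^\Z Y^\Z$. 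I would present the two cases in parallel, doing $\H_s$ in detail and indicating the modifications for $\H_a$ (replace $\varpi$ by the pair $X,Y$, and drop $b=0$ from the index sets).
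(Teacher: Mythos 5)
Your treatment of the special fibre and of the kernel of reduction is essentially the paper's: computing $\Of^*(\bar{\H}_s)$ and $\Of^*(\bar{\H}_a)$ by hand (localization of $\bar{\F}[X]$, respectively restriction to the two components $V(X)$, $V(Y)$), and identifying $\OC^*_{\breve{K}}\Of^{**}$ as the kernel of $\Of^*(\hat{\H}_\sigma)\to\Of^*(\bar{\H}_\sigma)/\bar{\F}^*$ via Teichm\"uller lifts; that part is fine. The genuine gap is in the last two identities, exactly where you wave your hands. In the vertex case you scale $f$ by a power of $\varpi$ so that it lies in $\Of^+(\H_s)$ with nonzero reduction and then ``apply the first part''; but the first part concerns \emph{units} of $\Of^+(\H_s)$, and you have not shown that the scaled element is one --- having nonzero reduction does not make an element of $\Of^+$ invertible. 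For the vertex this is repairable (scale $1/f$ the same way and use that $\Of(\bar{\H}_s)$ is a domain, or invoke smoothness of $\hat{\H}_s$, which is how the paper concludes there). For the edge, however, this inference genuinely fails: $\Of(\bar{\H}_a)$ has zero divisors, and $X$ itself is a unit of $\Of(\H_a)$ (since $XY=\varpi$) lying in $\Of^+(\H_a)$ with nonzero but non-invertible reduction. So ``cleared of denominators and of powers of $X$ and $Y$ \dots reduces to a unit'' is precisely the assertion to be proved, and your sketch gives no mechanism for choosing the exponents nor for seeing that the resulting element is invertible integrally.

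What is needed (and what the paper does) is a divisor-theoretic step: after normalising $u$ by $\lambda\in\breve{K}^*$ so that $u/\lambda\in\Of^+(\H_a)\setminus\mG_{\breve{K}}\Of^+(\H_a)$ (this normalisation is justified there by pluri-nodality of the model), one compares the orders of vanishing of $u/\lambda$ at the two generic points of the special fibre with those of $X^\alpha$ or $Y^\alpha$; since a generic-fibre unit has purely vertical divisor, the ratio $u/(\lambda X^\alpha)$ or $u/(\lambda Y^\alpha)$ then has trivial divisor and is a unit of $\Of^+(\H_a)$. Some argument of this kind is unavoidable in the edge case, and it is the actual content of the proposition; by contrast, the point you single out as the main obstacle ($\varpi$-adic completeness, reducedness of $\Of^+/\varpi$, and $\Of^{**}=1+\varpi\Of^+$) is routine. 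As stated, your proposal proves the two reduction isomorphisms but not $\Of^*(\H_a)=\Of^*(\hat{\H}_a)X^{\Z}Y^{\Z}$, and proves $\Of^*(\H_s)=\Of^*(\hat{\H}_s)\varpi^{\Z}$ only modulo the unjustified step noted above.
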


\begin{proof}
La preuve apparaît dans les arguments de la démonstration de \cite[Prop. 3.8.]{J2} mais nous les répétons très succinctement pour le confort du lecteur.

Commençons par la dernière identité de l'énoncé. Étant donné une fonction inversible $u$ sur $\Of^*({\H}_s)$ ou $\Of^*({\H}_a)$, on utilise le fait que $\hat{\H}_s$ et $\hat{\H}_a$ sont pluri-nodales sur $\spf(\OC_{\breve{K}})$ (au sens de \cite[Definition 1.1]{ber7}) pour montrer qu'il existe $\lambda\in \breve{K}^*$ tel que $u/\lambda$ est dans $\Of(\hat{\H}_s)\backslash\mG_{\breve{K}} \Of(\hat{\H}_s)$ ou $\Of(\hat{\H}_a)\backslash\mG_{\breve{K}} \Of(\hat{\H}_a)$. Pour le  sommet, on obtient  directement $u/\lambda\in \Of^*(\hat{\H}_s)$ car le modèle est lisse. Sur l'arête,  on peut  trouver un élément de la forme $X^\alpha$ ou de la forme $Y^\alpha$ avec $\alpha\in\Z$ qui a les mêmes ordres d'annulation que $u/\lambda$ en les points génériques des composantes irréductibles de la fibre spéciale. On en déduit que $\frac{u}{\lambda X^\alpha}$ ou $\frac{u}{\lambda Y^\alpha}$ est inversible dans $\Of(\hat{\H}_a)$ ce qui prouve le dernier point.

Pour les deux premières identités, le sous-groupe $\OC_{\breve{K}}^*\Of^{**}(\hat{\H}_s)$ (resp. $\OC^*_{\breve{K}}\Of^{**}(\hat{\H}_s)$) est clairement le noyau de la flèche de projection $\Of^*(\hat{\H}_s)\to \Of^*(\bar{\H}_s)/\bar{\F}^*$ (resp. $\Of^*(\hat{\H}_a)\to \Of^*(\bar{\H}_a)/\bar{\F}^*$) et on se ramène à calculer $\Of^*(\bar{\H}_s)/\bar{\F}^*$ et $\Of^*(\bar{\H}_a)/\bar{\F}^*$. Sur le sommet, le résultat est clair  car $\bar{\H}_s\cong \P^1_\F\backslash \P^1(\F)$ dont on sait calculer les fonctions inversibles. Pour une arête, on a deux composantes irréductibles de la fibre spéciale $V(X)$ et $V(Y)$ et une fonction sur $\bar{\H}_a$ est déterminé par la restriction à chacune de ces composantes. De plus, une telle fonction est inversible si et seulement si chacune de ces restrictions l'est et on en déduit une injection :
\[\Of^*(\bar{\H}_a)/\bar{\F}^*\flinj \Of^*(V(Y))/\bar{\F}^*\times\Of^*(V(X))/\bar{\F}^*\cong  \prod_{b\in \F^*} (X-b)^{\Z}\times \prod_{b\in \F^*}(Y-b)^{\Z}\]
Mais on voit que chaque fonction $((X-b))_{b\in \F^*}$ et  $((Y-b))_{b\in \F^*}$ est inversible en fibre spéciale par définition et la flèche précédente est une bijection.
\end{proof}
\subsection{Interprétation modulaire\label{ssecmod}}

Nous aimerions étudier des fibrés en droites provenant d'une interprétation modulaire $\hat{\H}$ que nous allons présenter ici.  

Si $A$ est une $\OC_K$-algèbre, un \emph{$\OC_D$-module formel} sur ${\rm Spec}(A)$ (ou, plus simplement, sur $A$) est un groupe formel $F$ sur $A$ muni d'une action de $\OC_D$, notée $\iota : \OC_D \to \mathrm{End}(F)$, qui est compatible avec l'action naturelle de $\OC_K$ sur l'espace tangent $\mathrm{Lie}(F)$, \emph{i.e.} pour $a$ dans $\OC_K$, $d\iota (a)$ est la multiplication par $a$ dans $\mathrm{Lie}(F)$. Le 
  $\OC_D$-module formel $F$ est dit \emph{spécial} si $\mathrm{Lie}(F)$ est un 
  $\OC_{(2)}\otimes_{\OC_K} A$-module localement libre de rang $1$. On a le r\'esultat classique suivant: 





\begin{prop}
  Sur un corps alg\'ebriquement clos de caract\'eristique $p$, il existe un unique 
   $\OC_D$-module formel sp\'ecial de dimension $2$ et de $(\OC_K$-)hauteur $4$, à isogénie près. 
\end{prop}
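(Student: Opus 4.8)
La stratégie que je suivrais, l'énoncé étant classique et dû à Drinfeld, repose sur la théorie de Dieudonné relative à $\OC_K$. Soient $k$ un corps algébriquement clos de caractéristique $p$ et $\sigma$ le Frobenius de $\breve K/K$. À isogénie près, la catégorie des $\OC_K$-modules formels sur $k$ est équivalente à celle des isocristaux $(N,\varphi)$ relatifs à $\OC_K$, où $N$ est un $\breve K$-espace vectoriel de dimension finie et $\varphi : N\to N$ une bijection $\sigma$-semi-linéaire ; dans ce dictionnaire, la $\OC_K$-hauteur correspond à $\dim_{\breve K}N$, le fait d'être un module formel (donc connexe) équivaut à la stricte positivité de toutes les pentes de Newton de $(N,\varphi)$, et la dimension du module formel est la somme de ses pentes. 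Une structure de $\OC_D$-module formel se traduit en une action $\breve K$-linéaire de $D$ sur $N$ commutant à $\varphi$ ; le sous-corps $K_{(2)}\subset D$ agit alors par les deux plongements $K_{(2)}\hookrightarrow\breve K$ sur les deux facteurs d'une décomposition $N=N_0\oplus N_1$ en sous-$\breve K$-espaces de dimension $2$, échangés par $\varphi$ et par $\Pi_D$. Il s'agit donc de montrer qu'il existe un unique isocristal $(N,\varphi)$, à isomorphisme près, de dimension $4$, de pentes $>0$ et de somme $2$, muni d'une telle action de $D$.

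Le point essentiel, bien que court, serait d'établir que $N$ est isocline de pente $1/2$. Par la classification de Dieudonné--Manin, $N=\bigoplus_\lambda N_\lambda$ avec $N_\lambda$ isocline de pente $\lambda$, et cette décomposition est préservée par tout endomorphisme commutant à $\varphi$, donc par $D$. Comme $D$ est une algèbre à division, son image dans $\en_{\mathrm{isoc}}(N)=\prod_\lambda\en_{\mathrm{isoc}}(N_\lambda)$, avec $\en_{\mathrm{isoc}}(N_\lambda)\cong\mat_{n_\lambda}(D_\lambda)$ où $D_\lambda$ est l'algèbre à division centrale sur $K$ d'invariant $\lambda$ et $n_\lambda d_\lambda=\dim_{\breve K}N_\lambda$, $d_\lambda$ désignant le dénominateur réduit de $\lambda$, est contenue dans un unique facteur. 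Or les seuls polygones de Newton de dimension $4$, à pentes dans $]0,1]$ de somme $2$ et tels que la dimension de chaque composante isocline soit divisible par le dénominateur de sa pente, sont $(\tfrac12,\tfrac12,\tfrac12,\tfrac12)$ et $(\tfrac13,\tfrac13,\tfrac13,1)$. Dans le second cas $\en_{\mathrm{isoc}}(N)\cong D_{1/3}\times K$, et $D$, de dimension $4$, ne se plonge ni dans $K$ ni dans $D_{1/3}$ (sinon $D_{1/3}$ serait un $D$-module et l'on aurait $4\mid 9$) ; ce cas est donc exclu. Ainsi $N$ est isocline de pente $1/2$, unique à isomorphisme d'isocristaux près une fois la dimension $4$ fixée, et $\en_{\mathrm{isoc}}(N)\cong\mat_2(D)$.

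Il resterait à traiter l'action de $D$ et l'existence. Deux plongements de $K$-algèbres $D\hookrightarrow\mat_2(D)\cong\en_{\mathrm{isoc}}(N)$ sont conjugués par un élément de $\mat_2(D)^\times=\en_{\mathrm{isoc}}(N)^\times=\aut_{\mathrm{isoc}}(N)$, d'après le théorème de Skolem--Noether ; l'isocristal muni de son action de $D$ est donc unique à isomorphisme près, d'où, par l'équivalence de l'étape précédente, le fait que deux $\OC_D$-modules formels de dimension $2$ et de hauteur $4$ sur $k$ sont isogènes. Pour l'existence d'un tel module, qu'on peut de surcroît choisir spécial, je partirais d'un $\OC_{(2)}$-module formel de $\OC_{(2)}$-hauteur $1$ et de dimension $1$ (dont l'isocristal, vu sur $\OC_K$, est isocline de pente $1/2$ et de dimension $2$) et j'induirais l'action de $\OC_D$ via $\OC_D=\OC_{(2)}\oplus\OC_{(2)}\Pi_D$ ; on obtient un $\OC_D$-module formel spécial de dimension $2$ et de hauteur $4$, les deux facteurs de $\mathrm{Lie}$ étant chacun de dimension $1$.

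L'étape que j'anticipe comme la plus délicate est la mise en place rigoureuse du dictionnaire de Dieudonné dans le cadre relatif à $\OC_K$ : la $\breve K$-linéarité de l'action de $D$ sur $N$, l'analyse du comportement de $\varphi$ et de $\Pi_D$ sur la décomposition $N=N_0\oplus N_1$, et surtout la traduction exacte de la condition « spécial » en termes de réseaux de Dieudonné (elle n'affecte pas la classe d'isogénie, ne portant que sur le réseau, mais il faut le justifier). Une fois ce dictionnaire acquis, les deux dernières étapes ne sont qu'une succession d'arguments formels sur les algèbres centrales simples.
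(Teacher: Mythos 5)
Votre démonstration est correcte pour l'essentiel, mais elle emprunte une route réellement différente de celle du texte. Vous raisonnez entièrement au niveau des isocristaux : décomposition de Dieudonné--Manin en parties isoclines, élimination du polygone $(\tfrac13,\tfrac13,\tfrac13,1)$ par un argument de dimension sur les algèbres à division, identification $\en(N)\cong\mat_2(D)$ pour $N$ isocline de pente $1/2$ et de hauteur $4$, puis Skolem--Noether pour conjuguer deux actions de $D$ ; l'existence s'obtient en induisant depuis un module de Lubin--Tate pour $\OC_{(2)}$. Le texte ne passe jamais par l'analyse des pentes : il exploite la graduation du module de Cartier $M=M_0\oplus M_1$ et la notion d'indice critique ($\Pi M_i=VM_i$, dont l'existence vient de $\Pi^*\circ\Pi^*=\varpi^*=0$ sur $\lie\XG$), puis n'applique Dieudonné--Manin qu'à l'isocristal \emph{étale} $(M_i[1/p],V^{-1}\Pi)$ pour produire une base explicite $x_0,x_1$ de $M_i$ vérifiant $\Pi x_j=Fx_j=Vx_j$. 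Votre approche est plus conceptuelle et règle la classification à isogénie près de façon plus économe (elle montre au passage que la condition d'être spécial ne coûte rien à isogénie près, ce que vous signalez à juste titre) ; celle du texte a l'avantage de fournir une forme normale explicite du module de Cartier, qui est précisément ce qui est réutilisé ensuite pour expliciter la bijection $\GC^{Dr}(\bar{\F})\cong\bar{\H}(\bar{\F})$, calculer les lieux d'annulation de $\Pi_*$ et $F_*$ et en déduire les ordres des $\omega_i$ ; la conjugaison par Skolem--Noether ne donnerait pas directement cette base. Les deux arguments retrouvent $\mathrm{QIsog}\cong\gln_2(K)$ (centralisateur de $D$ dans $\mat_2(D)$ chez vous, fixes de $V^{-1}\Pi$ dans le texte).

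Deux points mineurs à corriger dans votre rédaction. D'une part, l'affirmation selon laquelle l'image de $D$ dans $\prod_\lambda\en(N_\lambda)$ serait \emph{contenue dans un unique facteur} est fausse telle quelle : un morphisme unitaire vers un produit envoie $1$ sur $(1,\ldots,1)$. Ce qui est vrai, et ce que vous utilisez réellement dans la phrase suivante, est que $D$ étant simple, chaque projection $D\to\en(N_\lambda)$ est injective dès que $N_\lambda\neq 0$ ; le décompte de dimensions ($4\nmid 1$ et $4\nmid 9$) exclut alors bien le second polygone. D'autre part, la construction du modèle par induction depuis $\OC_{(2)}$ mériterait d'être explicitée (action de $\Pi_D$ échangeant les deux facteurs avec $\Pi_D^2=\varpi$ et $\Pi_D a=\sigma(a)\Pi_D$) pour vérifier le caractère spécial, mais c'est la construction standard et elle fonctionne.
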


  On notera $\Phi_{\bar{\F}}$ l'unique (à isogénie près) 
$\OC_D$-module formel sp\'ecial $\Phi_{\bar{\F}}$ sur $\bar{\F}$ de dimension $2$ et hauteur $4$.

Consid\'erons le foncteur $\GC^{Dr} : \mathrm{Nilp} \to \mathrm{Ens}$ envoyant $A\in \mathrm{Nilp}$ sur l'ensemble des classes d'isomorphisme de triplets $(\psi, F, \rho)$ avec : 
\begin{itemize}[label= \textbullet] 
\item $\psi : \bar{\F} \to A/ \varpi A$ est un $\F$-morphisme, 
\item $F$ est un $\OC_D$-module formel sp\'ecial de dimension $2$ et de hauteur $4$ sur $A$, 
\item $\rho : \Phi_{\bar{\F}} \otimes_{\bar{\F}, \psi} A/ \varpi A \to F_{A/ \varpi A}$ est une quasi-isog\'enie de hauteur z\'ero. 
\end{itemize}
   
 Le théorème fondamental suivant, à la base de toute la théorie, est dû à Drinfeld : 

\begin{theo}[\cite{dr2}]\label{Drrep}
Le foncteur $\GC^{Dr}$ est repr\'esentable par $\hat\H$.
\end{theo} 
 
Dans toute la suite nous appellerons  $\XG$ le module formel universel sur $\hat{\H}$.  Un objet d'importance fondamentale à étudier est son algèbre  de Lie ainsi que sa décomposition en composantes isotypiques  pour l'action de $\OC_{(2)}$
\[
\lie (\XG)= \lie (\XG)_0 \oplus \lie(\XG)_1
\]  
et le dual de ces parties isotypiques $\omega_0$ et $\omega_1$. 


\section{Fibrés équivariants}

\subsection{Fibrés  en niveau entier}


Nous allons étudier les fibrés sur le modèle entier $\hat{\H}$. En fibre générique, la question est plus simple comme en témoigne le résultat suivant (\cite{J1}) 
\begin{theo}\label{theopicgen}
$\pic(\H)=0$
\end{theo} 
Le problème est plus subtil en niveau entier et le but de cette section est de prouver le résultat suivant :

\begin{theo}\label{theopicentier}
La flèche naturelle de  restriction à chaque composantes irréductibles de la fibre spéciale est un isomorphisme : \[\pic(\hat{\H})\iso\prod_{s\in \BC\TC_0}\pic(\P_s)\cong \prod_{s\in\BC\TC_0}\Z.\]
\end{theo}

D'après le résultat précédent, nous pouvons donner la définition suivante.
\begin{defi}
Pour tout fibrés en droites $\Lf$, nous appellerons ordre en $s\in \BC\TC_0$ et noterons $\ord_s(\Lf)$ l'image dans $\Z$ par la projection $\pic(\bar{\H})\to\pic(\P_s)$ issue de l'isomorphisme précédent.
\end{defi}

Avant de prouver le résultat, nous donnons une autre caractérisation de ces fibrés qui nécessite  la notion de norme introduite par    Teitelbaum  dans \cite[p656]{teit1}. 

\begin{defi}
Soit $X$ un schéma formel sur $\OC_{\breve{K}}$ et $\Mf$ un faisceau en $\Of_{X^{rig}}$-modules.    Une norme sur $\Mf$ est la donnée pour toute ouvert admissible $U\subset X^{rig}$ d'une fonction 
\[
\Vert \cdot  \Vert_U: \Mf(U)\to \R_{\geq 0}.   
\] 
qui vérifie les axiomes suivants 
\begin{enumerate}

\item Si $U\subset V \subset X^{rig}$ et $f\in \Mf(V)$,  $\Vert f \Vert_V \subset  \Vert f \Vert_U$.

\item  Pour $V\subset X^{rig}$,  et $\UC$ un  recouvrement  admissible de $V$,   on a 
\[
\Vert f \Vert_V \leq  \max_{U\in \UC} \Vert f  \Vert_U. 
\]

\item  Pour  $U\subset X^{rig}$,  $f\in \Of(U)$ et $g\in\Mf(U)$,  on a $\Vert f g \Vert_U = \vert f \vert  \Vert g \Vert_U$ où $\vert \cdot \vert$ est la norme spectrale.

\end{enumerate}

\end{defi}

\`A tout faisceau en $\Of$-modules $\Mf$ muni d'un norme sur $X^{rig}$,  on peut associer un faisceau $\hat{\Mf}$ sur $X$    défini par 
\[
\forall U \subset X,\; \;   \hat{\Mf} (U)= \{ f \in  \Mf(U^{rig}) :  \Vert f \Vert_{U^{rig}} \leq 1  \}.  
\]

\begin{ex}
Si $X= \hat{\H}$,  $\Of_{\hat{\H}}$ provient de $\Of_{\H}$ muni de la norme spectrale par ce procédé.  
\end{ex}

\begin{prop}
\label{propnorme}
Tout fibré en droites sur $\hat{\H}$  provient d'une norme sur $\Of_{\H}$.   
\end{prop}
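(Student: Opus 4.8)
The plan is to exhibit an explicit norm on $\Of_\H$ whose associated sheaf on $\hat\H$ recovers an arbitrary given line bundle $\Lf$. By Theorem \ref{theopicentier} the line bundle $\Lf$ is determined by its orders $(\ord_s(\Lf))_{s\in\BC\TC_0}$ along the irreducible components $\P_s$; equivalently, by the description of the affinoid cover in \eqref{eq:ouvsimp}, $\Lf$ is glued from the free rank-$1$ modules $\Lf(\hat\H_s)$ and $\Lf(\hat\H_a)$ over the rings $\Of^+(\H_s)$ and $\Of^+(\H_a)$, with transition functions that, modulo units and by Proposition \ref{proposthsig}, are monomials in the coordinates $X$ (resp. $X,Y$) raised to powers governed by the $\ord_s$. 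First I would trivialize $\Lf$ in the generic fibre: since $\pic(\H)=0$ by Theorem \ref{theopicgen}, we have $\Lf[1/\varpi]\cong\Of_\H$, so we may regard $\Lf$ as a sub-$\Of_{\hat\H}$-module of (the constant sheaf) $\Of_\H$, i.e. as a fractional-ideal-type structure inside $\Of_\H$.

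Given such an embedding, the natural candidate norm is $\Vert f\Vert_U:=\inf\{|g| : g\in\Of(U),\ f\in g\cdot\Lf(U)\}$ for $U\subset\H$ admissible, or dually one normalizes a local generator $e_U$ of $\Lf$ over a formal open and sets $\Vert e_U\Vert:=1$, extending by axiom (3). Concretely, over the tube $\H_s$ of a component the generator is (after the trivialization) a power $h_s^{n_s}$ of a suitable function, and over $\H_a$ similarly; one then defines $\Vert\cdot\Vert$ so that these generators have norm exactly $1$, and checks the three axioms: (1) functoriality under restriction is automatic from the infimum formula; (3) the multiplicativity $\Vert fg\Vert=|f|\,\Vert g\Vert$ with respect to the spectral norm follows because $\Of^+(\H_s)$, $\Of^+(\H_a)$ are exactly the unit-ball subrings (here one uses that $\hat\H$ is semistable and the rings in \eqref{eq:ouvsimp} are the $\OC_{\breve K}$-flat integral models, so spectral norm $\le 1$ detects membership in $\Of^+$); (2) the ultrametric/gluing inequality on admissible covers reduces, via the affinoid cover by the $\H_s$ and $\H_a$ and the maximum-modulus principle on these affinoids, to the statement that the norm of a section is the max of its norms on the pieces of any admissible cover, which holds because $\Lf$ is a coherent (hence locally free rank one) sheaf on the Noetherian formal scheme $\hat\H$.

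It then remains to verify that $\widehat{(\Of_\H,\Vert\cdot\Vert)}=\Lf$ as subsheaves of $\Of_\H$ on $\hat\H$, i.e. that for every formal open $U\subset\hat\H$ one has $\{f\in\Of(U^{rig}) : \Vert f\Vert_{U^{rig}}\le 1\}=\Lf(U)$. For $U$ contained in one of the standard opens $\hat\H_s$ or $\hat\H_a$ this is the defining property of the generator together with the identification of $\Of^+(\H_s)$, $\Of^+(\H_a)$ as the norm-$\le 1$ functions; the general case follows by gluing since both sides are sheaves and the $\hat\H_s$, $\hat\H_a$ form a cover of $\hat\H$. I expect the main obstacle to be precisely the compatibility of the local generators across the intersections $\hat\H_s\cap\hat\H_a$ — that is, checking that the transition functions of $\Lf$ (which by Proposition \ref{proposthsig} are monomials $X^\alpha$, $Y^\alpha$ up to units of $\Of^+$) are compatible with the norm one has prescribed, so that a single global norm exists inducing the prescribed $\Lf$ rather than merely inducing it locally. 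This is really a bookkeeping computation with the explicit equations $XY=\varpi$ and the orders $\ord_s(\Lf)$, and once it is done the three axioms and the identity $\widehat{\Mf}=\Lf$ are formal.
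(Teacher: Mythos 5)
Your overall strategy --- trivialise $\Lf$ in the generic fibre using $\pic(\H)=0$, regard $\Lf$ as a fractional-ideal subsheaf of $\Of_{\H}$, pick generators over the standard formal opens and decree that they have norm $1$ --- is essentially the paper's. But there is a genuine gap at the very first step: you take for granted that $\Lf(\hat{\H}_s)$ and $\Lf(\hat{\H}_a)$ are free of rank one over $\Of(\hat{\H}_s)$ and $\Of(\hat{\H}_a)$, i.e.\ that $\pic(\hat{\H}_s)=\pic(\hat{\H}_a)=0$. A line bundle on the formal scheme $\hat{\H}$ is only trivial Zariski-locally, not a priori on these particular opens, and this vanishing is exactly the technical lemma carrying the weight of the paper's proof: one reduces to the special fibre via $\pic(\hat{\H}_s)=\pic(\bar{\H}_s)$, $\pic(\hat{\H}_a)=\pic(\bar{\H}_a)$, and then computes $\pic(\bar{\H}_s)$ (a factorial ring) and $\pic(\bar{\H}_a)$, the latter requiring a short Mayer--Vietoris argument because $\Of(\bar{\H}_a)$ is not a domain (two components $V(X)$, $V(Y)$). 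Your appeal to Theorem \ref{theopicentier} does not supply this: that theorem only identifies the global Picard group with $\prod_s\pic(\P_s)$ and says nothing about restrictions to $\hat{\H}_s$, $\hat{\H}_a$; moreover, in the paper it is proved \emph{after} the present proposition and its proof itself rests on the same local vanishing.

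Once local freeness is granted, the rest of your plan is correct and close to the paper's, with a difference of emphasis. The paper uses Proposition \ref{proposthsig} (the factorisation $\Of^*(\H_a)=\Of^*(\hat{\H}_a)\Of^*(\H)$) to replace a generator of $\Lf|_{\hat{\H}_a}$ by one extending to a global generating section of $\Lf[1/\varpi]$; in your fractional-ideal formulation this is not needed, because the compatibility of the locally defined norms on the overlaps $\H_s$ is automatic: two local generators differ by a unit of $\Of(\hat{\H}_s)$, and such a unit has spectral norm $1$ together with its inverse. For the same reason, the ``main obstacle'' you single out --- bookkeeping with monomial transition functions and $XY=\varpi$ --- is not where the difficulty lies; the difficulty is the local triviality above, and once it is available the remaining verifications (the three axioms, and the identification of the unit ball with $\Lf$ via $\Of(\hat{\H}_a)=\{f\in\Of(\H_a):|f|_{\mathrm{sp}}\le 1\}$) are indeed routine, as you say.
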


\begin{rem}
En revanche,  toute norme n'induit pas un fibré. 
\end{rem}

\begin{proof}
Cela repose sur  ce résultat technique qui nous servira aussi dans les sections suivantes.

\begin{lem}
Soit $s\in \BC\TC_0$ un sommet et $a\in \BC\TC_1$ une arête, on a    
$\pic(\hat{\H}_{a})=\pic(\hat{\H}_{s})=0$.
\end{lem}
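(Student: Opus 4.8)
The plan is to compute the Picard group of the two basic affinoids $\hat{\H}_s$ and $\hat{\H}_a$ directly from the explicit presentations given in \eqref{eq:ouvsimp}, and conclude that every line bundle on them is trivial. The key point is that both $\Of^+(\H_s)$ and $\Of^+(\H_a)$ are known rings whose Picard groups vanish.

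First I would treat the vertex case $\hat{\H}_s$. The special fiber $\bar{\H}_s\cong\P^1_\F\smallsetminus\P^1(\F)$ is a smooth affine curve over $\F$, in fact the spectrum of the PID (or at least UFD) $\F[X,(X^q-X)^{-1}]$; since this ring is a localization of a polynomial ring it has trivial Picard group, so $\pic(\bar{\H}_s)=0$. Since $\hat{\H}_s=\spf\Of^+(\H_s)$ is $\varpi$-adically formally smooth over $\spf\OC_{\breve K}$ with this special fiber, I would lift line bundles: a line bundle on $\hat{\H}_s$ is a compatible system of line bundles on the reductions mod $\varpi^n$, and each such line bundle is determined by its reduction mod $\varpi$ together with obstruction/deformation classes lying in $\hhh^i(\bar{\H}_s,\Of_{\bar{\H}_s})$ for $i=1,2$; but $\bar{\H}_s$ is affine so these groups vanish, hence every line bundle on $\hat{\H}_s$ is the trivial one. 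Alternatively, and perhaps more cleanly, one notes that $\Of^+(\H_s)$ is itself a noetherian ring which is $\varpi$-adically complete with regular (even smooth) special fiber and, being a Tate algebra of the specific form above, can be checked to have trivial Picard group directly.

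For the edge case $\hat{\H}_a$ the situation is mildly more delicate because the special fiber $\bar{\H}_a = V(X)\cup V(Y)$ is nodal, i.e. it is two affine lines (minus some $\F$-points) glued at a single node. Here I would use a Mayer--Vietoris / Cartan-type argument: a line bundle on $\bar{\H}_a$ is given by a line bundle on each of $V(X)$ and $V(Y)$ — both trivial since each is again an open in $\P^1_\F$ — together with a gluing datum along the node, which is an element of $\bar{\F}^*$ (the units of the residue field at the node modulo the images of the units on each component). Since the global units surject onto $\bar{\F}^*$ on each side — indeed Proposition~\ref{proposthsig} shows $\Of^*(\bar{\H}_a)/\bar{\F}^*$ is free on the $(X-b)$, $(Y-b)$ — one sees that the gluing datum can be trivialized, whence $\pic(\bar{\H}_a)=0$. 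Then the same formal-lifting argument as in the vertex case (the obstruction groups $\hhh^i(\bar{\H}_a,\Of)$ vanish because $\bar{\H}_a$ is affine) upgrades this to $\pic(\hat{\H}_a)=0$.

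The main obstacle, I expect, is the edge case: one must be careful that $\hat{\H}_a$ is not regular (it has the singularity $XY=\varpi$), so one cannot blithely invoke smoothness for the deformation-theoretic step. The correct statement is that $\bar{\H}_a$, while singular, is still \emph{affine}, which is all that is needed to kill the cohomological obstructions $\hhh^1$ and $\hhh^2$ of the structure sheaf governing lifting of line bundles along $\OC_{\breve K}/\varpi^{n+1}\to\OC_{\breve K}/\varpi^n$; and the computation of $\pic(\bar{\H}_a)$ itself requires the explicit gluing description together with the surjectivity of units from Proposition~\ref{proposthsig}. Once both local Picard groups are shown to vanish, Proposition~\ref{propnorme} follows: a line bundle on $\hat{\H}$ is trivial on each $\hat{\H}_s$ and each $\hat{\H}_a$, hence is obtained by gluing trivial bundles, i.e. it arises from a $1$-cocycle with values in $\Of^*$, which exactly exhibits it as coming from a norm on $\Of_\H$ in Teitelbaum's sense.
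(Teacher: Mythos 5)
Your proof is correct and takes essentially the same route as the paper: you compute the Picard groups of the special fibers exactly as the paper does (for $\bar{\H}_s$ the coordinate ring is a localization of $\bar{\F}[X]$, hence factorial; for $\bar{\H}_a$ you glue line bundles on the two components $V(X)$, $V(Y)$ along the node, the gluing obstruction in $\bar{\F}^*$ being killed because constant units already surject onto it), and then transfer the vanishing to the formal models. The only difference is cosmetic: where the paper invokes \cite[Sec.~3.7.4]{frvdp} for $\pic(\hat{\H}_\sigma)\cong\pic(\bar{\H}_\sigma)$, you re-derive what is needed by square-zero lifting along the $\varpi$-adic thickenings, correctly noting that affineness of the special fiber (not smoothness of $\hat{\H}_a$, which indeed fails at the node) is what kills the obstruction and deformation groups $\hhh^1$ and $\hhh^2$.
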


\begin{proof}

D'après \cite[Sec. 3.7.4]{frvdp}, on a $\pic(\hat{\H}_a)=\pic(\bar{\H}_{a})$ et $\pic(\hat{\H}_{s})=\pic(\bar{\H}_{s})$ avec
\[
\Of(\bar{\H}_{a})=\bar{\F}[X,Y, \frac{1}{(X^{q-1}-1)(Y^{q-1}-1)}]/(XY) 
\]
\[
\Of(\bar{\H}_{s})=\bar{\F}[X, \frac{1}{(X^{q}-X)}].
\]
L'anneau précédent étant factoriel,  on a $\pic(\hat{\H}_{s})=0$.
 On a de plus le recouvrement par les composantes irréductibles $\bar{\H}_{a}=V(X)\cup V(Y)$. On prouve l'exactitude\footnote{On reprend l'argument de \cite{wa}} de
\begin{equation}
\label{eqexost}
1 \fln{}{} \Of_{\bar{\H}_{a}}^* \fln{\alpha}{} \iota_* \Of^*_{V(X)}\times \iota_*\Of^*_{V(Y)} \fln{\beta}{} \iota_* \Of^*_{V(X,Y)} \fln{}{}1. 
\end{equation}
Pour l'exactitude au centre, on observe la suite exacte (version additive)
\begin{equation}
\label{eqexo}
0 \fln{}{} \Of_{\bar{\H}_{a}} \fln{\tilde{\alpha}}{} \iota_* \Of_{V(X)}\times \iota_* \Of_{V(Y)} \fln{\tilde{\beta}}{} \iota_* \Of_{V(X,Y)} \fln{}{}0, 
\end{equation}
et les identifications ensemblistes ${\rm Im}(\tilde{\alpha}|_{\Of^*_{\bar{\H}_{a}}})={\rm Im}(\alpha)$  et $\ker(\tilde{\beta}|_{\Of^*_{\bar{\H}_{a}}})=\ker(\beta)$. Le faisceau $\iota_* \Of_{V(X,Y)}^*$  est concentré en le point fermé $V(X,Y)$ et la tige en ce point vaut $\bar{\F}^*$. Ainsi $\beta$ est surjectif en toutes les tiges.  

La suite exacte longue associée à \eqref{eqexost} nous donne 
\[
\Of^*(V(X))\times\Of^*(V(Y))\fl \Of^*(V(X,Y)) \fl \pic(\bar{\H}_{a}) \fl \pic(V(X))\times \pic(V(Y)).
\]
La première flèche est clairement surjective car $\Of^*(V(X,Y))=\bar{\F}^*$. De plus, $\pic(V(X))=\pic(V(Y))=0$ car $V(X)$ est affine et $\Of(V(X))$ est factoriel en tant qu'ouvert de $\A^1_{\bar{\F}}$. D'où le résultat.


\end{proof}

Prenons  $\Lf\in \pic(\hat{\H})$,  le fibré associé en fibre générique $\Lf[1/\varpi]$ est trivial.  Fixons une arête $a\in \BC\TC_1$ et montrons que l'on peut trouver un générateur de $\Lf|_{\hat{\H}_a}$ qui se prolonge en une section  globale génératrice\footnote{Pour tout $U\subset \hat{\H}$,  on a une inclusion $\Lf(U)\subset \Lf[1/\varpi](U^{rig})$. } de $\Lf[1/\varpi]$.      Soit $v_1$ un générateur de $\Lf|_{\hat{\H}_a}$ et $v_2$ un générateur de $\Lf[1/\varpi]$.   Elles définissent toutes les deux des générateurs de $\Lf[1/\varpi]|_{{\H}_a}$.  Donc,  $v_2= u v_1$ avec $u\in \Of^*({\H}_a)$. Mais on a   $
\Of^*(\H_a)=  \Of^*(\hat{\H}_a) \Of^*(\H)$ d'après \ref{proposthsig} (car $X$, $Y$ et $\OC_{\breve{K}}^*$ sont dans $\Of^*(\H)$) et  écrivons $u= u_1  u_2$ avec $u_1\in \Of^*(\hat{\H}_a)$ et $u_2\in \Of^*(\H)$.  On voit alors que $v_a:=u_2^{-1}v_2= u_1v_1  $ est une section globale génératrice de $\Lf[1/\varpi]$ et de $\Lf|_{\hat{\H}_a}$.  

D'après ce qui précède,  on trouve des sections $v_a\in \Lf[1/\varpi](\H)$  pour tout $a\in \BC \TC_1$,   tel que la restriction  à  $\H_a$ est  un générateur de $\Lf(\hat{\H}_a)$.   Pour $U\subset \H_a$ et $r=fv_a \in \Lf[1/\varpi](U)$, on définit la norme $\Vert r \Vert_U:= |f|$ où  $|\cdot |$ est la norme spectrale dans $\Of(U)$.  Notons que si $V\subset \H_s$ pour un sommet $s\in \BC\TC_0$ alors  $\Vert \cdot \Vert_V$  ne dépend pas de l'arête $a$ contenant $s$.  Soit $V\subset \H$    un ouvert quasi-compact quelconque et  $r\in \Lf(V)$,  posons $\Vert r  \Vert_V= \max_{a\in \BC \TC_1} ||r  ||_{V\cap  \H_a} $.  Nous avons donc exhibé une norme sur $\Of\cong \Lf[1/\varpi]$ dont le faisceau associé  est $\Lf$ par construction.


\end{proof}

Donnons ici la preuve de \ref{theopicentier} :

\begin{proof}[Démonstration de \ref{theopicentier}]
Par abus, nous appellerons $\BC\TC_1$ les recouvrements de $\H$ (resp. $\hat{\H}$, $\bar{\H}$)  par les ouverts $\H_a$ (resp. $\hat{\H}_a$, $\bar{\H}_a$) avec $a\in\BC\TC_1$. D'après l'annulation du groupe de Picard de $\hat{\H}_a$ montré dans \ref{LemmeBase}, on a une identification $\pic(\hat{\H})\cong \hcech{1}(\hat{\H},\BC\TC_1,\Of^*)$. De plus, la suite exacte d'après \ref{proposthsig} \[0\to\OC_{\breve{K}}^*\Of^{**}(\H_{\sigma})\to \Of^*(\hat{\H}_{\sigma})\to \Of^*(\bar{\H}_{\sigma})/\bar{\F}^*\to 0, \forall\sigma\in\BC\TC_0\cup\BC\TC_1\] induit une suite exacte courte de complexe\footnote{Ici, on voit $\OC_{\breve{K}}^*\Of^{**}_{\H}$ et $\Of^{*}_{\bar{\H}}/\bar{\F}^*$ comme des préfaisceaux} \[0\to \ccech{\bullet}({\H},\BC\TC_1,\OC_{\breve{K}}^*\Of^{**}_{\H})\to\ccech{\bullet}(\hat{\H},\BC\TC_1, \Of^{*}_{\hat{\H}}) \to\ccech{\bullet}(\bar{\H},\BC\TC_1,\Of^{*}_{\bar{\H}}/\bar{\F}^*)\to 0\]  qui  donne  lieu à une suite exacte longue\footnote{On n'a pas de cohomologies en degré supérieur ou égal à $2$ car les intersections de trois ouverts sont triviales\label{footnarb}} \[\hcech{1}({\H},\BC\TC_1,\OC_{\breve{K}}^*\Of^{**}_{\H})\to \hcech{1}(\hat{\H},\BC\TC_1,\Of^*_{\hat{\H}})\to \hcech{1}(\bar{\H},\BC\TC_1,\Of^*_{\bar{\H}})\to 0.\]

Il suffit donc de montrer $\hcech{1}({\H},\BC\TC_1,\OC_{\breve{K}}^*\Of^{**}_{\H})=0$ et $\hcech{1}(\bar{\H},\BC\TC_1,\Of^*_{\bar{\H}})\cong\prod_{s\in \BC\TC_0}\pic(\P_s)$. Pour la première identité, on a une suite exacte courte \[0\to \ccech{\bullet}({\H},\BC\TC_1,1+\mG_{\breve{K}})\to\ccech{\bullet}({\H},\BC\TC_1, \OC_{\breve{K}}^*) \oplus \ccech{\bullet}({\H},\BC\TC_1,\Of^{**}_{\H}) \to\ccech{\bullet}({\H},\BC\TC_1,\OC_{\breve{K}}^*\Of^{**}_{\H})\to 0\] et donc une suite exacte longue \[\hcech{1}({\H},\BC\TC_1, \OC_{\breve{K}}^*) \oplus \hcech{1}({\H},\BC\TC_1,\Of^{**}_{\H}) \to\hcech{1}({\H},\BC\TC_1,\OC_{\breve{K}}^*\Of^{**}_{\H})\to \hcech{2}({\H},\BC\TC_1,1+\mG_{\breve{K}}).\] On a directement $\hcech{2}({\H},\BC\TC_1,1+\mG_{\breve{K}})=0$ (voir note \ref{footnarb}) et pour n'importe quelle faisceau $\Ff$, le groupe $\hcech{1}({\H},\BC\TC_1,\Ff)$ s'injecte dans $\han{1}({\H},\Ff)$ par suite spectrale de Cech. Mais l'annulation des groupes $\han{1}({\H},\Ff)$ pour $\Ff=\OC_{\breve{K}}^*\Of^{**}_{\H}$ a été montrée\footnote{Pour les faisceaux $\Ff$ constants, l'annulation des groupes de cohomologie $\han{1}({\H},\Ff)$ n'a pas été explicitement énoncée dans \cite{J1} mais la stratégie employée permet aussi de traiter ces cas beaucoup plus simples. On observe d'abord l'acyclicité des couronnes d'après \cite{vdp} et on en déduit le même résultat pour certaines unions standard de telles couronnes (qui correspondent aux fibrations $X^d_t(\beta)$ considérées dans \cite[§2.2]{J1}) par calcul explicite du complexe de Cech (qui calcule la cohomologie simplicial du segment.). Le lemme combinatoire \cite[Lem. 5.7]{J1} permet alors d'étendre l'acyclicité pour les ouverts $\bar{U}_n$ apparaissant dans le recouvrement Stein de $\H$ (voir \cite[§1.3]{J3} pour leur définition). Le résultat pour $\H$ s'obtient par passage à la limite en prouvant l'annulation $\rrr^1 \limp_n \han{1}(\bar{U}_n,\Ff)$ qui est claire car le système projectif $(\han{1}(\bar{U}_n,\Ff))_n$ est constant.} dans \cite[Th. 7.1, Rem. 6.4]{J1} ce qui entraîne $\hcech{1}({\H},\BC\TC_1,\OC_{\breve{K}}^*\Of^{**}_{\H})=0$.

Pour le second terme $\hcech{1}(\bar{\H},\BC\TC_1,\Of^*_{\bar{\H}})$, on observe $\pic(\bar{\H}_a)=0$ pour $a\in\BC\TC_1$  d'où $\hcech{1}(\bar{\H},\BC\TC_1,\Of^*_{\bar{\H}})\cong \pic(\bar{\H})$. On veut plutôt calculer ce groupe de Picard   sur le recouvrement $\bar{\H}=\bigcup_{s\in \BC\TC_0}\P_s$   
 ce qui revient à établir l'exactitude de la suite :
\begin{equation}\label{eq:suiteex}
0\to\Of^*_{\bar{\H}}\to\prod_{s\in \BC\TC_0}\iota_*\Of^*_{\P_s}\to \prod_{\{s,t\}\in \BC\TC_1}\iota_*\Of^*_{\P_s\cap\P_t}\to 0
\end{equation} avec $\iota$ désignant l'immersion fermée $\P_s\to {\bar{\H}}$ ou encore $\P_s\cap\P_t\to {\bar{\H}}$.

Il suffit de vérifier cette propriété après restriction à chaque ouvert $\bar{\H}_a$. Mais à chaque restriction, on retrouve la suite \eqref{eqexo} dont on a montré l'exactitude. On obtient alors une suite exacte longue d'après \eqref{eq:suiteex} \[\prod_{s\in \BC\TC_0}\Of^*(\P_s)\fln{\beta}{} \prod_{\{s,t\}\in \BC\TC_1}\Of^*(\P_s\cap\P_t)\to \pic(\bar{\H})\to\prod_{s\in \BC\TC_0}\pic(\P_s)\to \prod_{\{s,t\}\in \BC\TC_1}\pic(\P_s\cap\P_t).\] La flèche $\beta$ s'identifie à $\prod_{s\in \BC\TC_0}\bar{\F}^*\fln{\Delta}{} \prod_{\{s,t\}\in \BC\TC_1}\bar{\F}^*$ où \[\Delta((\lambda_s)_{s\in \BC\TC_0})=(\lambda_s^{{\rm Sgn}(s)}\lambda_t^{{\rm Sgn}(t)})_{\{s,t\}\in \BC\TC_1}\in \prod_{\{s,t\}\in \BC\TC_1}\bar{\F}^* \]  avec pour convention ${\rm Sgn}(s)=1$ si $s$ est pair et ${\rm Sgn}(s)=-1$ sinon. Ainsi, $\beta$ est surjective par contractibilité de $\BC\TC$. On a de plus $\pic(\P_s\cap\P_t)=0$ car chaque intersection de composantes est un point fermé.

On en déduit que la flèche de restriction \[\pic(\bar{\H})\iso\prod_{s\in \BC\TC_0}\pic(\P_s)\] est un isomorphisme ce qui prouve le résultat voulu.
\end{proof}

\subsection{Définition et interprétation cohomologique}

Notre but ici est d'étudier des fibrés en droites sur $\H$ ou $\hat{\H}$ muni d'une action d'un sous-groupe $H$ de $G$. Cela nous amène à considérer les objets suivants.

\begin{defi}
Soit un espace $X$ (qui peut être un espace rigide, un schéma ou un schéma formel...) muni d'une action d'un groupe $H$, un fibré en droites $\Lf$ sur $X$ est $H$-équivariant s'il est muni d'une action de $H$ semi-linéaire ie.  pour tout  $g\in H$, $U\subset X$  on a une  fonction  $\rho_g: \Lf(g^{-1}U) \to \Lf(U)$  qui vérifie
 \[
\forall f\in \Of(g^{-1}U), \forall  r\in \Lf(g^{-1}U),  \;\; \rho_g(fr)=  (g\cdot f) \rho_g(r) ,
\]  
\[\forall h\in H, \rho_{gh}=\rho_g\circ g^{-1}\rho_h.\]
\end{defi}

On a plusieurs relations sur les fibrés en droites équivariants.  Deux fibrés $\Lf$ et $\Lf'$ sont équivalents de manière faible s'il existe un isomorphisme de fibrés $\Lf \iso \Lf'$ et de manière forte si cet isomorphisme commute avec les morphismes $(\rho_g)_{g\in H}$ provenant de l'action. On note $\pic_H(X)$ le groupe pour le produit tensoriel des classes de  fibrés en droites $H$-équivariants à équivalence forte.   La proposition suivante précise le lien entre ces notions

\begin{prop}\label{propactforte}
Soit un espace $X$ comme précédemment, on a une suite exacte :  
\begin{equation}\label{eq:actforte}
0\to \hgal{1}(H, \Of^*(X)) \to \pic_H(X) \to \pic(X)^H \to \hgal{2}(H, \Of^*(X)).  
\end{equation}
\end{prop}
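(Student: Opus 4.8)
The plan is to establish \eqref{eq:actforte} as the five-term exact sequence in low degrees attached to the spectral sequence $\hgal{p}(H,\hhh^q(X,\Of^*))\Rightarrow\hhh^{p+q}_H(X,\Of^*)$, but to unwind it by hand in terms of group cocycles, which avoids setting up equivariant cohomology machinery. The only structural input needed is that the $\Of_X$-linear automorphisms of a line bundle are the global units, i.e. $\aut_{\Of_X}(\Lf)=\Of^*(X)$. First I would treat the forgetful map $\beta:\pic_H(X)\to\pic(X)$. For an $H$-equivariant bundle $(\Lf,(\rho_g)_g)$, the semi-linearity axiom says exactly that each $\rho_g:\Lf(g^{-1}U)\to\Lf(U)$ comes from an $\Of_X$-linear isomorphism $g^*\Lf\iso\Lf$; hence the class of $\Lf$ is $H$-invariant and $\beta$ factors through $\pic(X)^H$, visibly as a homomorphism for the tensor product.

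Next I would compute $\ker\beta$. If $\beta(\Lf)=0$, transport the equivariant structure along an isomorphism $\Lf\cong\Of_X$ to get a family $(\rho_g)$ on $\Of_X$; comparing $\rho_g$ to the tautological action $r\mapsto g\cdot r$, write $\rho_g(r)=u_g\cdot(g\cdot r)$ with $u_g\in\Of^*(X)$ (invertible because $\rho_g$ is), and the relation $\rho_{gh}=\rho_g\circ g^{-1}\rho_h$ translates into $u_{gh}=u_g\,(g\cdot u_h)$, i.e. $g\mapsto u_g$ is a $1$-cocycle. Two such structures on $\Of_X$ are strongly equivalent iff the cocycles differ by a coboundary (an automorphism of $\Of_X$ commuting with the $\rho_g$ is multiplication by some $v\in\Of^*(X)$, giving $u'_g=v^{-1}u_g\,(g\cdot v)$), and conversely every $1$-cocycle defines an equivariant structure on $\Of_X$. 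This yields a group isomorphism $\ker\beta\iso\hgal{1}(H,\Of^*(X))$, hence exactness on the left and in the middle of \eqref{eq:actforte}.

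Then I would build the obstruction map $\pic(X)^H\to\hgal{2}(H,\Of^*(X))$. For $\Lf\in\pic(X)^H$, choose for each $g$ a semi-linear isomorphism $\rho_g:\Lf(g^{-1}\cdot)\to\Lf(\cdot)$ (possible since $g^*\Lf\cong\Lf$) without imposing the cocycle relation; then $c(g,h):=\rho_{gh}^{-1}\circ\rho_g\circ(g^{-1}\rho_h)$ is an $\Of_X$-linear automorphism of a line bundle, hence an element of $\Of^*(X)$, and associativity of composition (comparing the two ways of reassociating $\rho_g\rho_h\rho_k$) shows $c$ is a $2$-cocycle. Replacing $\rho_g$ by $u_g\rho_g$ with $u_g\in\Of^*(X)$ changes $c$ by the coboundary of $(u_g)$, so $[c]\in\hgal{2}(H,\Of^*(X))$ depends only on $\Lf$; one checks $\Lf\mapsto[c]$ is a homomorphism, and by construction $[c]=0$ iff the $\rho_g$ can be chosen to satisfy the cocycle relation, i.e. iff $\Lf\in\imm(\beta)$. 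This gives exactness on the right.

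The hard part will be purely bookkeeping rather than conceptual: translating carefully the definition of semi-linear action given above — with the convention $\rho_{gh}=\rho_g\circ g^{-1}\rho_h$ and the tracking of the open sets $g^{-1}U$ — into descent data $\{g^*\Lf\iso\Lf\}$, so that the $1$- and $2$-cocycle conditions come out with the correct variance and normalization (and so that $c(g,h)$ really is $\Of_X$-linear, which one sees by checking that the composite twists a section $fr$ by $(gh)^{-1}\cdot(gh)\cdot f=f$). Once that dictionary is fixed, all the verifications above are routine.
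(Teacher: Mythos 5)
Your proposal is correct and takes essentially the same route as the paper: the kernel of the forgetful map is identified with $\hgal{1}(H,\Of^*(X))$ by trivializing the underlying bundle and reading off a $1$-cocycle of units (the paper phrases this via a global generator $v$ with $h\cdot v=c(h)v$), and the obstruction class is the same $2$-cocycle $c(g,h)=\rho_{gh}^{-1}\circ(\rho_g\circ g^{-1}\rho_h)\in\aut(\Lf)\cong\Of^*(X)$. No gap to report.
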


 


\begin{proof}
 L'argument est le même que \cite[Proposition 4.2.]{J3}.  La flèche $\pic_H(X)\to \pic(X)^H$ est le morphisme d'oubli de l'action de $H$.  Son noyau classifie les actions de $H$ sur le faisceau trivial $\Of_X$. Si $\Lf$ un  fibré équivariant de ce noyau, il admet un générateur global $v$ en tant que $\Of_{X}$-module.  Pour tout $h\in H$,  $h\cdot v$ est  un autre générateur, donc $h\cdot v= c(h)v$ avec $c(h)\in \Of^*(X)$.  Le fait que $\Lf$ soit muni d'une action de $H$ montre que $c$ est un cocycle dans $\hgal{1}(H, \Of^*(X))$.  Réciproquement, la donnée d'un cocycle $c$ détermine une action $(\rho_h)_{h\in H}$ de $H$ sur $\Of_{X}$ donné par 
\[
\rho_h : f\in \Of_{X}(h^{-1}U) \mapsto c(h)(h\cdot f)    
\]
pour $U\subset X$.  Ainsi le noyau s'identifie au  groupe de cohomologie $\hgal{1}(H, \Of^*(X))$.
 
 Si $\Lf\in \pic(X)^H$, on a une famille d'isomorphismes $(\rho_h:  h^{-1}\Lf\iso \Lf )_{h\in H}$ et on définit une application $c$
\[
(g,h)\in H^2 \mapsto  \rho_{gh}^{-1} \circ (\rho_g\circ g^{-1} (\rho_h) )\in  \aut(\Lf)\cong  \Of^*(X). 
\]
L'application $c$ détermine un cocycle dans $\hgal{2}(H, \Of^*(X))$ qui est trivial si et seulement si la famille $(\rho_h)_{h\in H}$  peut être modifiée   de façon à  définir  une action de $H$ sur $\Lf$.  
\end{proof}

Appliquons ce résultat au cas particulier de demi-plan de Drinfeld :
\begin{coro}\label{corocohoequiv}
Soit $H \subset G$, $(s_0,s_1)=a$ l'arête standard de $\BC\TC_1$, , on a les points suivants :
\begin{itemize}
\item $\pic_H(\H)\cong \hgal{1}(H, \Of^*(\H))$
\item On a des suites exactes
\begin{align}
0 \to \hom([G]_2, \OC_{\breve{K}}^*) \to &\pic_{[G]_2}(\hat{\H}) \fln{(\ord_{s_0}, \ord_{s_1})}{} \Z^2\\
0 \fl \hom(G, \OC_{\breve{K}}^*) \fl &\pic_{G}(\hat{\H}) \fln{\ord_{s_1}}{} \Z
\end{align}
\item On a une décomposition canoniques en produits \[\pic_{gG^\circ g^{-1}}(\P_s)=\hom(gG^\circ g^{-1},\bar{\F}^*)\times \Z\] pour $s=g\cdot s_1\in \BC\TC_0$ un sommet.
\end{itemize}

\end{coro}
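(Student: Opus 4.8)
Le plan est de déduire les trois points de la Proposition~\ref{propactforte}, en explicitant à chaque fois les unités globales, le groupe de Picard et l'action du groupe concerné. Pour le premier point, j'applique \eqref{eq:actforte} à $X=\H$ : comme $\pic(\H)=0$ d'après le Théorème~\ref{theopicgen}, la suite dégénère en l'isomorphisme $\pic_H(\H)\iso\hgal{1}(H,\Of^*(\H))$, et il n'y a rien d'autre à démontrer.

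Pour le deuxième point, je prends $X=\hat{\H}$ avec $H=[G]_2$, resp. $H=G$. Les unités globales de $\hat{\H}$ sont les constantes, $\Of^*(\hat{\H})=\OC_{\breve{K}}^*$, et $G$ y agit trivialement puisque son action sur $\hat{\H}$ est $\breve{K}$-linéaire ; le noyau dans \eqref{eq:actforte} est donc $\hgal{1}(H,\Of^*(\hat{\H}))=\hom(H,\OC_{\breve{K}}^*)$. Pour le troisième terme, j'utilise le Théorème~\ref{theopicentier}, qui identifie $\pic(\hat{\H})$ à $\prod_{s\in\BC\TC_0}\Z$ via $\Lf\mapsto(\ord_s(\Lf))_s$ ; comme $g\in G$ envoie $\P_s$ isomorphiquement sur $\P_{gs}$, cette identification est équivariante pour l'action de $G$ par permutation des facteurs suivant l'action sur $\BC\TC_0$. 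Puisque $[G]_2$ n'a que les deux orbites $\BC\TC_0^{(0)}$ et $\BC\TC_0^{(1)}$ sur $\BC\TC_0$, une famille invariante est constante sur chacune, d'où $\pic(\hat{\H})^{[G]_2}\cong\Z^2$ via $(\ord_{s_0},\ord_{s_1})$ ; de même $G$ est transitif sur $\BC\TC_0$, donc $\pic(\hat{\H})^{G}\cong\Z$ via $\ord_{s_1}$. En reportant ceci dans \eqref{eq:actforte} (on ne garde que l'exactitude à gauche et l'identification de la flèche d'oubli avec $(\ord_{s_0},\ord_{s_1})$, resp. $\ord_{s_1}$) on obtient les deux suites exactes voulues. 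Je ne prétends pas que ces flèches soient surjectives : déterminer leur image est l'objet de la classification \ref{theointroclass}, pas de ce corollaire.

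Pour le troisième point, j'applique \eqref{eq:actforte} à $X=\P_s\cong\P^1_{\bar{\F}}$ et $H=gG^\circ g^{-1}$, où $s=g\cdot s_1$. On a $\Of^*(\P_s)=\bar{\F}^*$ et $\pic(\P_s)=\Z$, avec action triviale de $H$ dans les deux cas : l'action de $H$ sur $\P_s$ est $\bar{\F}$-linéaire et tout automorphisme de $\P^1$ préserve le degré d'un fibré en droites. La suite devient alors $0\to\hom(H,\bar{\F}^*)\to\pic_H(\P_s)\to\Z\to\hgal{2}(H,\bar{\F}^*)$, et il reste à la scinder, c'est-à-dire à exhiber un fibré $H$-équivariant d'ordre $1$. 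Quitte à conjuguer par $g$ on se ramène à $s=s_1$ et $H=G^\circ=\gln_2(\OC_K)$, dont l'action sur $\P_{s_1}\cong\P^1_{\bar{\F}}$ se factorise par la réduction modulo $\varpi$ et l'action naturelle de $\gln_2(\F)$ sur $\P^1_{\bar{\F}}$ par homographies ; le sous-fibré tautologique $\Of_{\P^1}(-1)\subset\Of_{\P^1}^{\oplus 2}$ est manifestement $\gln_2$-équivariant, donc son dual $\Of_{\P^1}(1)$ aussi, et $n\mapsto\Of_{\P_s}(n)$ fournit la section cherchée de $\pic_H(\P_s)\twoheadrightarrow\Z$, d'où la décomposition en produit.

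Le principal point délicat se situe dans le deuxième point : il faut vérifier soigneusement que l'isomorphisme du Théorème~\ref{theopicentier} est bien $G$-équivariant et identifier correctement les deux orbites de $[G]_2$ sur $\BC\TC_0$, afin que le terme abstrait $\pic(\hat{\H})^H$ devienne réellement $\Z^2$ (resp. $\Z$) à travers $(\ord_{s_0},\ord_{s_1})$ (resp. $\ord_{s_1}$). Dans le troisième point, le seul ingrédient non formel est l'annulation de l'obstruction dans $\hgal{2}$, que je traite via la structure équivariante explicite sur le fibré tautologique ; tout le reste n'est qu'une application directe de la Proposition~\ref{propactforte}.
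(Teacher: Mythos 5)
Votre démonstration est correcte et suit essentiellement la même démarche que celle du texte : application de la suite exacte \eqref{eq:actforte} aux trois espaces, annulation de $\pic(\H)$ pour le premier point, constance des unités de $\hat{\H}$ et description des orbites de $[G]_2$ et $G$ sur $\BC\TC_0$ combinées à \ref{theopicentier} pour le deuxième, et scindage de la suite pour $\P_s$ via la structure équivariante explicite sur les $\Of(k)$ pour le troisième. La seule différence est cosmétique (vous scindez via le fibré tautologique là où le texte invoque directement l'action de $G^\circ$ sur $\Of(k)$ avec $\Of(k)(\P_s)\cong\sym^k(\bar{\F}^2)$), ce qui revient au même.
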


\begin{rem}
Le calcul de $\Of^*(\H)$ a fait l'objet de travaux \cite{J2,vdp,gek,gehr} et ce groupe  s'inscrit dans une suite exacte courte 
\[
0\to {\breve{K}}^*\to \Of^*(\H) \to \Z \llbracket \P^1(K)  \rrbracket^0\to 0
\]
où  $\Z \llbracket \P^1(K)  \rrbracket^0$ sont les distributions à valeurs dans $\Z$ sur l'espace profini $\P^1(K)$ de masse  totale nulle.  
\end{rem}

\begin{proof}
L'annulation du groupe de Picard de $\H$ entraîne directement le premier point d'après la suite exacte \eqref{eq:actforte}. Comme les fonctions inversibles sur $\hat{\H}$ sont constantes d'après \cite[Lemme 3.]{ber3} ou \cite[Théorème 7.1.]{J1}, on a $\hgal{1}(H, \Of^*(\hat{\H}))= \homm(H,\OC_{\breve{K}}^*)$. De plus, $\BC\TC^{(1)}_0$ et $\BC\TC^{(0)}_0$ (resp. $\BC\TC_0$) sont les orbites de l'action de $[G]_2$ (resp. $G$) sur $\BC\TC_0$ et le deuxième point découle encore de la suite exacte \eqref{eq:actforte} d'après \ref{theopicentier}. Pour le troisième point, on peut supposer $g=\id$. Le fait que les fonctions inversibles sur l'espace projectif sont constantes ainsi que le calcul de son groupe de Picard entraîne l'exactitude de la suite \[0\to\hom(G^\circ,\bar{\F}^*)\to \pic_{G^\circ}(\P_s)\to \Z \to 0.\]
De plus, chacun des faisceaux $\Of(k)$ peut être muni d'une action  de $G^\circ $ pour laquelle $\Of(k)(\P_s)\cong \sym^k(\bar{\F}^2)$. Cette construction permet de scinder la suite exacte précédente.
\end{proof}


Comme en fibre générique, nous pouvons aussi  donner une interprétation cohomologique aux fibrés $H$-équivariants en niveau entier pour $H= G$ ou $[G]_2$.    Notons $Z^i$ et $B^i$ les cocycles et les cobords de la résolution de Bar calculant les groupes de cohomologie $\hgal{i}(H, \Of^*(\H))$. On introduit $\hat{Z}_1\subset Z_1$ le sous-module de cocycle $c$ tel que 
\[
c(\stab_H(s)) \subset  \Of^*(\hat{\H}_{s}),
\]  
avec $s=s_0$ ou $s_1$ les sommets  standards. On impose de plus $c(w)\in \Of^*(\hat{\H}_{a_0})$ avec $a_0=(s_0,s_1)$ l'arête standard  si $H=G$. 

\begin{prop}
On a un diagramme commutatif dont les lignes horizontales sont exactes 
\[
\begin{tikzcd}
0 \ar[r] & B^1 \ar[r] & Z^1 \ar[r] & {\pic}_{H}(\H) \ar[r] & 0  \\
0 \ar[r] & \hat{B}^1 \ar[r] \ar[u] & \hat{Z}^1 \ar[r]\ar[u] & {\pic}_{H}(\hat{\H}) \ar[r]\ar[u] & 0
\end{tikzcd}
\] 
avec $\hat{B}^1$ l'image de $\Of^*(\H)\cap \Of^*(\hat{\H}_{a_0})$ dans $B^1$ par $u\mapsto gu/u$. 

\end{prop}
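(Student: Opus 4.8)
The statement to prove is the commutative diagram with exact rows relating the cocycle/coboundary description of $\pic_H(\H)$ and $\pic_H(\hat\H)$. The strategy is to first establish the top row, which is essentially a restatement of Corollary \ref{corocohoequiv} together with the bar resolution computing $\hgal{1}(H,\Of^*(\H))$, and then to identify the integral Picard group $\pic_H(\hat\H)$ with the subcomplex $\hat Z^1/\hat B^1$ by a careful cocycle bookkeeping argument. The vertical maps will simply be the obvious inclusions (forgetting the integral normalization), and commutativity will be formal once the identifications are in place.

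First I would treat the top row. By the first point of Corollary \ref{corocohoequiv} (if $H$ acts on $\H$ with trivial Picard group) and more generally by Proposition \ref{propactforte} combined with Theorem \ref{theopicgen}, we have $\pic_H(\H)\cong \hgal{1}(H,\Of^*(\H))$. Writing this group as $Z^1/B^1$ with $Z^1$ the $1$-cocycles and $B^1$ the $1$-coboundaries of the bar resolution gives the exact sequence $0\to B^1\to Z^1\to \pic_H(\H)\to 0$ by definition. So the top row requires no real work.

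The main content is the bottom row: the claim that $\pic_H(\hat\H)$ is naturally identified with $\hat Z^1/\hat B^1$, compatibly with the top row. For this I would proceed as follows. Given an $H$-equivariant line bundle $\Lf$ on $\hat\H$, its generic fiber $\Lf[1/\varpi]$ is an $H$-equivariant line bundle on $\H$, hence (being trivial as a bundle by Theorem \ref{theopicgen}) corresponds to a class in $Z^1/B^1$; pick a global generator $v$ of $\Lf[1/\varpi]$ so that $h\cdot v = c(h)v$ with $c\in Z^1$. The point is then to choose $v$ so that $c$ actually lies in $\hat Z^1$: since $\Lf$ restricted to $\hat\H_{s}$ (for $s$ a standard vertex) is a line bundle on $\hat\H_s$, which is trivial by the Lemma in the proof of Proposition \ref{propnorme}, one can compare a generator of $\Lf|_{\hat\H_s}$ with $v$; using Proposition \ref{proposthsig} to split $\Of^*(\H_a) = \Of^*(\hat\H_a)\Of^*(\H)$ (the same splitting argument used in the proof of Proposition \ref{propnorme}) one adjusts $v$ by an element of $\Of^*(\H)$ so that it generates $\Lf$ over $\hat\H_{a_0}$ as well, and then the stabilizer condition $c(\stab_H(s))\subset \Of^*(\hat\H_s)$ and (for $H=G$) $c(w)\in\Of^*(\hat\H_{a_0})$ fall out because $c(h)v$ must still generate the integral bundle wherever $h$ preserves the relevant affinoid. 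Conversely, a cocycle $c\in\hat Z^1$ defines an action on $\Of_\H$, and the integral structure is recovered by declaring $v$ to generate over $\hat\H_{a_0}$ and spreading out via the group action (using transitivity of $[G]_2$ on oriented edges, resp. of $G$ on edges), exactly as in the norm construction of Proposition \ref{propnorme}; Theorem \ref{theopicentier} guarantees this is well-defined and produces every integral bundle. The coboundary subgroup $\hat B^1$ is then forced to be the image of $\Of^*(\H)\cap\Of^*(\hat\H_{a_0})$ under $u\mapsto gu/u$, since changing the generator $v$ by such a $u$ is precisely the freedom that preserves both the generic triviality and the integral normalization at $a_0$.

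The hard part will be the surjectivity/well-definedness at the integral level — i.e. checking that the adjusted cocycle $c$ genuinely lies in $\hat Z^1$ for \emph{all} elements of the stabilizers (not just a generating set), and that the reconstruction from $c\in\hat Z^1$ does not depend on auxiliary choices. This is where Proposition \ref{proposthsig} and the pluri-nodal splitting of inversible functions do the real work, and where one must be careful about the asymmetry between $s_0$ and $s_1$ and about the extra condition on $c(w)$ in the case $H=G$. Once these identifications are set up, commutativity of the diagram is immediate: both vertical arrows are the tautological "forget the integral normalization" inclusions, and they intertwine the quotient maps to $\pic_H(\hat\H)$ and $\pic_H(\H)$ by construction.
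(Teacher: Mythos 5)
Your overall route is the same as the paper's: identify the top row with the bar-resolution computation of $\hgal{1}(H,\Of^*(\H))\cong\pic_H(\H)$, produce from an integral equivariant bundle a cocycle in $\hat Z^1$ via a common generator of $\Lf|_{\hat{\H}_{a_0}}$ and of $\Lf[1/\varpi]$ (using the splitting $\Of^*(\H_{a_0})=\Of^*(\hat{\H}_{a_0})\Of^*(\H)$ from \ref{proposthsig}, as in the proof of \ref{propnorme}), reconstruct an integral bundle from a cocycle by spreading the lattice $\Of_{\hat{\H}_{ga_0}}c(g)$ over all edges, and read off $\hat B^1$ from the ambiguity $r\mapsto ur$ with $u\in\Of^*(\H)\cap\Of^*(\hat{\H}_{a_0})$. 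However, there is a genuine gap at exactly the point you defer: the well-definedness of the reconstruction is the heart of the proof, and you give no mechanism for it. Your appeal to \ref{theopicentier} is misplaced --- that theorem computes $\pic(\hat{\H})$ but says nothing about whether the family of lattices $\Of_{\hat{\H}_{ga_0}}c(g)$ is independent of the representative $g$ of an edge, nor whether two such lattices agree on the vertex affinoid where two edges meet. What is needed, and what the paper isolates as Lemma \ref{lemstab}, is that the conditions defining $\hat Z^1$ (imposed only on $\stab_H(s_0)$, $\stab_H(s_1)$, and on $c(w)$ when $H=G$) propagate to $c(\stab_H(a_0))\subset\Of^*(\hat{\H}_{a_0})$ and to $c(g)/c(g')\in\Of^*(\hat{\H}_{ga_0\cap g'a_0})$ whenever $ga_0\cap g'a_0\neq\emptyset$.

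The missing ideas are concrete: first, the containment $\Of^*(\hat{\H}_{s_0})\cap\Of^*(\hat{\H}_{s_1})\cap\Of^*(\H)\subset\Of^*(\hat{\H}_{a_0})$ (a consequence of \ref{proposthsig}), which handles $\stab_{[G]_2}(a_0)=\stab_{[G]_2}(s_0)\cap\stab_{[G]_2}(s_1)$, together with $c(wh)=c(w)\,w\cdot c(h)$ for the other coset of $\stab_G(a_0)$; second, the cocycle manipulation $c(g)/c(g')=g'\,c({g'}^{-1}g)$ with ${g'}^{-1}g\in\stab_H({g'}^{-1}T)$ and ${g'}^{-1}T\in\{s_0,s_1,a_0\}$, which transports the standard-vertex conditions to an arbitrary pair of adjacent edges; and third, in the case $H=G$ when $ga_0$ and $g'a_0$ have opposite orientations, the factorization $g=g'hw$ with $h={g'}^{-1}gw^{-1}\in\stab_{[G]_2}({g'}^{-1}T)$, so that $c(g)/c(g')=(g'c(h))(g'h\,c(w))$, which is where the extra hypothesis $c(w)\in\Of^*(\hat{\H}_{a_0})$ is actually used. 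Without this lemma your map $\hat Z^1\to\pic_H(\hat{\H})$ is not known to be defined, so the exactness of the bottom row is not established; your forward direction (bundle to cocycle, and the identification of $\hat B^1$) is fine and agrees with the paper.
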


\begin{proof}
Nous commencerons par le résultat intermédiaire suivant :

\begin{lem}\label{lemstab}
Soit $H=G$ ou $[G]_2$ et $g,g'\in H$ pour lesquels  $ga_0\cap g' a_0\neq 0$ avec $a_0=(s_0,s_1)$ l'arête standard. Alors pour tout cocycle $c\in \hat{Z}_1$, on a
\[c(\stab_H(a_0))\subset \Of^*(\hat{\H}_{a_0}),\]
\[\frac{c(g)}{c(g')}\in \Of^*(\hat{\H}_{ga_0\cap g' a_0}).\]
\end{lem}

\begin{proof}
Reprenons $g,g',c$ de l'énoncé et écrivons $T=ga_0\cap g' a_0\neq 0$ pour simplifier. Commençons par étudier $c(\stab_H(a_0))$. Si $H=[G]_2$, on a $\stab_H(a_0)=\stab_H(s_0)\cap \stab_H(s_1)$ et on en déduit d'après \ref{proposthsig}  :
\[c(\stab_H(a_0))\subset \Of^*(\hat{\H}_{s_0})\cap \Of^*(\hat{\H}_{s_1})\cap \Of^*(\H)\subset\Of^*(\hat{\H}_{a_0}).\]
Si $H=G$, on a $\stab_G(a_0)=\stab_{[G]_2}(a_0) \amalg w \stab_{[G]_2}(a_0)$. Nous avons déjà traité $\stab_{[G]_2}(a_0)$ et prenons $w h\in w \stab_{[G]_2}(a_0)$. On a $c(w h)=c(w) w\cdot c(h)\in \Of^*(\hat{\H}_{a_0})$ ce qui entraîne l'inclusion  recherchée.

Intéressons-nous à $\frac{c(g)}{c(g')}$ quand $H=[G]_2$. On a   alors ${g'}^{-1}T=s_0$, $s_1$ ou $a_0$ et  $g^{-1}g'\in \stab_{H}  ({g'}^{-1}T)$ d'où $c(\stab_{H}  ({g'}^{-1}T))\subset\Of^*(\hat{\H}_{{g'}^{-1}T})$ par hypothèse.     La condition de cocycle impose 
\[
\frac{c(g)}{c(g')} = \frac{c(g'{g'}^{-1} g) }{c(g')}= g'c({g'}^{-1}g)\in g'\Of^*(\hat{\H}_{{g'}^{-1}T})=\Of^*(\hat{\H}_{T}). 
\]

Passons au cas où $H=G$.   Si $ga_0$ et $g'a_0$ ont la même orientation alors $g^{-1}g' \in \stab_{[G]_2}({g'}^{-1}T)$  et on se ramène au cas précédent.  Dans le cas contraire,  supposons $g\in [G]_2$ quitte à échanger $g$ et $g'$.  On vérifie alors l'identité  $h:={g'}^{-1}g w^{-1}\in \stab_{[G]_2}(g'^{-1}T)$  d'où 
\[
\frac{c(g)}{c(g')}=\frac{c(g'hw)}{c(g')}= (g' c(h))( g'h c(w)) \in \Of^*(\hat{\H}_{T}).  
\]
\end{proof}

Revenons à la preuve du résultat. La commutativité du diagramme ainsi que l'exactitude de la première ligne est claire. Il suffit de prouver l'exactitude de la deuxième suite.   Prenons d'abord $c\in \hat{Z}^1$, cela induit une action sur $\Of_{\H}$ donnée par 
\[
f\in \Of^*(U) \mapsto  c(g) (g\cdot f)   
\]
 sur chaque arête $a=g a_0$,  on définit le réseau $\Lf|_{\hat{\H}_{a}}= \Of_{\hat{\H}_a} c(g)\subset \Of_{\H_{a}}$.    Pour que l'on puisse définir un fibré en niveau entier il faut vérifier les compatibilités suivantes qui découlent de \ref{lemstab} : 
\begin{itemize}

\item Ce réseau ne dépends  pas de  la décomposition $a=g a_0$,  i.e.  $c(g)/c(g')\in \Of^*(\hat{\H}_{a})$ si $a=ga_{0}= g'a_{0}$.  

\item Si deux arêtes $ga_0$, $g'a_0$ s'intersectent le long d'un sommet,  le réseau associé se recolle en l'intersection, i.e.    $c(g)/c(g')\in \Of^*(\hat{\H}_{g a_0 \cap g'a_0})$. 

\end{itemize}


Réciproquement, si $\Lf$ est un fibré équivariant,  on se donne $r$ un générateur commun à $\Lf|_{\hat{\H}_{a_0}}$  et $\Lf[\frac{1}{\varpi}]$, (voir la preuve du proposition \ref{propnorme}).  Si $T\subset a_0$ est un sous-complexe simplicial, et $g\in \stab_{H}(T)$,   $g\cdot r$ est encore en générateur de $\Lf|_{\hat{\H}_T}$ donc $g\cdot r= c(g) r$ avec $c(g)\in \Of^*(\hat{\H}_{T})$.  L'application $g\mapsto c(g)$ est bien un cocycle dans $\hat{Z}^1$.  Le choix de $r$ est unique à multiplication par une fonction inversible $u\in \Of^*(\H)\cap \Of^*(\hat{\H}_{a_0})$ et le cocycle associé à $ur$ est $c(g)(g u /u)$.  D'où l'identification $\pic_{H}(\hat{\H})\cong \hat{Z}^1/\hat{B}^1$.

\end{proof}






\subsection{Exemples \label{ssecex}}

Nous décrivons ici plusieurs classes d'exemples de fibrés $H$-équivariants  en fibre générique et en niveau entier quand $H=G$ ou $[G]_2$. 

$\bullet$ Les objets les plus simples sont les fibrés provenant des caractères. Plus précisément, étant donné un caractère $\chi : H\to {\breve{K}}^*$ (resp. $\chi : H\to \OC_{\breve{K}}^*$), on peut définir sur le faisceau structural de $\H$ (resp. $\hat{\H}$) une action de de donnée par :
\[\rho_h : f\in\Of(h^{-1}U)\mapsto \chi(h)h\cdot f\in\Of(U), \forall h\in H\]
Nous appellerons $\Of(\chi)$ le fibré  obtenu ainsi  et écrirons de même $\Lf(\chi):=\Lf\otimes \Of (\chi)$ pour $\Lf$ $H$-équivariant.

$\bullet$ En fibre générique, un moyen de construire des fibrés dans $\pic_G(\H)$ consiste à restreindre des fibrés équivariants sur la droite projective ambiante. On obtient alors sur $\H$ les fibrés $\Of(k)$ et $\Of(k)(\chi)$. Notons que, sur $\Of(k)$, nous avons imposé l'unique action de $G$ pour laquelle on a un isomorphisme $G$-équivariant $\Of(k)(\P_K^1)\cong \sym^k({\breve{K}}^2)$. Les propriétés cohomologiques de ces objets ont par exemple fait l'objet des travaux \cite{mormur1,mormur2,mormur3,teit4,scst,orl2}. Nous nous proposons dans cet article d'étudier les possibles modèles entiers sur de ces faisceaux avec l'espoir de compléter les résultats de \cite{teit4,GK6} et de leur donner des intuitions plus géométriques et plus modulaires.

$\bullet$ Pour les faisceaux $\Of(2k)$, il est relativement aisé de construire des modèles entiers. En effet, en fibre générique, le faisceau $\Of(-2)$ s'identifie à $\Omega^1_{\H/{\breve{K}}}$ qui admet le sous-réseau $\Omega^1_{\hat{\H}/\OC_{\breve{K}}}(d\log)$ des formes différentielles à pôles logarithmiques. Plus précisément, sur une arête $\hat{\H}_a$ dont les sections sont de la forme \[\Of(\hat{\H}_a)=\OC_{\breve{K}}\langle  X,Y, \frac{1}{X^{q-1}-1} , \frac{1}{Y^{q-1}-1} \rangle/ (XY-\varpi),\] on a \[\Omega^1(d\log)(\hat{\H}_a)=\Of(\hat{\H}_a)dX/X=\Of(\hat{\H}_a)(-dY/Y).\] Ce dernier est stable sous l'action de $G$ et fourni donc un objet de $\pic_G (\hat{\H})$ qui en fibre générique devient $\Of(2)$ par construction. Pour les fibrés de poids $2k$, il suffit de considérer les puissances tensorielles de $\Omega^1_{\hat{\H}/\OC_{\breve{K}}}(d\log)^{\otimes -k}$.

$\bullet$ La tâche est relativement plus subtile en poids impairs. Nous allons par exemple prouver  qu'il n'existe pas de modèle entier $G$-équivariant pour de tels fibrés. Par contre, si l'on s'intéresse plutôt aux faisceaux $[G]_2$-équivariants, nous avons alors une famille dénombrable de modèle pour chaque $\Of(2k+1)$. Utilisons  l'interprétation modulaire de $\hat{\H}$ pour exhiber deux exemples fondamentaux dans le cas de $\Of(-1)$. Rappelons que $\hat{\H}$ admet un module formel spécial universel $\XG$ dont le plan tangent se décompose en somme de deux fibrés en droite
\[\lie(\XG)=\lie(\XG)_0\oplus\lie(\XG)_1\]
suivant les deux plongements de $\OC_{{(2)}}$ dans $\OC_{\breve{K}}$. Chacune de ces parties isotypiques fournissent un objet de $\pic_{[G]_2}(\hat{\H})$ et ces dernières sont échangées par l'action la matrice $w= \left( \begin{matrix}
0 & 1\\ \varpi & 0
\end{matrix} \right)$. De plus, l'action de l'uniformisante $\Pi_D\in \OC_D$ sur $\XG$ induit des morphismes de fibrés $\Pi :\lie(\XG)_i\to\lie(\XG)_{i+1}$ (pour $\in\Z/2\Z$) dont la composée est la multiplication par $\varpi$. Nous en déduisons que ces deux fibrés sont alors isomorphes en fibre générique ce qui n'est pas le cas en niveau entier. Enfin, nous allons montrer que les fibrés duaux $\omega_0$, $\omega_1$ sont des modèles entiers pour le faisceau $\Of(-1)$ sur $\H$.

$\bullet$ L'interprétation modulaire de $\hat{\H}$ fournit deux autre exemples de fibrés équivariants qui proviennent en quelque sorte du premier revêtement de la tour de Drinfeld $\Sigma^1/\H$. Plus précisément, les points de $\Pi_D$-torsion $\XG[\Pi_D]$ du module formel universel $\XG/\hat{\H}$ est fini plat sur $\hat{\H}$ et admet une action du groupe cyclique $(\OC_D/\Pi_D)^*\cong\F_{q^2}=: \F^*_{(2)}$ qui se transporte à l'idéal d'augmentation $\IC$ de $\XG[\Pi_D]$. Ce dernier se décompose alors suivant les caractères de $\F^*_{(2)}$. Chacune de ces parties isotypiques sont des éléments de $\pic_{[G]_2}(\hat{\H})$ et nous noterons $\Lf_0$, $\Lf_1$ les deux fibrés associés aux caractères de $\F^*_{(2)}$ qui se prolongent en des applications $\F$-linéaires  $\F_{(2)}\to \bar{\F}$. Pour les autres caractères, les fibrés peuvent s'écrire de manière unique sous la forme $\Lf_0^a\otimes \Lf_1^b$ avec $0\le a,b\le q-1$(voir  \cite[Proposition 1.3.1]{Rayn}). En fibre générique, ils sont tous les deux de torsion  d'ordre $q+1$ dans $\pic_{[G]_2} (\H)$ et correspondent à deux parties isotypiques $(\pi_*\Of_{\Sigma^1})[\chi]$ du poussé-en-avant du faisceau structural du premier revêtement $\Of_{\Sigma^1}$.

\begin{rem}
Il peut paraître surprenant que les exemples de fibrés  $\omega_0$, $\omega_1$, $\Lf_0$ et $\Lf_1$  sont dans $\pic_{[G]_2}(\hat{\H})$ et non  dans $\pic_{G}(\hat{\H})$. Pour expliquer ce phénomène, il faut décrire un peu plus précisément le problème modulaire représenté par $\hat{\H}$ et les symétries qu'il possède. Il s'agit du  foncteur $\GC$ qui, à une algèbre $A\in \mathrm{Nilp}$, associe l'ensemble des triplets $(X,\rho,\iota)$ avec $X$ un $\OC_K$-module  formel sur $A$, $\rho$ une isogénie $\Phi_{\bar{\F}} \otimes A/ \varpi A \to X_{A/ \varpi A}$ de hauteur $0$ (cf \ref{ssecmod} pour la définition de $\Phi_{\bar{\F}}$) et une action de  $\OC_D$ sur  $X$ qui en fait un module spécial. De manière plus générale, on peut aussi étudier le foncteur $\tilde{\GC}$ qui classifie les mêmes triplets $(X,\rho,\iota)$ mais où $\rho $ peut être de hauteur quelconque. Ce dernier est représentable par $\hat{\H}\times \Z$ et on a des actions naturelles qui commutent du groupe des isogénies $G$ de $\Phi_{\bar{\F}}$ sur le terme $\rho$ ainsi que celle de $D^*$ sur le terme $\iota$. De plus, les éléments $g\in G$ (resp. $\Pi_D^n b\in D^* =\Pi_D^\Z\times\OC_D^*$) envoient $\hat{\H}\times\{i\}\subset \hat{\H}\times\Z$ sur $\hat{\H}\times\{i+v(\det(g))\}$ (resp. $\hat{\H}\times\{i+n\}$). Si l'on choisit une identification non-canonique $\hat{\H}\cong\hat{\H}\times\{0\}$, l'action usuelle d'un élément $g\in G$ sur $\hat{\H}$ correspond à la restriction de celle de $g/\Pi_D^{v(\det(g))}\in G\times D^*$ sur $\hat{\H}\times\{0\}$ (qui est laissée stable par $g/\Pi_D^{v(\det(g))}$). Mais, $g/\Pi_D^{v(\det(g))}$ commute avec $D^*$ si et seulement si $v(\det(g))$ est pair. Ainsi, les éléments de $[G]_2$ préservent les parties isotypiques $\omega_0$, $\omega_1$ (resp. $\Lf_0$, $\Lf_1$), et ceux dans le complémentaire les échangent.
\end{rem}

\begin{defi}
Nous noterons $\pic_{(mod)}(\hat{\H})$ le sous-groupe de $\pic_{[G]_2}(\hat{\H})$ engendré par les fibrés $\Of(\chi)$, $\omega_0$, $\omega_1$, $\Lf_0$ et $\Lf_1$. Les éléments de $\pic_{(mod)}(\hat{\H})$ seront appelés  fibrés modulaires.
\end{defi}

L'un des résultats principaux de cet article consistera à montrer que tous les fibrés $[G]_2$-équivariants sont modulaires.
\subsection{Classifications}

Dans cette section, nous allons énoncer le théorème de classification de $\pic_{[G]_2}(\hat{\H})$ et expliquer les grandes lignes de la preuve. Pour donner un sens précis à l'énoncé, nous aurons besoin de la notion de poids et de type d'un fibré.


\begin{defi}
Soit $\Lf\in \pic(\hat{\H})^{[G]_2}$ et $i,j\in \Z/2\Z$, on appelle le poids $w(\Lf)$  de $\Lf$  l'élément de $\frac{1}{q-1}\Z$ 
\[
w(\Lf)= - \frac{1}{q-1}(\ord_{s_0}(\Lf)+\ord_{s_1}(\Lf))
\]
et le type $t_{i,j}(\Lf)$ 
\[
t_{i,j}(\Lf)= \ord_{s_j}(\Lf)+\ord_{s_j}(\omega_i)w(\Lf). 
\]
\end{defi}

\begin{rem}

Le poids et le type se prolongent naturellement aux fibrés sur $\bar{\H}$ et à ceux sur $\H$ pour lesquels on a fixé une structure entière (\ref{corogen}).

\end{rem}

Le théorème de classification est le suivant

\begin{theo}
Les fibrés $[G]_2$-équivariants sont  les fibrés modulaires et sont aussi les fibrés de $\pic(\hat{\H})^{[G]_2}$ de poids  entier.

Les fibrés $G$-équivariants sont  de la forme $\Omega^1(\log)^k$ à un caractère de $G$ près et sont aussi les fibrés  de $\pic(\hat{\H})^G\subset\pic(\hat{\H})^{[G]_2}$ de poids  pair. 

En particulier, le fibré $\Of(1)$ sur $\H$ n'admet pas de structure entière $G$-équivariante.
\end{theo}

\begin{rem}

Pour pallier au fait que les fibrés de poids impaires $\Of(2k+1)$ n'ont pas de structure entière $G$-équivariante, on peut procéder de la manière suivante. Le $\OC_D$-module formel $\XG$ universel sur $\hat{\H}$ fournit une flèche $\Pi: \omega_i\to\omega_{i+1}$ et on peut considérer que l'on peut voir $\Ff:=(\omega_0\oplus\omega_1)/(\Pi(\omega_0\oplus\omega_1))\otimes\OC_{\breve{K}[\sqrt{\varpi}]}$ comme un réseau de $\Of(-1)\otimes\OC_{\breve{K}[\sqrt{\varpi}]}$ stable sous l'action de $[G]_2$. Pour obtenir un fibré $G$-équivariant sur $\hat{\H}\otimes\OC_{\breve{K}[\sqrt{\varpi}]}$, il suffit d'imposer que l'élément  $w= \left( \begin{matrix}
0 & 1\\ \varpi & 0
\end{matrix} \right)\in G\backslash [G]_2$ agisse par $\Pi/\sqrt{\varpi}$. On prouve aisément que ce procédé fonctionne et qu'il donne la structure entière de $\Of(-1)\otimes\OC_{\breve{K}[\sqrt{\varpi}]}$ considérée dans \cite{GK6}. 

Comme les morphismes $\Pi$ sont bijectifs en dehors des points singuliers de la fibre spéciale  $\bigcup_{s\in \BC\TC_0} \P_s(\F)$ (voir \ref{theoOrdOmegai}), on a des isomorphismes $G$-équivariants $\Ff|_U\cong \omega_0|_U\cong\omega_1|_U$ sur l'ouvert complémentaire de $\bigcup_{s\in \BC\TC_0} \P_s(\F)$. En revanche, toujours d'après \ref{theoOrdOmegai}, la tige de $\Ff$ en les points singuliers s'identifie à $\bar{\F}^2$.

\end{rem}

La preuve  occupera les deux sections suivantes  où l'on établira le résultat clé (voir \ref{coroOrdreOmega}, \ref{theoordli} et \ref{propordreomegalog}) :

\begin{lem}\label{lemord}
On a :
\begin{enumerate}

\item $\ord_{s_0}(\omega_0)=- 1 $ et    $\ord_{s_1}(\omega_0)=  q$. 
 
\item  $\ord_{s_0}(\omega_1)= q$  et $\ord_{s_1}(\omega_1)= -1$.

\item $\ord_{s_0}(\Lf_0)= 1 $ et $\ord_{s_1}(\Lf_0)=-1 $

\item $\ord_{s_0}(\Lf_1)= -1$ et $ \ord_{s_1}(\Lf_1)=1 $

\item  $\ord_{s_0}(\Omega^1(\log))= \ord_{s_1}(\Omega^1(\log))=q-1 $
\end{enumerate}
\end{lem}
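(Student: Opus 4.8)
The plan is to compute the five orders by comparing line bundles on $\hat{\H}$ via their restrictions to the two standard components $\P_{s_0}$, $\P_{s_1}$, using the morphisms coming from the universal formal module together with the local models \eqref{eq:ouvsimp} for the tubes $\hat{\H}_s$, $\hat{\H}_a$. The key technical device is that the action of $\Pi_D$ and of Frobenius on $\XG$ induce morphisms of line bundles $\Pi, F : \lie(\XG)_i \to \lie(\XG)_{i+1}$ (equivalently, by duality, $\omega_{i+1}\to\omega_i$), whose composite is multiplication by $\varpi$ in one direction and is closely tied to the Verschiebung in the other; for the Raynaud bundles $\Lf_i$ one likewise has the structural maps of the Raynaud scheme $\XG[\Pi_D]$. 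The strategy, for each of $\omega_0,\omega_1,\Lf_0,\Lf_1$, is to locate the zero locus of the relevant morphism on the special fiber $\bar{\H}$ and read off its order of vanishing at the generic point of each component $\P_{s_j}$; since such a morphism $\Lf\to\Lf'$ with divisor $D$ gives the relation $\ord_{s_j}(\Lf') = \ord_{s_j}(\Lf) + (\text{multiplicity of }\P_{s_j}\text{ in }D)$, one obtains a linear system in the six unknowns $\ord_{s_j}(\omega_i)$, $\ord_{s_j}(\Lf_i)$ which is then solved. By the symmetry exchanging $s_0\leftrightarrow s_1$ (implemented by $w\in G\setminus[G]_2$, which swaps $\omega_0\leftrightarrow\omega_1$ and $\Lf_0\leftrightarrow\Lf_1$), items (2) and (4) follow from (1) and (3) once the latter are known, so it suffices to treat $\omega_0$ and $\Lf_0$.

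First I would reduce everything to an explicit computation on the two affinoid charts $\hat{\H}_{s_0}$, $\hat{\H}_{s_1}$ and the overlap chart $\hat{\H}_{a_0}$ of the standard edge: since by Theorem~\ref{theopicentier} a bundle is determined by its orders, and by Proposition~\ref{proposthsig} the invertible functions on these charts are understood, it is enough to trivialize each of $\omega_i$, $\Lf_i$ over $\hat{\H}_{a_0}$, over $\hat{\H}_{s_0}$, and over $\hat{\H}_{s_1}$, and compare the transition functions. Concretely: on $\hat{\H}_{a_0}$ with coordinates $X,Y$, $XY=\varpi$, the special fiber has the two components $V(X)$ (which is $\bar{\H}_{a_0}\cap\P_{s_1}$ up to the conventions fixed in the text) and $V(Y)$ ($=\bar{\H}_{a_0}\cap\P_{s_0}$); a generator of $\Lf$ over $\hat{\H}_{a_0}$ that extends to a generator of $\Lf[1/\varpi]$ differs from the generators over $\hat{\H}_{s_0}$, $\hat{\H}_{s_1}$ by powers of $X$ and $Y$ respectively, and these powers are exactly $\ord_{s_1}(\Lf)$ and $\ord_{s_0}(\Lf)$ (possibly up to sign conventions, which I would pin down by testing against $\Omega^1(\log)$, whose generator $dX/X=-dY/Y$ on $\hat{\H}_{a_0}$ is simultaneously a generator on both $\hat{\H}_{s_0}$ and $\hat{\H}_{s_1}$, forcing $\ord_{s_0}=\ord_{s_1}=q-1$ once one recalls that on $\hat{\H}_s$ the module of logarithmic differentials is generated over $\Of(\hat{\H}_s)=\OC_{\breve K}\langle X,\tfrac1{X^q-X}\rangle$ by $d\log(X^q-X)$, a section of $\Of(q-1)$ on $\P_s$; this yields (5) directly and calibrates the signs). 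For $\Lf_0,\Lf_1$ the needed computation is the explicit description of the Raynaud scheme $\XG[\Pi_D]$ and its isotypic pieces over each chart, which is precisely what is cited from \cite{pan} for $K=\Q_p$ and from \cite{J3} in general; I would invoke that computation to get $(\ord_{s_0}(\Lf_0),\ord_{s_1}(\Lf_0))=(1,-1)$ and then (4) by the $w$-symmetry.

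The main obstacle — and the heart of the argument — is item (1), the computation of $\ord_{s_j}(\omega_0)$, because it requires controlling the universal special formal $\OC_D$-module $\XG$ and its Lie algebra on the boundary charts of $\hat{\H}$, i.e.\ near the singular points of $\bar{\H}$. The plan there is to use the two morphisms $\Pi, F:\omega_i\to\omega_{i+1}$ (dual to $\Pi_D$ and to Frobenius acting on $\lie(\XG)$) whose composites satisfy $F\circ\Pi = \Pi\circ F = (\text{mult. by }\varpi)$ in one ordering and, crucially, a relation of the form $\Pi\circ F = (\text{mult. by a coordinate cutting out one component of }\bar\H_{a_0})$ — this asymmetry between the two singular components is exactly what produces the asymmetric pair $(-1,q)$. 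Writing out the effect of these maps on the explicit Dieudonné/Lie data over $\hat{\H}_{a_0}$, one extracts two linear relations: $\ord_{s_j}(\omega_1)=\ord_{s_j}(\omega_0)+(\text{div}_{s_j}\Pi)$ and the analogous one for $F$, and using that $\Pi F$ has divisor supported with multiplicity $1$ on $\P_{s_1}$ and $F\Pi$ with multiplicity $1$ on $\P_{s_0}$ (or the reverse), together with $\ord_{s_0}(\omega_0)+\ord_{s_1}(\omega_0)+\ord_{s_0}(\omega_1)+\ord_{s_1}(\omega_1)$ being computable from the fact that $\det\lie(\XG)=\lie(\XG)_0\otimes\lie(\XG)_1$ is (a twist of) $\Omega^1(\log)^{-1}$ hence of total order $-(q-1)\cdot 2$... — solving this small system pins down all four numbers to $(-1,q)$, $(q,-1)$. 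Once (1)–(2) are in hand, the displayed image statement $\{(k_0,k_1):k_0+k_1\equiv0\bmod q-1\}$ in the intro lemma is just the subgroup of $\Z^2$ generated by the five computed vectors $(-1,q),(q,-1),(1,-1),(-1,1),(q-1,q-1)$, which one checks is exactly that congruence subgroup. I expect the delicate points to be (a) fixing the orientation/parity sign conventions consistently between $s_0$ and $s_1$, and (b) identifying the precise divisors of $\Pi$ and $F$ on the boundary, for which I would lean on the modular interpretation (Theorem~\ref{Drrep}) and the structure of $\Phi_{\bar\F}$ rather than attempt a coordinate brute force.
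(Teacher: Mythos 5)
Your overall scaffolding (compute divisors of structural maps between the modular bundles, extract a linear system in the orders, use the $w$-symmetry to get (2) and (4) from (1) and (3), and read (5) off from $\Omega^1(\log)|_{\P_s}=\Omega^1_{\P_s}(\sum_{y\in\P_s(\F)}y)\cong\Of(q-1)$) matches the paper's strategy. But at the crucial point — item (1) — your proposal does not contain a correct mechanism, and it black-boxes exactly the part that constitutes the technical heart of the paper. The relations you invoke are wrong: the true identities are $\Pi\circ\Pi=\varpi$ and $VF=FV=\varpi$, not ``$F\circ\Pi=\Pi\circ F=\varpi$''; and there is no relation ``$\Pi\circ F=$ multiplication by a coordinate cutting out one component'' — $\Pi\circ F$ is $\sigma$-semilinear, so it cannot be multiplication by a function, and the asymmetry $(-1,q)$ does not come from such an identity. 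In the paper the asymmetry comes from the Frobenius twist in $F_*:(\lie\XG)_i^{(q)}|_{\P_s}\to(\lie\XG)_{i+1}|_{\P_s}$: the two equations are $\bigl(\deg$ of cokernel of $\Pi_*\bigr)=q+1=\#\P_s(\F)$ and $\bigl(\deg$ of cokernel of $F_*\bigr)=q^2+1=\#\P_s(\F_{(2)})$, the factor $q$ entering through the twist (Théorème \ref{theoOrdOmegai}, Corollaire \ref{coroOrdreOmega}). Establishing these — i.e.\ locating the zero loci of $\Pi_*$ and $F_*$ and proving the vanishing order is exactly $1$ — is what Section 3 (explicit Cartier modules, the bijection \ref{theopsi}, the bases of \ref{LemmeBase}) and the Dieudonné--Messing tangent-space computation in the proof of \ref{theoOrdOmegai} are for; ``lean on the modular interpretation rather than attempt a coordinate brute force'' is precisely the step that needs an argument, and your proposal supplies none.

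Two further points. Your fallback closing relation, $\lie(\XG)_0\otimes\lie(\XG)_1$ being a twist of $\Omega^1(\log)^{-1}$, is circular relative to this paper: the Kodaira--Spencer isomorphism \ref{theoks} is \emph{deduced} from the order computation (together with \ref{propordreomegalog}), so you would have to prove it independently (e.g.\ via Grothendieck--Messing deformation theory identifying the tangent space of $\hat\H$), which you do not do; by contrast, the second pair of equations in the paper comes from $F$, and note that the analogue of the clean composite trick fails there since $V_*$ is zero on $\lie$. Finally, for items (3)--(4) you simply defer to the references, whereas the paper only cites \cite{pan}, \cite{J3} for $\Lf_0\otimes\Lf_1\cong\Of$ and for $\IC/\IC^2\cong\omega_0/\Pi\omega_1\oplus\omega_1/\Pi\omega_0$, and then derives the orders of $\Lf_i$ from \ref{theoOrdOmegai}; that deferral is tolerable, but combined with the gap above it means the two asymmetric computations that the lemma actually asserts, (1)--(2), are not proved by your proposal. (A smaller inaccuracy: reading $\ord_{s_0},\ord_{s_1}$ off the powers of $X,Y$ in a single edge chart is not correct as stated — the order at $s$ is the degree of $\bar\Lf|_{\P_s}$, which sees all $q+1$ intersection points of $\P_s$, so one must patch over the whole star of $s$ or argue via equivariance.)
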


De ce résultat, nous pouvons en déduire

\begin{coro}
Les fibrés modulaires sont exactement ceux dont le poids est entier et  se décomposent de manière unique sous la forme  
\[
\Lf=\omega_i^{\otimes -w(\Lf)}\otimes \Lf_j^{\otimes t_{i,j}(\Lf)}(\chi) 
\] avec $\chi\in \homm([G]_2,\OC_{\breve{K}})$.
\end{coro}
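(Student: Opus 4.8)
The plan is to reduce everything to the additive invariant $(\ord_{s_0},\ord_{s_1})$ together with Lemma~\ref{lemord}. By Corollary~\ref{corocohoequiv} there is an exact sequence
\[
0\to \homm([G]_2,\OC_{\breve{K}}^*)\to \pic_{[G]_2}(\hat{\H})\fln{(\ord_{s_0},\ord_{s_1})}{}\Z^2 ,
\]
so two $[G]_2$-equivariant line bundles are strongly equivalent up to twisting by a character of $[G]_2$ as soon as they have the same pair of orders; moreover, through the isomorphism of Theorem~\ref{theopicentier}, each $\ord_{s_k}$ is additive for the tensor product. Hence it suffices to carry out all verifications at the level of the pair of integers $(\ord_{s_0},\ord_{s_1})$, and the whole statement amounts to an exercise with Lemma~\ref{lemord}.

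First I would show that a modular bundle has integer weight. Since $\Of(\chi)$ has orders $(0,0)$, the image of $\pic_{(mod)}(\hat{\H})$ in $\Z^2$ is the subgroup generated by the images of $\omega_0,\omega_1,\Lf_0,\Lf_1$, i.e. by $(-1,q),(q,-1),(1,-1),(-1,1)$ according to Lemma~\ref{lemord}. Each of these satisfies $k_0+k_1\equiv 0\pmod{q-1}$, and conversely the subgroup already contains $(1,-1)$ and $(0,q-1)=(-1,q)-(-1,1)$, hence equals $\{(k_0,k_1)\in\Z^2:k_0+k_1\equiv 0\pmod{q-1}\}$ (this also gives the image of $\beta$ restricted to $\pic_{(mod)}(\hat{\H})$ announced in the introduction). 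Therefore $\ord_{s_0}(\Lf)+\ord_{s_1}(\Lf)\equiv 0\pmod{q-1}$ for every $\Lf\in\pic_{(mod)}(\hat{\H})$, i.e. $w(\Lf)\in\Z$.

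For the converse together with the explicit decomposition, fix $\Lf\in\pic_{[G]_2}(\hat{\H})$ with $w:=w(\Lf)\in\Z$, fix $i,j\in\Z/2\Z$, and put $t:=t_{i,j}(\Lf)=\ord_{s_j}(\Lf)+w\,\ord_{s_j}(\omega_i)$. I would compute the orders of $\MC:=\omega_i^{\otimes(-w)}\otimes\Lf_j^{\otimes t}$. Using Lemma~\ref{lemord}, namely $\ord_{s_j}(\Lf_j)=1$, $\ord_{s_{1-j}}(\Lf_j)=-1$, and the identity $\ord_{s_0}(\omega_i)+\ord_{s_1}(\omega_i)=q-1$ valid for $i=0,1$, one gets $\ord_{s_j}(\MC)=-w\,\ord_{s_j}(\omega_i)+t=\ord_{s_j}(\Lf)$ straight from the definition of $t$, and
\[
\ord_{s_{1-j}}(\MC)=-w\,\ord_{s_{1-j}}(\omega_i)-t=-w(q-1)-\ord_{s_j}(\Lf)=\ord_{s_0}(\Lf)+\ord_{s_1}(\Lf)-\ord_{s_j}(\Lf)=\ord_{s_{1-j}}(\Lf),
\]
the penultimate equality being exactly the definition of the weight. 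Thus $\MC$ and $\Lf$ have identical orders, so by the exact sequence above $\Lf\cong\MC(\chi)=\omega_i^{\otimes(-w(\Lf))}\otimes\Lf_j^{\otimes t_{i,j}(\Lf)}(\chi)$ for some $\chi\in\homm([G]_2,\OC_{\breve{K}}^*)$; in particular $\Lf$ is modular. This proves that the modular bundles are precisely those of integer weight and yields the stated form. Uniqueness is then immediate: the exponents $-w(\Lf)$ and $t_{i,j}(\Lf)$ are intrinsic to $\Lf$ once $i,j$ are fixed, and $\chi$ is determined because $\Of(\chi)\cong\Of(\chi')$ (strongly) forces $\chi=\chi'$, i.e. injectivity of the left-hand map in the exact sequence. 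I expect the only delicate point to be the computation of $\ord_{s_{1-j}}(\MC)$, where one must insert the weight relation $-w(q-1)=\ord_{s_0}(\Lf)+\ord_{s_1}(\Lf)$ and the relation $\ord_{s_0}(\omega_i)+\ord_{s_1}(\omega_i)=q-1$ at the right moment; the genuinely substantial input, Lemma~\ref{lemord} itself, is proved separately.
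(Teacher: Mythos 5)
Your proof is correct and follows essentially the same route as the paper: both arguments reduce everything to the pair of orders $(\ord_{s_0},\ord_{s_1})$ via the exact sequence of Corollary~\ref{corocohoequiv} and then conclude with the order computations of Lemma~\ref{lemord}, the only difference being bookkeeping (the paper first twists by $\omega_i^{w(\Lf)}$ to reduce to weight zero, while you directly check that $\omega_i^{-w}\otimes\Lf_j^{t}$ has the same orders as $\Lf$). Your explicit verification, including the identity $\ord_{s_0}(\omega_i)+\ord_{s_1}(\omega_i)=q-1$ and the uniqueness of the exponents and of $\chi$, is accurate.
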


\begin{proof}
D'après le calcul des ordres précédents, on voit que chaque générateur de $\pic_{(mod)}(\hat{\H})$ a un poids entier et que les fibrés de poids  nul sont les puissances  de $\Lf_j$ à un caractère près. Comme $\omega_i$ a poids $-1$, tout fibré de poids entier s'écrit (à un caractère près) comme une combinaison de $\omega_0$ et de $\Lf_0$. Une analyse rapide montre que le poids et le type sont les coefficients de cette décomposition.
\end{proof}


Comme on a l'inclusion $\pic_{(mod)}(\hat{\H})\subset \pic_{[G]_2}(\hat{\H})$, le théorème de classification revient à prouver :

\begin{prop}\label{proppoids}

Tout fibré $[G]_2$-équivariant sur $\hat{\H}$ ou sur  $\bar{\H}$ a un poids entier et tout fibré $G$-équivariant est de poids pair.

\end{prop}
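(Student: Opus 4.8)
Le point de départ est l'interprétation cohomologique des fibrés $[G]_2$-équivariants via les cocycles $\hat Z^1$ établie plus haut : un fibré $\Lf\in\pic_{[G]_2}(\hat\H)$ correspond à un cocycle $c\colon [G]_2\to\Of^*(\H)$ tel que $c(\stab_{[G]_2}(s))\subset\Of^*(\hat\H_s)$ pour $s=s_0,s_1$. L'idée est de lire les ordres $\ord_{s_0}(\Lf)$ et $\ord_{s_1}(\Lf)$ sur ce cocycle. Plus précisément, $\ord_{s_j}(\Lf)$ est le degré du fibré en droite $\Lf|_{\P_{s_j}}$ sur la composante irréductible $\P_{s_j}$ ; or $\Lf|_{\hat\H_{a_0}}$ est engendré par $r$ et, pour $h\in\stab_{[G]_2}(s_j)$, l'action $h\cdot r=c(h)r$ s'obtient en multipliant par $c(h)\in\Of^*(\hat\H_{s_j})$. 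Le degré de $\Lf|_{\P_{s_j}}$ se calcule alors en évaluant la somme des ordres d'annulation, le long des points $\F$-rationnels de $\P_{s_j}$ (c'est-à-dire des arêtes incidentes à $s_j$), des réductions $\bar c(h)\in\Of^*(\bar\H_{s_j})/\bar\F^*\cong\prod_{b\in\F}(X-b)^{\Z}$ pour $h$ parcourant le stabilisateur. La proposition \ref{proposthsig} fournit exactement le dictionnaire nécessaire pour cela.

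La première étape est donc de fixer un sommet, disons $s_1=[\OC_K^2]$, de stabilisateur $\stab_{[G]_2}(s_1)=G^\circ\varpi^\Z\cap[G]_2=G^\circ\varpi^{2\Z}$. On considère la restriction de l'action à $G^\circ$ : elle agit sur $\P_{s_1}\cong\P^1_\F$ à travers son quotient $\bar G=\gln_2(\F)$, qui permute transitivement les points $\F$-rationnels $\P^1(\F)$, c'est-à-dire les arêtes incidentes. Comme le diviseur des zéros/pôles de $\bar c(h)$ sur $\P_{s_1}$ doit être invariant à la fois par conjugaison-compatibilité du cocycle et par l'action de $\bar G$ — ou, plus exactement, comme $h\mapsto(\text{diviseur de }\bar c(h))$ est un $1$-cocycle de $\bar G$ à valeurs dans le groupe des diviseurs de degré arbitraire sur $\P^1(\F)$ — sa classe modulo les cobords (diviseurs principaux, i.e. de degré $0$) est contrôlée par $\hgal 1(\bar G,\Z)$ via le degré. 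Le degré lui-même définit un homomorphisme $\stab_{[G]_2}(s_1)\to\Z$, mais $\stab_{[G]_2}(s_1)=G^\circ\varpi^{2\Z}$ a pour abélianisé $\F^*\times\Z$ (via $\det$ puis $v\circ\det$), donc cet homomorphisme se factorise par le déterminant ; en particulier, sa restriction au sous-groupe $\sln_2(\OC_K)$ — ou au pro-$p$-Iwahori, ou à tout sous-groupe de type fini à abélianisé fini — est triviale. On en déduit que le diviseur de $\bar c(h)$ est principal pour $h$ dans ce sous-groupe, donc que l'action de ce sous-groupe sur $\Of(k)(\P_{s_1})$-type est déjà $\bar\F$-linéaire ; le degré total de $\Lf|_{\P_{s_1}}$ est alors déterminé par la contrainte de compatibilité restante, qui relie $\ord_{s_1}(\Lf)$ à l'action de $\varpi^2$ ou d'un représentant de Weyl.

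La deuxième étape, qui est le cœur quantitatif, consiste à relier $\ord_{s_0}(\Lf)$ et $\ord_{s_1}(\Lf)$ en exploitant l'élément $\alpha=\mathrm{diag}(1,\varpi)$ (ou $w$) : cet élément n'est pas dans $[G]_2$, mais $\alpha^2\in[G]_2$ et $\alpha$ envoie $s_1$ sur $\alpha s_1$ qui est à distance $1$ de $s_1$ dans l'appartement standard, tandis que $w$ échange $s_0$ et $s_1$. En écrivant la condition de cocycle pour $\alpha^2=\alpha\cdot\alpha$ (resp. pour $w^2=\varpi\cdot\id$), on obtient une relation entre $c(\alpha)|_{\hat\H_{s_0}}$, $c(\alpha)|_{\hat\H_{s_1}}$ et $c(\alpha^2)$, dont la réduction en fibre spéciale force $\ord_{s_0}(\Lf)+\ord_{s_1}(\Lf)$ à être égal au degré du diviseur d'une fonction du type $X^{q-1}-1$ (les fonctions inversibles sur $\hat\H_{a_0}$ modulo $\OC^*_{\breve K}\Of^{**}$ étant $\prod_{b\in\F^*}(X-b)^\Z\times\prod_{b\in\F^*}(Y-b)^\Z$ d'après \ref{proposthsig}), donc multiple de $q-1$ ; ceci donne la première assertion $w(\Lf)\in\Z$. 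Pour la parité dans le cas $G$-équivariant, on réitère l'argument avec le cocycle étendu à $G$ tout entier : la condition supplémentaire $c(w)\in\Of^*(\hat\H_{a_0})$ impose que $\ord_{s_0}(\Lf)$ et $\ord_{s_1}(\Lf)$ sont échangés par $w$ donc égaux, et comme leur somme est multiple de $q-1$ (impair, car $q$ est une puissance de $p\neq2$), chacun est individuellement multiple de $q-1$ et leur somme est paire multiple de $q-1$, d'où $w(\Lf)$ pair. L'extension à $\bar\H$ est immédiate puisque tout fibré $[G]_2$-équivariant sur $\bar\H$ provient par le corollaire cité d'un fibré sur $\hat\H$, avec les mêmes ordres.

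**Principal obstacle.** Le point le plus délicat est le calcul précis du diviseur de $\bar c(h)$ sur chaque composante $\P_{s_j}$ et la vérification que les contraintes de recollement aux points singuliers (via \ref{lemstab}) n'imposent rien de plus que la congruence modulo $q-1$ : il faut s'assurer que le groupe de cohomologie $\hgal 1(\stab_{[G]_2}(s_j),\Of^*(\hat\H_{s_j})/\OC^*_{\breve K})$ pertinent, une fois quotienté par les relations venant des arêtes, est bien engendré par les classes réalisées par les fibrés modulaires $\omega_i$ et $\Lf_j$ — autrement dit que l'argument de comptage ne laisse pas échapper d'invariant supplémentaire. C'est essentiellement un calcul de cohomologie de $\gln_2(\OC_K)$ (ou de l'Iwahori) à coefficients dans le groupe des fonctions inversibles de $\P^1_\F$ privé de ses points rationnels, que l'on mène en se ramenant au quotient fini $\gln_2(\F)$ grâce à la pro-$p$-acyclicité du noyau $1+\varpi\mat_2(\OC_K)$ (un pro-$p$-groupe agissant sur un module $p'$-divisible, ou sans $p$-torsion après réduction).
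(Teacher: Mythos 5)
Votre stratégie — lire les ordres sur un cocycle de $\hat{Z}^1$ puis extraire la congruence par un calcul de diviseurs des $\bar{c}(h)$ via \ref{proposthsig} — est réellement différente de celle du texte, mais en l'état elle comporte des lacunes substantielles. D'abord, l'étape quantitative centrale n'est pas effectuée : pour un fibré seulement $[G]_2$-équivariant, le cocycle $c$ n'est défini que sur $[G]_2$, or $\alpha\notin[G]_2$ ; la « condition de cocycle pour $\alpha^2=\alpha\cdot\alpha$ » fait intervenir $c(\alpha)$, qui n'existe pas, et il ne reste alors qu'une affirmation non démontrée selon laquelle la réduction en fibre spéciale forcerait $\ord_{s_0}(\Lf)+\ord_{s_1}(\Lf)$ à être le degré du diviseur d'une fonction du type $X^{q-1}-1$ : aucun mécanisme produisant le module $q-1$ n'est exhibé, et votre paragraphe « principal obstacle » reconnaît que ce calcul reste à faire. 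Dans la preuve du texte, le $q-1$ provient des caractères de $\F^*$ : les ordres $k_i$ déterminent, à un twist ${\det}^{r_i}$ près, l'action de $\stab_{[G]_2}(s_i)=w^{1-i}G^\circ w^{i-1}\varpi^\Z$ sur $\bar{\Lf}(\P_{s_i})\cong\sym^{k_i}(\bar{\F}^2)\otimes{\det}^{r_i}$ (cf. \ref{corocohoequiv}), et la comparaison des deux actions de l'Iwahori sur la droite $\bar{\Lf}(\P_{s_0}\cap\P_{s_1})$ donne les congruences \eqref{eq:r0r1}, d'où $k_0+k_1\equiv 0\pmod{q-1}$ ; c'est cette comparaison au point d'intersection que votre schéma de « recollement » devrait reproduire, et qu'il ne fait qu'annoncer.

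Ensuite, l'argument de parité pour le cas $G$-équivariant est faux tel quel : vous déduisez de $k_0=k_1$ et de $k_0+k_1\equiv 0\pmod{q-1}$ que chaque $k_i$ est multiple de $q-1$ « car $q-1$ est impair » ; or $p\neq 2$ entraîne $q$ impair donc $q-1$ pair, et $2k\equiv 0\pmod{q-1}$ n'implique que $k\equiv 0\pmod{(q-1)/2}$ (pour $q=5$ et $k_0=k_1=2$ on aurait un poids $-1$ impair). Il faut une contrainte supplémentaire : dans le texte elle vient de ce que l'action de $w$ identifie les deux twists, i.e. $r_0=r_1$, ce qui, joint à \eqref{eq:r0r1}, donne $k_0\equiv k_1\equiv 0\pmod{q-1}$ et donc un poids pair. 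Signalons enfin que $\stab_{[G]_2}(s_1)=G^\circ\varpi^\Z$ et non $G^\circ\varpi^{2\Z}$ (les scalaires $\varpi^n\id$ ont un déterminant de valuation paire) ; l'erreur est bénigne en soi, mais elle montre que la partie « cohomologie des stabilisateurs » de votre plan, qui porte tout le poids de l'argument, demanderait à être reprise avec précision avant que la preuve puisse être considérée comme complète.
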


\begin{proof}
Prenons un fibré $[G]_2$-équivariant $\Lf$ sur $\hat{\H}$, il suffit de raisonner sur $\bar{\Lf}$ et travailler en fibre spéciale. Quitte à tensoriser par des puissances de $\omega_0|_{\bar{\H}}$ et de $\Lf_0|_{\bar{\H}}$ et $\delta_a$ voir \eqref{eq:del}, on peut supposer  que les ordres $k_0=\ord_{s_0}(\Lf)$, $k_1=\ord_{s_1}(\Lf)$ de $\Lf$ sont positifs \ref{lemord} et que $\varpi\id$ agisse trivialement. L'idée de la preuve consiste à voir que la donnée de ces ordres détermine à un caractère près l'action de $\stab_{[G]_2}(s_i)=w^{1-i}G^{\circ}w^{i-1} \varpi^\Z$ sur $\bar{\Lf}(\P_{s_i})$ d'après \ref{corocohoequiv} (ici, $w$ est la matrice $\left( \begin{matrix}
0 & 1\\ \varpi & 0
\end{matrix} \right)$ introduite précédemment). Par restriction, cela fournit deux actions de l'Iwahori( + le centre) $\stab_{[G]_2}(s_0)\cap\stab_{[G]_2}(s_1)$ sur $\bar{\Lf}(\P_{s_0}\cap\P_{s_1})$ qui doivent coïncider. Cette condition va entraîner le résultat.

Plus précisément, on a des surjections $w^{1-i}G^{\circ}w^{i-1} \varpi^\Z\fln{c_{w^{i-1}}}{}G^{\circ}\to\bar{G}$ pour $i=0,1$ (avec $\bar{G}=\gln_2(\F)$ et $c_{w^{i-1}}$ la flèche qui envoie $\varpi^\Z$ sur $1$ puis conjugue par $w^{i-1}$) et des isomorphismes $\stab_{[G]_2}(s_i)$-équivariants par inflation :
\[\bar{\Lf}(\P_{s_i})\cong \sym^{k_i}(\bar{\F}^2)\otimes {\det}^{\otimes r_i}\]

Pour décrire explicitement les  actions sur  $\bar{\Lf}(\P_{s_0}\cap\P_{s_1})$, nous aurons besoin de considérer le  système de coordonnées homogènes usuel  $[z_0^{(1)}:z_1^{(1)}]$ sur $\P_{\breve{K}}^1$ qui définit encore un système de coordonnées pour $\P_{s_1}$ en fibre spéciale. Le même raisonnement montre que $[z_0^{(0)},z_1^{(0)}]:=w\cdot [z_0^{(1)},z_1^{(1)}]$ fournit  un système similaire sur la composante $\P_{s_0}$. Par construction de la flèche de réduction vers l'immeuble de Bruhat-Tits, on voit que $V^+(z_0^{(1)})=\P_{s_0}\cap\P_{s_1}=V^+(z_0^{(0)})$. On peut alors voir que les sections $\bar{\Lf}(\P_{s_i})$ s'identifient aux polynômes homogènes en $(z_0^{(i)},z_1^{(i)})$ de degré $k_i$ et l'on notera $(z_0^{(i)})\subset \sym^{k_i}(\bar{\F}^2)\otimes {\det}^{\otimes r_i}$ le sous-espace  des polynômes divisibles par $z_0^{(i)}$ (qui est stable sous l'action de l'Iwahori $I$). Par la description précédente de $\P_{s_0}\cap\P_{s_1}$, on a des isomorphismes $I$-équivariant : 
\[\bar{\Lf}(\P_{s_0}\cap\P_{s_1})\cong\sym^{k_i}(\bar{\F}^2)\otimes {\det}^{\otimes r_i}/(z_0^{(i)})\cong\bar{\F} \cdot (z_1^{(i)})\]
Mais on a $\left( \begin{matrix}
a & 0\\ 0 & d
\end{matrix} \right)\cdot (z_1^{(1)})=a^{r_1}d^{r_1+k_1}z_1^{(1)}$ et $\left( \begin{matrix}
a & 0\\ 0 & d
\end{matrix} \right)\cdot (z_1^{(0)})=w\left( \begin{matrix}
d & 0\\ 0 & a
\end{matrix} \right)w^{-1}\cdot(z_1^{(0)})=a^{r_0+k_0}d^{r_0}z_1^{(0)}$ pour tout $a$, $d$ dans $\F$. L'isomorphisme de représentation précédent entraîne les relations $a^{r_1}d^{r_1+k_1}=a^{r_0+k_0}d^{r_0}$ pour tout $a$, $d$ d'où  :
\begin{equation}\label{eq:r0r1}
r_0+k_0\equiv r_1 \pmod{q-1}\et r_0\equiv r_1+k_1 \pmod{q-1}
\end{equation}
En résolvant le système précédent, on obtient : \[k_0+k_1\equiv 0 \pmod{q-1}\]
ce qui prouve le résultat pour $[G]_2$. Si $\bar{\Lf}$ est de plus $G$-équivariant, on a en plus de la relation \eqref{eq:r0r1} l'égalité $r_0=r_1$ ce qui entraîne \[k_0\equiv k_1\equiv 0 \pmod{q-1}\] et le résultat



\end{proof}

Nous terminons cette section par quelques conséquences du théorème de classification. La première permet de décrire les fibrés en fibre générique de $\pic_{[G]_2}({\H})$ qui admettent un modèle entier. Ce résultat  utilise essentiellement le faite que $\omega_1[1/p]\cong \Of(1)$ (voir \cite[p656]{teit2} ou \ref{RemPullbackPeriodes}) :
\begin{coro}\label{corogen}
Si $\Lf\in\pic_{[G]_2}({\H})$ provient d'un élément de $\pic_{[G]_2}(\hat{\H})$ alors on peut trouver des entiers $k,t$ tels que 
\[
\Lf\cong\Lf_0[\frac{1}{p}]^{\otimes t}(k)(\chi).
\] 
avec $\Lf(k):=\Lf\otimes \Of(k)$. L'entier $k$ est unique et $t$ l'est seulement modulo $q+1$. Ces entiers prolongent la notion de poids et de type aux fibrés sur $\H$ qui admettent une structure entière\footnote{Notons que $k$ et $t$ ne dépendent pas de la structure entière  choisie.}.
\end{coro}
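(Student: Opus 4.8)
Le plan est de tout déduire du théorème de classification en niveau entier et de l'identification de $\omega_1$ en fibre générique. Comme $\Lf$ admet par hypothèse une structure entière $\hat{\Lf}\in\pic_{[G]_2}(\hat{\H})$, j'invoquerais d'abord le théorème de classification : $\hat{\Lf}$ est modulaire, donc de poids entier, et le corollaire donnant la décomposition explicite des fibrés modulaires fournit, pour le choix $i=1$, $j=0$, une écriture canonique
\[
\hat{\Lf}\cong\omega_1^{\otimes -w(\hat{\Lf})}\otimes\Lf_0^{\otimes t_{1,0}(\hat{\Lf})}(\chi),\qquad \chi\in\homm([G]_2,\OC_{\breve{K}}^*).
\]
En restreignant à $\H$ et en utilisant $\omega_1[1/p]\cong\Of(\pm 1)$ (voir \cite[p656]{teit2} ou \ref{RemPullbackPeriodes}), cela devient $\Lf\cong\Of(k)\otimes\Lf_0[1/p]^{\otimes t}(\chi)$ avec $k$ égal, au signe près, à $w(\hat{\Lf})$ et $t=t_{1,0}(\hat{\Lf})$, ce qui règle l'existence.

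Pour l'unicité je procéderais en deux temps. D'abord je vérifierais que $k$ et la classe de $t$ modulo $q+1$ ne dépendent pas du choix de $\hat{\Lf}$ : deux structures entières de $\Lf$ diffèrent par un élément du noyau de la restriction $\pic_{[G]_2}(\hat{\H})\to\pic_{[G]_2}(\H)$ ; or ce noyau est engendré par $\Lf_0^{q+1}$, ce qui résulte du théorème de classification joint au fait que $\Of(1)$ est d'ordre infini tandis que $\Lf_0[1/p]$ est de torsion d'ordre $q+1$ dans $\pic_{[G]_2}(\H)$ (et que les caractères s'injectent, voir \ref{corocohoequiv}). Comme $\ord_{s_0}(\Lf_0)=1$ et $\ord_{s_1}(\Lf_0)=-1$ d'après \ref{lemord}, le fibré $\Lf_0^{q+1}$ est de poids nul et de type $t_{1,0}$ égal à $q+1$ ; donc $w(\hat{\Lf})$, et par suite $k$, est inchangé, et $t_{1,0}(\hat{\Lf})$ est inchangé modulo $q+1$. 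Ensuite je montrerais qu'une égalité $\Of(k)\otimes\Lf_0[1/p]^{\otimes t}(\chi)\cong\Of(k')\otimes\Lf_0[1/p]^{\otimes t'}(\chi')$ force $k=k'$ et $t\equiv t'\pmod{q+1}$ : on remarque d'abord que $\chi$, $\chi'$ sont à valeurs dans $\OC_{\breve{K}}^*$ (puisque $\Lf$, ainsi que $\Of(k)$, $\Of(k')$ et $\Lf_0[1/p]$, admettent des structures entières, il en va de même de $\Of(\chi)$ et $\Of(\chi')$), de sorte que les deux membres se relèvent en niveau entier via les structures entières $\omega_1^{\mp k}$, $\omega_1^{\mp k'}$, $\Lf_0^{t}$, $\Lf_0^{t'}$, $\Of(\chi)$, $\Of(\chi')$ de leurs facteurs ; ces deux relèvements sont des structures entières du même fibré générique, donc diffèrent d'une puissance de $\Lf_0^{q+1}$. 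En comparant les poids (nuls pour les facteurs $\Lf_0$ et les caractères, égaux au signe près à $k$, $k'$ pour les facteurs $\omega_1$) on obtient $k=k'$, puis en comparant le couple $(\ord_{s_0},\ord_{s_1})$ — qui ne diffère alors que d'un multiple de $(q+1)(1,-1)$ — on obtient $t\equiv t'\pmod{q+1}$.

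La dernière assertion est alors immédiate : par construction $k$ vaut au signe près $w(\hat{\Lf})$ et $t=t_{1,0}(\hat{\Lf})$ pour n'importe quelle structure entière, et l'on vient de voir que ces quantités n'en dépendent pas ; elles prolongent donc les notions de poids et de type. L'essentiel de la preuve est purement formel une fois le théorème de classification acquis ; le seul point réellement délicat est la détermination précise du noyau de $\pic_{[G]_2}(\hat{\H})\to\pic_{[G]_2}(\H)$, et le soin à prendre avec le fait que $\chi$ lui-même n'est \emph{pas} déterminé de manière unique (modifier $t$ de $q+1$ change $\chi$ par le caractère $\Lf_0[1/p]^{q+1}$, qui peut être non trivial), ce qui impose d'énoncer l'unicité seulement pour $k$ et pour $t$ modulo $q+1$.
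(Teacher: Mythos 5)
Votre démonstration est correcte et suit exactement la voie que le papier a en tête (il ne donne d'ailleurs aucune preuve détaillée de ce corollaire, se contentant d'indiquer qu'il résulte du théorème de classification et de $\omega_1[1/\varpi]\cong\Of(\mp 1)$) : décomposition modulaire d'une structure entière, restriction à la fibre générique, puis unicité via le noyau de $\pic_{[G]_2}(\hat{\H})\to\pic_{[G]_2}(\H)$. Deux petites imprécisions seulement : ce noyau est engendré par $\Lf_0^{q+1}(\leg)$ et non par $\Lf_0^{q+1}$ (cf. \ref{propinvchar}, ce qui ne change rien ici puisque le caractère est absorbé dans $\chi$), et le fait que $\Of(1)$ soit d'ordre infini dans $\pic_{[G]_2}(\H)$ — dont dépend l'unicité de $k$ — ne « résulte » pas formellement du théorème de classification en niveau entier mais demande un argument en fibre générique (par exemple la comparaison des $[G]_2$-représentations $\hhh^0(\H,\Of(k))$, ou la description des $\Of(k)$ comme parties isotypiques de $\pi_*\pi^*\Of(k)$).
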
%

Nous avons aussi une classification similaire en fibre spéciale qui n'est pas très surprenante au vu de \ref{theopicentier} et \ref{propactforte}.

\begin{coro}\label{coromodspe}

La flèche $\Lf\mapsto\bar{\Lf}=\Lf/\varpi \Lf$ met en bijection les fibrés de $\pic_{[G]_2}(\hat{\H})$ à torsion par un caractère de $[G]_2$ à valeurs dans $1+\varpi\OC_{\breve{K}}$ près et les fibrés de $\pic_{[G]_2}(\bar{\H})$.

En particulier, un élément $\bar{\Lf}$ de $\pic_{[G]_2}(\bar{\H})$ est équivalent à la donnée de  $(a,r,k_1,k_2)\in \bar{\F}^*\times\Z/(q-1)Z\times \Z^2$ tels que $q-1|k_1+k_2$ ou encore de $(a,r,k,w)=(a,r,k_1,k_1+k_2)$. Ces éléments  sont caractérisés par $k_i=\ord_{s_i}(\bar{\Lf})$ et $\bar{\Lf}|_{\P_{s_1}}\cong_{G^\circ\varpi^\Z}\delta_a\otimes {\det}^r\otimes\Of(k_1)  $.

\end{coro}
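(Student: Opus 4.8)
The plan is to apply Proposition~\ref{propactforte} to both $\hat{\H}$ and $\bar{\H}$ with $H=[G]_2$, and compare the two resulting four-term exact sequences through reduction modulo $\varpi$. First I would record the relevant global units: $\Of^*(\hat{\H})=\OC_{\breve{K}}^*$ (units are constant, as used in~\ref{corocohoequiv}), while a regular function on $\bar{\H}$ restricts to a constant on each component $\P_s\cong\P^1$, these constants agreeing on the intersection points, so that by connectedness of $\BC\TC$ it is globally constant, i.e. $\Of^*(\bar{\H})=\bar{\F}^*$. Proposition~\ref{propactforte} then gives
\[
0\to\homm([G]_2,\OC_{\breve{K}}^*)\to\pic_{[G]_2}(\hat{\H})\to\pic(\hat{\H})^{[G]_2}\to\hgal{2}([G]_2,\OC_{\breve{K}}^*),
\]
\[
0\to\homm([G]_2,\bar{\F}^*)\to\pic_{[G]_2}(\bar{\H})\to\pic(\bar{\H})^{[G]_2}\to\hgal{2}([G]_2,\bar{\F}^*),
\]
and the reduction functor $\Lf\mapsto\bar{\Lf}$ carries the first into the second. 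By Theorem~\ref{theopicentier} and its proof (which also computes $\pic(\bar{\H})$), restriction to components identifies both $\pic(\hat{\H})$ and $\pic(\bar{\H})$ with $\prod_s\pic(\P_s)$; since the composite $\pic(\hat{\H})\to\pic(\bar{\H})\to\prod_s\pic(\P_s)$ is this same restriction, reduction induces an isomorphism $\pic(\hat{\H})^{[G]_2}\iso\pic(\bar{\H})^{[G]_2}\cong\Z^2$.

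Next I would invoke the Teichmüller section, splitting $\OC_{\breve{K}}^*\cong(1+\varpi\OC_{\breve{K}})\times\mu$ with $\mu\iso\bar{\F}^*$, so that $\homm([G]_2,\OC_{\breve{K}}^*)\twoheadrightarrow\homm([G]_2,\bar{\F}^*)$ is onto with kernel $\homm([G]_2,1+\varpi\OC_{\breve{K}})$. A short diagram chase then gives the first assertion. For surjectivity: the images of $\pic_{[G]_2}(\hat{\H})$ and of $\pic_{[G]_2}(\bar{\H})$ in $\pic(\cdot)^{[G]_2}\cong\Z^2$ coincide, both equal to $\{(k_0,k_1):q-1\mid k_0+k_1\}$ by Proposition~\ref{proppoids}; so given $\Mf\in\pic_{[G]_2}(\bar{\H})$ there is $\Lf\in\pic_{[G]_2}(\hat{\H})$ with the same image, $\Mf\otimes\bar{\Lf}^{-1}$ is then $\Of(\chi)$ for a character $\chi:[G]_2\to\bar{\F}^*$, and lifting $\chi$ to $\tilde{\chi}$ by Teichmüller yields $\overline{\Lf\otimes\Of(\tilde{\chi})}\cong\Mf$. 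For the kernel: if $\bar{\Lf}$ is trivial, $\Lf$ maps to $0$ in $\pic(\bar{\H})^{[G]_2}$, hence to $0$ in $\pic(\hat{\H})^{[G]_2}$, so $\Lf\cong\Of(\tilde{\chi})$ for some $\tilde{\chi}\in\homm([G]_2,\OC_{\breve{K}}^*)$, and $\Of(\bar{\tilde{\chi}})$ is trivial precisely when $\tilde{\chi}$ is valued in $1+\varpi\OC_{\breve{K}}$. Thus reduction descends to an isomorphism $\pic_{[G]_2}(\hat{\H})/\homm([G]_2,1+\varpi\OC_{\breve{K}})\iso\pic_{[G]_2}(\bar{\H})$.

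For the explicit parametrisation I would analyse the $\bar{\H}$-sequence. It reads
\[
0\to\homm([G]_2,\bar{\F}^*)\to\pic_{[G]_2}(\bar{\H})\to\{(k_0,k_1)\in\Z^2:q-1\mid k_0+k_1\}\to 0,
\]
right-exact because the reductions $\bar{\omega}_i$, $\bar{\Lf}_j$ of the modular bundles realise all lattice points of this subgroup: by Lemma~\ref{lemord} the pairs $(\ord_{s_0},\ord_{s_1})$ of $\bar{\omega}_0$ and $\bar{\Lf}_0$ are $(-1,q)$ and $(1,-1)$, which generate it. Since $[[G]_2,[G]_2]=\sln_2(K)$ ($\sln_2(K)$ being perfect as $K$ is infinite) and $[G]_2/\sln_2(K)\cong\OC_K^*\times\varpi^{2\Z}$ via $\det$, and the pro-$p$ part of $\OC_K^*$ admits no nontrivial map to $\bar{\F}^*$, one gets $\homm([G]_2,\bar{\F}^*)\cong\homm(\F_q^*,\bar{\F}^*)\times\homm(\varpi^{2\Z},\bar{\F}^*)\cong\Z/(q-1)\Z\times\bar{\F}^*$, the two factors corresponding to $\det^r$ and $\delta_a$. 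The sequence splits through the section $(k_0,k_1)\mapsto\bar{\omega}_0^{\otimes -w}\otimes\bar{\Lf}_0^{\otimes t}$ provided by the corollary following Lemma~\ref{lemord} ($w$, $t$ the weight and type), whence $\pic_{[G]_2}(\bar{\H})\cong\bar{\F}^*\times\Z/(q-1)\Z\times\{(k_0,k_1):q-1\mid k_0+k_1\}$. Finally I would match invariants: $k_i=\ord_{s_i}(\bar{\Lf})$ by definition of the forgetful map, and the restriction to $\P_{s_1}$, read inside $\pic_{G^\circ\varpi^\Z}(\P_{s_1})\cong\homm(G^\circ\varpi^\Z,\bar{\F}^*)\times\Z$ of Corollary~\ref{corocohoequiv}, recovers $(a,r,k_1)$ from $\bar{\Lf}|_{\P_{s_1}}\cong_{G^\circ\varpi^\Z}\delta_a\otimes\det^r\otimes\Of(k_1)$.

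The heavy lifting is already in the cited results, so this is mostly bookkeeping; the points needing care will be (i) the compatibility of reduction with the identifications $\pic(\hat{\H})\cong\prod_s\pic(\P_s)\cong\pic(\bar{\H})$ of Theorem~\ref{theopicentier}, and (ii) the right-exactness of the $\bar{\H}$-sequence, which rests entirely on the explicit construction of $\bar{\omega}_i$, $\bar{\Lf}_j$ and the order computation in Lemma~\ref{lemord} — the $\hgal{2}$-obstruction never intervenes since these classes genuinely come from equivariant bundles. The only non-formal input is $\Of^*(\bar{\H})=\bar{\F}^*$ on the infinite, non-proper union of projective lines, but this follows from connectedness of the tree as above.
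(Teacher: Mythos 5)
Votre démonstration est correcte et suit essentiellement la même stratégie que celle, très succincte, du texte : comparaison des suites exactes de la Proposition \ref{propactforte} pour $\hat{\H}$ et $\bar{\H}$, identification $\pic(\hat{\H})^{[G]_2}\cong\pic(\bar{\H})^{[G]_2}\cong\Z^2$ via \ref{theopicentier}, surjectivité sur le réseau $\{(k_0,k_1):q-1\mid k_0+k_1\}$ grâce aux fibrés modulaires de \ref{lemord} et \ref{proppoids}, puis traduction du caractère résiduel en la donnée $(a,r)$. Vous ne faites qu'expliciter (relèvement de Teichmüller, calcul de $\homm([G]_2,\bar{\F}^*)$, scindage par $\bar{\omega}_0^{-w}\otimes\bar{\Lf}_0^{t}$) des étapes que le texte déclare « claires ».
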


\begin{proof}
La première partie est claire d'après \ref{lemord} et \ref{proppoids} que l'on peut encore appliquer en fibre spéciale. De plus, les ordres en les composantes $\P_{s_0}$ et $\P_{s_1}$ déterminent un fibré sur $\bar{\H}$ à un caractère $\chi : [G]_2\to \bar{\F}$ près. On observe aisément que le choix de $a$ et $r$ pour lequel $\bar{\Lf}|_{\P_{s_1}}\cong_{G^\circ\varpi^\Z} {\det}^r\otimes\Of(k_1)  $ est équivalent au choix de $\chi$. Observons aussi que, d'après la preuve de \ref{proppoids}, on a $\bar{\Lf}|_{\P_{s_0}}\cong \delta_a\otimes{\det}^{r+k_1}\otimes\Of(k_0)$.
\end{proof}

On sait aussi que les formes différentielles à pôle logarithmiques $\Omega^1(\log)$ est un fibré modulaire et le résultat suivant précise cette affirmation :

\begin{coro}[Isomorphisme de Kodaira-Spencer]
On a un isomorphisme $G$-équivariant $\widetilde{KS}: \omega_0\otimes \omega_1 \to \Omega^1(\log)$.  
\end{coro}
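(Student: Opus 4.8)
L'isomorphisme recherché doit être la flèche de Kodaira–Spencer du $\OC_D$-module formel universel $\XG$ sur $\hat{\H}$. Pour la construire, je partirais de la réalisation de de Rham $\MC:=\hdr{1}(\XG/\hat{\H})$, un fibré localement libre de rang $4$ sur $\hat{\H}$ muni de sa filtration de Hodge $0\to\mathrm{Fil}\to\MC\to\MC/\mathrm{Fil}\to 0$ (avec $\mathrm{Fil}$ de rang $2$) et de la connexion de Gauss–Manin. Comme $\hat{\H}$ est semi-stable sur $\OC_{\breve{K}}$ et que $\XG$ ne dégénère que de façon logarithmique le long du lieu singulier $\bigcup_{s\in\BC\TC_0}\P_s(\F)$ de $\bar{\H}$, cette connexion s'étend en une connexion à pôles logarithmiques $\nabla:\MC\to\MC\otimes_{\Of}\Omega^1(\log)$. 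On pose alors
\[
KS:\ \mathrm{Fil}\flinj\MC\fln{\nabla}{}\MC\otimes\Omega^1(\log)\flsur(\MC/\mathrm{Fil})\otimes\Omega^1(\log),
\]
qui est $\Of_{\hat{\H}}$-linéaire (le terme de dérivation dans la règle de Leibniz tombe dans $\mathrm{Fil}\otimes\Omega^1(\log)$ et disparaît après projection) et manifestement $\OC_D$- et $G$-équivariante.

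Je décomposerais ensuite tout le long des parties isotypiques pour $\OC_{(2)}\subset\OC_D$ : comme $\OC_{(2)}$ agit $\Of$-linéairement, $\nabla$ respecte cette graduation, donc $KS=KS_0\oplus KS_1$, et $KS$ s'interprète comme une flèche $\mathrm{Fil}\otimes(\MC/\mathrm{Fil})^\vee\to\Omega^1(\log)$. Le point structurel est que, pour un module formel spécial, la torsion par $\Pi_D$ dans la filtration de Hodge échange les deux facteurs isotypiques, de sorte que la partie isotypique de $\mathrm{Fil}\otimes(\MC/\mathrm{Fil})^\vee$ est $\omega_0\otimes\omega_1$ (et non $\omega_i^{\otimes 2}$) ; $KS$ fournit donc une flèche
\[
\widetilde{KS}:\ \omega_0\otimes\omega_1\fl\Omega^1(\log).
\]
L'élément non trivial de $G/[G]_2$ permute les deux facteurs isotypiques mais laisse $\omega_0\otimes\omega_1$ stable, si bien que $\widetilde{KS}$ est $G$-équivariante. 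Enfin, $\widetilde{KS}$ est non nulle : en fibre générique c'est l'isomorphisme de Kodaira–Spencer classique $\Of(-1)\otimes\Of(-1)\iso\Omega^1_{\H}$ (cohérent avec $\omega_1[1/\varpi]\cong\Of(1)$), et comme $\pic(\H)=0$ et $\H$ est connexe (\ref{theopicgen}), $\widetilde{KS}$ est déjà un isomorphisme après inversion de $\varpi$.

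Il reste à voir que $\widetilde{KS}$ est un isomorphisme en niveau entier, et c'est ici qu'intervient le calcul des ordres. Étant une flèche non nulle de fibrés en droites sur $\hat{\H}$, son conoyau $\coker(\widetilde{KS})$ est de torsion, supporté par un diviseur vertical effectif $\mathrm{div}(\widetilde{KS})=\sum_{s\in\BC\TC_0}n_s\P_s$ avec $n_s\ge 0$. Or le fibré $\Omega^1(\log)\otimes(\omega_0\otimes\omega_1)^{-1}$ a tous ses ordres nuls : pour $s=s_0,s_1$ cela découle de \ref{lemord} puisque $\ord_{s_j}(\Omega^1(\log))-\ord_{s_j}(\omega_0)-\ord_{s_j}(\omega_1)=(q-1)-(-1)-q=0$, et les autres composantes s'en déduisent par $G$-équivariance, $[G]_2$ n'ayant que deux orbites sur $\BC\TC_0$ et $\widetilde{KS}$ étant de poids pair. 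Par \ref{theopicentier} ce fibré est donc trivial, donc $\sum_s n_s\P_s$ est principal, donc de la forme $k\cdot\bar{\H}$ pour un $k\ge 0$ (les seuls diviseurs verticaux effectifs principaux). Il ne reste qu'à voir $k=0$, c'est-à-dire que $\widetilde{KS}$ ne s'annule pas modulo $\varpi$ : cela revient à la non-dégénérescence de la connexion de Gauss–Manin réduite, équivalente à la non-isotrivialité de $\XG$ le long des composantes de $\bar{\H}$, que l'on contrôle sur la description explicite de $\XG$ au-dessus des affinoïdes $\hat{\H}_s$. Le point le plus délicat est la construction du premier paragraphe, à savoir munir $\nabla$ de sa structure logarithmique aux points singuliers de $\hat{\H}$ et identifier correctement les gradués de la filtration de Hodge (l'échange des indices par $\Pi_D$) ; une fois cela acquis, le reste n'est que formalisme joint à \ref{lemord} et \ref{theopicentier}. À défaut, on peut obtenir un isomorphisme abstrait $\omega_0\otimes\omega_1\cong\Omega^1(\log)(\chi)$ directement par la classification, puis déterminer le caractère $\chi$ via l'action de $G^\circ\varpi^\Z$ sur la restriction à $\P_{s_1}$ — mais seule la flèche $\widetilde{KS}$ ci-dessus justifie le nom de Kodaira–Spencer.
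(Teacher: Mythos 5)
Votre route principale — construire $\widetilde{KS}$ comme flèche de Kodaira--Spencer de $\hdr{1}(\XG/\hat{\H})$ muni de la connexion de Gauss--Manin à pôles logarithmiques, puis contrôler le conoyau par les ordres — est génuinement différente de la preuve du texte, qui est purement abstraite : par \ref{coroOrdreOmega} et \ref{propordreomegalog}, $\omega_0\otimes\omega_1$ et $\Omega^1(\log)$ ont le même ordre $q-1$ en chaque sommet, donc coïncident dans $\pic(\hat{\H})$ par \ref{theopicentier} ; par \ref{propactforte} ils diffèrent d'un caractère $\chi$ de $G$, et $\chi$ est trivial car en fibre générique on obtiendrait $\Of(-1)^{\otimes 2}\iso\Of(-2)(\chi)$ via \ref{RemPullbackPeriodes}. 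C'est exactement votre « à défaut » final, à ceci près que déterminer $\chi$ par l'action de $G^\circ\varpi^\Z$ sur la restriction à $\P_{s_1}$ ne suffit pas : un caractère de $G$ trivial sur $G^\circ\varpi^\Z$ peut rester non trivial sur $w$ (ou $\alpha$), et il faut l'argument en fibre générique (ou tester $w$ sur l'arête standard). Ce que votre construction achète en plus, c'est une flèche naturelle réalisant l'isomorphisme ; c'est précisément ce que le texte fait \emph{ensuite} dans \ref{propdualomega}.

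Deux points de votre route principale méritent d'être signalés. D'abord, l'identification de la partie isotypique de $\mathrm{Fil}\otimes(\MC/\mathrm{Fil})^\vee$ avec $\omega_0\otimes\omega_1$ : la construction donne naturellement $\omega_{\XG^\vee,i}\otimes\omega_i\to\Omega^1(\log)$ (puisque $\mathrm{Fil}\cong\omega_{\XG^\vee}$, cf. \ref{RemarkExtUniv}), et passer de là à $\omega_0\otimes\omega_1$ requiert l'isomorphisme équivariant $\omega_{\XG^\vee,i}\cong\omega_{i+1}$. Or c'est le contenu de \ref{propdualomega}, que le texte démontre \emph{à l'aide} de l'isomorphisme de Kodaira--Spencer que vous cherchez à établir ; votre phrase sur « la torsion par $\Pi_D$ » ne le fournit pas, et il faut soit une polarisation (comme dans la référence citée après \ref{coroordreomega2}), soit le déduire des ordres plus une détermination de caractère — auquel cas on retombe sur la preuve abstraite. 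Ensuite, la non-annulation modulo $\varpi$ (votre $k=0$) est vérifiable en restreignant à $\bar{\H}_s$ où $KS$ devient l'isomorphisme classique $\Of(-2)\iso\Omega^1_{\P^1}$ via l'application des périodes, mais elle n'est en fait pas nécessaire pour l'énoncé : dès qu'une flèche équivariante non nulle existe entre deux fibrés de mêmes ordres, le caractère les séparant est forcé d'être trivial (une section équivariante non nulle de $\Of(\chi^{-1})$ est une constante non nulle), et l'isomorphisme abstrait suit ; $k=0$ ne sert qu'à affirmer que $\widetilde{KS}$ elle-même, et non $\varpi^{-k}\widetilde{KS}$, le réalise.
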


La démonstration de ce fait pourra être trouvé dans \ref{theoks}.

\begin{rem}
À première vue, le fibré $\omega_0\otimes \omega_1$ est uniquement $[G]_2$-équivariant mais on peut prolonger l'action à $G$ tout entier car tout élément dans le complémentaire $G\setminus [G]_2$ échange les fibrés $\omega_0$ et $\omega_1$.
\end{rem}
%






\section{Théorie de Cartier}

Pour prouver le résultat \ref{lemord}, nous aurons besoin de mieux comprendre l'interprétation  modulaire $\GC $ de l'espace symétrique $\hat{\H}$ ainsi que le théorème de représentatibilité \ref{Drrep}. Plus précisément, ce théorème induit  une bijection entre les points géométriques en fibre spéciale $\pi:\GC(\bar{\F})\iso\bar{\H}(\bar{\F})$ que nous voulons rendre explicite. Cela nécessitera d'avoir un moyen de classifier les modules formels, possible grâce à la théorie de Cartier sur $\bar{\F}$ dont nous allons rappeler les résultats principaux dans les trois sous-sections §\ref{ssecwitt}, §\ref{sseccart}, §\ref{ssecisog}  qui vont suivre. Dans le cas qui nous intéresse, à savoir lorsque l'on travaille sur $\bar{\F}$, les modules formels spéciaux décrits par $\GC(\bar{\F})$ sont alors complètement caractérisés par la donnée d'un module de Cartier  $M$ qui est un $ \OC_{\breve{K}}$-module libre de rang $4$  muni d'une $\Z/2\Z$-graduation et d'opérateurs $Pi$, $V$, $F$ homogènes de degré $1$ respectivement $\bar{\F}$-linéaire, $\sigma^{-1}$-linéaire et $\sigma$-linéaire. Décrire la flèche $\pi$ revient alors à résoudre un problème d'algèbre semi-linéaire relativement accessible  réalisé dans \ref{theopsi}. La construction de cette bijection repose sur des éléments de la preuve du résultat classique suivant dont nous rappellerons les différents arguments :
\begin{lem}
\label{propClassIsog}
Il existe une unique classe d'isogénie de $\OC_D$-module formel spécial sur $\bar{\F}$  de dimension $2$ et hauteur $4$.   
\end{lem}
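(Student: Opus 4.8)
Le plan est de rappeler la démonstration classique via la théorie de Cartier, dont les ingrédients seront réutilisés pour expliciter la bijection $\pi$.

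\textbf{Réduction à un module de Cartier rationnel.} D'après la théorie de Cartier sur le corps parfait $\bar{\F}$ (cf. \ref{ssecwitt}--\ref{ssecisog}), la donnée d'un $\OC_D$-module formel spécial $X$ sur $\bar{\F}$ de dimension $2$ et de hauteur $4$ équivaut à celle d'un module de Cartier $M$ : un $\OC_{\breve{K}}$-module libre de rang $4$, $\Z/2\Z$-gradué $M=M_0\oplus M_1$ via les deux plongements $\OC_{(2)}\hookrightarrow\OC_{\breve{K}}$, muni des opérateurs homogènes de degré $1$ que sont $F$ ($\sigma$-linéaire), $V$ ($\sigma^{-1}$-linéaire) et $\Pi=\iota(\Pi_D)$ (linéaire), soumis aux relations $FV=VF=\varpi$ et $\Pi^2=\varpi$ ; $F$ et $V$ étant fonctoriels, ils commutent à l'action de $\OC_D$, de sorte que $F$, $V$, $\Pi$ commutent deux à deux, et la spécialité se traduit par $\dim_{\bar{\F}}(M_i/VM_{i-1})=1$. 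Le point essentiel du dictionnaire est que les quasi-isogénies $\OC_D$-linéaires correspondent exactement aux isomorphismes après inversion de $\varpi$ ; il suffit donc de classifier, à isomorphisme près, le module de Cartier \emph{rationnel} $N:=M[1/\varpi]$, vu comme $\breve{K}$-espace vectoriel de dimension $4$ muni de sa graduation, de son action de $D$ et de son opérateur $\sigma$-linéaire bijectif $F$ (et de voir qu'il en existe un).

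\textbf{Calcul des pentes et unicité.} Sur $N$, l'opérateur $V=\varpi F^{-1}$ est inversible, de sorte que ne comptent plus que l'isocristal $(N,F)$ et le plongement $D\hookrightarrow\en(N,F)$ induit par $\iota$. De $FV=\varpi$ et de $\dim_{\bar{\F}}(M/VM)=\dim X=2$ on tire $\dim_{\bar{\F}}(M/FM)=2$, donc la somme des pentes de $(N,F)$ vaut $2$ ; de plus $X$ est un groupe formel, donc connexe, et ses pentes sont toutes $>0$. On exploite alors que $D$ est une algèbre à division de dimension $4$ sur $K$ : d'après la classification de Dieudonné--Manin, $\en(N,F)$ est un produit d'algèbres de matrices sur des algèbres à division déterminées par la décomposition en pentes de $N$, et une algèbre à division ne se plonge de façon unitaire dans un tel produit que si elle se plonge dans chacun des facteurs ; en comparant cette contrainte avec les polygones de Newton admissibles de rang $4$ (pentes dans $]0,1]$, de somme $2$, chaque pente $s/r$ étant de multiplicité divisible par $r$), on voit que la seule possibilité est que toutes les pentes valent $1/2$, autrement dit $(N,F)\cong N_{1/2}^{\oplus 2}$ où $N_{1/2}$ est l'unique isocristal simple de pente $1/2$. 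Enfin $\en(N,F)\cong\mat_2(D)$ et deux plongements de $D$ dans $\mat_2(D)$ sont conjugués par Skolem--Noether : le triplet $(N,F,\iota)$ — et avec lui $V=\varpi F^{-1}$ et $\Pi=\iota(\Pi_D)$ — est donc unique à isomorphisme près, ce qui donne l'unicité de la classe d'isogénie.

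\textbf{Existence et principale difficulté.} L'existence résulte de celle de $N_{1/2}$ — ou, de manière équivalente, de la représentabilité \ref{Drrep} ($\hat{\H}$ étant non vide), le $\OC_D$-module $\Phi_{\bar{\F}}$ de Drinfeld en fournissant un représentant explicite. Le calcul de pentes lui-même est très court une fois la connexité invoquée ; la véritable difficulté réside dans la mise en place soigneuse du dictionnaire de la théorie de Cartier — à savoir que la $\Z/2\Z$-graduation et les trois opérateurs $F$, $V$, $\Pi$ de degré $1$ encodent fidèlement la structure de $\OC_D$-module formel spécial, et que les quasi-isogénies $\OC_D$-linéaires correspondent aux isomorphismes après inversion de $\varpi$ —, ce qu'apporteront précisément les sections \ref{ssecwitt}--\ref{ssecisog}.
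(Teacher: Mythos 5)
Votre démonstration est correcte, mais elle suit une route sensiblement différente de celle du texte. Vous calculez directement le polygone de Newton de l'isocristal rationnel $(N,F)$ de rang $4$ : la contrainte de plongement unitaire de l'algèbre à division $D$ dans $\en(N,F)$ (jointe à l'exclusion de la partie étale, $X$ étant un groupe formel) force toutes les pentes à valoir $1/2$, d'où $\en(N,F)\cong \mat_2(D)$, et Skolem--Noether conclut que le plongement $\iota$ est unique à conjugaison près ; comme une quasi-isogénie $\OC_D$-linéaire n'est autre qu'un isomorphisme de $(N,F,\iota)$, l'unicité de la classe d'isogénie en découle. Le texte procède autrement : il introduit la notion d'indice critique $i$ (celui pour lequel $\Pi M_i=VM_i$, qui existe toujours puisque $\Pi^*\circ\Pi^*=\varpi^*=0$ sur des algèbres de Lie de dimension $1$), puis applique Dieudonné--Manin non pas à $(N,F)$ mais à l'opérateur $\sigma$-linéaire auxiliaire $V^{-1}\Pi$ agissant sur le réseau $M_i$ de rang $2$ : celui-ci est de pente nulle, donc admet une base de points fixes $x_0,x_1$, ce qui fournit la forme normale explicite $M[1/p]=\breve{K}x_0\oplus\breve{K}x_1\oplus\breve{K}\Pi x_0\oplus\breve{K}\Pi x_1$ avec $\Pi x_j=Fx_j=Vx_j$, et au passage l'identification $\mathrm{QIsog}(\XG)\cong\gln_2(K)$. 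Votre argument est plus court et plus conceptuel pour l'unicité seule, mais il ne produit ni l'indice critique ni cette base de points fixes de $V^{-1}\Pi$, qui sont précisément les ingrédients réutilisés ensuite (\ref{LemmeBase}, \ref{theopsi}) pour expliciter la bijection $\GC^{Dr}(\bar{\F})\cong\bar{\H}(\bar{\F})$ — contrairement à ce qu'annonce votre plan introductif, la voie Skolem--Noether ne livre donc pas le matériel dont la suite a besoin. Enfin, comme le texte, vous ne traitez l'existence que par renvoi (à $N_{1/2}$ ou à \ref{Drrep}) ; c'est acceptable ici puisque l'existence de $\Phi_{\bar{\F}}$ est admise en amont dans la section \ref{ssecmod}.
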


D'après ce qui précède, la donnée d'un point géométrique $y\in \bar{\H}(\bar{\F})$ fournit un module de Cartier $M_y$. Nous pouvons aussi rendre explicite l'inverse de la flèche $\pi:\GC(\bar{\F})\iso\bar{\H}(\bar{\F})$ et ainsi donner une base à $M_y$ qui respecte la graduation et sur laquelle on peut écrire l'action des différents opérateurs  $\Pi$, $V$, $F$. Pour rendre plus simple l'énoncé de ce résultat, nous pouvons supposer quitte à translater $y$ par $\gln_2 (K)$ que $y\in \P_s(\bar{\F})\setminus \{\infty\}$ avec $s$ le sommet standard de $\BC\TC_0$ et $\infty$ un point $\bar{\F}$-rationnel de $\P_s$ fixé. Le théorème de classification est alors le suivant :
\begin{lem}
Soit $y\in \P_s(\bar{\F})\backslash \{\infty\}\cong \bar{\F}$, $[y]$ un relevé dans $\OC_{\breve{K}}$  et $M_y=M_{y,0}\oplus M_{y,1}$ le module de Cartier associé muni de sa $\Z/2\Z$-graduation.   On peut trouver une  $\OC_{\breve{K}}$-base    $\{x_{0,i},  x_{1,i} \}_{i\in\Z/2\Z}$ de $M_{y,i}$ tel que 
\[
\begin{array}{l}
\Pi x_{0,1} =F x_{0,1}= V x_{0,1} = \varpi x_{0,0}-[y] x_{1,0}, \\ 
\Pi x_{1,1}=F x_{1,1}= V x_{1,1}= x_{1,0}, \\ 
\Pi x_{1,0}= F x_{1,0} = V x_{1,0} = \varpi x_{1,1}, \\ 
\Pi x_{0 ,0}=  x_{0,1}+[y] x_{1,1}, \\
F x_{0 ,0 }=  x_{0,1}+[y^q] x_{1,1},   \\
V x_{0 ,0 }=  x_{0,1}+[y^{1/q}] x_{1,1}.
\end{array}
\]
\end{lem}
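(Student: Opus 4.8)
The plan is to write the module down by hand and then recognise it, rather than to chase a quasi-isogeny. First I would reduce to $y\in\P_s(\bar\F)\setminus\{\infty\}$: by the $\gln_2(K)$-equivariance of \ref{Drrep} — the action on $\bar\H$ matching the action on the tree $\BC\TC$ and on $\GC(\bar\F)$ through the rigidification $\rho$ — any geometric point of $\bar\H$ can be translated onto the component $\P_s$ of the standard vertex, and the stabilizer of $s$, acting on $\P_s\cong\P^1_\F$ through $\pgln_2(\F)$ transitively on $\F$-rational points, then lets one move the distinguished point $\infty$ off it.

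Next I would \emph{define} a candidate module $M'=\bigoplus_{i\in\Z/2\Z}(\OC_{\breve{K}}x_{0,i}\oplus\OC_{\breve{K}}x_{1,i})$, graded by $i$, with $\Pi$ (linear), $F$ ($\sigma$-linear) and $V$ ($\sigma^{-1}$-linear) of degree $1$ prescribed by the six formulas of the statement. The short semilinear computation these formulas invite shows that $\Pi^2=FV=VF=\varpi$, that $\Pi$ commutes with $F$ and with $V$, that $V$ is topologically nilpotent, and that $M'/VM'$ is one-dimensional over $\bar\F$ in each degree; so $M'$ is the Cartier module of a special formal $\OC_D$-module over $\bar\F$ of dimension $2$ and $\OC_K$-height $4$, which by \ref{propClassIsog} carries a unique quasi-isogeny of height zero to $\Phi_{\bar\F}$ and therefore defines a point $\pi(M')\in\bar\H(\bar\F)$. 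One also checks that changing the lift $[y]$ by a multiple of $\varpi$ only replaces the basis by one of the form $x_{0,i}\mapsto x_{0,i}+t_i x_{1,i}$, so $M'$ and the shape of the formulas do not depend on it; one may then take $[\cdot]$ to be the Teichm\"uller lift, making $[y^q]=\sigma([y])$ and $[y^{1/q}]=\sigma^{-1}([y])$ literal identities.

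It then remains to see that $\pi(M')=y$, and for this I would feed $M'$ into the explicit description of $\pi$ (equivalently of $\pi^{-1}$) from \ref{theopsi}. Concretely, the homothety class of the lattice that recipe extracts from $M'$ should come out to be $s$ — this is where the symmetric, edge-type pattern $\Pi=F=V$ on the ``$1$''-part is used — and the residual datum, which is the image of $x_{0,0}$ in the relevant quotient and involves precisely the coefficient $[y]$, should come out to be $y$. Here the Teichm\"uller normalisation is what matters: the difference between $\Pi x_{0,0}$, $Fx_{0,0}$ and $Vx_{0,0}$, namely $[y]$ against $[y^q]=\sigma([y])$ against $[y^{1/q}]=\sigma^{-1}([y])$, is forced by the $\sigma$- and $\sigma^{-1}$-semilinearity of $F$ and $V$ against the linearity of $\Pi$, and is exactly what makes $M'$ land on $y$ and not on a Galois conjugate. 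Once $\pi(M')=y$ is established, $M'\cong M_y$, and the chosen basis of $M'$ is the asserted basis of $M_y$. Alternatively, one could start from the symmetric module attached to an $\F$-point of $\P_s$, where $\Pi=F=V$ on a natural basis, and produce $M_y$ by the change of basis dictated by $y$; but the route above avoids having to pin that change of basis down explicitly.

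The main obstacle is this last matching: it rests entirely on having $\pi$, or its inverse, in sufficiently explicit form — that is, on the dictionary between the combinatorics of $\bar\H$ (component $\P_s\leftrightarrow$ homothety class of a lattice, position on $\P_s\leftrightarrow$ a line in the associated two-dimensional $K$-space) and lattices in the fixed isocrystal, which is the content of \ref{theopsi} together with the proof of \ref{propClassIsog}. Everything else is routine semilinear bookkeeping, the only delicate part being the tracking of $\sigma^{\pm1}$ that produces $[y^q]$ and $[y^{1/q}]$.
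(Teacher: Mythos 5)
Your argument runs the paper's proof in the opposite direction, and it does work. The paper starts from the lattice $M_y\subset M^{(0)}[1/\varpi]$ that the surjectivity part of \ref{theopsi} attaches to $y$ (namely $M_{y,1}=M^{(0)}_1$ and $M_{y,0}=\Pi^{-1}\tilde{M}_0$, where $\tilde{M}_0$ is the preimage in $M_1$ of the line $\bar{\F}(x_{0,1}+y\,x_{1,1})$), then simply sets $x_{0,0}:=\tfrac{1}{\varpi}\Pi(x_{0,1}+[y]x_{1,1})$ and $x_{1,0}:=\Pi x_{1,1}$ and reads off the six formulas by the same semilinear bookkeeping you describe. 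You instead postulate the module with these operators, verify it is a special graded Cartier module of dimension $2$ and height $4$, and then match it back to $y$ through the explicit dictionary of \ref{theopsi}. The computational content — Teichmüller lifts so that $\sigma([a])=[a^q]$, the $\sigma^{\pm1}$-semilinearity forcing $[y^q]$ and $[y^{1/q}]$, the relations $\Pi^2=FV=VF=\varpi$ — is identical; the paper's direction is just more economical because it never has to re-identify the point.

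One step in your write-up needs tightening. A point of $\GC^{Dr}(\bar{\F})$ is a \emph{pair} $(M',\varphi)$, and \ref{propClassIsog} gives the existence, not the uniqueness, of a height-zero quasi-isogeny (the quasi-isogeny group is all of $\gln_2(K)$, and height zero only cuts out a coset). So ``$\pi(M')$'' is undefined until $\varphi$ is fixed, and different choices of $\varphi$ move the image point all over $\bar{\H}(\bar{\F})$. The choice that makes your verification land on $y$ rather than on a translate is the $\OC_K$-linear identification sending the $V^{-1}\Pi$-fixed basis $x_{0,1},x_{1,1}$ of $M'_1$ (fixed precisely because $\Pi=V$ on your ``$1$''-part) to the fixed basis of $(M^{(0)}_1)^{V^{-1}\Pi}$; with that embedding $s_1(M')$ is the standard vertex, $\Pi M'_1=VM'_1$ makes $1$ critical, and $\Pi M'_0$ reduces modulo $\varpi$ to the line $\bar{\F}(x_{0,1}+y\,x_{1,1})$, i.e.\ to the point $y$. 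Making this quasi-isogeny explicit is exactly the lattice-embedding the paper's proof starts from, so once you add it your route closes up.
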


Dans la section suivante §\ref{seccalcordre}, nous utiliserons la description précédente de $\pi^{-1}$ pour étudier les deux flèches 
\[\Pi :M_{y,i}/VM_{y,i+1}\to M_{y,i+1}/VM_{y,i}\]
\[F :M_{y,i}/VM_{y,i+1}\to (M_{y,i+1}/VM_{y,i})^{(q)}\]
et déterminer sur quels fermés Zariski de $\bar{\H}(\bar{\F})$ elles s'annulent et à quel ordre (grâce  théorie de Dieudonné-Messing.). Cela nous permettra de calculer les  quantités $\ord_{s_i}(\omega_{i})-\ord_{s_{i+1}}(\omega_{i})$ et $\ord_{s_{i+1}}(\omega_{i})-\ord_{s_i}(\omega_{i})$ et déterminera entièrement les ordres de fibrés $(\omega_i)_{i=0,1}$ (ceux des fibrés $(\Lf_i)_{i=0,1}$ en  découlera aisément)  prouvant ainsi le lemme \ref{lemord}.

\subsection{Vecteurs de Witt \label{ssecwitt}}

Il existe un unique (voir par exemple \cite[§2.1]{boca}) endofoncteur $W_{\OC_K}$ de la catégorie de $\OC_{K}$-algèbres  tel que pour toute $\OC_K$-algèbre  $A$,  $W_{\OC_K}(A) = A^{\N}$ en tant qu'ensemble et tel que les flèches 
\[
w_n : (a_i)\in A^{\N} \mapsto  a_0^{q^n}+\varpi a_1^{q^{n-1}}+\cdots + \varpi^{n} a_n \in A 
\] 
définissent des morphismes d'anneaux $W_{\OC_K}(A)\to A$.  

\begin{ex}
$W_{\OC_K}(  \F )= \OC_{K}$,    $W_{\OC_K}(\bar{\F})= \OC_{\breve{K}}$.   Si de plus,  $A$ est une $K$-algèbre,   on a un isomorphisme de $\OC_{K}$-algèbre $W_{\OC_{K}}(A)\fln{\prod w_n}{}   A^{\N}$.  
\end{ex}

On a deux endomorphismes $\tau$ et $\sigma$ sur $W_{\OC_K}$ définis par
\[
\tau(a_0,a_1,\ldots)=(0, a_0, a_1,\ldots) \et w_n\sigma=w_{n+1}
\] 
ainsi qu'un morphisme multiplicatif  $[-]: A \to W_{\OC_K}(A)$ donné par $[a]=(a,0,0,\ldots)$.  Ces morphismes vérifient les relations 
\begin{align*}
 \sigma \tau &= \varpi \\ 
 \tau(a) b &= \tau(a \sigma(b))  \\
 \sigma([a])&=[a^{q} ]
\end{align*}
Si $A$ est un $\F$-algèbre, on a de plus 
\[
\sigma(a_0,a_1,\ldots )= (a_0^q, a_1^q, \ldots) \et  \sigma\tau = \tau\sigma = \varpi. 
\]

Rappelons que l'on a noté $K_{(2)}$  l'extension non ramifiée de $K$ de degré $2$ d'anneau $\OC_{(2)}$. Fixons $\zeta \in K_{(2)}$ une racine primitive $(q^{2}-1)$-i\'eme de l'unité de tel manière que $K_{(2)}= K[\zeta]$.  
\begin{rem}
\label{remK2act}
 Si $A$  est une $\OC_{{(2)}}$-algèbre,  $W_{\OC_K}(A)$ admet une structure de $\OC_{{(2)}}$-algèbre $ \chi_0:  \OC_{{(2)}} \to  W(A)$ avec $\chi_0(\zeta)= [\zeta]$.      On appelle $\chi_1 = \chi_0 \circ \sigma$ avec $\sigma: \OC_{{(2)}} \to \OC_{{(2)}}$ l'unique automorphisme non trivial de $\gal(K_{(2)}/K)$.   
\end{rem}

\subsection{Anneau de Cartier et classification des modules formels \label{sseccart}}

Considérons pour toute $\OC_K$-algèbre $A$, l'algèbre non-commutative $W_{\OC_K}(A)\{F;V\}$ où les éléments $F$ et $V$ sont soumis aux relations 
\begin{align*}
F a & = \sigma(a)F\\
V \sigma(a) & = a V\\
V x F & = \tau(x) \\
F V & =\varpi.  
\end{align*}
L'anneau de Cartier noté $E_{\OC_K}(A)$ sera la complétion de $W_{\OC_K}(A)\{F;V\}$ pour la topologie $V$-adique,  i.e  la topologie dont une base des voisinages est donnée pour les idéaux à droites engendrés par $V^n$.

Un ($\OC_K$-)module de Cartier $M$ sur $A$ est un $E_{\OC_K}(A)$-module à gauche tel que 
\begin{itemize}
\item[•] $M$ est séparé complet pour la filtration $V$-adique,  

\item[•]  $V$ est injectif sur $M$,

\item[•] $M/VM$ est libre de rang fini sur $A$. 

\end{itemize}

Le dual d'un module de Cartier $M$ est le $W_{\OC_K}(A)$-module libre $M^{\vee}= \mathrm{Hom}_{W_{\OC_K}(A)}(M,  W_{\OC_K}(A))$ muni des applications $F$ et $V$ données par $F_{M^{\vee}}=V_{M}^{\vee}$ et $V_{M^{\vee}}=F_{M}^{\vee}$.   

\begin{theo}
La catégorie de $\OC_{K}$-modules formels sur $A$ est équivalente à la catégorie de modules de Cartier sur $A$. 

De plus, si  $\XG$ est un $\OC_K$-module formel sur $A$  de module de Cartier $M(\XG)$, on a une identification $\lie \XG= M(\XG)/ V M(\XG)$   et $M(\XG)^{\vee} = M(\XG^{\vee})$  .   
\end{theo}



Nous aimerions énoncer un analogue de cette équivalence pour les  $\OC_D$-modules formels.  Rappelons que $\OC_{D}= \OC_{{(2)}}[\Pi_D]$    où $\Pi_D$ vérifie les relations $\Pi_D^2= \varpi$ et $\Pi_{D} a = \sigma(a) \Pi_D$ avec $a\in \OC_{{(2)}}$ et $\sigma$ l'unique automorphisme non trivial de $\gal(K_{(2)}/K)$. De plus, on a $\OC_{{(2)}}=\OC_{K}[\zeta]$ avec $\zeta \in K$ une racine primitive $(q^{2}-1)$-i\'eme de l'unité. Expliquons comment prolonger l'action de $\OC_{K}$ à $\OC_{{(2)}}$ puis à $\OC_{D}$ sur un module de Cartier $M$. 

   Si $A$ est une $\OC_{{(2)}}$-algèbre et $\XG$ un $\OC_D$-module formel sur $A$, l'action de $\zeta$ et de $\Pi_D$ sur le $\OC_K$-module formel sous-jacent induisent des $E_{\OC_K}(A)$-endomorphismes $\zeta$, $\Pi$ sur le module de Cartier $M(\XG)$ qui déterminent complètement la structure de  $\OC_D$-module sur $X$.

Réciproquement, si $A$ est une $\OC_{{(2)}}$-algèbre,  $E_{\OC_{K}}(A)$ admet deux plongements de $\OC_{{(2)}}$-algèbre d'après la remarque \ref{remK2act} et tout module $M$ sur $E_{\OC_K}(A)$ admettant une action $j: \OC_{{(2)}} \to  \en_{E_{\OC_{K}}(A)}(M)$  se décompose comme $M=M_0\oplus M_1$  en tant que $W_{\OC_{K}}(A)$-module avec 
\[
M_i=\{m\in M : \forall b\in \OC_{{(2)}},  j(b)m=\chi_i(b)m \}. 
\]
Comme $F$ et $V$  sont  semi-linéaires,    ils envoient $M_i$ dans $M_{i+1}$.   

 Si de plus $M$ admet un endomorphisme $\Pi$ tel que $\Pi^2= \varpi$  et $  \Pi (j(b))=j(\sigma(b))\Pi$  pour $b\in \OC_{{(2)}}$,  on a des morphismes induits 
 \[
 \Pi_i : M_i \to M_{i+1}.
 \]
La donnée de la décomposition $M=M_0\oplus M_1$ ainsi que des morphismes $\Pi_i$ est équivalente à \begin{itemize}
\item une $\Z/2\Z$-graduation sur $E_{\OC_K}(A)[\Pi]:=E_{\OC_K}(A)[X]/(X^2-\varpi)$,
\item les éléments de $W_{\OC_K}(A)$ sont homogènes de degré $0$,
\item les opérateurs $F,V,\Pi$ sont de degré $1$,
\end{itemize} et nous appellerons ($\OC_{K}[\Pi]$-)module de Cartier gradué un module de Cartier muni d'une telle graduation.  On en déduit l'énoncé

 \begin{theo}
 Si $A$ est une $\OC_{{(2)}}$-algèbre, on a une équivalence de catégories  entre les $\OC_{D}$-modules formels  sur $A$  et les modules de Cartier gradués sur $A$.  
 \end{theo}

Nous dirons qu'un module de Cartier gradué  $M$ est spécial si $M_i/ VM_{i+1}$ est un $A$-module libre de rang $1$  pour $ i=0,1$.  L'équivalence de catégories ci-dessus  fait correspondre les $\OC_{{(2)}}$-modules formels spéciaux aux  modules de Cartier gradués spéciaux.

\subsection{Isogénies et groupes $p$-divisibles \label{ssecisog}}

\begin{defi}
Un morphisme $\varphi:  \XG \to  \YG$    de $\OC_K$-modules  formels est une isogénie si  $\ker \phi$ est un groupe fini et plat.   
\end{defi}

\begin{defi}[Hauteur]
\label{defiHauteur}
Soit $A$ une  $\F_p$-algèbre  de type finie,    $\pG \subset A$ un idéal premier et $\varphi:  \XG \to  \YG$ une isogénie de groupes formels sur $A$, la hauteur $\mathrm{ht}(\pG)$ de $\varphi$ en $\pG$  est  $\rg_{k(\pG)}( \ker \varphi \otimes k(\pG) )$.  
 On dit que $\varphi$ est de hauteur $h$ si $\mathrm{ht}(\pG)=h$ pour tout $\pG\subset A$ premier.  
\end{defi} 

On a le point suivant \cite[Exercise 5.44.a)]{zink}
\begin{prop}\label{prophaut}
Dans la situation de la définition \ref{defiHauteur}, on a 
\[
\mathrm{ht}(\pG) = \Length_{W( k(\pG))} \left( (M(\YG)/ \varphi_* M(\XG))\otimes_{W(A)} W(k(\pG))    \right). 
\]
\end{prop}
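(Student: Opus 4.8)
The statement is \cite[Exercise~5.44.a)]{zink}, and the plan is to follow its proof, which proceeds in two stages: reduce to the case where $A$ is a field, then treat that case by d\'evissage on the kernel.

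For the reduction, I would first observe that since $\varphi$ is an isogeny, $\ker\varphi$ is finite and flat over $A$, so its formation commutes with base change and $\rg_{k(\pG)}(\ker\varphi\otimes_A k(\pG))=\rg_{k(\pG)}(\ker(\varphi_{k(\pG)}))$; the left-hand side of the asserted identity is thus computed over $k(\pG)$. On the Cartier side, one chooses an isogeny $\psi\colon\YG\to\XG$ with $\psi\varphi=[\varpi^N]$ for $N\gg 0$; as multiplication by $\varpi^N$ is injective on Cartier modules, $M(\varphi)$ is injective and $\varpi^N M(\YG)\subset\varphi_*M(\XG)$, so the cokernel $M(\YG)/\varphi_*M(\XG)$ is a finitely generated $W(A)$-module killed by a power of $\varpi$. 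Combining the compatibility of the Cartier module functor with base change (already implicit in the equivalence of \ref{sseccart}, since $\lie$ and, more generally, the reduction $M\mapsto M/VM$ of Cartier modules commute with base change) with right-exactness of $\otimes$, one obtains $\bigl(M(\YG)/\varphi_*M(\XG)\bigr)\otimes_{W(A)}W(k(\pG))\cong M(\YG_{k(\pG)})/\varphi_{k(\pG),*}M(\XG_{k(\pG)})$. Both sides of the formula are therefore unchanged by passing to $k(\pG)$, and I may assume $A=k$ is a field of characteristic $p$ — even algebraically closed, replacing $k$ by $\bar k$, since $W$ preserves flatness, so $W(\bar k)$ is faithfully flat over $W(k)$ and the relevant length is preserved.

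For the field case I would argue by d\'evissage, using that both members are additive under composition of isogenies: the sequence $0\to\ker\varphi\to\ker(\psi\varphi)\to\ker\psi\to0$ of finite flat group schemes is exact (because $\varphi$ is an fppf epimorphism), and the sequence $0\to M(\YG)/\varphi_*M(\XG)\to M(\ZG)/(\psi\varphi)_*M(\XG)\to M(\ZG)/\psi_*M(\YG)\to0$ is exact (because $M(\psi)$ is injective), so heights add and lengths add. Hence it suffices to treat an isogeny whose kernel is a simple finite group scheme over $\bar k$, i.e. $\alpha_p$, $\Z/p$ or $\mu_p$ (the case $\ker\varphi=0$ being the trivial identity $0=0$). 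In each of these three cases the explicit presentation of the Cartier modules lets one write $M(\varphi)$ down — it is, up to a unit, multiplication by $F$, by $V$, or a minor variant — and read off directly that $M(\YG)/\varphi_*M(\XG)$ has length $1$ over $W(k)$, which matches $\mathrm{ht}(\pG)$. Equivalently and more conceptually, the exact sequence of fppf sheaves $0\to\ker\varphi\to\XG\to\YG\to0$ identifies $M(\YG)/\varphi_*M(\XG)$ with the covariant Dieudonn\'e module of $\ker\varphi$, and one concludes by the standard formula relating the length of that Dieudonn\'e module to the order of the finite group scheme.

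The step I expect to be the main obstacle is pinning down the base-change compatibility of the Cartier module precisely — the functor $W$ does not commute with arbitrary base change, so one genuinely needs that the cokernel is already a finite $W(A)$-module and that $W$ preserves flatness — together with getting the normalisation (and the variance in the Dieudonn\'e-module identification) exactly right; by contrast the three explicit computations for $\alpha_p$, $\Z/p$, $\mu_p$ are routine and I would not spell them out.
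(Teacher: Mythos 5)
The paper does not actually prove this proposition: it is stated with a bare pointer to \cite[Exercise 5.44.a)]{zink}, so your argument is not competing with a written proof but reconstructing the intended solution of that exercise, and your outline is the standard and correct one. The two organising ideas — (i) both sides only depend on the fibre at $\pG$, because $\ker\varphi$ is finite flat and the cokernel of $\varphi_*$ is a finitely generated torsion $W(A)$-module so that $-\otimes_{W(A)}W(k(\pG))$ computes the Cartier-side quantity over $k(\pG)$; (ii) additivity of both height and length under composition of isogenies, reducing to simple kernels, or equivalently the identification of $M(\YG)/\varphi_*M(\XG)$ with the Cartier--Dieudonn\'e module of $\ker\varphi$ — are exactly right. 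A few points need more care than your sketch gives them. The cokernel is naturally a $V$-torsion module (cf.\ \ref{TheoIsogenie}.3), and its length is best computed via the finite filtration by the images of the $V^i$, whose graded pieces are finite-dimensional $k(\pG)$-vector spaces; this is also what makes the passage to $\bar{k}$ legitimate, because $k(\pG)$ is in general an \emph{imperfect} finitely generated extension of $\F_p$, so $W(k(\pG))$ is a non-noetherian local ring with maximal ideal $VW(k(\pG))\supsetneq pW(k(\pG))$ and ``faithful flatness of $W(\bar{k})$ over $W(k)$'' is not available off the shelf — whereas the dimensions of the $V$-graded pieces visibly do not change under $k\to\bar{k}$. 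Also, since the objects here are connected formal groups, the kernel of an isogeny over $\bar{k}$ is infinitesimal, so the simple constituents are only the $\alpha_p$- and $\mu_p$-type group schemes (more precisely their $\OC_K$-analogues, the paper working with ramified Witt vectors $W_{\OC_K}$ and measuring height in powers of $q$); $\Z/p$ cannot occur, which shortens rather than endangers your case analysis. None of this affects the correctness of the strategy.
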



\begin{ex}\label{exfrob}
Si $A$ est une $\F_p$-algèbre,    le Frobenius  $\mathrm{Frob}_q:  \XG\to  \XG^{(q)}$   induit au niveau de modules de Cartier  la multiplication par $V$.   Comme $M(\XG)/ V M(\XG)^{(q)}$ est un module libre isomorphe à $\lie \XG$,  $\mathrm{Frob}_q$ est une isogénie de hauteur $\dim \XG$ \cite[Lemma 5.19.]{zink}.  
\end{ex}

Nous expliquons  comment détecter la propriété d'être une isogénie au niveau de module de Cartier.     Nous nous ramenons d'abord à la fibre spéciale. 

\begin{prop}
Soit $A$ une $\OC_K$ algèbre,  $I\subset A$ un idéal nilpotent et $A_0= A/I$,   alors $\varphi: \XG\to \YG$ et une isogénie si et seulement si $\phi_0:  \XG_{A_0}\to  \YG_{A_0}$ l'est.  
\end{prop}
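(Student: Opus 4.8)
Le plan est de traiter séparément les deux sens, le premier étant formel et le second contenant toute la substance. \emph{Sens direct.} Si $\varphi$ est une isogénie, $\ker(\varphi_0)=\ker(\varphi)\times_{\spec A}\spec A_0$; la finitude et la platitude étant stables par changement de base, $\varphi_0$ est une isogénie.

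\emph{Sens réciproque.} Supposons $\varphi_0$ isogénie. On peut supposer $\dim\XG=\dim\YG=d$, faute de quoi ni $\varphi$ ni $\varphi_0$ n'est une isogénie (la dimension relative d'un module formel est insensible au changement de base). Fixons des coordonnées $\XG\cong\spf A[[X_1,\ldots,X_d]]$ et $\YG\cong\spf A[[Y_1,\ldots,Y_d]]$; comme $\varphi$ est un morphisme de groupes il préserve la section nulle, donc les séries $f_j:=\varphi^{\#}(Y_j)$, images des $Y_j$ par le morphisme d'anneaux sous-jacent à $\varphi$, sont sans terme constant: $f_j\in(X_1,\ldots,X_d)\,A[[X]]$. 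On a alors $\ker\varphi=\spf R$ avec $R:=A[[X_1,\ldots,X_d]]/(f_1,\ldots,f_d)$, et $R_0:=R/IR\cong A_0[[X_1,\ldots,X_d]]/(\bar f_1,\ldots,\bar f_d)$ est fini et plat sur $A_0$ par hypothèse. Montrons d'abord que $R$ est un $A$-module de type fini: en relevant un système générateur $\bar e_1,\ldots,\bar e_n$ de $R_0$ sur $A_0$ en des $e_1,\ldots,e_n\in R$, on a $R=\sum_i Ae_i+IR$; comme $I^k$ est un idéal de $A$, $I^ke_i\subseteq Ae_i$, d'où par récurrence $R=\sum_i Ae_i+I^kR$ pour tout $k\ge 1$, puis $R=\sum_i Ae_i$ par nilpotence de $I$.

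Le c\oe ur de la preuve est la platitude de $R$ sur $A$. Quitte à descendre $\XG$, $\YG$, $\varphi$ et le schéma en groupes fini localement libre $\ker\varphi_0$ à une sous-$\OC_K$-algèbre de type fini de $A$ (l'idéal induit restant nilpotent), on se ramène au cas où $A$ est noethérienne, de sorte que $B:=A[[X_1,\ldots,X_d]]$ est plat sur $A$. Soit $\pG\subset A$ premier; puisque $I$ est nilpotent, $I\subseteq\pG$, donc $\kappa(\pG)$ est une $A_0$-algèbre et
\[
B\otimes_A\kappa(\pG)=S:=\kappa(\pG)[[X_1,\ldots,X_d]],\qquad R\otimes_A\kappa(\pG)=R_0\otimes_{A_0}\kappa(\pG).
\]
L'anneau $S$ est régulier local de dimension $d$, les images des $f_j$ appartiennent à son idéal maximal, et $S/(\bar f_1,\ldots,\bar f_d)=R_0\otimes_{A_0}\kappa(\pG)$ est fini sur le corps $\kappa(\pG)$, donc artinien, de dimension de Krull $0=d-d$. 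Dans un anneau régulier (en particulier de Cohen-Macaulay) local de dimension $d$, $d$ éléments de l'idéal maximal engendrant un idéal de définition forment une suite régulière; ainsi $(f_1,\ldots,f_d)$ est une suite régulière de $B$ fibre à fibre sur $\spec A$. Le critère de platitude par fibres joint à la théorie des sections régulières (cf. EGA IV, §11 et §19, ou Matsumura, \emph{Commutative Ring Theory}, §22) entraîne alors que $R=B/(f_1,\ldots,f_d)$ est plat sur $A$, de formation compatible au changement de base. Avec l'étape précédente, ceci montre que $\ker\varphi$ est fini et plat sur $A$, c'est-à-dire que $\varphi$ est une isogénie.

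L'obstacle principal est cette dernière étape. La seule platitude de $R_0$ sur $A_0$ ne suffit pas à garantir celle de $R$ sur $A$ — pour $A=k[\varepsilon]/(\varepsilon^2)$ et $R=k$ vu comme $A$-algèbre, $R_0=k$ est plat sur $A_0=k$ sans que $R$ le soit sur $A$. C'est la conjonction du fait que $\varphi$ est un morphisme de groupes (d'où $f_j(0)=0$, et les images des $f_j$ tombent dans l'idéal maximal de $S$ fibre à fibre) et de la lissité de $\XG$ sur $A$ (d'où la régularité de $S$) qui force les fibres géométriques de $\ker\varphi$ à être des intersections complètes artiniennes, et donc $R$ à être plat sur $A$.
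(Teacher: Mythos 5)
Le papier ne donne aucune démonstration propre de cet énoncé --- il renvoie simplement à Zink, Lemma 5.5 --- la seule question est donc de savoir si votre preuve tient debout par elle-même. Son squelette est celui de la démonstration standard et il est sain : le sens direct par changement de base du noyau, la finitude de $R=A[[X_1,\dots,X_d]]/(f_1,\dots,f_d)$ par Nakayama le long de l'idéal nilpotent, et la platitude en exhibant $(f_1,\dots,f_d)$ comme système de paramètres fibre à fibre dans des anneaux locaux réguliers, donc de Cohen--Macaulay, de dimension $d$ --- c'est bien là qu'interviennent la structure de groupe ($f_j(0)=0$) et la lissité de $\XG$, comme votre contre-exemple final le souligne à juste titre.

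L'étape qui ne résiste pas telle quelle est la réduction au cas noethérien : un $\OC_K$-module formel sur $A$ est donné par des séries formelles ayant une infinité de coefficients, de sorte que $\XG$, $\YG$ et $\varphi$ ne descendent pas à une sous-$\OC_K$-algèbre de type fini ; le « quitte à descendre » n'a donc pas de contenu. Sans cette réduction, plusieurs identifications utilisées tacitement ne sont plus automatiques pour $A$ quelconque : $A[[X]]\otimes_A A_0\cong A_0[[X]]$ (nécessaire pour identifier $R/IR$ à $\Of(\ker\varphi_0)$ dans l'étape de finitude ; cela revient à $I\,A[[X]]=I[[X]]$, vrai si $I$ est de type fini mais pas en général), $A[[X]]\otimes_A \kappa(\pG)\cong\kappa(\pG)[[X]]$ (déjà faux pour $A=\Z_p$ et $\pG=(0)$ ; la forme locale du critère, qui ne fait intervenir que la fibre fermée au-dessus d'un idéal maximal, évite ce point), et la platitude de $A[[X]]$ sur $A$ (équivalente à la cohérence de $A$). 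La réparation standard --- et, en substance, ce que fait la preuve à laquelle le papier renvoie --- consiste à remplacer l'argument fibre à fibre sur tout $\spec A$ par le critère local de platitude le long de l'idéal nilpotent $I$, valable sans hypothèse noethérienne : une fois $R$ fini sur $A$, il reste à vérifier $\mathrm{Tor}_1^A(A_0,R)=0$, ce que l'on tire du complexe de Koszul de $(f_1,\dots,f_d)$ et de l'hypothèse que $\ker\varphi_0$ est fini localement libre. Si l'on se contente d'énoncer la proposition pour $A$ noethérien --- ce qui suffit à tous les usages qui en sont faits dans l'article --- votre argument est pour l'essentiel complet.
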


\begin{proof}
Voir \cite[ Lemma 5.5.]{zink}.
\end{proof}

\begin{theo}
\label{TheoIsogenie}
Si $A$ est un $\F_p$-algèbre, $\varphi:  \XG \to  \YG$ un morphisme    de $\OC_K$-modules  formels,   $\varphi_*:  M(\XG)\to M(\YG)$ est le morphisme induit, les conditions suivantes sont équivalentes :  
\begin{enumerate}

\item  $\varphi$ est une isogénie,

\item  il existe $\psi$ tel que $\psi\circ \varphi= \mathrm{Frob}^n$, 

\item     $M(\YG)/ \varphi_*( M(\XG ))$ est de $V$-torsion, 

\item $\phi_*[\frac{1}{V}]$ est un isomorphisme.  

\end{enumerate}
\end{theo}

\begin{proof}
L'équivalence entre $3.$ et $4.$ est claire et entre deux et quatre, cela résulte du faite que $\fro$ agit par multiplication par $V$ sur les modules de Cartier et de \ref{exfrob}. Enfin l'équivalence entre $1.$ et $2.$ est une application de \cite[Theorem 5.24]{zink} (du moins pour le sens direct, l'autre sens étant clair.) (voir aussi \cite[Theorem 5.25]{zink})
\end{proof}

\begin{prop}
\label{ProppDivisibleDef}
Soit $\XG$ un $\OC_K$-module  formel,  les conditions suivantes sont équivalentes

\begin{enumerate}

\item $[\varpi]$ est une isogénie. 

\item $\varpi:  M(\XG)\to M(\XG)$ est injectif.

\item $F :  M(\XG)\to M(\XG)^{(q)}$ est injectif. 
\end{enumerate}
Si $\XG$ satisfait ces conditions on dit qu'il est $p$-divisible.  La hauteur de $\XG$ désignera la hauteur de $[\varpi]$.  
\end{prop}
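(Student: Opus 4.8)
The plan is to establish the chain of equivalences $(2)\Leftrightarrow(3)$, $(1)\Rightarrow(2)$ and $(2)\Rightarrow(1)$ by leaning on Theorem \ref{TheoIsogenie} and Example \ref{exfrob}, working over the base $\F_p$-algebra $A$ and writing $M:=M(\XG)$. The only preliminary observation needed is that on $M$ one has $\varpi=FV=VF$: the first equality is a defining relation of the Cartier ring, and for the second one uses $VF=V\cdot 1\cdot F=\tau(1)$ together with the identity $\tau\sigma=\varpi$ valid over an $\F$-algebra, so $\tau(1)=\tau(\sigma(1))=\varpi$.

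First I would prove $(2)\Leftrightarrow(3)$. Since $V$ is injective on any Cartier module, the relation $\varpi=VF$ gives $\ker(\varpi\mid M)=\ker(VF\mid M)=\ker(F\mid M)$, because $Fm=0\Rightarrow VFm=0$ and conversely $VFm=0\Rightarrow Fm=0$ by injectivity of $V$. Hence $\varpi$ is injective on $M$ iff $F$ is, which is $(2)\Leftrightarrow(3)$ (recalling that injectivity of $F\colon M\to M^{(q)}$ is the same as injectivity of the $\sigma$-semilinear $F\colon M\to M$, base change along $\sigma$ being faithfully flat).

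Next, $(1)\Rightarrow(2)$. If $[\varpi]$ is an isogeny, then by the implication $(1)\Rightarrow(2)$ of Theorem \ref{TheoIsogenie} there exist a morphism $\psi$ and an integer $n$ with $\psi\circ[\varpi]=\mathrm{Frob}^{\,n}$. Passing to Cartier modules and using Example \ref{exfrob} — so that $(\mathrm{Frob}^{\,n})_*$ is, under the natural identifications, multiplication by $V^n$, and $[\varpi]_*$ is multiplication by $\varpi$ — one gets $\psi_*\circ(\varpi\cdot)=V^n$ on $M$. If $\varpi m=0$ then $V^n m=\psi_*(\varpi m)=0$, and injectivity of $V$ forces $m=0$; thus $\varpi$ is injective on $M$.

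The remaining and most delicate implication is $(2)\Rightarrow(1)$, and this is where I expect the real work. By the implication $(3)\Rightarrow(1)$ of Theorem \ref{TheoIsogenie} it suffices to show that $M/\varpi M$ is of $V$-torsion. Using $\varpi=FV$ and injectivity of $V$, the operator $V$ induces an isomorphism $VM/\varpi M=VM/VFM\xrightarrow{\ \sim\ }M/FM$; hence $M/\varpi M$ is an extension of $M/VM$ (on which $V$ acts by $0$) by $M/FM$, and it is $V$-torsion precisely when $M/FM$ is. Now $\varpi M=FVM\subseteq FM$, so $M/FM$ is annihilated by $\varpi$. Since $\varpi$ is injective on $M$, the Cartier module $M$ is free over $W_{\OC_K}(A)$ by the structure theory of Cartier modules of $p$-divisible formal groups (see \cite{zink}); as $F$ is an injective $\sigma$-semilinear operator, $FM$ is again free of full rank, so $M/FM$ is a torsion $W_{\OC_K}(A)$-module, and being in addition finitely generated over $A$, $V$-adically separated, and acted on by the topologically nilpotent operator $V$ (because $\XG$ is a \emph{formal} group), it is annihilated by some power of $V$. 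Therefore $M/\varpi M$ is of $V$-torsion and $[\varpi]$ is an isogeny. The hard point is exactly this last step: extracting freeness of $M$ over $W_{\OC_K}(A)$ from mere $\varpi$-torsion-freeness, which forces an appeal to the structure theory rather than to the purely formal consequences of Theorem \ref{TheoIsogenie}.
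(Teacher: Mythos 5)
Your handling of $(2)\Leftrightarrow(3)$ (from the relation $\varpi=VF$ over an $\F$-algebra together with the injectivity of $V$) and of $(1)\Rightarrow(2)$ (from condition 2 of the Théorème \ref{TheoIsogenie} and \ref{exfrob}) is correct and coincides with what the paper does. The problem is the converse $(2)\Rightarrow(1)$, which you rightly identify as the delicate direction. Your reduction --- via criterion 3 of \ref{TheoIsogenie} and the exact sequence $0\to VM/\varpi M\to M/\varpi M\to M/VM\to 0$ with $VM/\varpi M\cong M/FM$ --- to the statement that $M/FM$ is of $V$-torsion is fine. But the way you then conclude is circular: you invoke the \emph{structure theory of Cartier modules of $p$-divisible formal groups} to deduce that $M$ is free of finite rank over $W_{\OC_K}(A)$ from the mere $\varpi$-torsion-freeness of $M$. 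Over a perfect field, the implication ``$\varpi$ injective on $M$ $\Rightarrow$ $M$ free of finite rank'' is essentially \emph{equivalent} to the implication $(2)\Rightarrow(1)$ you are trying to prove (both say that injectivity of $F$ forces $p$-divisibility), so quoting it begs the question; and over a general $\F_p$-algebra $A$ the conclusion that $M$ is free over $W_{\OC_K}(A)$ is simply not available, even for honestly $p$-divisible groups. Your last step is also incomplete: knowing that $M/FM$ is a torsion $W_{\OC_K}(A)$-module on which $V$ is ``topologically nilpotent'' does not give that every element is killed by a power of $V$ unless the quotient $M/FM$ is $V$-adically separated, and separatedness does not pass to quotients.

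What $(2)\Rightarrow(1)$ actually requires is the classical theorem that a finite-dimensional formal group whose Cartier module has injective $F$ is $p$-divisible; this is exactly what the paper delegates to the Théorème \ref{TheoIsogenie} and to \cite{zink}. Either cite that result directly, or (say over a perfect base) prove it: one must show that the descending chain of images $V^{n}(M/FM)$ stabilises and that its stable value vanishes, using $\bigcap_n V^nM=0$ and the finiteness of $M/VM$; none of that is a formal consequence of what precedes in your argument, and none of it appears in your text.
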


\begin{proof}
L'équivalence entre (1) et (2) découle du  Théorème \ref{TheoIsogenie}.2..  L'équivalence entre (2) et (3) provient de l'identité $\varpi \equiv VF \mod [p]$ et de l'injectivité de $V$. 
\end{proof}

\begin{rem}
\label{RemarkExtUniv}
Observons la suite exacte pour tout groupe $p$-divisible $\XG$
\[
0 \to V M(\XG)/ VFM(\XG) \to  M(\XG)/VF M(\XG) \to  M(\XG)/VM(\XG)  \to 0.
\]
On a des identifications $M(\XG)/VM(\XG) = \lie \XG$ et $VM(\XG)/ VFM(\XG) \cong M(\XG)/FM(\XG)\cong  (\lie \XG^{\vee})^{\vee}$.   La suite exacte précédente correspond  à la suite obtenue en appliquant le foncteur $\lie$  à l'extension universelle de $\XG$ introduite dans \cite[§4 Theorem 2.2]{messing}.  
\end{rem}

\subsection{Classification des modules de Cartier spéciaux sur $\bar{\F}$ \label{ssecclasscart}}

Ici $A= \bar{\F}$.   Si $\XG$ est un $\OC_K$-module formel $p$-divisible sur $A$,   $M(\XG)$ est un $\OC_{\breve{K}}$-module de type fini sans $\varpi$-torsion d'après la proposition \ref{ProppDivisibleDef}.2. et est donc libre de rang $ \mathrm{ht}(\XG)$ (voir \ref{prophaut}).     On s'intéresse  aux $\OC_D$-modules formels spéciaux  de dimension $2$ et hauteur $4$.  

Nous voulons dans un premier temps établir ce résultat

\begin{prop}
Il existe une unique classe d'isogénie de $\OC_D$-module formel spécial sur $\bar{\F}$  de dimension $2$ et hauteur $4$.   

\end{prop}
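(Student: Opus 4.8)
Le plan est de ramener l'énoncé à de l'algèbre semi-linéaire sur $\breve{K}$ via les modules de Cartier, en « rationalisant ». D'après l'équivalence de catégories entre $\OC_D$-modules formels sur $\bar{\F}$ et modules de Cartier gradués, et d'après le Théorème \ref{TheoIsogenie} (un morphisme est une isogénie si et seulement s'il devient un isomorphisme après inversion de $V$), deux tels objets sont isogènes si et seulement si leurs modules de Cartier gradués deviennent isomorphes après inversion de $V$ ; comme nos objets sont $p$-divisibles ($F$ est injectif, Proposition \ref{ProppDivisibleDef}), la relation $FV=\varpi$ montre qu'inverser $V$ revient à inverser $\varpi$. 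Je poserais donc, pour un tel module $M$, $N:=M[1/\varpi]=N_0\oplus N_1$, un $\breve{K}$-espace vectoriel $\Z/2\Z$-gradué muni d'opérateurs homogènes de degré $1$ : $F$ ($\sigma$-linéaire), $V$ ($\sigma^{-1}$-linéaire), $\Pi$ ($\breve{K}$-linéaire), soumis à $FV=VF=\varpi$, $\Pi^2=\varpi$, $\Pi F=F\Pi$ et $\Pi V=V\Pi$ (ces deux dernières relations parce que l'action de $\OC_D$ est par endomorphismes du groupe formel). L'énoncé revient alors à montrer qu'un tel $N$ est unique à isomorphisme près.

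La deuxième étape consiste à réduire toute cette donnée à un unique isocristal de rang $2$. Comme $M$ est libre de rang $4=\mathrm{ht}(\XG)$ sur $\OC_{\breve{K}}$ (Propositions \ref{ProppDivisibleDef} et \ref{prophaut}) et que $\Pi$ induit un isomorphisme $\breve{K}$-linéaire $N_0\iso N_1$ (puisque $\Pi^2=\varpi$ est inversible sur $N$), chaque $N_i$ est de dimension $2$. L'opérateur $\tau:=\Pi^{-1}F$ est alors de degré $0$, $\sigma$-linéaire et bijectif, et l'on vérifie sans peine les relations $F=\tau\Pi=\Pi\tau$ et $V=\Pi\tau^{-1}$ ; de plus l'action de $\OC_{(2)}$ sur $N_0$ est la multiplication scalaire par $\chi_0$, donc automatiquement respectée par toute application $\breve{K}$-linéaire. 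Ainsi $N$, avec toute sa structure, se reconstruit canoniquement à partir de l'isocristal $(N_0,\tau)$ de rang $2$ sur $\breve{K}$, muni de l'identification $N_1=\Pi N_0$ — où $\Pi$ commute à $\tau$, donc ne fait que « recoller » deux copies de $(N_0,\tau)$.

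Il reste à identifier $(N_0,\tau)$. Un calcul de longueurs donne $v_\varpi(\det\tau)=0$ : les longueurs de $M/\Pi M$ et de $M/FM\cong(\lie \XG^{\vee})^{\vee}$ (Remarque \ref{RemarkExtUniv}) valent toutes deux $2$, et elles sont équilibrées entre les deux composantes graduées — pour $\Pi$ par symétrie entre $N_0$ et $N_1$, pour $F$ grâce à la spécialité et à $FV=\varpi$. Les pentes de $(N_0,\tau)$ sont donc de la forme $\{a,-a\}$ avec $a\geq 0$. La relation $F^2|_{N_0}=\varpi\,\tau^2$ montre alors que l'isocristal sous-jacent $(N,F)$ a pour pentes $\{a+\tfrac12,\,\tfrac12-a\}$ ; comme $\XG$ est un groupe $p$-divisible formel, ces pentes appartiennent à $]0,1]$, d'où $a<\tfrac12$, puis $a=0$ par réalisabilité en rang $2$. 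Donc $(N_0,\tau)$ est l'isocristal trivial (de pentes nulles), qui est unique à isomorphisme près car $\bar{\F}$ est algébriquement clos ; par suite $(N,F)$ est isocline de pente $\tfrac12$ et de dimension $4$. En remontant l'identification, deux modules de Cartier gradués spéciaux de notre type deviennent isomorphes après inversion de $\varpi$ : l'unicité de la classe d'isogénie en découle, l'existence étant fournie par le module de Cartier de $\Phi_{\bar{\F}}$.

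Le point le plus délicat sera le troisième paragraphe, c'est-à-dire la comptabilité des réseaux et des pentes : rendre rigoureux le fait que la spécialité (une condition sur des réseaux entiers) force la pente de $\tau$ à s'annuler, tout en suivant soigneusement les semi-linéarités lors du passage de $(N,F,V,\Pi)$ à $(N_0,\tau)$ puis à $\tau^2$. Si l'on préfère un argument plus structurel pour l'unicité de la structure de $\OC_D$-module une fois $(N,F)$ connu, on peut observer que $\en_F(N)\cong\mat_2(D)$ et que tous les plongements de $K$-algèbres $D\hookrightarrow\mat_2(D)$ sont conjugués par le théorème de Skolem–Noether, de sorte que l'action de $\OC_D$ est unique à automorphisme de $(N,F)$ près. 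Enfin cette démonstration est constructive : la base explicite de $N$ ainsi obtenue est précisément celle qui servira à décrire l'inverse de la bijection $\pi:\GC(\bar{\F})\iso\bar{\H}(\bar{\F})$ dans le lemme suivant.
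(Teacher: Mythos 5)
Votre stratégie globale est correcte et suit de près celle du texte (réduction aux modules de Cartier rationalisés, puis identification d'un isocristal de rang $2$ « de degré $0$ » : votre $\tau=\Pi^{-1}F$ coïncide avec le $V^{-1}\Pi$ du texte puisque $V^{-1}=\varpi^{-1}F$ et $\Pi^{-1}=\varpi^{-1}\Pi$). La différence réelle est dans la façon de montrer que $(N_0,\tau)$ est étale : le texte exhibe un indice critique $i$ (l'une des deux flèches $\Pi^*$ sur $\lie\XG$ est nulle car $\Pi^{*2}=\varpi^*=0$ sur des droites), ce qui donne $\Pi M_i=VM_i$, donc $V^{-1}\Pi$ stabilise le réseau $M_i$ et est bijectif dessus — l'isocristal est unité-racine sans aucun calcul de pentes ; vous passez par le polygone de Newton (déterminant nul, pentes $\{a,-a\}$, contrainte $a<1/2$ venant du caractère formel, réalisabilité en rang $2$). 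Votre variante par Skolem--Noether pour l'unicité de la structure de $\OC_D$-module est aussi légitime et plus proche de l'argument classique de Drinfeld.

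Le point qui ne tient pas tel quel est précisément celui que vous signalez comme délicat : l'affirmation que $\Length(M_1/\Pi M_0)=\Length(M_0/\Pi M_1)=1$ « par symétrie entre $N_0$ et $N_1$ ». Il n'y a aucune symétrie entre les deux composantes graduées d'un module spécial (c'est tout le contenu de la notion d'indice critique, et c'est ce qui rend $\omega_0$ et $\omega_1$ non isomorphes en niveau entier) ; a priori la somme $[M_0:\Pi M_1]+[M_1:\Pi M_0]=2$ pourrait se répartir en $0+2$. L'égalité $1+1$ est vraie mais demande un argument : par exemple, si $\Pi M_0=M_1$, la commutation $\Pi V=V\Pi$ et la spécialité ($[M_{i+1}:VM_i]=1$) donnent $VM_1=V\Pi M_0=\Pi VM_0$, d'où $[M_0:VM_1]=[M_0:\Pi VM_0]=[M_1:VM_0]+[M_0:\Pi M_1]=1+2=3\neq 1$, contradiction. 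Vous pouvez aussi contourner entièrement ce calcul en n'utilisant que $F$ : puisque $\Pi$ commute à $\tau$ et identifie $(N_0,\tau)$ à $(N_1,\tau)$, on a $2\,v_\varpi(\det\tau|_{N_0})=v_\varpi(\det\tau|_{N})=v_\varpi(\det F|_N)-v_\varpi(\det\Pi|_N)=\Length(M/FM)-\Length(M/\Pi M)=2-2=0$, où seule la répartition $1+1$ pour $F$ (qui, elle, découle bien de la spécialité et de $FV=\varpi$) est nécessaire. Avec cette correction, votre démonstration est complète.
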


Pour cela, nous aurons besoin de quelques notions et notations.  Il sera essentiel de comprendre les inclusions des sous-groupes 
\[
VM(\XG)_i,  \;\;  FM(\XG)_i \et \Pi M(\XG)_i
\]
dans $M(\XG)$ avec $i=0,1$ et $M(\XG)=M(\XG)_0\oplus M(\XG)_1$ la décomposition en tant que $\OC_{{(2)}}$-module.

\begin{claim}
Si $\XG$ est un $\OC_{D}$-module formel spéciale  de dimension $2$ et hauteur $4$ sur une $\bar{\F}$,  on a    
\[[M(\XG)_{i+1}: V M(\XG)_i ] = [M(\XG)_{i+1}: F M(\XG)_i ] =[M(\XG)_{i+1}: \Pi M(\XG)_i] =1 .\]
\end{claim}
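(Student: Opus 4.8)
\emph{Stratégie de preuve.} Le plan est de traiter les trois indices séparément en s'appuyant sur les relations $FV=\varpi$, $\Pi^2=\varpi$, et sur la commutation de $\Pi$ avec $F$ et $V$. Écrivons $M:=M(\XG)$ et $M_i:=M(\XG)_i$. On remarque d'abord que $M$ est libre de rang $\mathrm{ht}(\XG)=4$ sur $\OC_{\breve{K}}$ et que, $V\colon M_{i+1}\hookrightarrow M_i$ étant injectif de conoyau de longueur finie, les deux composantes $M_0$ et $M_1$ ont même rang, donc sont libres de rang $2$. L'égalité $[M_{i+1}:VM_i]=1$ est alors, à la renumérotation $i\leftrightarrow i+1$ près, la définition même de « spécial » : $M_i/VM_{i+1}$ est libre de rang $1$ sur $\bar{\F}=\OC_{\breve{K}}/\varpi$, donc de longueur $1$ sur $\OC_{\breve{K}}$.

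Pour l'indice de $FM_i$, j'utiliserais que $\XG$ est $p$-divisible (il est de hauteur $4$), ce qui rend $F$ injectif d'après \ref{ProppDivisibleDef}, ainsi que la relation $FV=\varpi$. On en tire la chaîne $\varpi M_{i+1}=F(VM_{i+1})\subset FM_i\subset M_{i+1}$ ; en comparant les longueurs sur $\OC_{\breve{K}}$, et en utilisant que $F$, bien que $\sigma$-semi-linéaire, préserve les longueurs des quotients puisque $\sigma$ est un automorphisme de $\OC_{\breve{K}}$ fixant $\varpi$, on obtient $2=[M_{i+1}:FM_i]+[M_i:VM_{i+1}]=[M_{i+1}:FM_i]+1$.

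Pour l'indice de $\Pi M_i$, je poserais $a_i:=[M_{i+1}:\Pi M_i]$. Comme $\Pi^2=\varpi$ et que $\Pi$ est injectif et $\OC_{\breve{K}}$-linéaire (il commute à $W_{\OC_K}(\bar{\F})=\OC_{\breve{K}}$), le même dévissage $\varpi M_{i+1}=\Pi(\Pi M_{i+1})\subset \Pi M_i\subset M_{i+1}$ fournit $a_i+a_{i+1}=2$. Il reste à voir $a_0=a_1$, et c'est ici qu'intervient le fait que $\Pi$ est un endomorphisme du $E_{\OC_K}(\bar{\F})$-module $M$, donc commute à $F$ : on calcule $[M_i:\Pi(FM_i)]$ de deux manières, l'une via $M_i\supset \Pi M_{i+1}\supset \Pi(FM_i)$, l'autre via $M_i\supset FM_{i+1}\supset F(\Pi M_i)$ — ce qui est licite car $\Pi(FM_i)=F(\Pi M_i)$ —, ce qui donne respectivement $a_{i+1}+1$ et $1+a_i$, d'où $a_0=a_1=1$.

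Le point le plus délicat est cette dernière égalité : le seul comptage issu de $\Pi^2=\varpi$ laisse ouvert a priori le cas $(a_0,a_1)=(0,2)$, dans lequel $\Pi$ induirait un isomorphisme $M_0\iso M_1$, et c'est précisément la commutation $\Pi F=F\Pi$ (jointe au calcul de $[M_{i+1}:FM_i]$) qui l'élimine. Tout le reste se ramène à de la comptabilité de longueurs sur l'anneau de valuation discrète $\OC_{\breve{K}}$, la seule précaution étant de tenir compte de la semi-linéarité de $F$ (et, le cas échéant, de $\Pi$), ce qui est inoffensif puisque $\sigma$ fixe $\varpi$.
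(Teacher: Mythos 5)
Votre preuve est correcte et repose sur la même idée que celle du texte : un décompte de longueurs sur $\OC_{\breve{K}}$ à partir de $[M_{i+1}:VM_i]=1$ (qui est la définition du caractère spécial), des relations $FV=\varpi=\Pi^2$, de l'injectivité de $F$, $V$, $\Pi$ et de $[M_i:\varpi M_i]=2$, la semi-linéarité de $F$ et $V$ étant inoffensive puisque $\sigma$ est un automorphisme fixant $\varpi$. La seule différence substantielle est votre traitement de $\Pi$, et elle est à votre avantage : la justification du texte se contente d'invoquer $\Pi^2 M_i=\varpi M_i$, ce qui ne donne que $a_i+a_{i+1}=2$ avec $a_i=[M_{i+1}:\Pi M_i]$ et laisse a priori ouvert le cas $(a_0,a_1)=(0,2)$, c'est-à-dire le cas où $\Pi$ induirait un isomorphisme $M_0\iso M_1$. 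Votre double calcul de $[M_i:\Pi FM_i]=[M_i:F\Pi M_i]$ — licite parce que $\Pi$ est un endomorphisme de $E_{\OC_K}(\bar{\F})$-module et commute donc à $F$ (on peut tout aussi bien utiliser $V$) — fournit l'égalité $a_0=a_1$ qui manque à l'argument tel qu'il est rédigé, et c'est bien l'ingrédient qui clôt la démonstration ; on vérifie d'ailleurs sur la base explicite du corollaire \ref{LemmeBase} que les deux indices valent effectivement $1$. Votre rédaction est donc non seulement correcte mais plus complète que la justification d'une ligne donnée dans le texte.
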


En effet, on a  $[M(\XG)_{i+1}: V M(\XG)_i ] =1$ par hypothèse de dimension et caractère spécial de $\XG$.   Le reste découle du fait que  $\varpi M(\XG)_{i}= V F M(\XG)_{i}= FV M(\XG)_{i} = \Pi^2 M(\XG)_{i} $,  de l'injectivité de $F$,  $V$,  $\Pi$,  et du fait que $[ M(\XG)_i:\varpi M(\XG)_i]= 2$. L'observation précédente nous amène à considérer :

\begin{defi}\label{deficrit}
On dit que $i$ est un indice critique si les conditions équivalentes suivantes sont vérifiées 

\begin{enumerate}

\item L'application induite par $\Pi$ sur l'espace  tangent  $\Pi^*: (\lie \XG)_i \to (\lie \XG)_{i+1}$
 est  nulle. 

\item $\Pi M(\XG)_i \subset VM(\XG)_i$  

\item $\Pi M(\XG)_i= VM(\XG)_i $

\end{enumerate}

\end{defi}

\begin{prop}
Il existe un indice critique pour tout $\OC_D$-module formel spécial sur une  $\bar{\F}$-algèbre.    
\end{prop}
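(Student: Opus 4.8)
The plan is to work over $A=\bar{\F}$ (the case used in §\ref{ssecclasscart}), reduce everything modulo $\varpi$, and turn the statement into a small piece of linear algebra on the two $2$‑dimensional $\bar{\F}$‑vector spaces $\bar{M}_i := M(\XG)_i/\varpi M(\XG)_i$, $i\in\Z/2\Z$. Write $\bar{F}_i,\bar{V}_i,\bar{\Pi}_i : \bar{M}_i\to\bar{M}_{i+1}$ for the (degree‑one) maps induced by $F,V,\Pi$.

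First I would record two preliminary facts. On one hand, the Fait just above, together with the inclusions $\varpi M(\XG)_{i+1}=F_iV_{i+1}M(\XG)_{i+1}\subseteq F_iM(\XG)_i$ and their analogues for $V$ and $\Pi$, shows that each of $\bar{F}_i,\bar{V}_i,\bar{\Pi}_i$ has rank $1$, so its image and its kernel are lines; and since moreover $\varpi M(\XG)_{i+1}\subseteq V_iM(\XG)_i$, one gets $\Pi_iM(\XG)_i\subseteq V_iM(\XG)_i$ iff $\imm(\bar{\Pi}_i)\subseteq\imm(\bar{V}_i)$, i.e.\ (both being lines) iff $\imm(\bar{\Pi}_i)=\imm(\bar{V}_i)$; thus condition $(2)$ of the definition of a critical index is purely a statement about these lines. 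On the other hand, from $FV=VF=\Pi^2=\varpi$ one gets $\bar{V}_{i+1}\bar{F}_i=\bar{F}_{i+1}\bar{V}_i=\bar{\Pi}_{i+1}\bar{\Pi}_i=0$, hence by dimension count
\[
\imm(\bar{F}_i)=\ker\bar{V}_{i+1},\qquad \imm(\bar{V}_i)=\ker\bar{F}_{i+1},\qquad \imm(\bar{\Pi}_i)=\ker\bar{\Pi}_{i+1};
\]
and since $\Pi$ is an $E_{\OC_K}(\bar{\F})$‑endomorphism of $M(\XG)$ it commutes with $F$ and $V$, so $\bar{\Pi}_{i+1}\bar{F}_i=\bar{F}_{i+1}\bar{\Pi}_i$ and $\bar{\Pi}_{i+1}\bar{V}_i=\bar{V}_{i+1}\bar{\Pi}_i$ as maps $\bar{M}_i\to\bar{M}_i$.

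Then I would argue by contradiction. Suppose no index is critical: $\imm(\bar{\Pi}_i)\neq\imm(\bar{V}_i)$, i.e.\ $\ker\bar{\Pi}_{i+1}\neq\ker\bar{F}_{i+1}$, for $i=0,1$. Consider $\bar{F}_1\bar{\Pi}_0=\bar{\Pi}_1\bar{F}_0$ on $\bar{M}_0$: the left side has image $\bar{F}_1(\ker\bar{\Pi}_1)=\imm(\bar{F}_1)=\ker\bar{V}_0$, nonzero because $\ker\bar{\Pi}_1\neq\ker\bar{F}_1$; so the right side is nonzero, and since $\imm(\bar{F}_0)=\ker\bar{V}_1$ this forces $\ker\bar{V}_1\neq\ker\bar{\Pi}_1$ and its image to be $\imm(\bar{\Pi}_1)=\ker\bar{\Pi}_0$; equating gives $\ker\bar{V}_0=\ker\bar{\Pi}_0$. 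Likewise, $\bar{V}_1\bar{\Pi}_0=\bar{\Pi}_1\bar{V}_0$ on $\bar{M}_0$ has left side with image $\bar{V}_1(\ker\bar{\Pi}_1)=\imm(\bar{V}_1)=\ker\bar{F}_0$ (using $\ker\bar{\Pi}_1\neq\ker\bar{V}_1$) and right side with image $\bar{\Pi}_1(\ker\bar{F}_1)=\imm(\bar{\Pi}_1)=\ker\bar{\Pi}_0$ (using $\ker\bar{F}_1\neq\ker\bar{\Pi}_1$ and $\imm(\bar{V}_0)=\ker\bar{F}_1$), whence $\ker\bar{F}_0=\ker\bar{\Pi}_0$ — contradicting non‑criticality of the index $1$. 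So a critical index exists.

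The main obstacle is purely bookkeeping: after reduction modulo $\varpi$ one must keep track of which line is an image and which a kernel, and check at each "$=\imm$" step that one is never applying a rank‑one operator to its own kernel. The two geometric inputs — the colength‑$1$ Fait and the commutation of $\Pi$ with $F$ and $V$ — are exactly what makes the chase close.
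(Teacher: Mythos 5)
Votre démonstration est correcte sur $\bar{\F}$, mais elle suit un chemin sensiblement différent — et beaucoup plus long — que celui du texte. La preuve du papier tient en une ligne : la composée $\Pi^*\circ\Pi^*:(\lie\XG)_i\to(\lie\XG)_{i+1}\to(\lie\XG)_i$ est induite par $\Pi^2=\varpi$, donc est la multiplication par $\varpi$, nulle sur une $\bar{\F}$-algèbre ; comme chaque $(\lie\XG)_i=M_i/VM_{i+1}$ est une droite (condition « spéciale »), l'une des deux flèches $\Pi^*$ est nulle, ce qui est précisément la condition $(1)$ de la définition d'indice critique. Vous n'utilisez jamais cette observation directe : au lieu de travailler sur les algèbres de Lie (droites), vous réduisez modulo $\varpi$ pour obtenir des espaces de dimension $2$, vous invoquez le Fait de colongueur $1$ (qui requiert la dimension $2$ et la hauteur $4$), la commutation de $\Pi$ avec $F$ et $V$, et vous menez une chasse entre six droites par l'absurde. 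Chaque étape de cette chasse est juste — j'ai vérifié en particulier que $\imm(\bar{F}_i)=\ker\bar{V}_{i+1}$, $\imm(\bar{V}_i)=\ker\bar{F}_{i+1}$, $\imm(\bar{\Pi}_i)=\ker\bar{\Pi}_{i+1}$ et que la non-criticité de l'indice $i$ équivaut bien à $\ker\bar{\Pi}_{i+1}\neq\ker\bar{F}_{i+1}$ — et la seconde relation de commutation suffit déjà à conclure. Ce que votre approche perd : elle est restreinte à $A=\bar{\F}$ (la réduction à des espaces de dimension $2$ et le Fait n'ont de sens que là), alors que l'énoncé porte sur une $\bar{\F}$-algèbre quelconque et que la notion est réutilisée plus loin sur $\bar{\F}[\epsilon]$ ; l'argument du texte, lui, n'utilise que le caractère spécial et $\Pi^2=\varpi$, et vaut (au moins localement) sur toute $\bar{\F}$-algèbre. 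Ce qu'elle apporte en contrepartie : elle rend explicites les relations d'incidence entre les six droites $\imm$ et $\ker$ de $\bar{F}$, $\bar{V}$, $\bar{\Pi}$ dans $M_i/\varpi M_i$, relations qui resservent implicitement dans la preuve du théorème \ref{theoOrdOmegai} ; mais pour la seule existence d'un indice critique, c'est un détour.
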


\begin{proof}
En effet,  la composition  d'application linéaire suivante est nulle:  
\[
\Pi^*\circ \Pi^*=\varpi^*: (\lie \XG)_i\to  (\lie \XG)_{i+1}  \to   (\lie \XG)_i.
\] 
Comme chacun des espaces vectoriels considérés est de dimension $1$,  l'une des applications $\Pi^*$ est nulle.

\end{proof}

\begin{prop}\label{propbasisog}
Soit $\XG$ un module formel spécial  de dimension $2$ et hauteur $4$ sur $\bar{\F}$.  Si $i$ est un indice critique,  alors on peut trouver $x_0,  x_1\in M(\XG)_i$  tel que 
\[
M(\XG)_i=\OC_{\breve{K}} x_0 \oplus  \OC_{\breve{K}}  x_1, \;\; \Pi  x_j= Vx_{j}= F x_j.   
\] 
\end{prop}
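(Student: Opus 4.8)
Le plan est de ramener l'énoncé à une unique assertion d'algèbre semi-linéaire, puis d'invoquer une descente de type Lang. Rappelons d'abord que $M(\XG)_i$ est libre de rang $2$ sur $\OC_{\breve{K}}$ (l'injectivité de $V$ donne des injections $M(\XG)_0\hookrightarrow M(\XG)_1$ et $M(\XG)_1\hookrightarrow M(\XG)_0$, ce qui force l'égalité des rangs, lesquels somment à $4$). Comme $i$ est critique, la condition (3) de la définition~\ref{deficrit} donne $\Pi M(\XG)_i = V M(\XG)_i$ : l'inclusion $\Pi M(\XG)_i\subseteq V M(\XG)_i$ entraîne l'égalité puisque ces deux sous-modules sont de coindice $1$ dans $M(\XG)_{i+1}$. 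L'injectivité de $\Pi : M(\XG)_i\to M(\XG)_{i+1}$ fournit alors un isomorphisme $M(\XG)_i\iso \Pi M(\XG)_i = V M(\XG)_i$, et en composant son inverse avec $V$ on obtient un opérateur
\[
u := \Pi^{-1}\circ V : M(\XG)_i \longrightarrow M(\XG)_i.
\]
Une vérification immédiate montre que $u$ est une bijection $\sigma^{-1}$-semi-linéaire de $M(\XG)_i$ (injective et surjective car $\Pi$ et $V$ sont injectifs de même image $\Pi M(\XG)_i = V M(\XG)_i$), et que $\Pi u = V$ sur $M(\XG)_i$ ; en particulier $\Pi x = V x$ dès que $u(x) = x$.

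J'appliquerais ensuite la version entière du théorème de Lang (descente le long de $\OC_K\to\OC_{\breve{K}}$) à l'automorphisme $\sigma^{-1}$-semi-linéaire $u$ du $\OC_{\breve{K}}$-module libre $M(\XG)_i$ : comme $\OC_{\breve{K}}$ est complet pour la topologie $\varpi$-adique de corps résiduel $\bar{\F}$ parfait (et même algébriquement clos), le $\OC_K$-module $N := M(\XG)_i^{u=1}$ est libre de rang $2$ et la flèche naturelle $N\otimes_{\OC_K}\OC_{\breve{K}}\to M(\XG)_i$ est un isomorphisme. Concrètement, on résout $\bar u(\bar x) = \bar x$ modulo $\varpi$ grâce au théorème de Lang additif (Artin--Schreier) sur $\bar{\F}$, puis on relève par approximations successives. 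En choisissant une $\OC_K$-base $x_0,x_1$ de $N$ — qui est alors aussi une $\OC_{\breve{K}}$-base de $M(\XG)_i$ — on obtient $V x_j = \Pi x_j$ pour $j = 0,1$.

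Il reste à voir que l'égalité $F x_j = \Pi x_j$ est automatique. En appliquant $F$ à $V x_j = \Pi x_j$ et en utilisant $FV = \varpi = \Pi^2$ ainsi que la commutation $F\Pi = \Pi F$ (l'action de $\OC_D$ commute au Frobenius du module formel), on trouve $\varpi x_j = \Pi(F x_j) = \Pi(\Pi x_j)$, d'où $F x_j = \Pi x_j$ par injectivité de $\Pi$. On conclut que $\Pi x_j = V x_j = F x_j$, comme voulu. Le seul ingrédient non formel est l'étape de descente ; tout le reste n'est qu'un jeu avec les relations $\Pi^2 = FV = VF = \varpi$ et les calculs de coindice déjà établis. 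Le point sur lequel il faut être vigilant est que $u$ stabilise bel et bien le réseau $M(\XG)_i$, et pas seulement $M(\XG)_i[1/\varpi]$ : c'est exactement ce que garantit l'égalité $\Pi M(\XG)_i = V M(\XG)_i$ (et non un simple isomorphisme après inversion de $\varpi$), et c'est là qu'intervient de manière essentielle l'hypothèse que $i$ est critique.
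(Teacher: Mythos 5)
Votre preuve est correcte et suit essentiellement la même route que celle du texte : le papier se ramène lui aussi à l'automorphisme semi-linéaire $V^{-1}\Pi$ (l'inverse de votre $u$) du réseau $M(\XG)_i$, en tire une base de vecteurs fixes, puis déduit $Fx_j=\Pi x_j$ des relations $\Pi^2=FV=VF=\varpi$ et de l'injectivité des opérateurs. La seule différence, cosmétique, est la justification de l'existence d'une base de points fixes : le papier invoque la classification de Dieudonné--Manin (l'isocristal est étale car $M(\XG)_i$ en est un réseau stable), tandis que vous donnez l'argument équivalent, plus explicite, par le théorème de Lang modulo $\varpi$ et approximations successives.
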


\begin{proof}
Par hypothèse,  $\Pi M(\XG)_i = V M(\XG)_i$.  Comme $V$ et $\Pi$ sont injectives,  on peut construire un isomorphisme $\sigma$-linéaire  $ V^{-1} \Pi: M(\XG)_i\to  M(\XG)_i$.   Par classification de Dieudonné-Manin $( M(\XG)_i[1/p] ,  V^{-1}\Pi)$  est l'isocrystal \'etale de dimension $2$ car $M(\XG)_i $ en est un réseau $V^{-1} \Pi$-stable.   On en déduit que $M(\XG)_i= \OC_{\breve{K}} x_0 \oplus \OC_{\breve{K}} x_1$ avec $x_0,x_1$ des points fixes pour $V^{-1}\Pi$.   De plus,  on a $VFx_j= \varpi x_j = \Pi^2 x_j= V \Pi x_j $ d'où  $F x_j= \Pi x_j$ par injectivité de $V$.   
\end{proof}

Nous sommes maintenant en mesure de démontrer  la proposition  \ref{propClassIsog}.  Cela revient à prouver,  d'après   le théorème \ref{TheoIsogenie}, 

\begin{prop}
Soit $\XG$ un module formel spécial sur $\bar{\F}$,  on peut trouver $x_0, x_1 \in M(\XG)$ tel que 
\[
M(\XG)[1/p]= \breve{K} x_0 \oplus \breve{K} x_1 \oplus \breve{K} \Pi x_0 \oplus \breve{K} \Pi x_1 
\]
avec $\Pi x_j = F x_j = V x_j$.   De plus,  on peut imposer que   $x_0,  x_1$ forment une $\OC_{\breve{K}}$-base de $M(\XG)_i$ pour $i$ un indice critique.  

Les quasi-isogénies de $\XG$ s'identifient à $\gln_2(K)$.   
\end{prop}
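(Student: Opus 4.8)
The plan is to build everything on the two propositions just proved, namely the existence of a critical index and Proposition~\ref{propbasisog}. Fix a critical index $i$ and apply Proposition~\ref{propbasisog} to $\XG$: this produces $x_0,x_1\in M(\XG)_i$ that form an $\OC_{\breve{K}}$-basis of $M(\XG)_i$ and satisfy $\Pi x_j=Fx_j=Vx_j$ for $j=0,1$. Because $\Pi^2=\varpi$ and $M(\XG)$ is $\varpi$-torsion free (Proposition~\ref{ProppDivisibleDef}), the operator $\Pi$ is injective on $M(\XG)$, hence on $M(\XG)[1/p]$; therefore $\Pi x_0,\Pi x_1$ are $\breve{K}$-linearly independent, and since they lie in the two-dimensional $\breve{K}$-space $M(\XG)_{i+1}[1/p]$ (both graded pieces have rank $2$), they are a basis of it. This gives at once the decomposition $M(\XG)[1/p]=\breve{K}x_0\oplus\breve{K}x_1\oplus\breve{K}\Pi x_0\oplus\breve{K}\Pi x_1$ with $x_0,x_1$ a basis of $M(\XG)_i$ for the critical index $i$. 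For later use one also records, using $FV=VF=\Pi^2=\varpi$, that on this entire basis each of the three operators $\Pi$, $F$, $V$ acts by $x_j\mapsto\Pi x_j$ and $\Pi x_j\mapsto\varpi x_j$, each with its own $\OC_{\breve{K}}$-, $\sigma$- or $\sigma^{-1}$-linearity.

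For the statement about quasi-isogenies, recall that a quasi-isogeny of the $\OC_D$-module $\XG$ is an invertible element of $\en_{\OC_D}(\XG)[1/p]$, which under the equivalence of categories with graded Cartier modules is a $\breve{K}$-linear automorphism $\phi$ of $M(\XG)[1/p]$ commuting with the $\OC_{(2)}$-action and with $\Pi$, $F$ and $V$. Since $K_{(2)}/K$ is a nontrivial extension the characters $\chi_0,\chi_1$ are distinct, so commuting with the $\OC_{(2)}$-action forces $\phi$ to preserve the grading, $\phi\big(M(\XG)_i[1/p]\big)\subseteq M(\XG)_i[1/p]$. As $\Pi$ is an isomorphism from $M(\XG)_i[1/p]$ onto $M(\XG)_{i+1}[1/p]$ and $\phi$ commutes with $\Pi$, the map $\phi$ is determined by its restriction to $M(\XG)_i[1/p]$; write $\phi(x_0)=ax_0+bx_1$ and $\phi(x_1)=cx_0+dx_1$ with $a,b,c,d\in\breve{K}$, so that automatically $\phi(\Pi x_0)=a\Pi x_0+b\Pi x_1$ and $\phi(\Pi x_1)=c\Pi x_0+d\Pi x_1$.

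It then remains to express commutation with $F$. Using $Fx_j=\Pi x_j$ and the $\sigma$-linearity of $F$ one gets $F\phi(x_0)=\sigma(a)\Pi x_0+\sigma(b)\Pi x_1$, whereas $\phi(Fx_0)=\phi(\Pi x_0)=a\Pi x_0+b\Pi x_1$, and likewise on $x_1$; hence $\phi$ commutes with $F$ exactly when $a,b,c,d$ are fixed by $\sigma$, i.e.\ lie in $\breve{K}^{\sigma=1}=K$. Commutation with $V$ is then automatic ($V$ is injective after inverting $p$ by Proposition~\ref{ProppDivisibleDef}, and $\varpi=FV$ is central), and a direct check shows conversely that any $\phi$ built from a matrix over $K$ by the formulas above does commute with $\Pi$, $F$ and $V$. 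Consequently $\en_{\OC_D}(\XG)[1/p]\cong\mat_2(K)$, and passing to units identifies the group of quasi-isogenies of $\XG$ with $\gln_2(K)$ — this is the action through which $\gln_2(K)$ operates on $\GC^{Dr}$ and hence on $\hat{\H}$. The only slightly delicate point of the argument is precisely this last one: commuting with the Frobenius-semilinear operator $F$ forces the matrix coefficients down from $\breve{K}$ to its fixed field $K$; everything else is a formal consequence of Propositions~\ref{propbasisog} and \ref{ProppDivisibleDef} together with the $\OC_{(2)}$-isotypic decomposition.
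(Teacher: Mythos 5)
Votre preuve est correcte et suit essentiellement la même démarche que celle de l'article : la première partie repose sur la proposition \ref{propbasisog} et sur le fait que $\Pi$ induit un isomorphisme $M(\XG)_i[1/p]\iso M(\XG)_{i+1}[1/p]$, et la seconde sur le fait qu'une quasi-isogénie, commutant avec $\Pi$, $F$, $V$ et la graduation, est déterminée par l'image de $x_0,x_1$. Votre calcul explicite montrant que la commutation avec l'opérateur $\sigma$-linéaire $F$ force les coefficients matriciels dans $\breve{K}^{\sigma=1}=K$ n'est que la version développée de l'argument du texte, qui dit que ces images doivent rester dans $(M(\XG)_i^{V^{-1}\Pi})[1/p]=Kx_0\oplus Kx_1$.
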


\begin{proof}
On décompose $M(\XG)[1/p]=M(\XG)_0[1/p] \oplus M(\XG)_1[1/p]$.   En utilisant la proposition précédente,  $M(\XG)_i[1/p]= \breve{K} x_ 0\oplus \breve{K} x_1$ avec $x_0,x_1\in M(\XG)_i^{V^{-1}\Pi}$ et $i$ un indice critique.   Comme $\Pi$ induit un isomorphisme $\breve{K}$-linéaire $M(\XG)_i [1/p] \iso M(\XG)_{i+1}[1/p]$,  on en déduit la première partie du résultat.    

Pour la deuxième, une quasi-isogénie $\varphi: M(\XG)[1/p]\iso M(\XG)[1/p]$ est équivalente aux choix de l'image de $x_0,x_1$ dans  $M(\XG)_i^{V^{-1}\Pi}[1/p]= K x_0 \oplus K x_1$ car elle commute avec $\Pi$, $F$, $V$ et $\breve{K}$.   Cela donne une application bijective 
\[
\mathrm{QIsog}(\XG)\to \gln(M(\XG)_i^{V^{-1}\Pi}[1/p] )\cong \gln_2(K).  
\]    
\end{proof}

Fixons $\Phi_{\bar{\F}}$ un module formel spécial sur $\bar{\F}$ de dimension $2$ de hauteur $4$ et $M^{(0)}$ le module de Cartier associé.  On peut supposer que $1$ est critique dans $M^{(0)}$ et  fixons une base \[(M^{(0)}_1)^{V^{-1}\Pi}= \OC_{{K}}x_0 \oplus \OC_{{K}} x_1.\]       Nous nous intéressons maintenant à $\GC^{Dr}(\bar{\F})$  qui classifie les classes d'isomorphisme des couples\footnote{Notons que pour un anneau $A$ quelconque de $\nilp_{\OC_{\breve{K}}}$, un point de $\GC(A)$ est la donnée d'un morphisme $\psi : \bar{A}\to A/\varpi A$ en plus de celle d'un couple $(  \XG , \varphi)$. Lorsque $A=\bar{\F}$, la donnée de $\psi$ est redondante.} $(  \XG , \varphi)$
\begin{itemize}[label= \textbullet] 
\item $\XG$ est un $\OC_D$-module formel sp\'ecial de dimension $2$ et de hauteur $4$ sur $\bar{\F}$, 
\item $\varphi : \Phi_{\bar{\F}} \to \XG$ est une quasi-isog\'enie de hauteur $0$. 
\end{itemize}

Nous voulons rendre explicite l'identification de $\GC(\bar{\F})$ avec $\bar{\H}(\bar{\F})= \bigcup_{s\in \BC \TC_0}  \P_s(\bar{\F}) $. Pour cela,  donnons-nous un module de Cartier $M$ associé à un module spécial de hauteur $4$ et dimension  $2$ sur $\bar{\F}$ ainsi qu'une quasi-isogénie $\varphi:  M[1/p]\iso M^{(0)}[1/p]$ de hauteur $0$.  Par théorie de Cartier,  la donnée de $(M, \varphi)$ est équivalente  à la donnée d'un point $\GC^{Dr}(\bar{\F})$.    On définit deux $\OC_K$-réseaux de   $(M^{(0)}_1)^{V^{-1}\Pi}[1/p]\cong K^{2}$   donnés par 
\[
\varphi (M_1^{V^{-1}\Pi}) \et \varphi (\Pi M_0^{V^{-1}\Pi}).  
\]    
Appelons $s_0(M):=[\varphi (\Pi M_0^{V^{-1}\Pi})],s_1(M):=[\varphi (M_1^{V^{-1}\Pi})]$ les sommets associés dans l'arbre de Bruhat-Tits.  On identifiera $[(M_1^{(0)})^{V^{-1}\Pi}]$  au sommet standard de $\BC \TC_0$.   

\begin{prop}
$s_0(M)$ est pair  si et seulement  si $0$ est critique.  De même,  $s_1(M)$ est impair si et seulement si $1$ est critique.    
\end{prop}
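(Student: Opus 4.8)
The plan is to unwind the two definitions and compute the parity of $s_0(M)=[\varphi(\Pi M_0^{V^{-1}\Pi})]$ and of $s_1(M)=[\varphi(M_1^{V^{-1}\Pi})]$ relative to the reference lattice $(M_1^{(0)})^{V^{-1}\Pi}$, which by our normalisation represents the standard (odd) vertex $s_1$ lying on the standard edge $(s_0,s_1)$. First I would observe that the two equivalences are interchanged by $w=\left(\begin{smallmatrix}0&1\\\varpi&0\end{smallmatrix}\right)\in G\setminus[G]_2$: on the quasi-isogeny side $w$ acts (via conjugation by $\Pi_D$ on the $\OC_D$-structure) by swapping the isotypic pieces $M_0\leftrightarrow M_1$, hence swapping ``$0$ critical'' and ``$1$ critical'', while on $\BC\TC_0$ it exchanges the two $[G]_2$-orbits $\BC\TC_0^{(0)}$ and $\BC\TC_0^{(1)}$, i.e. it reverses parity; since $\varphi$ has height $0$, $w$ genuinely acts on $\GC^{Dr}(\bar{\F})$, so it is enough to prove, say, the statement about $s_1(M)$ and the index $1$.

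To treat that, I would split according to whether $1$ is critical. If $1$ is critical, Proposition \ref{propbasisog} (applied to $i=1$) furnishes an $\OC_{\breve{K}}$-basis of $M_1$ made of $V^{-1}\Pi$-fixed vectors, identifying $M_1^{V^{-1}\Pi}$ with a rank-$2$ $\OC_K$-lattice that tensors up to $M_1$; the same holds for $M^{(0)}$ by the normalisation, and $(M_1^{(0)})^{V^{-1}\Pi}$ is exactly the reference lattice of $s_1$. Since $\varphi$ commutes with $V^{-1}\Pi$, it carries $M_1^{V^{-1}\Pi}$ onto an $\OC_K$-lattice in $(M_1^{(0)})^{V^{-1}\Pi}[1/p]$; the height-$0$ hypothesis, combined with the fact (the Fait of §\ref{ssecclasscart}) that $\Pi$, $V$ and $F$ induce identifications $M_0^{(0)}\to M_1^{(0)}$ of the same length-$1$ colength, forces the valuation of the determinant of this identification to be even, so $s_1(M)$ lies in the $[G]_2$-orbit of $s_1$, i.e. is odd. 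If instead $1$ is not critical, then $0$ is, and Definition \ref{deficrit}(2)--(3) says precisely that $\Pi M_1\neq VM_1$. One still gets the étale isocrystal $(M_1[1/p],V^{-1}\Pi)$ by transporting $(M_0[1/p],V^{-1}\Pi)$ through the isomorphism $\Pi$, hence a rank-$2$ fixed lattice $M_1^{V^{-1}\Pi}$; but now this lattice is obtained from its ``critical'' counterpart by a shift of odd length, and transporting through $\varphi$ as before flips the parity, so $s_1(M)$ is even. Combining the two cases gives ``$s_1(M)$ odd $\iff 1$ critical'', and the statement for $s_0(M)$ follows by the $w$-symmetry.

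The hard part is exactly the colength/valuation bookkeeping packaged into the two cases above: one has to control how the height-$0$ quasi-isogeny $\varphi$ is distributed between the graded pieces $M_0,M_1$, and how the degree-$1$ operator $\Pi$ (which has length-$1$ colength on each $M_i$) acts on the $V^{-1}\Pi$-fixed lattices, so that the purely algebraic criterion $\Pi M_i\subseteq VM_i$ of Definition \ref{deficrit} gets converted into the parity of the relative position of two lattices in $\BC\TC$. This is where the explicit normal form of Proposition \ref{propbasisog} and the elementary remark that $\Pi M_i^{V^{-1}\Pi}$, $VM_i^{V^{-1}\Pi}$ and $FM_i^{V^{-1}\Pi}$ all coincide when $i$ is critical, but only then, do the real work; everything else is formal.
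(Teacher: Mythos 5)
Your argument rests on the same index bookkeeping as the paper's proof: everything comes down to computing $[(M_1^{(0)})^{V^{-1}\Pi}:\varphi(M_1^{V^{-1}\Pi})]$ and $[(M_1^{(0)})^{V^{-1}\Pi}:\varphi(\Pi M_0^{V^{-1}\Pi})]$ modulo $2$, using that the height-$0$ hypothesis combined with $\varphi\Pi=\Pi\varphi$ forces $[M_i^{(0)}:\varphi M_i]=0$ for \emph{each} graded piece separately, and that $\Pi$, $V$, $F$ have colength $1$ on each piece. Two caveats. First, the paper does not use your $w$-symmetry: it handles $s_0(M)$ directly, the only change being the extra shift $[M_1^{(0)}:\Pi M_0^{(0)}]=1$, which converts ``$0$ critique'' into index $1$ rather than index $0$. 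Your reduction is not free of charge: $w$ has odd determinant valuation, so to make it act on $\GC^{Dr}(\bar{\F})$ (where quasi-isogenies must have height $0$) you must renormalise by $\Pi_D^{-1}$, and this simultaneously swaps the gradings, replaces the reference lattice $(M_1^{(0)})^{V^{-1}\Pi}$ by one of odd index in it, and permutes $(s_0(M),s_1(M))$; checking that these three effects combine to exchange the two equivalences \emph{with the correct parity conventions} is at least as long as the one-line direct computation it is meant to replace, so as written this half of your proof is an assertion rather than an argument. Second, in the non-critical case the phrase ``shift of odd length'' hides the one point that needs proof: when $1$ is not critical one has $M_1^{V^{-1}\Pi}=\Pi M_0^{V^{-1}\Pi}$ exactly (a $V^{-1}\Pi$-fixed vector of $M_1$ outside $\Pi M_0$ would, together with $\Pi M_0^{V^{-1}\Pi}$, span a fixed lattice generating $M_1$ over $\OC_{\breve{K}}$ and make $1$ critical), whence the relevant index is $1+[M_0^{(0)}:\varphi M_0]=1$. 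Note also that $\varphi$ preserves indices, so it does not ``flip the parity''; the flip comes entirely from the colength-$1$ inclusion $\Pi M_0\subset M_1$. With these two points repaired your proof coincides with the paper's.
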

\begin{proof}
L'indice $1$ est critique si et seulement si   $\varphi M_1 = \varphi M_{1}^{V^{-1}\Pi}\otimes \OC_{\breve{K}}$ si et seulement si $[(M^{(0)}_1)^{V^{-1}\Pi}  :  \varphi (M_1^{V^{-1}\Pi}) ]=0$  (car   $  [M_{1}^{(0)}: \varphi M_1]=0$)   si et seulement si $s_1(M)$ a la même parit\'e que le sommet standard.   Pour l'autre équivalence,  on observe que  $[  M^{(0)}_1 : \Pi M_1^{(0)}]=1$  donc $0$ est critique si et seulement si  $[(M_1^{(0)})^{V^{-1}\Pi}: \varphi (\Pi  M_1^{V^{-1}\Pi}) ]=1$ si et seulement si $s_0(M)$ n'a pas la même parité que  le sommet standard.     
\end{proof}

\begin{const}

Fixons $s$ un sommet de $\BC \TC_0$ et intéressons-nous aux couples $(M, \varphi)$ tel que $s=s_i(M)$ avec $i$ critique.     Si $i$ est critique,   on a une suite d'inclusions 
\[
   \varpi M_{i} \subset \Pi M_{i+1} \subset M_{i}
\]
 qui détermine un point   \[y_i(M)\in \P(M_{i}/ \varpi M_{i})\cong\P_{s_i(M)}(\bar{\F}).\]    Notons que si $j$ n'est pas critique,  $s_j(M)=s_{j+1}(M)$   et on définit  $y_{j}(M)=y_{j+1}(M)$ dans  $\P_{s_j(M)}(\bar{\F})=\P_{s_{j+1}(M)}(\bar{\F})$.  

La construction précédente définit une flèche $G$-équivariante  $\psi:\GC^{Dr}(\bar{\F})\to (\bar{\H}\times \bar{\H} )(\bar{\F}) $ donnée par 
$(M, \varphi)\mapsto (s_0(M),s_1(M),  y_0(M), y_1(M))$.  

\end{const}

\begin{theo}\label{theopsi}
La flèche $\psi$ se factorise par la diagonale   $ \bar{\H}(\bar{\F}) \subset (\bar{\H}\times \bar{\H} )(\bar{\F})$ et induit une bijection $G$-équivariante  $\tilde{\psi}:\GC^{Dr}(\bar{\F})\cong \bar{\H}(\bar{\F}) $
\end{theo}

\begin{proof}
  S'il existe un indice non critique,  les couples $(s_i(M),  y_i(M))$ ne dépendent pas de $i$ et définissent un point de $\P_{s_i(M)}(\bar{\F}) \subset \bar{\H}(\bar{\F})$.  Lorsque les deux indices sont critiques,  on observe la suite d'inclusions 
\[
\varpi M_i^{V^{-1}\Pi} \subset \Pi M_{i+1}^{V^{-1}\Pi} \subset M_i^{V^{-1}\Pi}.  
\]
Ceci montre que $(s_0(M),s_1(M))$ est une arête  de $\BC \TC_1$    et $y_{0}(M)$ et $y_1(M)$ correspondent à l'unique point d'intersection entre $\P_{s_0(M)}$ et $\P_{s_1(M)}$.  Nous avons prouvé que  $\psi$ se factorise par la diagonale  de $ (\bar{\H}\times \bar{\H} )(\bar{\F})$ et que la flèche  $\tilde{\psi}:\GC^{Dr}(\bar{\F})\cong \bar{\H}(\bar{\F}) $  est bien définie.

Montrons maintenant l'injectivité de $\tilde{\psi}$.  Soit $(M^{(1)}, \varphi^{(1)}),  (M^{(2)}, \varphi^{(2)})$ deux éléments de  $\GC^{Dr}(\bar{\F})$ tel que $s_i(M^{(1)})= s_{i}(M^{(2)})$ et $y_i(M^{(1)})= y_i(M^{(2)})$ pour $i=0,1$.   Si $i$ est critique,   le $\OC_{\breve{K}}$-réseau  $\varphi^{(j)}(M^{(j)}_i)$ vaut $\varphi^{(j)}(M^{(j)}_i)^{V^{-1}\Pi}\otimes\OC_{\breve{K}}$ et est donc déterminé par $s_i(M)$ qui  ne dépend pas de $j$.    Si $i$ n'est pas critique, $i+1$ l'est et le même raisonnement montre que  $\varphi^{(j)}(M^{(j)}_{i+1})$ ne dépend pas de $j$. De plus,  on a une suite d'inclusions 
\[
\varpi   \varphi^{(j)}(  M_{i+1}^{(j)} ) \subset  \varphi^{(j)}(  \Pi M_{i}^{(j)} ) \subset \varphi^{(j)}(  M_{i+1}^{(j)})
\]
qui décrit  $y_i(M^{(j)})$ et détermine $\varphi^{(j)}(M^{(j)}_{i})$.  On en déduit que $\varphi^{(1)}(M^{(1)})= \varphi^{(2)}(M^{(2)})$.  Comme ces applications sont injectives,  on a une flèche $(\varphi^{(1)})^{-1} \circ \varphi^{(2)} :  M^{(2)} \to M^{(1)}$ qui est une bijection.

Pour la surjectivité de $\tilde{\psi}$,  il suffit de raisonner sur une composante irréductible et intéressons nous à  $\P_s(\bar{\F})$ avec $s$ le sommet standard.       Si $y\in \P_s(\bar{\F})$,   la droite $\LC\subset \bar{\F}^2 \cong  M_1/ \varpi M_1$     associée définit un module $\varpi M_1\subset  \tilde{M}_0\subset  M_1$. Posons  $M_0= \Pi^{-1} \tilde{M}_0$,  montrons que le sous-module  $M:= M_0 \oplus M_1$ de $M^{(0)}[1/p]$ définit un point de $\GC(\bar{\F})$  par i.e. $M$ est  stable par   $\Pi,  F, V$ et  $[M^{(0)}: M]=0$. Si c'est le cas, il s'enverra sur $y$ par construction. 

$\bullet$ On a $M_1= M_1^{(0)}$ d'où $[M_1^{(0)}:M_1]=0$ et \[[M_0^{(0)}:M_0]=[\Pi M_0^{(0)}:\tilde{M}_0]=[M_1: \tilde{M}_0]-[  M_1 :  \Pi M_0^{(0)}]=0\] car $[M_1: \tilde{M}_1]=1= [  M_1 :  \Pi M_0^{(0)}]$. On a donc $[M^{(0)}:  M]=0$.    

$\bullet$ Par construction,  $\Pi M_0= \tilde{M}_0 \subset M_1$ et \[\varpi\Pi M_1 \subset  \Pi \tilde{M}_0 =\Pi^2  M_0=\varpi M_0. \] Comme la multiplication par $\varpi$ est injective,   $\Pi M_1\subset M_0$ et $M$ est bien stable par $\Pi$.   Comme $1$ est critique, $M_1=M_1^{V^{-1}\Pi}\otimes \OC_{\breve{K}}$ et d'après la preuve de \ref{propbasisog} les actions de $\Pi$, $V$ et $F$ coïncident sur $M^{V^{-1}\Pi}_1$. Ainsi, on a  $\Pi M_1 = F M_1 = V M_1 \subset M_0$.   De plus,  $VM_0 = F^{-1}\Pi^2 M_0  \subset  F^{-1} \Pi M_1= M_1$.   Le même raisonnement sur $F$ montre que $M$ est stable par $F$ et $V$ ce qui termine la preuve.  \end{proof}

\begin{rem}
\label{RemPullbackPeriodes}
  Par définition de $\P^1=\gln_2/B$ en tant que faisceau,   $\Of(-1)\subset \std^{(2)} \otimes \Of_{\P^1}$ est l'objet universel paramétrant un sous-fibré en droites du faisceau libre de rang $2$. En fixant $s$ un sommet de parité $i$ et en considérant la restriction de $\tilde{\psi}$ à $\P_s$, on en déduit  
\[
\tilde{\psi}^*(\std^{(2)}\otimes \P_s)=   M_{i}/\varpi M_{i} \et \tilde{\psi}^*(\Of(-1))= \Pi M_{i+1}/ \varpi M_i=\Pi \lie(\XG|_{\P_s})_{i+1}
\]
par construction de l'application des périodes via la filtration de Hodge. 

Notons aussi que l'on peut réaliser le même jeu en fibre générique et construire une application de périodes  $\pi :(\GC^{Dr})^{rig}\to \P_{\breve{K}}^1$ donnée au niveau des points géométriques  \[(M,\varphi)\in\GC^{Dr}(C)\mapsto[\Pi M_{0}/ \varpi M_1\subset M_{1}/ \varpi M_1]\in \P^1_{\breve{K}}(C)\] où $\varpi$ désigne ici l'uniformisante de $W_{\OC_K}(C)$. Cela montre en particulier que $\omega_0[1/\varpi]\cong\Of(-1)$ dans $\pic_{[G]_2}({\H})$ e de même pour $\omega_1[1/\varpi]$ via $\Pi:\omega_0[1/\varpi]\iso\omega_1[1/\varpi]$.
\end{rem}

La preuve de la surjectivité de $\psi$ construit pour chaque point géométrique $y\in \bar{\H}$ un module de Cartier spécial $M_y$ avec une isogénie $\varphi_y$ tel que $\tilde{\psi}(M_y,\varphi_y)=y$. Le résultat suivant va un peu plus loin et fournit une base explicite à $M_y$ sur laquelle les opérateurs $\Pi$, $V$, $F$ sont relativement faciles à décrire.

\begin{coro}
\label{LemmeBase}
Soit $y\in \P_s(\bar{\F})\backslash \{\infty\}\cong \bar{\F}$ avec $s\in \BC\TC^{(i)}_0$ et $\infty$ un point géométrique de $\P_s$ fixé.  Si $M=M_0\oplus M_1$ est le module de Cartier associé, on peut trouver des   $\OC_{\breve{K}}$-bases    $\{x_{0,j},  x_{1,j} \}$ de $M_j$ ($j=0,1$) tel que 
\[
\begin{array}{l}
\Pi x_{0,i} =F x_{0,i}= V x_{0,i} = \varpi x_{0,i+1}-[y] x_{1,i+1}, \\ 
\Pi x_{1,i}=F x_{1,i}= V x_{1,i}= x_{1,i+1}, \\ 
\Pi x_{1,i+1}= F x_{1,i+1} = V x_{1,i+1} = \varpi x_{1,i}, \\ 
\Pi x_{0 ,i+1}=  x_{0,i}+[y] x_{1,i}, \\
F x_{0 ,i+1 }=  x_{0,i}+[y^q] x_{1,i},   \\
V x_{0 ,i+1}=  x_{0,i}+[y^{1/q}] x_{1,i}.
\end{array}
\]
\end{coro}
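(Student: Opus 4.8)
The statement to prove is Corollary \ref{LemmeBase}: for $y\in \P_s(\bar\F)\setminus\{\infty\}$ with $s$ of parity $i$, the associated Cartier module $M=M_0\oplus M_1$ admits bases $\{x_{0,j},x_{1,j}\}$ of $M_j$ realizing the displayed formulas for $\Pi$, $F$, $V$. The whole construction is already essentially built into the proof of surjectivity in Theorem \ref{theopsi}, so the plan is to unwind that construction and choose coordinates carefully. First I would reduce, using $G$-equivariance, to the case $s = s_1$ the standard vertex and $i=1$ (the general case then follows by translating the explicit basis by a quasi-isogeny; the roles of the indices $i$ and $i+1$ are symmetric in the statement). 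So fix $y\in\P_{s}(\bar\F)\cong\bar\F$, let $[y]\in\OC_{\breve K}$ be the Teichmüller lift (via $[\cdot]$ from §\ref{ssecwitt}), and recall from the proof of Theorem \ref{theopsi} that $1$ is critical for $M$, that $M_1 = M_1^{(0)}$, and that $M_0 = \Pi^{-1}\tilde M_0$ where $\tilde M_0\subset M_1$ is the preimage of the line $\LC\subset M_1/\varpi M_1$ determined by $y$.

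**Key steps.** The second step is to pick the good basis of $M_1$. Since $1$ is critical, by Proposition \ref{propbasisog} we have $M_1 = \OC_{\breve K}x_{0,1}\oplus \OC_{\breve K}x_{1,1}$ with $\Pi x_{j,1} = Vx_{j,1} = Fx_{j,1}$ for $j=0,1$, i.e. $x_{0,1},x_{1,1}$ are fixed by $V^{-1}\Pi$ and span $(M_1)^{V^{-1}\Pi}\otimes\OC_{\breve K}$. Here I would arrange the labelling so that the line $\LC = \tilde M_0/\varpi M_1$ is spanned by the image of $\varpi x_{0,1} - [y]\,x_{1,1}$; concretely, if we identify $\P(M_1/\varpi M_1)$ with $\P^1_{\bar\F}$ via the basis $(\bar x_{0,1},\bar x_{1,1})$, the point $y$ corresponds to this line (the $\varpi$ appears because of the inclusion chain $\varpi M_1\subset \tilde M_0\subset M_1$, so $\tilde M_0$ is generated over $\OC_{\breve K}$ by $\varpi x_{0,1}$, $\varpi x_{1,1}$ and one lift $\varpi x_{0,1}-[y]x_{1,1}$ of a generator of $\LC$). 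The third step is to produce the basis of $M_0$: set $x_{1,0} := \Pi x_{1,1}$ (which lies in $M_0$ since $\tilde M_0\ni \varpi x_{1,1} = \Pi^2 x_{1,1} = \Pi x_{1,0}$, wait — more simply $\Pi x_{1,1}\in M_1$ and one checks $\Pi^{-1}$ of it lands in $M_0$; better: take $x_{1,0}$ to be the unique element of $M_0$ with $\Pi x_{1,0} = \varpi x_{1,1}$, which exists by injectivity of $\Pi$ once one checks $\varpi x_{1,1}\in\Pi M_0 = \tilde M_0$) and take $x_{0,0}\in M_0$ with $\Pi x_{0,0} = x_{0,1}+[y]x_{1,1}$ — again the right-hand side lies in $\Pi M_0 = \tilde M_0$ because modulo $\varpi M_1$ it is a generator of $\LC$ (up to the sign/normalization built into step two). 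One then checks $\{x_{0,0},x_{1,0}\}$ is an $\OC_{\breve K}$-basis of $M_0$ by a determinant/index computation ($[M_0:\OC_{\breve K}x_{0,0}+\OC_{\breve K}x_{1,0}]=0$, using $[M^{(0)}:M]=0$ from the proof of Theorem \ref{theopsi}).

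**Finishing and the main obstacle.** The last step is to verify the six displayed identities. Three of them — the actions of $\Pi$, $F$, $V$ on $x_{0,1}$ and $x_{1,1}$, and the $\Pi$-action on $x_{0,0}$, $x_{1,0}$ — hold by our choices of basis and by criticality of $1$ (Proposition \ref{propbasisog}). It remains to compute $Fx_{0,0}$ and $Vx_{0,0}$. Here I would use that $F$ and $V$ are $\sigma$- and $\sigma^{-1}$-semilinear over $W_{\OC_K}(\bar\F)=\OC_{\breve K}$, that $\sigma([y]) = [y^q]$ and $\sigma^{-1}([y]) = [y^{1/q}]$ (relations from §\ref{ssecwitt}), and the compatibilities $VF = \Pi^2$ on each graded piece together with $F x_{j,1} = V x_{j,1} = \Pi x_{j,1}$. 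Concretely: $F x_{0,0}$ must satisfy $V(Fx_{0,0}) = \varpi x_{0,0}$ and $\Pi(Fx_{0,0})\equiv F(\Pi x_{0,0})$-type relations; writing $Fx_{0,0} = a x_{0,1}+b x_{1,1}$ and applying $\Pi$ (resp. comparing with the semilinear twist of the $\Pi x_{0,0}$ formula) pins down $a=1$, $b=[y^q]$, and symmetrically for $V$ with $[y^{1/q}]$. The main obstacle I anticipate is purely bookkeeping: getting the normalization in step two exactly right so that all six formulas come out with the stated signs and Teichmüller lifts simultaneously (in particular reconciling the $\varpi x_{0,i+1}-[y]x_{1,i+1}$ in the first line with the $x_{0,i}+[y]x_{1,i}$ in the fourth — these must be compatible via $\Pi^2=\varpi$), and checking each identity is consistent with $V$ injective and with the relations $VxF=\tau(x)$, $FV=\varpi$ of the Cartier ring. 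There is no conceptual difficulty beyond Theorem \ref{theopsi} and Proposition \ref{propbasisog}; it is a matter of choosing the lift $[y]$ and the basis vectors in the one order that makes everything line up.
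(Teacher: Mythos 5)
Votre démarche est exactement celle du texte : réduction au sommet standard, choix de $x_{0,1},x_{1,1}$ une base de $M_1^{V^{-1}\Pi}$ via la proposition \ref{propbasisog}, puis $x_{1,0}=\Pi x_{1,1}$ et $x_{0,0}=\frac{1}{\varpi}\Pi(x_{0,1}+[y]x_{1,1})$, et enfin un calcul direct utilisant la $\sigma^{\pm 1}$-semi-linéarité de $F$, $V$ et $\sigma([y])=[y^q]$ ; c'est correct. La seule imprécision est dans votre deuxième étape : l'image de $\varpi x_{0,1}-[y]x_{1,1}$ dans $M_1/\varpi M_1$ vaut $-y\,\bar{x}_{1,1}$ et n'engendre donc pas la droite $\LC$ associée à $y$ — la bonne normalisation, que vous utilisez d'ailleurs à l'étape trois, est $\tilde{M}_0=\varpi M_1+\OC_{\breve{K}}(x_{0,1}+[y]x_{1,1})$, l'élément $\varpi x_{0,i+1}-[y]x_{1,i+1}$ de la première identité vivant quant à lui dans $M_{i+1}$ et n'apparaissant qu'a posteriori comme valeur de $\Pi x_{0,i}$ une fois la base de $M_{i+1}$ fixée.
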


\begin{proof}

Supposons ici que $s$ est le sommet standard,  les autres sommets se traiterons de manière similaire.  On voit $M$ comme un réseau  de $M^{(0)}[1/\varpi]$  avec $M^{(0)}_1= M_1$.  On fixe $x_{0,1}$ et $x_{1,1}$ une $\OC_{K}$-base de $M_1^{V^{-1}\Pi}$,    identifions $\P_s(\bar{\F})$ avec $\P( M_1 /\varpi    M_1)$ via  
\[
[z_0,z_1]\mapsto \bar{\F} (z_0  x_{0,1}+ z_1 x_{1,1} ).  
\]
La chaîne d'inclusions  $ \varpi  M_1 \subset  \Pi M_0 \subset  M_0 $ définit   un point $[\tilde{z}_0,  \tilde{z}_1]$ dans $ \P_s(\bar{\F})$.   Sans perte de généralité,  on peut supposer que $\tilde{z}_0\neq 0$ (ie. $[\tilde{z}_0,  \tilde{z}_1]\neq \infty$) et  posons $y= \tilde{z}_1/\tilde{z}_0$.    Alors,  l'image de $\Pi M_0$ dans  $M_1 / \varpi  M_1$ est  $ \bar{\F} ( x_{0,1}+yx_{1,1})$.   Ainsi,    $(x_{0,0}, x_{1,0}):=(  \frac{1}{\varpi}\Pi (  x_{0,1}+[y]  x_{1,1}),    \Pi x_{1,1} )$ est une base de $M_1$.  

Nous avons décrit explicitement une base de $M$ et la description de la multiplication par $\Pi$,  $V$,  $F$ dans cette base découle par un calcul direct.  

\end{proof}

\section{\'Etude des fibrés en droites modulaires\label{seccalcordre}}

\subsection{Calcul des ordres $\omega_0$ et  $\omega_1$}
Soit $\XG$ un module spécial sur $A$ un anneaux de $\nilp_{\OC_{\breve{K}}}$,  et $M= M_0 \oplus M_1$ le module de Cartier associé.  L'opérateur $\Pi$  induit  une  flèche  $M_i/VM_{i+1}\to M_{i+1}/VM_i$,  cela définit un morphisme de fibrés  
\[
\Pi^* :  \omega_{i}\to \omega_{i+1}  .
\]
Quand $A$ est une $\F$-algèbre,  le Frobenius relatif $F: \XG\to \XG^{(q)}$ induit un morphisme 
\[
F^*:  \omega_{i}^{(q)}|_{\bar{\H}} \to  \omega_{i+1}|_{\bar{\H }}.
\]
Le  but de cette section est d'étudier ces deux morphismes en fibre spécial.

\begin{theo}
\label{theoOrdOmegai}
Soit $s\in  \BC\TC_0^{(i)}$,    les flèches    $\Pi_*:(  \lie \XG)_i|_{\P_s} \to (\lie \XG)_{i+1}|_{\P_s}$ et $F_*:  (\lie  \XG)^{(q)}_{i}|_{\P_s}\to( \lie \XG)_{i+1}|_{\P_s}$  sont nulles et 
\begin{align*}
(\lie \XG)_{i}|_{\P_s}/ \Pi_* ( \lie \XG)_{i+1}|_{\P_s} & \cong \bigoplus_{y \in  \P_s(\F)}   \iota_{y,*} \bar{\F}  \\
(\lie \XG)_{i}|_{\P_s} / F_* (\lie \XG)^{(q)}_{i+1} |_{\P_s} &\cong   \bigoplus_{y \in  \P_{s}(\F_{(2)})} \iota_{y,*} \bar{\F}.      
\end{align*}
\end{theo} 

\begin{coro}
\label{coroOrdreOmega}
On a 
\begin{enumerate}

\item $\ord_{s_0}(\omega_0)=- 1 $ et    $\ord_{s_0}(\omega_1)= q$. 
 
\item  $\ord_{s_1}(\omega_0)=  q$  et $\ord_{s_1}(\omega_1)= -1$.

\end{enumerate}
\end{coro}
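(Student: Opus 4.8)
The plan is to read the two orders directly off Theorem \ref{theoOrdOmegai} by a degree count on the components $\P_{s_0}$ and $\P_{s_1}$. First I would record the two translations needed: by Theorem \ref{theopicentier} the order $\ord_s(\Lf)$ of a line bundle at a vertex $s$ is the degree of its restriction $\Lf|_{\P_s}$ on the rational curve $\P_s\cong\P^1_{\bar{\F}}$, and since $\omega_j=\lie(\XG)_j^\vee$ one has $\ord_s(\omega_j)=-\deg\big(\lie(\XG)_j|_{\P_s}\big)$ for $j=0,1$. So it suffices to compute these four degrees.

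Next I would treat a vertex $s$ of parity $i$ uniformly; for such a vertex $i$ is the critical index along $\P_s$, so Theorem \ref{theoOrdOmegai} tells us that $\Pi_*$ and $F_*$ from index $i$ to index $i+1$ vanish on $\P_s$, whereas the maps in the other direction
\[ \Pi_*:\lie(\XG)_{i+1}|_{\P_s}\to\lie(\XG)_i|_{\P_s},\qquad F_*:\lie(\XG)^{(q)}_{i+1}|_{\P_s}\to\lie(\XG)_i|_{\P_s} \]
have cokernels $\bigoplus_{y\in\P_s(\F)}\iota_{y,*}\bar{\F}$ and $\bigoplus_{y\in\P_s(\F_{(2)})}\iota_{y,*}\bar{\F}$. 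Since the sources are line bundles (hence torsion-free) and the cokernels are torsion, both maps are injective, so they are short exact sequences of coherent sheaves on $\P_s$. Taking degrees, using $\#\P^1(\F)=q+1$, $\#\P^1(\F_{(2)})=q^2+1$ and the fact that the $q$-power Frobenius twist multiplies degrees by $q$, I get the linear system
\[ \deg\big(\lie(\XG)_{i+1}|_{\P_s}\big)+(q+1)=\deg\big(\lie(\XG)_i|_{\P_s}\big),\qquad q\,\deg\big(\lie(\XG)_{i+1}|_{\P_s}\big)+(q^2+1)=\deg\big(\lie(\XG)_i|_{\P_s}\big), \]
whose unique solution is $\deg\big(\lie(\XG)_{i+1}|_{\P_s}\big)=-q$ and $\deg\big(\lie(\XG)_i|_{\P_s}\big)=1$.

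Finally I would specialize: $s_0$ has parity $0$ and $s_1$ has parity $1$ (they are adjacent, $s_0$ being the base point of the distance function), so taking $i=0$, $s=s_0$ gives $\deg(\lie(\XG)_0|_{\P_{s_0}})=1$ and $\deg(\lie(\XG)_1|_{\P_{s_0}})=-q$, i.e. $\ord_{s_0}(\omega_0)=-1$, $\ord_{s_0}(\omega_1)=q$; taking $i=1$, $s=s_1$ gives $\deg(\lie(\XG)_1|_{\P_{s_1}})=1$ and $\deg(\lie(\XG)_0|_{\P_{s_1}})=-q$, i.e. $\ord_{s_1}(\omega_1)=-1$, $\ord_{s_1}(\omega_0)=q$, which is exactly the statement. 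There is no real obstacle here once Theorem \ref{theoOrdOmegai} is granted; the only points requiring care are the bookkeeping ones — keeping the dual sign between $\lie$ and $\omega$, using that the critical index at a vertex equals that vertex's parity (so it is the arrow \emph{into} index $i$, not out of it, that is nonzero), and that the relevant twist is the $q$-power Frobenius over $\bar{\F}$ (which is precisely what makes the point counts $q+1$, $q^2+1$ appear and the $2\times2$ system nondegenerate).
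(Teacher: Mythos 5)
Your argument is correct and is essentially the paper's own proof: both extract from Théorème \ref{theoOrdOmegai} the two degree equations coming from the lengths $q+1$ and $q^2+1$ of the cokernels of $\Pi_*$ and $F_*$ (with the factor $q$ from the Frobenius twist), and solve the resulting $2\times 2$ system at each vertex. The extra remarks you make — injectivity of the maps because a nonzero map of line bundles on $\P^1$ with torsion cokernel is injective, and the sign flip $\ord_s(\omega_j)=-\deg(\lie(\XG)_j|_{\P_s})$ — are exactly the bookkeeping the paper leaves implicit.
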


\begin{proof}
Les identités du théorème précédent entrainent le système d'équations 
\begin{align*}
-\ord_{s_0} \omega_0  -(- \ord_{s_0} \omega_1) &  = q+1 \\
-\ord_{s_0} \omega_0  -(- q\ord_{s_0} \omega_1) & =q^2+1
\end{align*}
cela implique le point (1).   Le point (2) s'obtient de la même manière.  
\end{proof}

\begin{proof}[D\'emonstration du théorème \ref{theoOrdOmegai}]
Fixons $s$ un sommet de $\BC\TC^{(i)}_0$ et $y\in \P_s(\bar{\F})$.  \'Etant donné $\infty \neq y$ un point $\F$-rationnel de $\P_s$, on peut voir $y$ comme un élément de $\bar{\F}\cong \P_s(\bar{\F})\backslash \{\infty\}$ (resp. $ \F_{(2)}\cong\P_s(\F_{(2)})\backslash \{\infty\} $). D'après \ref{theopsi},  correspond à $y$ un module formel spécial $(\XG_y, \varphi_y)$ et donc un module  de Cartier $(M_y,\varphi_y)$  dont une base $\{x_{j,k}\}_{j,k\in \Z/2\Z}$ a été décrite explicitement dans le lemme \ref{LemmeBase}. Pour, nous voulons savoir en quels points $y$ de $\P_s(\bar{\F})$ s'annule les flèches suivantes et à quel ordre (pour $j=0,1$)
\[\Pi_* :M_{y,j}/M_{y,j+1}\to M_{y,j+1}/M_{y,j}\]
\[F_* :(M_{y,j}/M_{y,j+1})^{(q)}\to M_{y,j+1}/M_{y,j}.\]    
 Lorsque $j=i$, on a $VM_{y,i}=FM_{y,i}=\Pi M_{y,i}$ d'après \ref{deficrit} et \ref{LemmeBase} et les flèches $\Pi_*$ et $F_*$ s'annulent. En particulier, les flèches sous-jacentes entre les fibrés en droite sont nulles.

Supposons maintenant $j=i+1$, alors $VM_{y,i+1}/\varpi M_{y,i} =\bar{\F}\cdot(x_{0,i}+[y^{1/q}] x_{1,i})$ et $x_{1,i}$ (resp. $x_{0,i+1}$) engendre $M_{y,i}/VM_{y,i+1}$ (resp. $M_{y,i+1}/VM_{y,i}$). On observe alors la congruence
\[\Pi_* x_{0,i+1}\equiv ([y]-[y^{1/q}] )x_{1,i}+x_{0,i}+[y^{1/q}] x_{1,i}\equiv (y-y^{1/q} )x_{1,i} \pmod{VM_{y,i+1}}\]
\[F_* x_{0,i+1}\equiv ([y^q]-[y^{1/q}] )x_{1,i}+x_{0,i}+[y^{1/q}] x_{1,i}\equiv (y^q-y^{1/q} )x_{1,i} \pmod{VM_{y,i+1}}\] qui montre que la flèche $\Pi_*$ (resp. $F_*$) s'annulent si et seulement si $y$ est dans $\F$ (resp. $\F_{(2)}$).

Maintenant, nous voulons montrer que l'ordre d'annulation de $\Pi_*$ (resp. $F_*$) en les $y$ ci-dessus  vaut $1$. Appelons $T_y \bar{\H}$ (resp. $T_y \P_s$) la préimage de $y$ dans $\H(\bar{\F}[\epsilon])$ (resp. $\P_s(\bar{\F}[\epsilon])$). Il est suffisant d'établir que pour $(\tilde{M}, \tilde{\varphi})\in T_y \P_s$   qui n'est pas la déformation triviale,  les flèches $\Pi_*$ et $F_*$ sont non-nulles sur les algèbres de Lie.

Le résultat suivant classifie les déformations dans $T_y \bar{\H}$ et est une conséquence directe de la théorie de Dieudonné-Messing.

\begin{lem}\label{lemfiltmess}
L'ensemble $T_y \bar{\H}$ est en  bijection  avec l'ensemble des sous-$\bar{\F}[\epsilon]$-module $D\subset M_y\otimes \bar{\F}[\epsilon]$ de rang $2$  tels que :
\begin{itemize}
\item $D\otimes_{\bar{\F}[\epsilon]} \bar{\F}\cong VM_y$
\item Chaque $D_j:=D\cap (VM_{y,j}\otimes \bar{\F}[\epsilon])$ vérifie $D_j\otimes_{\bar{\F}[\epsilon]} \bar{\F}\cong VM_{y,j}$ pour $j=0,1$ et $D=D_0\oplus D_1$.
\item $(\Pi \otimes \bar{\F}[\epsilon]) D\subset D$.
\end{itemize}
\end{lem}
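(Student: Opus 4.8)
The plan is to deduce this classification from Grothendieck--Messing (``Dieudonné--Messing'') crystalline deformation theory applied to the square-zero thickening $\bar{\F}[\epsilon]\twoheadrightarrow\bar{\F}$.

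First I would reduce to a deformation problem. By Theorem~\ref{Drrep} one has $\bar{\H}(\bar{\F}[\epsilon])=\GC^{Dr}(\bar{\F}[\epsilon])$, and an element lying over $y$ amounts to an $\OC_D$-module formel spécial of dimension $2$ and height $4$ over $\bar{\F}[\epsilon]$ lifting $\XG_y$: the datum $\psi$ is redundant (forced to be the canonical inclusion, $\bar{\F}$ being formally étale over $\F$), and the quasi-isogeny lifting $\varphi_y$ is unique by rigidity of quasi-isogenies along the nilpotent ideal $(\epsilon)$. So $T_y\bar{\H}$ is the set of such lifts, and it remains to classify them.

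Next I would apply Grothendieck--Messing. Since $\varpi=0$ in $\bar{\F}[\epsilon]$ and $(\epsilon)$ is a square-zero (hence divided-power) ideal, the crystal property of the Dieudonné module identifies $\D(\XG_y)(\bar{\F}[\epsilon])$ with $M_y\otimes_{\OC_{\breve{K}}}\bar{\F}[\epsilon]=(M_y/\varpi M_y)\otimes_{\bar{\F}}\bar{\F}[\epsilon]$, compatibly with every endomorphism of $\XG_y$; in particular the operator $\Pi$ and the $\OC_{(2)}$-action become $\Pi\otimes\mathrm{id}$ and $\zeta\otimes\mathrm{id}$. This identification carries the Hodge filtration --- the one whose quotient is $\lie\XG_y=M_y/VM_y$ by the Cartier-theoretic identification recalled above --- onto $VM_y/\varpi M_y$, a direct summand of rank $\dim\XG_y=2$. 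Messing's theorem (cf.\ \cite{messing}) then puts the lifts of $\XG_y$ to a formal $\OC_K$-module over $\bar{\F}[\epsilon]$ in bijection with the rank-$2$ locally free submodules $D\subset M_y\otimes_{\OC_{\breve{K}}}\bar{\F}[\epsilon]$ whose reduction is this filtration; this is the first bullet. By functoriality of Messing's equivalence, a further endomorphism of $\XG_y$ lifts to the deformation attached to $D$ exactly when its image in $\en\bigl(M_y\otimes_{\OC_{\breve{K}}}\bar{\F}[\epsilon]\bigr)$ stabilises $D$. Applied to the action of $\zeta$, and using that $\chi_0(\zeta)$ and $\chi_1(\zeta)$ are distinct elements of $\bar{\F}$ (since $\zeta$ has order $q^2-1\nmid q-1$), this says exactly that $D$ is homogeneous for the $\Z/2\Z$-grading, i.e.\ $D=D_0\oplus D_1$ with $D_j=D\cap(M_{y,j}\otimes_{\OC_{\breve{K}}}\bar{\F}[\epsilon])$, each $D_j$ of reduction the degree-$j$ piece of $VM_y/\varpi M_y$; applied to $\Pi$ it gives $(\Pi\otimes\mathrm{id})D\subset D$. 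These are the second and third bullets. Finally, such a deformation is automatically special: once $D=D_0\oplus D_1$ is a graded rank-$2$ direct summand, each $D_j$ is a rank-$1$ direct summand of $M_{y,j}\otimes_{\OC_{\breve{K}}}\bar{\F}[\epsilon]$, hence $(\lie\widetilde{\XG})_j=(M_{y,j}\otimes_{\OC_{\breve{K}}}\bar{\F}[\epsilon])/D_j$ is free of rank $1$. Conversely every $D$ meeting the three conditions comes from a special $\OC_D$-linear lift, and this gives the bijection.

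The main obstacle is matching conventions rather than anything conceptual: one must reconcile the paper's covariant Cartier-module formalism ($\lie\XG=M/VM$, the operators $F,V,\Pi$) with the crystalline Dieudonné/Messing picture, so as to be sure that the relevant Hodge filtration is $VM_y/\varpi M_y$ and not a Frobenius twist of it, and one must invoke the $\OC_K$-equivariant (``relative'') form of Messing's deformation theorem for formal $\OC_K$-modules. Once the crystal $\D(\XG_y)(\bar{\F}[\epsilon])\cong M_y\otimes_{\OC_{\breve{K}}}\bar{\F}[\epsilon]$ and its Hodge filtration are pinned down, the rest is bookkeeping.
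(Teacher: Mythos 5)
Your argument is correct and follows essentially the same route as the paper: both identify $T_y\bar{\H}$ with deformations of $\XG_y$ via rigidity/uniqueness of the lifted quasi-isogeny, invoke the crystal property to identify $\tilde{M}/\varpi\tilde{M}$ with $M_y/\varpi M_y\otimes\bar{\F}[\epsilon]$, apply Messing's theorem to translate lifts into liftings of the Hodge filtration $VM_y/\varpi M_y$, and characterise the $\OC_D$-structure by stability of $D$ under the grading and under $\Pi\otimes\mathrm{id}$. Your explicit remark that the special condition is automatic once $D=D_0\oplus D_1$ is graded of rank $2$ is a welcome (and correct) addition that the paper leaves implicit.
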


\begin{proof}
Considérons d'abord une déformation quelconque $\tilde{\XG}/\bar{\F}[\epsilon]$ (qui ne provient pas forcément d'un élément de $T_y \bar{\H})$) de $\XG_y/\bar{\F}$  et $\tilde{M}$ le  module de Cartier $\tilde{\XG}$. Par propriété de cristal de l'extension universelle (voir \ref{RemarkExtUniv}), on a une identification :
\[
\tilde{M}/\varpi \tilde{M} \cong M_y/ \varpi M_y \otimes \bar{\F}[\epsilon] 
\]
Si on note (pour éviter toute confusion) $\tilde{V}$ le Verschiebung sur $\tilde{M}$, le théorème principal de la théorie de Dieudonné-Messing (\cite[§5 Theorem 1.6]{messing}) affirme  que le foncteur \[\tilde{M}\mapsto (\tilde{V}\tilde{M}/ \varpi \tilde{M}\subset \tilde{M}/ \varpi \tilde{M})\]  est une équivalence des catégories entre les déformations de groupes $p$-divisibles  de  $\XG_y$ sur  $\bar{\F}[\epsilon]$ et les relevés de la filtration   $VM_y/\varpi M_y \subset M_y/ \varpi M_y$. 

Pour tout morphisme $\psi :\YG\to\ZG$ de modules formels et toutes déformations $\tilde{\YG}$, $\tilde{\ZG}$ de $\YG$, $\ZG$, le résultat précédent avec la propriété de cristal de l'extension universelle entraîne que le morphisme $\psi$ peut se relever en $\tilde{\psi} : \tilde{\YG}\to\tilde{\ZG}$ si et seulement si la flèche $\psi_* \otimes \bar{\F}[\epsilon]: M(\YG)\otimes \bar{\F}[\epsilon]\to M(\ZG)\otimes \bar{\F}[\epsilon]$ préserve les filtrations associées à $\tilde{\YG}$, $\tilde{\ZG}$ par l'équivalence précédente où $\psi_*$ le morphisme induit au niveau des modules de Cartier. Dans ce cas, le relevé est unique et correspond à $\psi_* \otimes \bar{\F}[\epsilon]:M(\tilde{\YG})\to M(\tilde{\ZG} )$.

Si une déformation $(\tilde{\XG},\tilde{\varphi})$ de $(\XG_y,\varphi_y)$ est un point de $T_y \bar{\H}$, alors la quasi-isogénie $\tilde{\varphi}$ est uniquement déterminée par la donnée de $\tilde{\XG},\XG_y,\varphi_y$ d'après la discussion précédente. Ainsi, $T_y \bar{\H}$ est en bijection avec les déformations de $\XG_y$ qui sont des $\OC_D$-modules. Comme la $\Z/2\Z$-graduation sur un $\OC_D$-module de Cartier est équivalente à l'action de $\OC_{(2)}$ et la multiplication par $\Pi $ à l'action de $\Pi_D\in\OC_D$, la déformation $\tilde{\XG}$ est un $\OC_D$-module si et seulement si la filtration $(\tilde{V}\tilde{M}/ \varpi \tilde{M}\subset \tilde{M}/ \varpi \tilde{M})$ est stable par la multiplication par $\Pi\otimes \bar{\F}[\epsilon] $ et admet aussi une $\Z/2\Z$-graduation qui relève celle  sur $M_y$. Ces filtrations sont exactement  celles apparaissant dans l'énoncé.
\end{proof}

\begin{rem}
Reprenons le $\OC_D$-module formel $\XG_y$ ainsi qu'une déformation $\tilde{\XG}$ dans $T_y\bar{\H}$. Notons que, sur le module de Cartier $\tilde{M}$ associé à $\tilde{\XG}$, on a deux multiplications possibles par $\Pi$, $V$, $F$. Comme  $\tilde{M}$ provient d'un $\OC_D$-module formel spécial, il est naturellement muni d'une structure de $E_{\OC_K}(\bar{\F})[\Pi]$-modules ce qui le munit d'une action par d'opérateurs que l'on notera $\tilde{\Pi}$, $\tilde{V}$ et $\tilde{F}$ ($\tilde{V}$ coïncide avec celui de la preuve précédente).

Suivant l'isomorphisme $\tilde{M}/\varpi\tilde{M}\cong M_y/\varpi M_y\otimes \bar{\F}[\epsilon]$ provenant de la propriété de cristal de l'extension universelle, on peut aussi considérer l'action de \[\Pi[\epsilon]:=\Pi\otimes \bar{\F}[\epsilon],\ \ V[\epsilon]:=V\otimes \bar{\F}[\epsilon],\ \ F[\epsilon]:=F\otimes \bar{\F}[\epsilon].\] Dans le prochain énoncé, nous montrerons que,  pour toute déformation  dans $T_y \bar{\H}$, la filtration associée est stable par $\Pi[\epsilon]$, $V[\epsilon]$, $F[\epsilon]$ qui sont alors induits par les uniques relevés à $\tilde{\XG}$ de \[\Pi:\XG_y\to \XG_y, \ F: \XG_y\to (\XG_y)^{(q)}, \ V:(\XG_y)^{(q)}\to \XG_y\]
qui agissent par $\Pi$, $V$ et $F$.

Il est aussi important de noter que les opérateurs $\tilde{V}$ et $\tilde{F}$ ne proviennent pas de morphismes du groupe formel $\tilde{\XG}$ quand la déformation n'est pas triviale (car $\F[\epsilon]$ n'est pas parfait) et ces derniers ne coïncident pas avec $V[\epsilon]$ et $F[\epsilon]$ dans ce cas. Pour $\tilde{\Pi}$, cet opérateur  provient du morphisme induit par $\Pi_D\in D$ et coïncide donc avec $\Pi[\epsilon]$.
\end{rem}

Pour pouvoir terminer la description de $T_y\bar{\H}$, nous donnons une base canonique à chacune des déformations $D\subset M_y\otimes \bar{\F}[\epsilon]$ dans $T_y\bar{\H}$. Pour simplifier l'énoncé qui va suivre, nous définissons l'élément suivant pour $j=0,1$ et $a\in \bar{\F}$: \[e_j(b):=V x_{i+j,j+1} + \epsilon  b x_{1+i+j,j}\in M_{y,j} \otimes \bar{\F}[\epsilon]\]
\begin{coro}
On a \begin{equation}\label{eq:ty}
T_y\bar{\H}=\begin{cases}\{ \bar{\F}[\epsilon]e_i(a_0) \oplus\bar{\F}[\epsilon] e_{i+1}(a_1) : a_0,a_1\in\bar{\F}\} & \si y \in \F\\
\{  \bar{\F}[\epsilon] e_i(a) \oplus \bar{\F}[\epsilon] e_{i+1}(0):a\in\bar{\F}\} &\sinon\end{cases}
\end{equation}
et chacune de ces filtrations est stable par $V[\epsilon]$ et  $F[\epsilon]$.

De plus, quand $y\in \F$, on a $T_y\bar{\H}=T_y\P_s\oplus T_y\P_{s'}$ avec $s'$ le deuxième sommet pour lequel  $y\in \P_s(\bar{\F})$. Cette  décomposition correspond à 
\[T_y\P_s=\{ \bar{\F}[\epsilon]e_i(a_0) \oplus\bar{\F}[\epsilon] e_{i+1}(0) : a_0\in\bar{\F}\}\]
\[T_y\P_{s'}=\{ \bar{\F}[\epsilon]e_i(0) \oplus\bar{\F}[\epsilon] e_{i+1}(a_1) : a_1\in\bar{\F}\}\]
 sous l'identification \eqref{eq:ty}.

\end{coro}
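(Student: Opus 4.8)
The plan is to combine the deformation‑theoretic classification of \ref{lemfiltmess} with the explicit Cartier‑module basis of \ref{LemmeBase}, reducing everything to elementary linear algebra over $\bar\F[\epsilon]$ (all computations take place in $M_y\otimes\bar\F[\epsilon]$, where $\varpi$ acts by $0$ and a Teichmüller lift $[z]$ reduces to $z$). By \ref{lemfiltmess}, an element of $T_y\bar\H$ is a $\Z/2\Z$‑graded rank‑$2$ $\bar\F[\epsilon]$‑submodule $D\subset M_y\otimes\bar\F[\epsilon]$ reducing mod $\epsilon$ to the Hodge filtration $VM_y/\varpi M_y$ and stable under $\Pi[\epsilon]$. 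From \ref{LemmeBase} one reads that the graded piece of $VM_y/\varpi M_y$ in $M_{y,i}$ is the line $\bar\F(Vx_{0,i+1})=\bar\F(x_{0,i}+y^{1/q}x_{1,i})$ and the one in $M_{y,i+1}$ is $\bar\F(Vx_{1,i})=\bar\F x_{1,i+1}$. Since a lift of a line $\bar\F v_0\subset\bar\F v_0\oplus\bar\F w_0$ to a free rank‑$1$ direct summand of $(\bar\F v_0\oplus\bar\F w_0)\otimes\bar\F[\epsilon]$ is uniquely $\bar\F[\epsilon](v_0+\epsilon b\,w_0)$ with $b\in\bar\F$, choosing the complements $x_{1,i}$ and $x_{0,i+1}$ and comparing with the definition of $e_j(b)$ shows that the graded lifts of the Hodge filtration, ignoring $\Pi$‑stability for the moment, are exactly the modules $\bar\F[\epsilon]\,e_i(a_0)\oplus\bar\F[\epsilon]\,e_{i+1}(a_1)$ with $a_0,a_1\in\bar\F$.

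It then remains to impose $\Pi[\epsilon]D\subset D$, i.e.\ (as $\Pi$ is homogeneous of degree $1$) $\Pi[\epsilon]\,e_i(a_0)\in\bar\F[\epsilon]\,e_{i+1}(a_1)$ and $\Pi[\epsilon]\,e_{i+1}(a_1)\in\bar\F[\epsilon]\,e_i(a_0)$. Using $\Pi x_{0,i}=-y\,x_{1,i+1}$, $\Pi x_{1,i}=x_{1,i+1}$, $\Pi x_{1,i+1}=0$, $\Pi x_{0,i+1}=x_{0,i}+y\,x_{1,i}$ from \ref{LemmeBase}, a direct computation gives
\[
\Pi[\epsilon]\,e_i(a_0)=(y^{1/q}-y+\epsilon a_0)\,x_{1,i+1},\qquad \Pi[\epsilon]\,e_{i+1}(a_1)=\epsilon a_1\,(x_{0,i}+y\,x_{1,i}),
\]
and, working modulo $\epsilon^2$, both membership conditions collapse to the single equation $(y-y^{1/q})\,a_1=0$. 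Since $y-y^{1/q}=0\iff y^q=y\iff y\in\F$, this is vacuous when $y\in\F$ and forces $a_1=0$ otherwise, which is precisely the claimed description of $T_y\bar\H$ in the two cases.

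For the stability of each such $D$ under $V[\epsilon]$ and $F[\epsilon]$, I would invoke the preceding remark and Dieudonné-Messing theory: this amounts to lifting the morphisms $V:\XG_y^{(q)}\to\XG_y$ and $F:\XG_y\to\XG_y^{(q)}$ to the deformation $\tilde\XG$, and because the Frobenius of $\bar\F[\epsilon]$ kills $\epsilon$ the twist $\tilde\XG^{(q)}$ is the constant deformation of $\XG_y^{(q)}$, so the conditions reduce to $V[\epsilon]$, resp.\ $F[\epsilon]$, carrying the relevant constant Hodge line into $D$, resp.\ into the constant line. Using that $F$, $V$ and $\Pi$ agree on $x_{0,i},x_{1,i},x_{1,i+1}$, together with $Fx_{0,i+1}=x_{0,i}+y^q x_{1,i}$ and $Vx_{0,i+1}=x_{0,i}+y^{1/q}x_{1,i}$ (\ref{LemmeBase}), the computations have the same shape as for $\Pi[\epsilon]$ and, after reduction mod $\epsilon^2$, produce only conditions of the form $(y^{1/q^2}-y)\,a_1=0$ (and nothing in the remaining cases), which again hold automatically: $y\in\F$ forces $y^{1/q^2}=y$, and $y\notin\F$ forces $a_1=0$ by the previous step. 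Finally, when $y\in\F$ the point $y\in\P_s(\F)$ is the unique node where $\P_s$ meets another component $\P_{s'}$, $(s,s')\in\BC\TC_1$; the tangent space of a node being the direct sum of the tangent lines of its two branches, $T_y\bar\H=T_y\P_s\oplus T_y\P_{s'}$, and I would match the two summands with the $a_0$‑ and $a_1$‑axes by restricting $\tilde\psi$ (\ref{theopsi}) to $\P_s$ and using \ref{RemPullbackPeriodes}: as $s$ has parity $i$, moving $y$ inside $\P_s$ deforms only the period line $\Pi M_{y,i+1}/\varpi M_{y,i}\subset M_{y,i}/\varpi M_{y,i}$ — both indices being critical at an edge, cf.\ \ref{deficrit} — hence only the parameter $a_0$, keeping the intersection datum carried in $M_{y,i+1}$ fixed, so $a_1=0$; symmetrically $T_y\P_{s'}$ is cut out by $a_0=0$. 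This yields the stated formulas and the decomposition.

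The heart of the matter — and the place where care is needed — is the semilinear bookkeeping over the imperfect ring $\bar\F[\epsilon]$: deciding precisely which of $\Pi[\epsilon],V[\epsilon],F[\epsilon]$ is $\bar\F[\epsilon]$‑linear rather than Frobenius‑semilinear and how each acts on $\epsilon$, and then checking that after reduction modulo $\epsilon^2$ every constraint degenerates to $(y-y^{1/q})\,a_1=0$. Once the basis of \ref{LemmeBase} is in hand the computations are mechanical, and the node argument in the last step is elementary geometry together with the period‑map dictionary of \ref{RemPullbackPeriodes}.
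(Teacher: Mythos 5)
Votre démonstration est correcte et suit pour l'essentiel la même route que celle du texte : réduction via \ref{lemfiltmess} aux relevés gradués de la filtration de Hodge stables par $\Pi[\epsilon]$, paramétrage de ces relevés par $(a_0,a_1)$ au moyen de la base de \ref{LemmeBase}, puis constat que la stabilité par $\Pi[\epsilon]$ se réduit à l'unique équation $(y-y^{1/q})a_1=0$, vide si $y\in\F$ et forçant $a_1=0$ sinon. Les seules divergences sont mineures : vous identifiez $T_y\P_s$ et $T_y\P_{s'}$ par la géométrie du point double et l'application des périodes (\ref{RemPullbackPeriodes}) là où le texte utilise le critère d'indice critique de \ref{deficrit} (les déformations avec $a_1=0$ sont exactement celles d'indice critique $i$), et vous explicitez les calculs de stabilité par $F[\epsilon]$ et $V[\epsilon]$ que la preuve du texte laisse implicites — deux variantes tout à fait acceptables.
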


\begin{proof}
D'après \ref{lemfiltmess}, les éléments de $T_y\bar{\H}$ sont en bijection avec les déformations $D\subset M_y\otimes \bar{\F}[\epsilon]$ de la filtration $VM_y/\varpi M_y\subset M_y/\varpi M_y$ qui respecte la $\Z/2\Z$-graduation de $M_y$ et est stable par  multiplication par $\Pi[\epsilon]$. Nous classifions d'abord les filtrations $\Z/2\Z$-graduées et distingueront celle stable par multiplication par $\Pi[\epsilon]$ plus tard. Dans ce cas, chaque terme $D_j$ est libre de rang $1$ sur $ \bar{\F}[\epsilon]$ et admet un générateur qui relève $Vx_{0,j+1}$. Comme les éléments $Vx_{0,j+1}$ et $x_{1+i+j,j}$ engendrent $M_{y,j}$, chacun de ces générateurs peuvent s'écrire sous la forme  $e_j(a)+\epsilon \lambda Vx_{0,j}$ (unique en $a,\lambda\in\bar{\F}$). On observe la relation \[(\mu +\epsilon \lambda)e_j(a)=\mu e_j(a) +\epsilon\lambda Vx_{0,j}\] qui prouve que deux éléments $e_j(a)+\epsilon \lambda Vx_{0,j}$ et $e_j(a')+\epsilon \lambda' Vx_{0,j}$ engendrent la même $ \bar{\F}[\epsilon]$-droite  si et seulement si $a=a'$. Ainsi, toute déformation $D\subset M_y\otimes \bar{\F}[\epsilon]$ respectant la $\Z/2\Z$-graduation peut s'écrire de manière unique (en $a_0,a_1\in\bar{\F}$) sous la forme \[D= \bar{\F}[\epsilon]e_i(a_0) \oplus\bar{\F}[\epsilon] e_{i+1}(a_1)\]

Nous décrivons maintenant les déformations qui sont stables par multiplication par $\Pi[\epsilon]$. Si $y\in\F$, pour $a,b\in \bar{\F}$, on a $\Pi[\epsilon]e_j(a)=a\varepsilon e_{j+1}(b)\in\bar{\F}[\epsilon] e_{j+1}(b)$ et cette condition de stabilité n'entraîne aucune restriction.  En particulier, cela prouve \[T_y\bar{\H}=\{ \bar{\F}[\epsilon]e_i(a_0) \oplus\bar{\F}[\epsilon] e_{i+1}(a_1) : a_0,a_1\in\bar{\F}\}.\] De plus, les éléments $D=\bar{\F}[\epsilon]e_i(a_0) \oplus\bar{\F}[\epsilon] e_{i+1}(0) $ sont exactement  ceux qui ont pour indice critique $i$.  Cela entraîne \[T_y\P_s=\{ \bar{\F}[\epsilon]e_i(a_0) \oplus\bar{\F}[\epsilon] e_{i+1}(0) : a_0\in\bar{\F}\}.\] Le même raisonnement sur $i+1$ montre \[T_y\P_{s'}=\{ \bar{\F}[\epsilon]e_i(0) \oplus\bar{\F}[\epsilon] e_{i+1}(a_1) : a_1\in\bar{\F}\}.\] Supposons maintenant $y\notin \F$, on a alors ($y\neq 0$) \[\Pi[\epsilon] e_i(a)=(y+y^{1/q}+\epsilon a)Vx_{0,i}=(y+y^{1/q}+\epsilon a)e_{i+1}(0).\] On en déduit que $ \bar{\F}[\epsilon]\Pi[\epsilon]e_i(a) =\bar{\F}[\epsilon] e_{i+1}(0)$ et les filtrations recherchées  sont de la forme $D= \bar{\F}[\epsilon]e_i(a_0) \oplus\bar{\F}[\epsilon] e_{i+1}(0)$. Comme $\Pi[\epsilon]e_{i+1}(0)=0$, ces dernières conviennent ce qui termine la preuve.
\end{proof}
Terminons maintenant la preuve de \ref{theoOrdOmegai}. Pour cela on considère la déformation $\tilde{M}$ associée à la filtration (voir le résultat précédent) \[D(=\tilde{V}\tilde{M}/\varpi\tilde{M})= \bar{\F}[\epsilon]e_i(a) \oplus\bar{\F}[\epsilon] e_{i+1}(0) \] avec $a\neq 0$. On veut montrer que l'application $\Pi[\epsilon]$ (resp. $F[\epsilon]$) induit un morphisme non nul \[\Pi_* :\tilde{M}_{i+1}/\tilde{V}\tilde{M}_{i}\to \tilde{M}_{i}/\tilde{V}\tilde{M}_{i+1}\ {\rm (resp. }\ F_* :\tilde{M}_{i+1}/\tilde{V}\tilde{M}_{i}\to \tilde{M}_{i}/\tilde{V}\tilde{M}_{i+1})\]  quand $y\in \F$ (resp. $y\in \F_{(2)}$). Les éléments $x_{0,i+1}$ et $x_{1,i}$ engendrent toujours $\tilde{M}_{i+1}/\tilde{V}\tilde{M}_{i}$ et $\tilde{M}_{i}/\tilde{V}\tilde{M}_{i+1}$ respectivement et on a
\[\Pi_* x_{0,i+1}\equiv ([y]-[y^{1/q}] )x_{1,i}+x_{0,i}+[y^{1/q}] x_{1,i}\equiv -\epsilon a x_{1,i}\not\equiv 0 \pmod{\tilde{V}\tilde{M}_{i+1}}\]
\[(\text{resp. } F_* x_{0,i+1}\equiv ([y^q]-[y^{1/q}] )x_{1,i}+x_{0,i}+[y^{1/q}] x_{1,i}\equiv -\epsilon a x_{1,i} \not\equiv 0 \pmod{\tilde{V}\tilde{M}_{i+1}})\]
pour le choix convenable de $y$.
  
\end{proof}

De manière similaire,  on peut aussi étudier le fibré équivariant 
\[
U\subset \hat{\H} \mapsto (M_{\XG|_U} )/ ( F M_{\XG|_U} ) .  
\]
Ce dernier s'identifie au faisceau des formes différentielles invariantes $\omega_{\XG^{\vee}}= \omega_{\XG^{\vee},0}\oplus \omega_{\XG^{\vee},1}$ du dual de Cartier
  du module universel sur $\hat{\H}$.  Un raisonnement similaire permet de caractériser entièrement la classe de ces parties isotypiques dans $\pic_{[G]_2}(\hat{\H})$.

\begin{theo}
Soit $s\in  \BC\TC_0^{(i)}$,    les flèches    $\Pi^*:\omega_{\XG^{\vee},i}|_{\P_s} \to \omega_{\XG^{\vee}, i+1} |_{\P_s}$ et $V^*:  \omega_{\XG^{\vee},i}|_{\P_s}\to\omega_{\XG^{\vee},i+1}^{(q)}|_{\P_s}$  sont nulles et 
\begin{align*}
 \omega_{\XG^{\vee}, i} |_{\P_s}/\Pi^*\omega_{\XG^{\vee},i+1}|_{\P_s}& \cong \bigoplus_{y \in  \P_s(\F)}   \iota_{y,*} \bar{\F}  \\
\omega_{\XG^{\vee},i}^{(q)}|_{\P_s}  / V^* \omega_{\XG^{\vee},i+1}|_{\P_s} &\cong   \bigoplus_{y \in  \P_{s}(\F_{(2)})} \iota_{y,*} \bar{\F}.      
\end{align*}
\end{theo}

\begin{coro}\label{coroordreomega2}

On a 
\begin{enumerate}

\item $\ord_{s_0}(\omega_{\XG^{\vee},0})=q $ et    $\ord_{s_0}(\omega_{\XG^{\vee}, 1})= -1$. 
 
\item  $\ord_{s_1}(\omega_{\XG^{\vee},0})=  -1$  et $\ord_{s_1}(\omega_{\XG^{\vee},1})= q$.
\end{enumerate}
\end{coro}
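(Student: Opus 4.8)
The plan is to deduce Corollaire~\ref{coroordreomega2} from the theorem immediately above it by exactly the same degree count that proved Corollaire~\ref{coroOrdreOmega}. The ingredients I would invoke are: by Théorème~\ref{theopicentier}, $\ord_s(\Lf)$ is simply the degree of $\Lf|_{\P_s}$ on $\P_s\cong\P^1$; a nonzero morphism of line bundles on $\P^1$ is automatically injective, with cokernel a torsion sheaf whose length equals the difference of the two degrees; the $q$-th Frobenius twist multiplies degrees by $q$, so $\deg(\LC^{(q)})=q\deg\LC$; and $\#\P^1(\F)=q+1$, $\#\P^1(\F_{(2)})=q^2+1$.

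First I would fix $i\in\Z/2\Z$ and the standard vertex $s_i\in\BC\TC_0^{(i)}$, at which $i$ is the critical index. On $\P_{s_i}$ the theorem asserts that $\Pi^*\colon\omega_{\XG^\vee,i}|_{\P_{s_i}}\to\omega_{\XG^\vee,i+1}|_{\P_{s_i}}$ and $V^*\colon\omega_{\XG^\vee,i}|_{\P_{s_i}}\to\omega_{\XG^\vee,i+1}^{(q)}|_{\P_{s_i}}$ vanish, whereas the companion maps emanating from the index-$(i+1)$ piece, namely $\Pi^*\colon\omega_{\XG^\vee,i+1}|_{\P_{s_i}}\to\omega_{\XG^\vee,i}|_{\P_{s_i}}$ and $V^*\colon\omega_{\XG^\vee,i+1}|_{\P_{s_i}}\to\omega_{\XG^\vee,i}^{(q)}|_{\P_{s_i}}$, are nonzero with cokernels $\bigoplus_{y\in\P_{s_i}(\F)}\iota_{y,*}\bar{\F}$ and $\bigoplus_{y\in\P_{s_i}(\F_{(2)})}\iota_{y,*}\bar{\F}$ of lengths $q+1$ and $q^2+1$. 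Reading off degrees then gives
\begin{align*}
\ord_{s_i}(\omega_{\XG^\vee,i})-\ord_{s_i}(\omega_{\XG^\vee,i+1})&=q+1,\\
q\,\ord_{s_i}(\omega_{\XG^\vee,i})-\ord_{s_i}(\omega_{\XG^\vee,i+1})&=q^2+1,
\end{align*}
whence $(q-1)\,\ord_{s_i}(\omega_{\XG^\vee,i})=q(q-1)$, so $\ord_{s_i}(\omega_{\XG^\vee,i})=q$ and $\ord_{s_i}(\omega_{\XG^\vee,i+1})=-1$. Taking $i=0$ yields point (1) and $i=1$ yields point (2).

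The one place requiring care is bookkeeping rather than mathematics: one must keep straight which of the two maps denoted $\Pi^*$ (resp. $V^*$) is the nonvanishing one on a given component, and this is precisely the equivalence between the parity of $s$ and the criticality of an index recorded in the preceding subsection, so that on $\P_{s_i}$ it is the maps out of the isotypic piece of index $i+1$ that survive. The genuine difficulty — pinning down the supports and lengths of these cokernels — is already absorbed into the theorem above, whose proof runs parallel to that of Théorème~\ref{theoOrdOmegai} via Dieudonné–Messing deformation theory and the explicit Cartier bases of Corollaire~\ref{LemmeBase}; relative to that, the present corollary is pure computation. As a sanity check one could alternatively try to express $\omega_{\XG^\vee,i}$ directly in terms of $\omega_i$ and the $\Lf_j$ by using a duality between $\XG$ and its Cartier dual together with Remarque~\ref{RemarkExtUniv}, but the direct degree count above is shorter and entirely self-contained.
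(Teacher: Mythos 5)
Your argument is correct and is exactly the one the paper intends: Corollaire~\ref{coroordreomega2} is deduced from the theorem just above it by the same degree count on each $\P_s\cong\P^1$ (length of the cokernel of a nonzero map of line bundles equals the difference of degrees, and $\deg\LC^{(q)}=q\deg\LC$) that proves Corollaire~\ref{coroOrdreOmega}, and your linear system and its solution $(q,-1)$ match. The bookkeeping of which maps vanish on $\P_{s_i}$ is also handled as in the paper, so there is nothing to add.
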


\begin{rem}
Le résultat précédent montre que $\omega_{i}$ et $\omega_{\XG^{\vee},i+1}$ sont isomorphes en tant que fibrés en droites.  Cela été prouvée  quand $K=\Q_p$ dans \cite[Corollary 6.7.]{pan} en exhibant une polarisation,  montrant ainsi que l'isomorphisme est équivariante.   Nous allons  en donner une preuve dans \ref{propdualomega} 
\end{rem}

\subsection{Formes différentielles logarithmiques et morphisme de  Kodaira-Spencer}

Dans cette section, nous  étudions le faisceau $\Omega^1(\log) / \hat{\H}$    des formes différentielles   à pôles logarithmiques sur les intersections de composantes irréductibles de $\bar{\H}$.  L'action naturelle de $\gln_2(K)$ sur $\hat{\H}$ induit une action sur $\Omega^1(\log)$

\begin{prop}\label{propordreomegalog}
Pour tout sommet $s\in \BC \TC_0$,  on a $\ord_s \Omega^1(\log)=q-1$.  
\end{prop}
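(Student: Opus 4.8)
The plan is to compute the degree of the line bundle $\Omega^1(\log)|_{\P_s}$ on the projective line $\P_s\cong\P^1_{\bar\F}$, which by definition of $\ord_s$ is exactly the integer we want. Since $\Omega^1(\log)$ is $G$-equivariant and $G$ acts transitively on $\BC\TC_0$, the quantity $\ord_s\Omega^1(\log)$ is independent of $s$, so it suffices to treat one vertex; in fact the argument below is uniform in $s$. The crux is to identify $\Omega^1(\log)|_{\P_s}$ with the sheaf $\Omega^1_{\P^1}$ twisted by the reduced divisor $\P_s(\F)$, i.e. with differentials on $\P^1$ having at worst logarithmic poles along the $q+1$ $\F$-rational points of $\P_s$ — equivalently, the points at which $\P_s$ meets its $q+1$ neighbouring components in $\bar\H$.

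First I would analyse $\Omega^1(\log)$ chart by chart. On the vertex affinoid $\hat\H_s=\spg(\breve K\langle X,\tfrac1{X^q-X}\rangle)$ of \eqref{eq:ouvsimp} the formal model is smooth over $\OC_{\breve K}$, so there is no logarithmic locus and $\Omega^1(\log)|_{\hat\H_s}=\Omega^1_{\hat\H_s/\OC_{\breve K}}$; restricting to the special fibre $\bar\H_s=\P_s\setminus\P_s(\F)$ gives the ordinary sheaf $\Omega^1_{\bar\H_s/\bar\F}$, generated by $dX$. Hence $\Omega^1(\log)|_{\P_s}$ is regular and nowhere vanishing away from $\P_s(\F)$. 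Next, near a point $P\in\P_s(\F)$, write $a=(s,s')$ for the corresponding edge and use the edge affinoid $\hat\H_a$ of \eqref{eq:ouvsimp}, on which $\Omega^1(\log)(\hat\H_a)=\Of(\hat\H_a)\tfrac{dX}{X}=\Of(\hat\H_a)(-\tfrac{dY}{Y})$. One of $X,Y$ is a local coordinate $t$ on $\P_s$ vanishing to order $1$ at $P$ (the component $\bar\H_a\cap\P_s$ is the vanishing locus of the other variable), and restricting the generator to $\bar\H_a\cap\P_s$ gives $\pm\,d\log t$; this is a differential with a logarithmic pole of order exactly $1$ at $P$ and no zero or pole on $(\bar\H_a\cap\P_s)\setminus\{P\}$. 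The two local descriptions are compatible on $\bar\H_s\subset\bar\H_a\cap\P_s$, where $dt=t\cdot d\log t$ with $t$ invertible.

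Assembling these local pictures over the $q+1$ edges at $s$, one gets a canonical identification
\[
\Omega^1(\log)|_{\P_s}\;\cong\;\Omega^1_{\P^1}\Bigl(\sum_{P\in\P_s(\F)}P\Bigr),
\]
and therefore
\[
\ord_s\Omega^1(\log)=\deg\bigl(\Omega^1(\log)|_{\P_s}\bigr)=\deg\Omega^1_{\P^1}+\#\P_s(\F)=-2+(q+1)=q-1,
\]
which is the assertion. As a sanity check this matches \ref{coroOrdreOmega}, since $\ord_{s_j}\omega_0+\ord_{s_j}\omega_1=-1+q=q-1$, consistent with the Kodaira--Spencer isomorphism $\omega_0\otimes\omega_1\cong\Omega^1(\log)$ established in \ref{theoks}.

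The main obstacle I expect is purely a matter of bookkeeping: matching the coordinate $X$ (or $Y$) of each edge chart \eqref{eq:ouvsimp} with an honest coordinate on $\P_s$ centred at the correct node, and verifying that across all $q+1$ edges incident to $s$ the logarithmic poles occur precisely at those $q+1$ nodes, each with multiplicity one, with no spurious extra zero or pole coming from the denominators $X^{q-1}-1$, $Y^{q-1}-1$ (these only delete points from the charts and do not affect the generator). Once the local normal forms are pinned down, the degree computation is immediate.
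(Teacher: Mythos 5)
Votre preuve est correcte et suit essentiellement la même démarche que celle de l'article, qui identifie directement $\Omega^1(\log)|_{\P_s}$ à $\Omega^1_{\P_s}(D)$ avec $D=\sum_{y\in\P_s(\F)}y$ et en déduit $\deg=-2+(q+1)=q-1$. Votre vérification locale carte par carte ne fait qu'expliciter ce que l'article tient pour acquis « par définition ».
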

\begin{proof}
Par définition,  $\Omega^1(\log)|_{\P_s}= \Omega^1_{\P_s}(D)\cong \Of_{\P_s}(-2)\otimes \Of_{\P_s}(q+1)\cong \Of(q-1)_{\P_s}$,  où $D= \sum_{y\in \P_s(\F)} y$. 
\end{proof}

Nous souhaitons maintenant relier le faisceau $\Omega^1(\log)$ à $\omega_0$ et $\omega_1$. 

\begin{theo}[Isomorphisme de Kodaira-Spencer]\label{theoks}
On a un isomorphisme $ \omega_0\otimes \omega_1 \to \Omega^1(\log)$ dans $\pic_{G}(\hat{\H})$.   
\end{theo}
\begin{proof}

D'après  \ref{coroOrdreOmega}  et \ref{propordreomegalog}, les fibrés $\omega_0\otimes \omega_1$ et $\Omega^1(\log)$ sont isomorphes quand on oublie l'action de $G$. On a en particulier un isomorphisme $G$-équivariant $\omega_0\otimes \omega_1 \iso \Omega^1(\log)(\chi)$ pour un certain caractère $\chi$ de $G$. En appliquant \ref{RemPullbackPeriodes}, on en déduit un isomorphisme $G$-équivariant en fibre générique $\Of(-1)^{2}\iso\Of(-2)(\chi)$ qui montre que le caractère $\chi$ est trivial.

\end{proof}

Le résultat suivant tente de rendre un peu plus naturel l'isomorphisme précédent (modulo un isomorphisme $\omega_{i} \cong \omega_{\XG^{\vee},i+1}^{\vee}$ dans $\pic_{[G]_2}(\hat{\H})$).

\begin{prop}\label{propdualomega}
Si $i=0,1$,  on a encore $\omega_{i} \cong \omega_{\XG^{\vee},i+1}^{\vee}$ dans $\pic_{[G]_2}(\hat{\H})$. 
\end{prop}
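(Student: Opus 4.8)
The plan is to follow exactly the pattern of the proof of \ref{theoks}: first get a non-equivariant isomorphism for free from the order computations, reduce the equivariant statement to the triviality of a character of $[G]_2$, and then kill that character in the generic fibre. Concretely, by \ref{coroOrdreOmega} and \ref{coroordreomega2} the line bundles $\omega_i$ and $\omega_{\XG^\vee,i+1}$ have the same orders at $s_0$ and $s_1$, hence are isomorphic by \ref{theopicentier}. By the exact sequence of \ref{propactforte} (see also \ref{corocohoequiv}) two $[G]_2$-equivariant structures on the same underlying line bundle differ by an element of $\homm([G]_2,\OC_{\breve{K}}^*)$, so one obtains a $[G]_2$-equivariant isomorphism $\omega_i\cong\omega_{\XG^\vee,i+1}(\chi)$ for some character $\chi$ (with the duality normalized as in the statement). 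It remains to show $\chi$ is trivial.

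For that, I would restrict to $\H$. By \ref{RemPullbackPeriodes} there is a $[G]_2$-equivariant isomorphism $\omega_i[1/\varpi]\cong\Of(-1)$, so it suffices to prove the analogous $\omega_{\XG^\vee,i+1}[1/\varpi]\cong\Of(-1)$ equivariantly. The natural tool is the polarization: the universal special formal $\OC_D$-module $\XG$ carries a canonical quasi-isogeny $\lambda:\XG\to\XG^\vee$ which is $\OC_D$-linear once $\XG^\vee$ is given the $\OC_D$-action twisted by the main involution of $D$. Since that involution restricts to $\sigma$ on $\OC_{(2)}$, the map $\lambda$ interchanges the isotypic pieces and induces a quasi-isogeny of line bundles $\lie(\XG)_i\to\lie(\XG^\vee)_{i+1}=\omega_{\XG^\vee,i+1}^\vee$ on $\hat{\H}$; its $[G]_2$-equivariance follows from the fact that the $[G]_2$-action commutes with the $\OC_D$-action (the compatibility already exploited in Section \ref{ssecex}). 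In the generic fibre a quasi-isogeny of $p$-divisible groups is an isogeny and induces an isomorphism on Lie algebras, whence $\omega_{\XG^\vee,i+1}^\vee[1/\varpi]\cong\lie(\XG)_i[1/\varpi]\cong\Of(1)$, i.e. $\omega_{\XG^\vee,i+1}[1/\varpi]\cong\Of(-1)$, $[G]_2$-equivariantly. Comparing with the isomorphism for $\omega_i[1/\varpi]$ shows that $\chi$ becomes trivial in $\pic_{[G]_2}(\H)$, and since the natural map $\homm([G]_2,\OC_{\breve{K}}^*)\to\pic_{[G]_2}(\H)$ is injective (a nontrivial $\chi$ gives a nontrivial $\Of(\chi)$ because $\Of^*(\H)$ has no $[G]_2$-semi-invariant elements besides constants), we conclude $\chi=1$.

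The main obstacle is the middle step: producing the polarization $\lambda$ in the required generality (all $K$, not just $\Q_p$ as in \cite{pan}) and checking carefully that it is $[G]_2$-equivariant and swaps the grading degrees, then controlling its effect in the generic fibre. A more computational alternative, avoiding any discussion of polarizations, is to use the explicit Cartier basis of $M_y$ from \ref{LemmeBase}: one writes down $M_y/FM_y$ in that basis, compares it directly with $M_y/VM_y$ (or its dual), and produces the isomorphism by an explicit formula; in this approach the order computation is not even needed, and the only remaining point is again to verify $[G]_2$-equivariance of the comparison.
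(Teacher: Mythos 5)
Your overall skeleton coincides with the paper's: the order computations of \ref{coroOrdreOmega} and \ref{coroordreomega2} together with \ref{theopicentier} give an isomorphism of the underlying line bundles, so by \ref{propactforte} the two equivariant structures differ by a character $\chi$ of $[G]_2$, and the whole content is to show $\chi=1$. Where you diverge is in how $\chi$ is killed. The paper does not pass to the generic fibre and does not use a polarization: it constructs an explicit non-zero $[G]_2$-equivariant morphism $KS:\omega_{\XG^\vee,i}\to\omega_{i+1}$ from the Gauss--Manin connection on $M_U/FVM_U$ composed with the Hodge filtration (and the identification $\omega_0\otimes\omega_1\cong\Omega^1(\log)$ of \ref{theoks}), and checks that it is non-zero by restricting to $\bar{\H}_s$ and identifying it, via the explicit period map of \ref{RemPullbackPeriodes}, with the canonical isomorphism $\Of(-2)\cong\Omega^1_{\P^1}$. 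A non-zero equivariant morphism between bundles of equal orders then forces $\chi=1$ since $\Of(\hat{\H})=\OC_{\breve{K}}$.

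The genuine gap in your proposal is exactly the step you yourself flag as the main obstacle: the existence, $[G]_2$-equivariance and grading-swapping property of the polarization $\lambda:\XG\to\XG^\vee$ over $\hat{\H}$ for general $K$. The remark immediately preceding the proposition states that this polarization argument is the one carried out in \cite{pan} for $K=\Q_p$ only, and that the proposition exists precisely to give a proof avoiding it; invoking $\lambda$ without constructing it therefore leaves the central input unproved. Your fallback via the Cartier bases of \ref{LemmeBase} does not repair this: those bases describe $M_y$ at individual geometric points of $\bar{\H}$, and a pointwise comparison of $M_y/FM_y$ with $M_y/VM_y$ does not by itself assemble into a global morphism of sheaves, let alone an equivariant one --- producing such a global, functorial (hence equivariant) map is exactly what the Kodaira--Spencer construction is for. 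Note also a sign problem you inherit from the statement: the orders show that $\omega_i$ and $\omega_{\XG^\vee,i+1}$ (not its dual) agree, and your own generic-fibre computation gives $\omega_i[1/\varpi]\cong\Of(-1)$ while $\omega_{\XG^\vee,i+1}^\vee[1/\varpi]\cong\Of(1)$; feeding these into $\omega_i\cong\omega_{\XG^\vee,i+1}^\vee(\chi)$ yields $\Of(-1)\cong\Of(1)(\chi)$, a contradiction rather than $\chi=1$. The version your argument (and, in substance, the paper's $KS$ morphism) actually establishes is $\omega_i\cong\omega_{\XG^\vee,i+1}$.
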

\begin{proof}
D'après \ref{coroordreomega2} et \ref{coroOrdreOmega}, on a $\omega_{i} \cong \omega_{\XG^{\vee},i+1}^{\vee}(\chi)$ pour un certain caractère de $[G]_2$. Il suffit alors de construire un morphisme non-nul $[G]_2$-équivariant entre $\omega_{i}$ et $\omega_{\XG^{\vee},i+1}^{\vee}$. 

\'Ecrivons $\XG_U$ la restriction du module universel sur $U$, et $M_U$ le module de Cartier associé.     La connexion de Gauss-Manin fourni un morphisme 
\[
\nabla: M_U / F V M_U \to  M_U/ FV M_U \otimes \Omega^1(\log). 
\]
On a aussi la filtration de Hodge 
\[
0\to M_U/FM_U \xrightarrow{V} M_U/FV M_U \xrightarrow{pr_0} M_U/V M_U \to 0 
\]
concentrée en degrés $[0,1]$.  On en déduit par composition  le morphisme de Kodaira-Spencer
\[
KS:  M_U/FM_U \xrightarrow{ (pr_0\otimes id) \circ \nabla\circ V}   M_U/VM_U \otimes \Omega^1(\log).
\]
En globalisant  et en fixant une partie isotypique,  on en déduit un morphisme de faisceaux 
\[
KS: \omega_{\XG^{\vee},i} \to  \omega_{\XG,i}^{\vee}\otimes \Omega^1(\log)\iso \omega_{\XG,i+1}.   
\]
Cette flèche est l'application recherchée et il suffit de prouver qu'elle est non-nulle et se restreindre à un ouvert $\bar{\H}_s$ de $\bar{\H}$. Rappelons que nous avons construit dans \ref{theopsi} explicitement l'application de périodes $
\tilde{\psi}: \bar{\H }_{s}\to \P^1$ qui identifie $\bar{\H}_{s}$ à l'ouvert $\P_{s}\backslash \P_{s}(\F)$. D'après la remarque \ref{RemPullbackPeriodes},  on a   les identifications   
\[
M_{1,\bar{\H}_{s}}/FV M_{1,\bar{\H}_{s}} = \tilde{\psi}^*( \mathrm{St}), \;\;  M_{0,\bar{\H}_{s}}/FM_{1,\bar{\H}_{s}}= \tilde{\psi}^*(\Of(-1)) \et M_{1,\bar{\H}_{s}}/VM_{0,\bar{\H}_{s}} = \tilde{\psi}^*(\Of(1)). 
\]
Il   suffit alors de vérifier que l'application $KS$ associée à ce dernier est un isomorphisme, qui est l'isomorphisme naturel   $\Of(-2)\xrightarrow{\sim} \Omega^1_{\P^1}$. 
\end{proof}

\subsection{Fibrés équivariants provenant du premier revêtement \label{ssecrayn}}

Comme il a été décrit dans \ref{ssecex}, l'uniformisante $\Pi_{D}\in D^{\times}$ agit par isogénies sur le module formel universel $\XG/\hat{\H}$ et nous nous intéressons dans cette section aux  fibrés en droites construit à partir du groupe $\XG[\Pi_D]=\ker(\Pi_{D})$ fini et plat sur $\hat{\H}$. Ce dernier admet une action de $\OC_D/ \Pi_D$ qui en fait un schéma en $\F_{(2)}$-espaces vectoriels. De plus, le caractère spécial de $\XG$  entraîne que $\XG[\Pi_D]$ est un schéma de Raynaud \cite[§2.3.1]{wa}. Plus précisément, l'idéal d'augmentation $\IC$ de $\XG[\Pi_D]$ admet une décomposition le long des caractères de $\F_{(2)}^*$ dans $\OC_{\breve{K}}^*$, $\IC= \bigoplus_{\chi} \Lf_{\chi}$ et la condition d'être un schéma de Raynaud est équivalente au fait que chacun de ces faisceaux $\Lf_{\chi}$ est localement libre de rang $1$ sur $\hat{\H}$.   Parmi tous ces caractères, nous en distinguons deux  $\chi_0$ et $\chi_1$ qui correspondent aux deux  plongements d'anneaux \footnote{où on a prolongé $\chi_0$ et $\chi_1$ par  $0$ en $0$.} $\F_{(2)}\to \OC_{\breve{K}}/\varpi =\bar{\F}$ et nous noterons  $\Lf_0$ et $\Lf_1$ les deux parties isotypiques associées.  D'après \cite[Proposition 1.3.1]{Rayn}, toutes les autres parties isotypiques peuvent s'écrire comme des produits de $\Lf_0$, $\Lf_1$.

De plus, la multiplication et la comultiplication sur $\XG[\Pi_D]$ induisent  des morphismes 

\[
\begin{gathered}
d_{\chi,\chi'} : \Lf_{\chi}\otimes \Lf_{\chi'} \to \Lf_{\chi\chi'}\\
c_{\chi,\chi'} : \Lf_{\chi\chi'}\to \Lf_{\chi}\otimes \Lf_{\chi'}
\end{gathered}
\] 
et cela induit des morphismes par associativité de la structure de space vectoriel 
\[
\begin{gathered}
d_{i} : \Lf_{i}^{\otimes q} \to \Lf_{i+1}\\
c_{i} : \Lf_{i+1}\to  \Lf_{i}^{\otimes q}
\end{gathered}
\]
car $\chi_{i+1}=\chi_i^{q}$.  Le théorème principal de la théorie de Raynaud montre que le schéma de Raynaud $\XG[\Pi_D]$ est entièrement déterminé par la donnée de $\Lf_0$, $\Lf_1$, $d_i$ et $c_i$ ($i=0,1$).

\begin{theo}[\cite{Rayn} théorème 1.4.1]
Soit $X/S$ un schéma de Raynaud, écrivons encore $\IC=\bigoplus_{\chi} \Lf_{\chi} $ la décomposition en parties isotypiques de l'idéal d'augmentation.  Le foncteur 
\[
X\mapsto (\Lf_0, \Lf_1, d_i, c_i)
\]
établit une équivalence des catégories entre les schémas de Raynaud sur $S$ et le tuples données par des fibrés en droites $\Lf_0$ et $\Lf_1$, et des morphismes $d_i: \Lf_{i}^{\otimes q}\to \Lf_{i+1}$ et $c_i: \Lf_{i+1}\to \Lf_{i}^{\otimes q}$, tel que $d_i \circ c_i= w\mathrm{id}$ avec $w\in \OC_{\breve{K}}$  une constante indépendante de $S$ et de $X$ 
\end{theo}

Le but de cette section est de décrire les faisceaux $[G]_2$-équivariant $\Lf_0$, $\Lf_1$ 

\begin{theo}\label{theoordli}
 On a 
 \[
 \begin{gathered}
 \ord_{s_0}(\Lf_0)= 1, \;\; \ord_{s_1}(\Lf_0)=-1  \\
  \ord_{s_0}(\Lf_1)= -1, \;\; \ord_{s_1}(\Lf_1)=1 
 \end{gathered}
 \]
\end{theo}
Le résultat découle de ces deux assertions:



\begin{prop}\label{propinv}
$\Lf_0$ et $\Lf_1$ sont inverse l'un de l'autre dans $\pic(\hat{\H})$. 
\end{prop}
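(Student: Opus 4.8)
The plan is to deduce Proposition~\ref{propinv} from the stronger statement $\Lf_0\otimes\Lf_1\cong\Of_{\hat{\H}}$ in $\pic(\hat{\H})$, obtained in two moves: first trivialise a suitable power, then invoke torsion-freeness of $\pic(\hat{\H})$. \textbf{Step 1 (reduction to the trivial isotypic component).} Write $\IC=\bigoplus_\chi\Lf_\chi$ for the isotypic decomposition of the augmentation ideal under $\F_{(2)}^*$; among the $q^2-1$ characters there is the trivial one (it occurs in $\IC$, being carried by the monomial $z_0^{q-1}z_1^{q-1}$), and by \cite[Proposition 1.3.1]{Rayn} its line bundle $\Lf_{\mathrm{triv}}:=\IC^{\F_{(2)}^*}$ is $\cong\Lf_0^{\otimes(q-1)}\otimes\Lf_1^{\otimes(q-1)}$, the trivial character being $\chi_0^{\,q-1}\chi_1^{\,q-1}$. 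Since $\pic(\hat{\H})\cong\prod_s\Z$ is torsion free by~\ref{theopicentier}, it suffices to prove $\Lf_{\mathrm{triv}}\cong\Of_{\hat{\H}}$: then $(\Lf_0\otimes\Lf_1)^{\otimes(q-1)}\cong\Of_{\hat{\H}}$ forces $\Lf_0\otimes\Lf_1\cong\Of_{\hat{\H}}$.

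\textbf{Step 2 (a canonical section of $\Lf_{\mathrm{triv}}^{-1}$).} The invariants of the Hopf algebra split as $\Of_{\XG[\Pi_D]}^{\F_{(2)}^*}=\Of_{\hat{\H}}\oplus\Lf_{\mathrm{triv}}$, and the ring multiplication carries $\Lf_{\mathrm{triv}}\cdot\Lf_{\mathrm{triv}}$ into $\IC^2\cap\Of^{\F_{(2)}^*}\subset\Lf_{\mathrm{triv}}$: locally, trivialising $\Lf_0,\Lf_1$ by coordinates $z_0,z_1$ with $z_i^q=d_iz_{i+1}$, one has $(z_0^{q-1}z_1^{q-1})^2=(d_0d_1)\,z_0^{q-1}z_1^{q-1}$, with no constant term. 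This produces a canonical global section $\gamma\in\Gamma(\hat{\H},\Lf_{\mathrm{triv}}^{\otimes-1})$. Over the generic fibre $\H$ the group $\XG[\Pi_D]$ is finite étale, so $\Of_{\XG[\Pi_D]}^{\F_{(2)}^*}|_{\H}\cong\Of_{\H}\times\Of_{\H}$ and $\gamma|_{\H}$ is the multiplication of the second factor, hence invertible; thus $\gamma\neq0$ and $\mathrm{div}(\gamma)$ has no horizontal component, i.e.\ is supported on $\bar{\H}$.

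\textbf{Step 3 (equivariance kills $\mathrm{div}(\gamma)$).} Although $\XG[\Pi_D]$ is only $[G]_2$-equivariant, the elements of $G\setminus[G]_2$ act on it intertwining the $\F_{(2)}^*$-action with its $\sigma$-conjugate (the symmetry of the moduli problem recalled in \S\ref{ssecex}); hence they exchange $\Lf_0$ and $\Lf_1$ but \emph{fix} $\Lf_{\mathrm{triv}}$. So $\Lf_{\mathrm{triv}}$ is $G$-equivariant and, the multiplication being $G$-equivariant, $\gamma$ is a $G$-invariant section; therefore $\mathrm{div}(\gamma)$ is a $G$-stable effective divisor supported on $\bar{\H}$. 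Since $\hat{\H}$ is semi-stable (so $\bar{\H}$ is reduced) and $G$ acts transitively on the components $\{\P_s\}_{s\in\BC\TC_0}$, we conclude $\mathrm{div}(\gamma)=m\sum_s[\P_s]=m[\bar{\H}]=\mathrm{div}(\varpi^m)$ for some $m\geq0$, so that $\gamma/\varpi^m$ trivialises $\Lf_{\mathrm{triv}}^{\otimes-1}$, i.e.\ $\Lf_{\mathrm{triv}}\cong\Of_{\hat{\H}}$, which finishes the proof.

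The main obstacle is Step~2: pinning down Raynaud's isomorphism $\Lf_{\mathrm{triv}}\cong\Lf_0^{\otimes(q-1)}\otimes\Lf_1^{\otimes(q-1)}$ with the correct exponents, and verifying that the restriction of the multiplication to $\Lf_{\mathrm{triv}}^{\otimes2}$ has no $\Of_{\hat{\H}}$-component, so that $\gamma$ is genuinely a section of $\Lf_{\mathrm{triv}}^{-1}$ and not merely of $\Of_{\hat{\H}}\oplus\Lf_{\mathrm{triv}}$. (An alternative route, via the polarisation of $\XG$ and Cartier duality of Raynaud schemes, should also work, but requires comparable bookkeeping to match the Frobenius twist with Raynaud's duality conventions.) Once Step~2 is in place, Steps~1 and~3 are short.
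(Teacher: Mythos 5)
Votre démonstration est correcte, mais elle suit une route entièrement différente de celle du texte : le papier se contente de renvoyer à \cite{pan} (cas $K=\Q_p$) et à \cite[Cor. 6.8]{J3} (cas général), tandis que vous donnez un argument autonome. Votre idée — identifier la composante isotypique triviale $\IC^{\F_{(2)}^*}\cong\Lf_0^{\otimes(q-1)}\otimes\Lf_1^{\otimes(q-1)}$, construire la section canonique $\gamma$ de son inverse via la multiplication de l'algèbre de Hopf, constater qu'elle est inversible en fibre générique (car $\Sigma^1\to\H$ est un $\F_{(2)}^*$-torseur étale, donc $\IC_1|_{\H}\cong\Of_{\H}$ avec $\gamma|_{\H}=1$), puis tuer le diviseur vertical par $G$-équivariance et transitivité de $G$ sur les composantes $\P_s$ — est cohérente avec tout ce qui précède dans l'article ; en particulier la conclusion $\mathrm{div}(\gamma)=m\,\mathrm{div}(\varpi)$ utilise correctement que $\bar{\H}$ est réduite et que $\sum_s[\P_s]=\mathrm{div}(\varpi)$, et l'étape de torsion utilise légitimement \ref{theopicentier}. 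Ce que votre approche achète : une preuve interne qui n'exporte rien vers \cite{J3}, et qui identifie de surcroît $\gamma$ localement à $d_0d_1$, donc relie directement la trivialité de $\Lf_0\otimes\Lf_1$ aux lieux d'annulation des multiplications de Raynaud (la seconde équation du système de \ref{theoordli} en découle alors par le calcul des conoyaux de $d_i$). Ce que la citation achète : elle évite le seul point qui demande un soin réel chez vous, à savoir que l'isomorphisme de Raynaud $\IC_{\chi_0^a\chi_1^b}\cong\Lf_0^{\otimes a}\otimes\Lf_1^{\otimes b}$ ($0\le a,b\le q-1$) vaut bien pour le caractère trivial avec $(a,b)=(q-1,q-1)$ dans la version $\OC_K$-linéaire de la théorie (relations $z_i^q=d_iz_{i+1}$) — c'est exactement l'énoncé que le papier invoque déjà via \cite[Prop. 1.3.1]{Rayn}, donc ce n'est pas une lacune, mais c'est l'hypothèse dont tout dépend.

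Deux remarques mineures. D'une part, votre inquiétude finale sur « l'absence de terme constant » est superflue : $\IC$ étant un idéal, le produit $\IC_1\cdot\IC_1$ tombe automatiquement dans $\IC\cap\Of_{\XG[\Pi_D]}^{\F_{(2)}^*}=\IC_1$, sans calcul local. D'autre part, la $G$-équivariance (et pas seulement $[G]_2$) de $\IC_1$ est réellement nécessaire à l'étape 3 : avec la seule $[G]_2$-stabilité on n'obtiendrait que $\mathrm{div}(\gamma)=m_0D_0+m_1D_1$ sur les deux orbites de sommets, et $\Of(D_0)$ n'est pas trivial dans $\pic(\hat{\H})$ (ses ordres sont $(-(q+1),q+1)$) ; votre justification via la $\sigma$-semi-linéarité de l'action de $G\setminus[G]_2$ sur $\XG[\Pi_D]$, qui fixe le caractère trivial, est donc le bon argument et il fallait le donner.
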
  
\begin{proof}
Cela découle de \cite{pan} quand $K=\Q_p$ et de \cite[Corollaire 6.8.]{J3} quand $K$ est quelconque (et $d=1$ ici).   
\end{proof}

\begin{prop}
  Soit $s\in \BC\TC_0^{(j)}$, on a 
  \[
  \Lf_i/ d_{i+1}(\Lf_{i+1}^{\otimes q})|_{\P_s}   = \begin{cases}
   \bigoplus_{y\in \P_s(\F)} \iota_{y,*} \bar{\F} & \si i=j ,\\ 
   0 & \sinon. 
\end{cases}
  \]
\end{prop}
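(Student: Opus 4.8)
The plan is to mimic the strategy used for the operators $\Pi^*$ and $F^*$ on the $\omega_i$ in the proof of Theorem \ref{theoOrdOmegai}: reduce everything to a fiberwise computation over the geometric points $y\in\P_s(\bar\F)$ using the explicit Cartier-module description furnished by Corollary \ref{LemmeBase}, determine the vanishing locus of the relevant map, and then upgrade the vanishing to a first-order (order exactly $1$) statement via Dieudonné–Messing deformation theory. First I would recall that, since $\XG[\Pi_D]$ is a Raynaud scheme, the morphism $d_{i+1}:\Lf_{i+1}^{\otimes q}\to\Lf_i$ is a map of line bundles, so it suffices to compute, for each $y$, whether the induced map on fibers $(\Lf_{i+1}^{\otimes q})_y\to(\Lf_i)_y$ vanishes, and if so to what order.

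Next I would make the link between $d_{i+1}$ and the Verschiebung (or Frobenius) on the Cartier module of $\XG$ explicit. The augmentation ideal $\IC$ of $\XG[\Pi_D]$ and its isotypic decomposition $\IC=\bigoplus_\chi\Lf_\chi$ can be read off from $M(\XG)/\Pi M(\XG)$ graded by the $\F_{(2)}^*$-action; concretely $\Lf_i$ is recovered from $M(\XG)_i/\Pi M(\XG)_{i-1}$ (a line bundle by the \textbf{Fait} following Definition \ref{deficrit}), and the comultiplication/multiplication morphisms $d_i,c_i$ on the $q$-th powers come from the Frobenius/Verschiebung structure, the relation $d_i\circ c_i = w\,\mathrm{id}$ being the usual $FV=\varpi$. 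The point is that, over a geometric point $y$, the map $d_{i+1}$ on fibers is governed exactly by the relative position of $VM_{y}$, $FM_{y}$ and $\Pi M_y$ inside $M_y$, i.e. by whether the index $i$ or $i+1$ is critical in the sense of Definition \ref{deficrit}. Using Corollary \ref{LemmeBase} I would write out $\Pi x_{j,k}$, $Vx_{j,k}$, $Fx_{j,k}$ in the explicit basis and compute $d_{i+1}$ fiberwise: when $s\in\BC\TC_0^{(j)}$ and $i=j$, the index $i$ is critical for every $y$, so $\Pi M_{y,i}=VM_{y,i}$ and the induced map degenerates — one then finds, exactly as in the congruence computations for $\Pi_*$ and $F_*$ in the proof of Theorem \ref{theoOrdOmegai}, a factor of the form $y-y^{1/q}$ (or its analogue) so that the cokernel is supported precisely on $\P_s(\F)$; when $i\neq j$ the index $i+1$ is critical and $d_{i+1}$ is an isomorphism on every fiber, giving cokernel $0$.

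Then I would prove that at each $y\in\P_s(\F)$ the vanishing is of order exactly $1$, which is the main obstacle. As in Theorem \ref{theoOrdOmegai} the cleanest way is to show that the map on Lie algebras (or on $\Lf_i/d_{i+1}(\Lf_{i+1}^{\otimes q})$ seen as a coherent sheaf) is nonzero on a nontrivial first-order deformation $(\tilde M,\tilde\varphi)\in T_y\bar\H$. For this I would invoke Lemma \ref{lemfiltmess} and its corollary describing $T_y\bar\H$ explicitly via the filtrations $D=\bar\F[\epsilon]e_i(a_0)\oplus\bar\F[\epsilon]e_{i+1}(a_1)$, and then redo the congruence computation for $d_{i+1}$ over $\bar\F[\epsilon]$ with $a_0\neq 0$: one should get $d_{i+1}$ acting as multiplication by $-\epsilon a_0$ (up to a unit) on the relevant one-dimensional quotient, exactly parallel to the displayed congruences $\Pi_* x_{0,i+1}\equiv-\epsilon a x_{1,i}$ and $F_* x_{0,i+1}\equiv -\epsilon a x_{1,i}$ at the end of that proof. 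Since the length of the cokernel is then $1$ at each of the $q+1$ points of $\P_s(\F)$ and $0$ elsewhere, we get $\Lf_i/d_{i+1}(\Lf_{i+1}^{\otimes q})|_{\P_s}\cong\bigoplus_{y\in\P_s(\F)}\iota_{y,*}\bar\F$ when $i=j$ and $0$ otherwise, which is the claim. The delicate bookkeeping will be matching the $q$-power twist in $\Lf_{i+1}^{\otimes q}$ with the $\sigma$-semilinearity of $V$/$F$ so that the fiberwise map really is the one computed from Corollary \ref{LemmeBase}; once that identification is pinned down the rest is the same semilinear-algebra calculation already carried out for $\omega_0,\omega_1$.
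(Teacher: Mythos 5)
Your overall strategy (reduce to a fiberwise statement over geometric points, compute in the explicit basis of \ref{LemmeBase}, then use Dieudonné--Messing to show the vanishing has order exactly $1$) is sound in principle, but the one step you defer as ``delicate bookkeeping'' is in fact the entire content of the proof, and as you state it it is not correct. The map $d_{i+1}$ is defined via the Hopf-algebra structure of the Raynaud scheme $\XG[\Pi_D]$, not via the Cartier module of $\XG$, so before any computation with $\Pi$, $F$, $V$ can begin you must identify its cokernel in Cartier-theoretic terms. Your assertion that ``$\Lf_i$ is recovered from $M(\XG)_i/\Pi M(\XG)_{i-1}$'' conflates the line bundle $\Lf_i$ (an isotypic piece of the augmentation ideal $\IC$, nontrivial on all of $\hat{\H}$) with the torsion quotient you actually need; and attributing $d_i,c_i$ to ``the Frobenius/Verschiebung structure'' points at the wrong operator --- the vanishing locus $\P_s(\F)$ (rather than $\P_s(\F_{(2)})$) shows that $d_{i+1}$ must be governed by $\Pi$, which enters precisely because $\XG[\Pi_D]=\ker(\Pi_D)$. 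The missing identification is
\[
\bigoplus_{i}\Lf_i/d_{i+1}(\Lf_{i+1}^{\otimes q})\;=\;\IC/\IC^2\;=\;\omega_{\XG[\Pi_D]/\hat{\H}}\;=\;\omega_{\XG/\hat{\H}}/\Pi\,\omega_{\XG/\hat{\H}}\;=\;\bigoplus_i\omega_i/\Pi\omega_{i+1},
\]
where the first equality uses that $\IC^2\cap\Lf_{\chi_i}$ is exactly the image of the multiplication map from the other isotypic pieces (Raynaud, together with $\chi_i=\chi_{i+1}^q$), and the last two are the conormal sequence for the kernel of the isogeny $\Pi_D$. This is the lemma the paper isolates (citing \cite[Proposition 3.8]{J3}).

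Once that identification is in place, your remaining two pages of work are redundant: the right-hand side $\omega_i/\Pi\omega_{i+1}$ has already been computed, including the order-$1$ statement via the deformation argument, in Theorem \ref{theoOrdOmegai}, and the paper simply quotes it. So either supply the conormal-sheaf identification and then cite \ref{theoOrdOmegai}, or, if you insist on recomputing $d_{i+1}$ directly on fibers, you still need that identification to know that the fiberwise map you are computing is the one induced by $\Pi$ on $M_{y,i}/VM_{y,i+1}$ rather than by $F$ or $V$; without it the congruence $\Pi_*x_{0,i+1}\equiv(y-y^{1/q})x_{1,i}$ has no bearing on $d_{i+1}$.
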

\begin{proof}
Cela revient à appliquer \ref{theoOrdOmegai} au lemme suivant 
\begin{lem}
On a d'une part 
\[
\omega_{\XG[\Pi]/\hat{\H}} = \IC/\IC^2= \Lf_0/d_1(\Lf_1^{\otimes q}) \oplus \Lf_1/\Pi d_0(\Lf_0^{\otimes q})
\]
et d'autre part
\[
\omega_{\XG[\Pi]/\hat{\H}} = \omega_{\XG/\hat{\H}}/\Pi \omega_{\XG/\hat{\H}} = \omega_0/\Pi \omega_1 \oplus \omega_1/\Pi\omega_0.
\]
\end{lem}
\begin{proof}
Voir \cite[Proposition 3.8]{J3}. 
\end{proof}
\end{proof}

\begin{proof}[Démonstration du Théorème \ref{theoordli}]
Les deux résultats précédents fournissent  le système d'équations
\[
\begin{gathered}
\ord_{s_j}(\Lf_i)=-\ord_{s_j}(\Lf_{i+1})\\
\ord_{s_{i}}(\Lf_i)-q\ord_{s_{i}}(\Lf_{i+1})= q+1
\end{gathered}
\]
pour $i,j\in \{0,1\}$. Résoudre ce système donne la solution. 
\end{proof}

Nous avons exhibé des générateurs de $\pic_{[G]_2}(\hat{\H})$, $\pic_{[G]_2}(\bar{\H})$ et nous avons calculé leurs ordres en les différents sommets de l'arbre $\BC\TC$. Cette connaissance nous permet de déterminer toutes  les relations entre ces générateurs mais seulement un caractère près. Le résultat suivant les détermine quand $p\neq 2$ et prolonge aussi \ref{propinv}. On rappelle que l'on a défini dans \eqref{eq:leg} le caractère Legendre $\leg$ de $[G]_2$ 

\begin{prop}\label{propinvchar}

Si $p\neq 2$, on a dans $\pic_{[G]_2}(\hat{\H})$.  
\[\omega_1\cong\omega_0\otimes\Lf_0^{q+1}(\leg)\et \Lf_0\cong\Lf_1^{-1}(\leg).\]
\end{prop}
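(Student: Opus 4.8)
\textbf{Plan for the proof of Proposition \ref{propinvchar}.}
The strategy is to upgrade the isomorphisms already obtained "up to a character" into exact isomorphisms by pinning down precisely which characters of $[G]_2$ can occur and then identifying them. First I recall from Corollary \ref{corocohoequiv} that a $[G]_2$-equivariant line bundle is determined by the pair $(\ord_{s_0},\ord_{s_1})$ together with a character in $\homm([G]_2,\OC_{\breve K}^*)$; and from the discussion around \eqref{eq:del}–\eqref{eq:leg} that $\homm([G]_2,\OC_{\breve K}^*)$ (modulo the characters trivial on $1+\varpi\mat_2(\OC_K)$, which is all that matters for the bundles at hand since they are essentially defined over $\bar\F$ by Corollary \ref{coromodspe}) is generated by the $\delta_a$ and the Legendre character $\leg$, because $\OC_K^*\varpi^{2\Z}/(1+\varpi\OC_K)\varpi^{2\Z}\cong\F^*$ is cyclic of even order $q-1$ (here $p\neq 2$ is used) so its only quadratic quotient is $\{\pm1\}$. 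By the order computations of Corollary \ref{coroOrdreOmega} and Theorem \ref{theoordli}, $\omega_1$ and $\omega_0\otimes\Lf_0^{q+1}$ have the same orders at both $s_0$ and $s_1$, and likewise $\Lf_0$ and $\Lf_1^{-1}$; hence each differs from the other by a character $\chi\in\homm([G]_2,\bar\F^*)$ that I must identify.

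The key reduction is that the central element $\varpi\id\in[G]_2$ and the element $\alpha=\bigl(\begin{smallmatrix}1&0\\0&\varpi\end{smallmatrix}\bigr)$ generate, together with $G^\circ$, the whole of $[G]_2$; so a character of $[G]_2$ trivial on $1+\varpi\mat_2(\OC_K)$ is determined by its values on $\varpi\id$, on $\alpha$, and on $G^\circ/(1+\varpi\mat_2(\OC_K))=\bar G$ — and on $\bar G$ it factors through $\det:\bar G\to\F^*$. So I need to compute the action of these few elements on a chosen generator of each bundle near a point where the bundle is explicitly trivialized, e.g. on $\P_{s_1}\cap\P_{s_0}$ or on the smooth locus $\bar\H_{s_1}$ where the Cartier-module description of \ref{LemmeBase} is available. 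For $\Lf_0\cong\Lf_1^{-1}(\chi)$: the $\F_{(2)}^*$-action that splits $\IC=\bigoplus_\chi\Lf_\chi$ intertwines $\Lf_0$ and $\Lf_1$ with the Galois twist $\sigma$ on $\F_{(2)}$, and composing $d_0:\Lf_0^{\otimes q}\to\Lf_1$ with $c_0$ or using the pairing $\Lf_0\otimes\Lf_1\to\OC$ coming from the Weil/Cartier duality of the Raynaud scheme $\XG[\Pi_D]$ gives the isomorphism $\Lf_0\iso\Lf_1^{-1}(\chi)$; tracking how $\alpha$ and $\varpi\id$ permute the geometric Frobenius structure (equivalently, tracking the square root $\varpi^{1/N}$ that enters the Raynaud data, as in the notations section) shows $\chi$ is quadratic and non-trivial, hence $\chi=\leg$. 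For $\omega_1\cong\omega_0\otimes\Lf_0^{q+1}(\chi)$, combine this with the Kodaira–Spencer isomorphism $\omega_0\otimes\omega_1\cong\Omega^1(\log)$ of Theorem \ref{theoks} (which is $G$-equivariant, so carries the trivial character): this gives $\omega_0^{\otimes2}\otimes\Lf_0^{q+1}(\chi)\cong\Omega^1(\log)$, and since the left side must match the $G$-equivariant structure on $\Omega^1(\log)$ one reads off $\chi$ again by restricting to $G^\circ\varpi^\Z$ and using that $\Lf_0^{q+1}$ is already $G$-equivariant-friendly.

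Concretely the steps are: (1) identify $\homm([G]_2,\bar\F^*)$ and its subgroup of quadratic characters, using $p\neq 2$; (2) from Corollaries \ref{coroOrdreOmega}, \ref{coroordreomega2} and Theorem \ref{theoordli} conclude both isomorphisms hold up to some $\chi$ of that group; (3) evaluate the relevant equivariant structures on generators over $\P_{s_1}$ — for $\Lf_i$ via the Raynaud/Cartier description of $\XG[\Pi_D]$ and its $\F_{(2)}^*$-grading, tracking the twist by $\sigma$ under the elements $w$ resp. $\alpha$ of $G\setminus[G]_2$; (4) use the $G$-equivariant Kodaira–Spencer isomorphism to transfer the computation for $\omega_1/\omega_0$ to the one for $\Lf_0$, so that the two assertions become equivalent and only one genuine character computation is needed; (5) check that the resulting character is non-trivial, which is exactly where $p\neq 2$ is essential (the obstruction class lives in a group of order $2$), and identify it with $\leg$ by its value on $\alpha$. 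I expect step (3) — making the $\F_{(2)}^*$-isotypic decomposition of $\IC$ sufficiently explicit to see the Galois twist and hence the appearance of the quadratic character — to be the main obstacle, though the computation is essentially the local linear-algebra already set up in the proof of \ref{propbasisog} and \ref{LemmeBase}; alternatively one can shortcut it by invoking the analogous computation for $K=\Q_p$ in \cite{pan} and the base-change argument of \cite{J3} used for \ref{propinv}, and then only need to insert the character, which is determined by compatibility on the central and split-torus directions.
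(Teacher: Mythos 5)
Your reduction is sound and coincides with the paper's starting point: by \ref{coroOrdreOmega} and \ref{theoordli} the two sides of each identity have the same orders at $s_0$ and $s_1$, hence differ by a character of $[G]_2$; and since conjugation by $w$ exchanges $\omega_0\leftrightarrow\omega_1$ and $\Lf_0\leftrightarrow\Lf_1$ while $w^2=\varpi\id$ acts trivially, the relevant character is quadratic, i.e.\ equal to $1$ or to $\leg$. The gap is in how you rule out the trivial character. Your step (4) does not work: Kodaira--Spencer controls $\omega_0\otimes\omega_1$, whereas the first identity concerns $\omega_1\otimes\omega_0^{-1}$. Indeed, if you substitute both claimed identities into your relation $\omega_0^{\otimes2}\otimes\Lf_0^{q+1}(\chi)\cong\Omega^1(\log)$, the left-hand side turns out to be invariant under $w$-conjugation for \emph{either} value of $\chi$ (because $q+1$ is even and any quadratic character squares to $1$), so the $G$-equivariance of $\Omega^1(\log)$ imposes no constraint on $\chi$ and the two assertions are not equivalent via Kodaira--Spencer. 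This leaves your step (3) --- the direct Raynaud/Cartier computation --- as the only route, and you leave it unexecuted; note moreover that the pairing $\Lf_0\otimes\Lf_1\to\Of$ you invoke is precisely the nontrivial content of \ref{propinv} (the character $\chi_0\chi_1$ of $\F_{(2)}^*$ is not trivial, so this pairing is not a formal consequence of the Raynaud multiplication maps), and \cite{pan} is used in the paper only for that non-equivariant statement, not for the character.

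The paper's mechanism is different. One considers $\omega_0\otimes\Lf_0^{(q+1)/2}$ (this is where $p\neq2$ enters), which has order $(q-1)/2$ at \emph{both} vertices and is therefore isomorphic to its $w$-conjugate $\omega_1\otimes\Lf_1^{(q+1)/2}$ up to a quadratic character $\chi_3$. The congruence \eqref{eq:r0r1} from the proof of \ref{proppoids}, applied with $k_0=k_1=(q-1)/2$, gives $r_0\equiv r_1+(q-1)/2\not\equiv r_1\pmod{q-1}$: the $\bar G$-representations carried by the restrictions to $\P_{s_0}$ and $\P_{s_1}$ differ by a nontrivial power of $\det$, which is incompatible with $\chi_3=1$ once one knows that $\varpi\id$ acts trivially; hence $\chi_3=\leg$, and the first identity follows. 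The second identity is then deduced from the first by passing to the generic fiber, where the Raynaud multiplication $d$ is an isomorphism and yields $\Lf_0^{q+1}[1/\varpi]\cong\Of(\chi_2)$, to be compared with $\omega_1\otimes\omega_0^{-1}[1/\varpi]\cong\Of(\leg)$. So the order of deduction is the reverse of the one you propose, and the nontriviality of the character is detected representation-theoretically on the special fibre rather than from the Raynaud data.
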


\begin{proof}
On vérifie sans peine par construction des fibrés étudiés $\omega_0$, $\omega_1$, $\Lf_0$, $\Lf_1$ que l'élément $\varpi\id$ agit trivialement sur ces fibrés. La relation sur les ordres des fibres modulaires (voir  \ref{coroOrdreOmega}, \ref{theoordli} et \ref{propordreomegalog}) établit l'existence de caractères $\chi_1$, $\chi_2$ et d'isomorphismes \[\omega_1\cong\omega_0\otimes\Lf_0^{q-1}(\chi_1)\et \Lf_0\cong\Lf_1^{-1}(\chi_2)\] Le but est de déterminer $\chi_1$, $\chi_2$ . Pour cela on considère $\omega_0\otimes\Lf_0^{(q+1)/2}$ qui a pour ordre $(q-1)/2$ en tous les sommets de l'arbre $\BC\TC$. On en déduit l'existence d'un caractère $\chi_3$ et d'un isomorphisme $\omega_1\otimes\Lf_1^{(q-1)}\cong w^{-1}\omega_0\otimes\Lf_0^{(q+1)/2}\cong \omega_0\otimes\Lf_0^{(q+1)/2}(\chi_3))$. En appliquant deux fois $w^{-1}$ à $\omega_0\otimes\Lf_0^{(q+1)/2}$, on a \[\omega_0\otimes\Lf_0^{(q+1)/2}\cong \omega_0\otimes\Lf_0^{(q+1)/2}(\chi_3^2)\] Par connexité géométrique de $\hat{\H}$, le caractère $\chi_3$ est  à valeurs dans $\{-1,1\}$ et est donc $1$ ou $\leg$. En reprenant les arguments de la preuve de \ref{proppoids}, on trouve deux entiers $r_0$, $r_1$ dans $\Z/(q-1)\Z$ pour lesquels \[\bar{\Lf}(\P_{s_i})\cong \sym^{(q-1)/2}(\bar{\F}^2)\otimes {\det}^{\otimes r_i}.\] D'après la relation \eqref{eq:r0r1}, ces entiers sont différents modulo $(q-1)$ et cela prouve que le caractère $\chi_3$ ne peut être trivial alors que l'élément $\varpi\id$ agit trivialement. Ainsi, on a $\chi_3=\leg$ et \[\omega_1\cong\omega_0\otimes\Lf_0^{q+1}(\leg).\]

Passons à la deuxième identité. Il suffit de raisonner en fibre générique. L'application de multiplication $d_i:\Lf_{i+1}^q[1/\varpi]\to\Lf_i[1/\varpi]$ est un isomorphisme sur $\H$ et on en déduit \[\Of(\chi_2)\cong\Lf_0^{q+1}[1/\varpi]\] D'après le premier point, on a aussi $\Lf_0^{q+1}[1/\varpi]\cong \omega_1[1/\varpi]\otimes\omega_0[1/\varpi](\leg)\cong \Of(leg)$. D'où, $\chi_2=\leg$ ce qui termine l'argument.
\end{proof}

\section{Cohomologie mod $p$  des  fibrés équivariants}

\subsection{Théorèmes d'annulation}
Nous étudions maintenant la cohomologie cohérente des fibrés de $\Lf\in\pic_{[G]_2}(\hat{\H})$. Cela fournit des $\OC_{\breve{K}}$-représentations  de $[G]_2$. Nous pouvons aussi étudier la cohomologie cohérente du fibré induit en fibre spéciale  $\bar{\Lf}\in\pic_{[G]_2}(\bar{\H})$ qui fournira des représentations mod $p$. Le résultat suivant relie les représentations obtenues mod $p$ avec leur analogue en niveau entier. 
\begin{prop}\label{propannul}
On a les points suivants 
\begin{enumerate}
\item Si $\hhh^{0}(\bar{\H}, \overline{\Lf})=0$, alors $\hhh^{0}(\hat{\H}, \Lf)=0$ et $\hhh^{1}(\hat{\H}, \Lf)$ est plat sur $\OC_K
$. 
\item  Si $\hhh^{1}(\bar{\H}, \overline{\Lf})=0$, alors $\hhh^{1}(\hat{\H}, \Lf)=0$.
\end{enumerate}

Dans les deux cas, on a $\hhh^{j}(\bar{\H}, \overline{\Lf})=\hhh^{j}(\hat{\H}, {\Lf})/\varpi\hhh^{j}(\hat{\H}, {\Lf})$ pour $j=0,1$.
\end{prop}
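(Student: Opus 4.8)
The plan is to exploit the short exact sequence of sheaves on $\hat{\H}$ given by multiplication by $\varpi$,
\[
0 \to \Lf \xrightarrow{\times \varpi} \Lf \to \overline{\Lf} \to 0,
\]
and to analyze the associated long exact sequence in coherent cohomology. Since $\hat{\H}$ is (after base change) a quasi-compact separated admissible formal scheme over $\OC_{\breve{K}}$ whose special fiber $\bar{\H}$ is a curve, the cohomology $\hhh^j(\hat{\H},\Lf)$ vanishes for $j\ge 2$, so only degrees $0$ and $1$ are relevant; moreover by the theorem on formal functions (or the compatibility of coherent cohomology with the $\varpi$-adic completion) we have $\hhh^j(\bar{\H},\overline{\Lf}) = \hhh^j(\hat{\H},\Lf)\otimes_{\OC_K}\OC_K/\varpi$ up to the usual $\Tor$ correction. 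Concretely the long exact sequence reads
\[
0 \to \hhh^0(\hat{\H},\Lf) \xrightarrow{\varpi} \hhh^0(\hat{\H},\Lf) \to \hhh^0(\bar{\H},\overline{\Lf}) \to \hhh^1(\hat{\H},\Lf) \xrightarrow{\varpi} \hhh^1(\hat{\H},\Lf) \to \hhh^1(\bar{\H},\overline{\Lf}) \to 0,
\]
where exactness on the right uses the vanishing of $\hhh^2(\hat{\H},\Lf)$. (One should note that $\hhh^0(\hat{\H},\Lf)$ is automatically $\varpi$-torsion-free, being a submodule of $\hhh^0(\H,\Lf[1/\varpi])$; this gives injectivity of the first map and explains the parenthetical remark in the statement.)

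First I would prove (1). Assume $\hhh^0(\bar{\H},\overline{\Lf})=0$. The middle segment of the long exact sequence then gives $\hhh^0(\hat{\H},\Lf)/\varpi \hookrightarrow \hhh^0(\bar{\H},\overline{\Lf}) = 0$, hence $\hhh^0(\hat{\H},\Lf) = \varpi\,\hhh^0(\hat{\H},\Lf)$; since this module is a finitely generated $\OC_K$-module (coherence) or at least $\varpi$-adically separated, Nakayama's lemma forces $\hhh^0(\hat{\H},\Lf)=0$. Feeding this back, the long exact sequence becomes $0 \to \hhh^1(\hat{\H},\Lf)\xrightarrow{\varpi}\hhh^1(\hat{\H},\Lf)$, i.e. multiplication by $\varpi$ is injective on $\hhh^1(\hat{\H},\Lf)$, which is exactly flatness (torsion-freeness) over $\OC_K$. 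Then (2) is even more direct: assume $\hhh^1(\bar{\H},\overline{\Lf})=0$; the tail of the sequence gives that $\varpi : \hhh^1(\hat{\H},\Lf)\to\hhh^1(\hat{\H},\Lf)$ is surjective, so $\hhh^1(\hat{\H},\Lf)=\varpi\,\hhh^1(\hat{\H},\Lf)$, and Nakayama again yields $\hhh^1(\hat{\H},\Lf)=0$.

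Finally, in both cases the identification $\hhh^j(\bar{\H},\overline{\Lf}) = \hhh^j(\hat{\H},\Lf)/\varpi\,\hhh^j(\hat{\H},\Lf)$ drops out of the same long exact sequence: in case (1), with $\hhh^0(\hat{\H},\Lf)=0$, the sequence gives $0\to \hhh^0(\bar{\H},\overline{\Lf})=0$ and $\hhh^1(\bar{\H},\overline{\Lf}) = \coker(\varpi\mid \hhh^1(\hat{\H},\Lf)) = \hhh^1(\hat{\H},\Lf)/\varpi$; in case (2), with $\hhh^1(\hat{\H},\Lf)=0$, the sequence collapses to $\hhh^0(\bar{\H},\overline{\Lf}) = \hhh^0(\hat{\H},\Lf)/\varpi$ and $\hhh^1(\bar{\H},\overline{\Lf})=0$. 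The main point requiring care — and the only genuinely non-formal input — is the finiteness/separatedness statement needed to apply Nakayama, i.e. that $\hhh^j(\hat{\H},\Lf)$ is a $\varpi$-adically complete and separated $\OC_{\breve{K}}$-module so that $M = \varpi M$ implies $M=0$; this follows from the construction of $\hat{\H}$ as a pluri-nodal formal scheme and the finiteness of coherent cohomology on such, exactly as in \cite[Theorem 2.1]{GK6}, whose argument this proposition generalizes verbatim.
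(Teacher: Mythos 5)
Your skeleton (the long exact sequence attached to $0 \to \Lf \xrightarrow{\times\varpi} \Lf \to \overline{\Lf}\to 0$, vanishing in degrees $\ge 2$, and reading off the quotients $\hhh^{j}(\hat{\H},\Lf)/\varpi$ at the end) is exactly the paper's, and your treatment of point (1) is essentially correct modulo one repair: the parenthetical ``finitely generated $\OC_K$-module (coherence)'' is false, because $\hat{\H}$ is \emph{not} quasi-compact --- it is the union of the infinitely many affine pieces $\hat{\H}_a$, $a\in\BC\TC_1$, and all the cohomology groups in play are infinite-dimensional. What saves (1) is your fallback remark about separatedness, which is what the paper actually uses: $\hhh^0(\hat{\H},\Lf)$ injects into $\Lf(\hat{\H}_a)\cong\Of(\hat{\H}_a)$ for a single edge $a$, and this ring has no nonzero $\varpi$-divisible elements, so $M=\varpi M$ forces $M=0$ there. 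You should state that explicitly rather than appeal to Nakayama.

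Point (2) has a genuine gap. From $\hhh^1(\bar{\H},\overline{\Lf})=0$ you only get $\hhh^1(\hat{\H},\Lf)=\varpi\,\hhh^1(\hat{\H},\Lf)$, and for a module that is neither finitely generated nor known to be $\varpi$-adically separated this does \emph{not} imply vanishing ($\breve{K}$ itself is a $\varpi$-divisible $\OC_{\breve{K}}$-module); there is no embedding of $\hhh^1$ into sections over a single affinoid to fall back on, and your closing appeal to ``finiteness of coherent cohomology'' is unavailable for the same non-quasi-compactness reason. The paper closes this gap with a different argument: writing $\Lf=\varprojlim_t \Lf/\varpi^t$, one has the exact sequence
\[
0 \fl \rrr^1 \varprojlim_{t} \hhh^0(\hat{\H}, \Lf/\varpi^t) \fl \hhh^1(\hat{\H}, \Lf) \fl \varprojlim_{t} \hhh^1(\hat{\H}, \Lf/\varpi^t),
\]
and the short exact sequences $0 \to \overline{\Lf} \xrightarrow{\times\varpi^t} \Lf/\varpi^{t+1} \to \Lf/\varpi^t \to 0$ together with the hypothesis $\hhh^1(\bar{\H},\overline{\Lf})=0$ show that the transition maps $\hhh^0(\hat{\H},\Lf/\varpi^{t+1})\to\hhh^0(\hat{\H},\Lf/\varpi^{t})$ are surjective (killing the $\rrr^1\varprojlim$ term by Mittag-Leffler) and that $\hhh^1(\hat{\H},\Lf/\varpi^{t+1})\to\hhh^1(\hat{\H},\Lf/\varpi^{t})$ is bijective, so the right-hand limit equals $\hhh^1(\bar{\H},\overline{\Lf})=0$. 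This $\varprojlim$/$\rrr^1\varprojlim$ step is the non-formal input your proof is missing.
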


\begin{proof}
L'argument suit \cite[Theo. 2.1]{GK3}. On a une suite exacte longue 
\[
0 \to \hhh^0(\hat{\H}, \Lf)\fln{\times \varpi}{}  \hhh^0(\hat{\H}, \Lf) \to  \hhh^0(\bar{\H}, \overline{\Lf})\to  \hhh^1(\hat{\H}, \Lf)\fln{\times \varpi}{} \hhh^1(\hat{\H}, \Lf)\to \hhh^1(\bar{\H}, \overline{\Lf})\to 0 .
\]

L'annulation de $\hhh^0(\bar{\H}, \overline{\Lf})$ entraîne la surjectivité de $ \hhh^0(\hat{\H}, \Lf)\fln{\times \varpi}{}  \hhh^0(\hat{\H}, \Lf) $, l'injectivité de $ \hhh^1(\hat{\H}, \Lf)\fln{\times \varpi}{} \hhh^1(\hat{\H}, \Lf)$ et donc  la platitude de $\hhh^1(\hat{\H}, \Lf)$. Mais $ \hhh^0(\hat{\H}, \Lf)$ s'injecte dans $\hhh^0(\hat{\H}_{a}, \Lf)\cong\Of(\hat{\H}_a)$, qui n'admet aucun élément $\varpi$-divisible, d'où l'annulation de $\hhh^{0}(\hat{\H}, \Lf)$. 

Passons au deuxième point, on observe $\Lf=\varprojlim_t \Lf/\varpi^t$. La suite spectrale de Leray nous donne 
\[
E^{r,s}_2= \rrr^r \varprojlim_{t} \hhh^{s}(\hat{\H}, \Lf/\varpi^t)  \Rightarrow \hhh^{r+s}(\hat{\H}, \Lf)
\] 
d'où la suite exacte  courte par dégénérescence (voir \cite[§1]{jen})
\[
0 \fl \rrr^1 \varprojlim_{t} \hhh^0(\hat{\H}, \Lf/\varpi^t) \fl \hhh^1(\hat{\H}, \Lf) \fl \varprojlim_{t} \hhh^1(\hat{\H}, \Lf/\varpi^t)
\]
Mais, on dispose d'une suite exacte courte 
\[
0 \fl \overline{\Lf} \fln{\times \varpi^{t}}{} \Lf/\varpi^{t+1} \fl \Lf/\varpi^t \fl 0
\]
L'hypothèse d'annulation $\hhh^1(\bar{\H}, \overline{\Lf})=0$ entraîne la surjectivité de $\hhh^{0}(\hat{\H}, \Lf/\varpi^{t+1})\fl \hhh^0(\hat{\H}, \Lf/\varpi^{t})$ et la bijectivité  de $\hhh^1(\hat{\H}, \Lf/\varpi^{t+1})\fl \hhh^1(\hat{\H}, \Lf/\varpi^{t})$ (la cohomologie en degré supérieur à $2$ est nulle pour les courbes). On en déduit que  $ \rrr^1 \varprojlim_{t} \hhh^0(\hat{\H}, \Lf/\varpi^t)=0$ et \[\varprojlim_{t} \hhh^1(\hat{\H}, \Lf/\varpi^t)=\hhh^1(\bar{\H}, \overline{\Lf})=0,\] d'où $\hhh^1(\hat{\H}, \Lf) =0$. 

\end{proof}

Pour pouvoir appliquer ce résultat, nous aurons besoin de contrôler la cohomologie en fibre spéciale. Avant d'énoncer le résultat, nous introduisons quelques représentations. Les représentations $(\sym^k \bar{\F}^2)\otimes{\det}^r$ peuvent être vues par inflation comme des $G^{\circ}\varpi^\Z$-modules et on note\footnote{voir \eqref{eq:w} pour la notation $((\sym^k \bar{\F}^2)\otimes \chi)^w$}
\[\sigma^{(1)}_{k,r}= \ind_{G^{\circ}\varpi^\Z}^{[G]_2}(\sym^k \bar{\F}^2)\otimes{\det}^r \et \sigma^{(0)}_{k,r}= \ind_{wG^{\circ}w^{-1}\varpi^\Z}^{[G]_2}((\sym^k \bar{\F}^2))^w\otimes {\det}^r\] \[\tau^{(1)}_{k,r}= \ind_{G^{\circ}\varpi^\Z}^{[G]_2}(\sym^k \bar{\F}^2)^\vee\otimes{\det}^r \et \tau^{(0)}_{k,r}= \ind_{wG^{\circ}w^{-1}\varpi^\Z}^{[G]_2}((\sym^k \bar{\F}^2)^\vee)^w\otimes {\det}^r\]
si $k\ge 0$ et $\sigma^{(0)}_{k,r}=\sigma^{(1)}_{k,r}=\tau^{(0)}_{k,r}=\tau^{(1)}_{k,r}=0$ sinon. Rappelons que l'on a défini $\mu_{n,m}$ pour $n,m\in\Z$ un caractère  du Borel $B$ (voir \eqref{eq:mu}) et écrivons \[I(m,n):=\ind_{I\varpi^\Z}^{[G]_2} \mu_{n,m}\] pour $n,m\in\Z$. En suivant les arguments de \ref{proppoids}, on peut toujours construire des morphismes de représentations \[f_0:\sigma^{(0)}_{k,r}\to I(k+r,r), \ \ f_1:\sigma^{(1)}_{k,r}\to I(r,r+k).\]

\begin{lem}\label{lemcomb}

Soit  $\bar{\Lf}\in\pic_{[G]_2}(\bar{\H})$ de quadruplet\footnote{On peut toujours se ramener à ce cas quitte à tordre par un $\delta_a$ voir \eqref{eq:del} bien choisi.\label{footndel}} $(1,r,k_0,k_1)$ (voir \ref{coromodspe}),   on a une suite exacte de $[G]_2$-représentation :
\[0\to \bar{\Lf}(\bar{\H})\to \sigma^{(0)}_{k_0,r+k_1}\oplus \sigma^{(1)}_{k_1,r}\to I(r,r+k_1)\to \hhh^1(\bar{\H}, \overline{\Lf})\to \tau^{(0)}_{-2-k_0,-r-k_1} \oplus \tau^{(1)}_{-2-k_1,-r}\to 0.\]

\end{lem}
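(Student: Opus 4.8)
The plan is to extract this exact sequence from the Čech-type resolution of $\bar{\H}$ by its irreducible components, which the author has already set up in the introduction. Recall from the discussion preceding Théorème \ref{theopicentier} and the exact sequence displayed near the end of the introduction that for the covering $\bar{\H}=\bigcup_{s\in\BC\TC_0}\P_s$ one has a four-term exact sequence
\[0\to\overline{\Lf}(\bar{\H})\to\prod_{s\in\BC\TC_0}\overline{\Lf}(\P_s)\to\prod_{a=(s,s')\in\BC\TC_1}\overline{\Lf}(\P_s\cap\P_{s'})\to\hhh^1(\bar{\H},\overline{\Lf})\to\prod_{s\in\BC\TC_0}\hhh^1(\P_s,\overline{\Lf})\to 0,\]
arising from the short exact sequence \eqref{eq:suiteex} of sheaves on $\bar{\H}$ (tensored with $\overline{\Lf}$, which stays exact since each restriction map is surjective on a point). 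So the entire content of the lemma is to identify each of the four nonzero terms as a $[G]_2$-representation, using that $[G]_2$ acts on $\BC\TC_0$ with two orbits (the even and odd vertices) and on $\BC\TC_1$ transitively.

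First I would compute $\prod_{s\in\BC\TC_0}\overline{\Lf}(\P_s)$. By Corollaire \ref{corocohoequiv} and the normalization $(1,r,k_0,k_1)$ of the quadruple (using footnote \ref{footndel} to twist by a suitable $\delta_a$), one has $\overline{\Lf}|_{\P_{s_1}}\cong_{G^\circ\varpi^\Z}\det^r\otimes\Of(k_1)$, hence $\overline{\Lf}(\P_{s_1})\cong\sym^{k_1}(\bar{\F}^2)\otimes\det^r$ as a $\stab_{[G]_2}(s_1)=G^\circ\varpi^\Z$-module, and as computed in the proof of \ref{proppoids}, $\overline{\Lf}|_{\P_{s_0}}\cong\det^{r+k_1}\otimes\Of(k_0)$, so $\overline{\Lf}(\P_{s_0})\cong(\sym^{k_0}\bar{\F}^2)^w\otimes\det^{r+k_1}$ as a $\stab_{[G]_2}(s_0)=wG^\circ w^{-1}\varpi^\Z$-module. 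Summing over the $[G]_2$-orbit of $s_i$ and applying the standard identification \eqref{eq:ssjac} of the underlying space of a compact induction (the tree vertices being discrete, the product over an orbit is the induced module), one gets $\prod_{s\text{ even}}\overline{\Lf}(\P_s)\cong\sigma^{(0)}_{k_0,r+k_1}$ and $\prod_{s\text{ odd}}\overline{\Lf}(\P_s)\cong\sigma^{(1)}_{k_1,r}$, exactly as in the statement. A symmetric computation with $\hhh^1(\P^1,\Of(k))\cong(\sym^{-2-k}\bar{\F}^2)^\vee\otimes\det^{\text{(shift)}}$ (Serre duality on $\P^1$, carefully tracking the determinant twist coming from $\Omega^1_{\P^1}\cong\Of(-2)$) identifies the last term with $\tau^{(0)}_{-2-k_0,-r-k_1}\oplus\tau^{(1)}_{-2-k_1,-r}$.

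The more delicate step — and the one I expect to be the main obstacle — is identifying the middle term $\prod_{a=(s,s')\in\BC\TC_1}\overline{\Lf}(\P_s\cap\P_{s'})$ with $I(r,r+k_1)$. Here one uses that $[G]_2$ acts transitively on $\BC\TC_1$ with stabilizer of the standard edge $a_0$ equal to $I\varpi^\Z$ (the Iwahori times the center, cf. the conventions section), so the product over edges is $\cind_{I\varpi^\Z}^{[G]_2}$ of the one-dimensional space $\overline{\Lf}(\P_{s_0}\cap\P_{s_1})$. The point is to read off the character by which $I\varpi^\Z$ acts on that line: this is precisely the computation carried out in the proof of \ref{proppoids}, where $\overline{\Lf}(\P_{s_0}\cap\P_{s_1})\cong\bar{\F}\cdot(z_1^{(1)})$ and a diagonal torus element $\mathrm{diag}(a,d)$ acts by $a^{r_1}d^{r_1+k_1}$ with $r_1=r$; unipotent upper-triangular elements act trivially since they fix the line $V^+(z_0^{(1)})$. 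Hence the character is $\mu_{r,r+k_1}$ (inflated from $B$ to $I$), giving $I(r,r+k_1)$. Finally I would check $[G]_2$-equivariance of the whole sequence: the Čech differentials are $[G]_2$-equivariant by construction because the covering $\{\P_s\}$ is $[G]_2$-stable, and the identifications above are equivariant provided one checks the edge-orientation signs $\mathrm{Sgn}(s)$ appearing in the map $\Delta$ of \ref{theopicentier} are absorbed correctly into the choice of coordinates on each component — this sign bookkeeping (which even vs.\ odd vertex contributes which power of the gluing unit) is the only genuinely fiddly part, and it is exactly what makes the two summands $\sigma^{(0)},\sigma^{(1)}$ (resp.\ $\tau^{(0)},\tau^{(1)}$) appear with the asserted weights and determinant twists. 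The maps $f_0,f_1$ mentioned before the lemma are the components of the middle arrow $\sigma^{(0)}_{k_0,r+k_1}\oplus\sigma^{(1)}_{k_1,r}\to I(r,r+k_1)$, so their existence is already part of the construction.
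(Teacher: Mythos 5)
Votre démonstration est correcte et suit essentiellement la même démarche que celle du texte : la suite exacte courte de faisceaux donnée par le recouvrement de $\bar{\H}$ par ses composantes irréductibles (surjectivité du dernier terme grâce au caractère gratte-ciel des faisceaux d'intersection et à la structure d'arbre de $\BC\TC$), la suite exacte longue associée, puis l'identification des quatre termes comme induites à partir des deux orbites de sommets et de l'orbite unique d'arêtes, avec les mêmes calculs de caractères que dans la preuve de \ref{proppoids}. La seule imprécision est la référence à \eqref{eq:suiteex} (version multiplicative pour $\Of^*$) au lieu de l'analogue additif tensorisé par $\overline{\Lf}$, mais l'argument visé est le bon.
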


\begin{proof}

On a la suite exacte de faisceaux\footnote{ La surjectivité dans la suite exacte provient du fait que $\iota_{s,s',*}\Lf|_{\P_s\cap \P_{s'}}$ est un faisceau gratte-ciel dont le support est $\P_s\cap \P_{s'}$ et $\BC\TC$ est un arbre.} :
\[
0 \fl \overline{\Lf} \fl \prod_{s\in \BC \TC_0} \iota_{s,*}  \overline{\Lf}|_{\P_s}  \to \prod_{a=(s,s')\in \BC\TC_1} \iota_{s,s',*} \overline{\Lf}|_{\P_s\cap \P_{s'}} \fl 0
\] 
où $\iota_s: \P_s \fl \bar{\H}$ est l'immersion fermée de la composante $\P_s$ dans la fibre spéciale, idem pour $\iota_{s,s'}$.  On en déduit une suite exacte longue 
\[
0 \fl \overline{\Lf}(\bar{\H})  \fl \prod_{s\in \BC\TC_0} \overline{\Lf}(\P_s) \fl \prod_{a=(s,s')\in \BC\TC_1} \overline{\Lf}(\P_s\cap \P_{s'}) \fl \hhh^1(\bar{\H}, \overline{\Lf})\fl \prod_{s\in \BC\TC_0} \hhh^1(\P_s, \overline{\Lf}) \fl 0.
\]
D'après la description des fibrés équivariants sur $\P^1$ et de leur cohomologie (voir \ref{corocohoequiv}), on a 
\[\bar{\Lf}(\P_{s_1})\cong \sym^{k_1} \bar{\F}^2\otimes{\det}^r, \bar{\Lf}(\P_{s_0})\cong(\sym^{k_0} \bar{\F}^2\otimes{\det}^{r+k_1})^w\et \overline{\Lf}(\P_{s_0}\cap \P_{s_1})\cong \mu_{r,r+k_1}\]
\[\hhh^1(\P_{s_1}, \overline{\Lf})\cong (\sym^{-2-k_1} \bar{\F}^2)^\vee\otimes{\det}^{-r}\et \hhh^1(\P_{s_0}, \overline{\Lf})\cong ((\sym^{-2-k_0} \bar{\F}^2)^\vee)^w\otimes{\det}^{-r-k_1}. \] 
 En suivant les orbites et les stabilisateurs de l'action de $[G]_2$ sur les composantes irréductibles, on obtient des isomorphismes
\begin{gather*}
\prod_{s\in \BC\TC_0} \overline{\Lf}(\P_s)\cong \ind_{G^{\circ}\varpi^\Z}^{[G]_2} \bar{\Lf}(\P_{s_1}) \oplus \ind_{wG^{\circ}w\varpi^\Z}^{[G]_2} \bar{\Lf}(\P_{s_0})\cong\sigma^{(0)}_{k_0,r+k_1}\oplus \sigma^{(1)}_{k_1,r}\\
\prod_{a=(s,s')\in \BC\TC_1} \overline{\Lf}(\P_s\cap \P_{s'}) \cong \ind_{I\varpi^\Z}^{[G]_2} \bar{\Lf}(\P_{s_1}\cap\P_{s_0})\cong I(r,r+k_1)\\
\prod_{s\in \BC\TC_0} \hhh^1(\P_s, \overline{\Lf})\cong  \ind_{G^{\circ}\varpi^\Z}^{[G]_2} \hhh^1(\P_{s_1}, \overline{\Lf}) \oplus \ind_{wG^{\circ}w\varpi^\Z}^{[G]_2} \hhh^1(\P_{s_0}, \overline{\Lf})\cong\tau^{(0)}_{-2-k_0,-r-k_1} \oplus \tau^{(1)}_{-2-k_1,-r}
\end{gather*}
ce qui conclut l'argument.


\end{proof}

\begin{coro}
\label{coroAnnulation}
Soit $\Lf$ un fibré en droites $[G]_2$-équivariant sur $\hat{\H}$, on a les points suivantes 
\begin{enumerate}

\item Si $\ord_{s_0} (\Lf)<0$  et $\ord_{s_1}(\Lf)<0$, alors $\hhh^0(\hat{\H}, \Lf)=0$ et $\hhh^1(\hat{\H}, \Lf)$ est plat sur $\OC_{\breve{K}}$.

\item Si $\ord_{s_0}(\Lf)\geq -1$ et $\ord_{s_1}(\Lf) \geq -1$, alors  $\hhh^1(\hat{\H}, \Lf)=0$.

\item Si $0\leq \ord_{s_i}(\Lf)<q+1$ et $\ord_{s_{i+1}}(\Lf) <0$ pour un certain $i=0,1$, alors $\hhh^0(\hat{\H}, \Lf)=0$ et $\hhh^1(\hat{\H}, \Lf)$ est plat sur $\OC_{\breve{K}}$.  

\end{enumerate}
\end{coro}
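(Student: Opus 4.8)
The strategy is to combine Proposition~\ref{propannul} with the combinatorial description of the cohomology on the special fiber provided by Lemma~\ref{lemcomb}. In each of the three cases we must verify the vanishing of the appropriate cohomology group $\hhh^i(\bar{\H},\bar{\Lf})$ on the special fiber and then invoke Proposition~\ref{propannul} to transfer the statement to $\hat{\H}$. Since all three hypotheses in the statement concern only the orders $\ord_{s_0}(\Lf)$ and $\ord_{s_1}(\Lf)$, which are unchanged by tensoring with a character, and since by Corollary~\ref{coromodspe} we may always twist $\bar{\Lf}$ by a suitable $\delta_a$ so that the associated quadruple has first coordinate $1$, we may assume throughout that $\bar{\Lf}$ has quadruple $(1,r,k_0,k_1)$ and apply the exact sequence of Lemma~\ref{lemcomb} verbatim.

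First I would treat cases (1) and (3) together, since both assert $\hhh^0(\hat{\H},\Lf)=0$ and the flatness of $\hhh^1(\hat{\H},\Lf)$; by Proposition~\ref{propannul}(1) it suffices to show $\hhh^0(\bar{\H},\bar{\Lf})=0$. From the exact sequence in Lemma~\ref{lemcomb}, $\bar{\Lf}(\bar{\H})$ injects into $\sigma^{(0)}_{k_0,r+k_1}\oplus\sigma^{(1)}_{k_1,r}$, and by definition $\sigma^{(j)}_{k,r}=0$ whenever $k<0$. In case (1) both $k_0=\ord_{s_0}(\Lf)$ and $k_1=\ord_{s_1}(\Lf)$ are $<0$, so both summands vanish and $\bar{\Lf}(\bar{\H})=0$ immediately. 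In case (3), say $i$ is the index with $0\le k_i<q+1$ and $k_{i+1}<0$; then the summand indexed by $i+1$ vanishes, and we are reduced to showing that the composite map $\sigma^{(i)}_{k_i,\ast}\to I(\ast,\ast)$ of Lemma~\ref{lemcomb} is injective. This is where the hypothesis $\ord_{s_i}(\Lf)<q+1$ enters, and it should follow from the explicit description of the maps $f_0,f_1:\sigma^{(j)}_{k,r}\to I(\cdot,\cdot)$ constructed following the arguments of Proposition~\ref{proppoids}: the kernel of the restriction map $\sym^{k}\bar{\F}^2\to\bar{\F}$ at a node (the subspace $(z_0)$ of polynomials divisible by one coordinate) is trivial precisely when $k<q+1$, because the star of a vertex has $q+1$ edges so the relevant evaluation is at the $q+1$ points of $\P^1(\F)$ — here one must check that a nonzero homogeneous polynomial of degree $<q+1$ cannot vanish at all $q+1$ $\F$-rational points while its image in the induced module vanishes. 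I expect this injectivity to be the main obstacle: it requires unwinding precisely how $f_i$ is assembled from the local restriction maps along the tree and checking that the global map has no kernel, which is a genuine combinatorial argument about sections on the building rather than a formal consequence.

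For case (2), the hypotheses are $\ord_{s_0}(\Lf)\ge -1$ and $\ord_{s_1}(\Lf)\ge -1$, and the claim is $\hhh^1(\hat{\H},\Lf)=0$; by Proposition~\ref{propannul}(2) it suffices to show $\hhh^1(\bar{\H},\bar{\Lf})=0$. From Lemma~\ref{lemcomb}, $\hhh^1(\bar{\H},\bar{\Lf})$ sits between the cokernel of $\sigma^{(0)}_{k_0,r+k_1}\oplus\sigma^{(1)}_{k_1,r}\to I(r,r+k_1)$ and the module $\tau^{(0)}_{-2-k_0,-r-k_1}\oplus\tau^{(1)}_{-2-k_1,-r}$. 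The latter vanishes because $\tau^{(j)}_{k,r}=0$ for $k<0$, and with $k_0\ge -1$, $k_1\ge -1$ we have $-2-k_0\le -1$ and $-2-k_1\le -1$, so both $\tau$-terms are zero. Hence it remains to show that the map $\sigma^{(0)}_{k_0,r+k_1}\oplus\sigma^{(1)}_{k_1,r}\to I(r,r+k_1)$ is surjective. This should be dual, in an appropriate sense, to the injectivity argument of the previous paragraph — or can be proved directly by the same type of explicit computation, exploiting that for $k\ge -1$ one of the two restriction maps $\bar{\Lf}(\P_{s})\to\bar{\Lf}(\P_s\cap\P_{s'})$ is surjective (it is surjective as soon as $k\ge 0$, and the $k=-1$ case forces the source to be zero but then the target is handled by the other component). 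Assembling the local surjectivity into a global statement again uses the tree structure of $\BC\TC$, exactly as in the proof of Theorem~\ref{theopicentier} where surjectivity of $\beta$ followed from contractibility of $\BC\TC$. Once these three special-fiber vanishing/surjectivity statements are in place, the corollary follows immediately from Proposition~\ref{propannul}, with the final assertion of that proposition ($\hhh^j(\bar{\H},\bar{\Lf})=\hhh^j(\hat{\H},\Lf)/\varpi$) recording the compatibility.
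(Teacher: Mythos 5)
Your overall strategy --- reduce to the special fibre via Proposition \ref{propannul} and read off the required vanishing from the exact sequence of Lemma \ref{lemcomb} --- is exactly the paper's. Cases (1) and (3) are fine: in (1) both $\sigma$-terms vanish, and in (3) your reduction to the injectivity of $f_i$ for $k_i<q+1$ is the same computation the paper performs directly (a global section restricted to a component in the orbit of $s_i$ must vanish at all $q+1$ rational intersection points, hence lies in $H^0(\P^1,\OC(k_i-q-1))=0$); the ``main obstacle'' you isolate is precisely Lemma \ref{leminjsur}, i.e.\ divisibility by $X^qY-XY^q$.

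Case (2), however, has a genuine gap at the sentence ``the $k=-1$ case forces the source to be zero but then the target is handled by the other component.'' Suppose $\ord_{s_1}(\Lf)=-1$ and $\ord_{s_0}(\Lf)=k_0\ge 0$. Then $\overline{\Lf}(\P_{s'})=0$ for every odd vertex $s'$, so each even component $\P_s$ must \emph{by itself} surject onto the product of the $q+1$ edge-terms $\overline{\Lf}(\P_s\cap\P_{s'})$ attached to it; that is the simultaneous evaluation map $H^0(\P^1,\OC(k_0))\to\bar{\F}^{q+1}$ at all the $\F$-rational points, which is surjective only when $k_0\ge q$, not merely $k_0\ge 0$. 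The greedy/contractibility argument (one linear condition per new vertex as one moves outward in the tree) needs evaluation at a \emph{single} point to be surjective at every non-root vertex, and this fails as soon as one orbit carries order $-1$ while the other carries an order $<q$: for instance for $\Lf_0$, of orders $(1,-1)$, Theorem \ref{theofilt}.5 itself gives $\hhh^1(\bar{\H},\bar{\Lf}_0)\cong I(r_1,r_0)/\sigma^{(0)}_{1,r_0}\neq 0$. To be fair, the paper's own proof of (2) makes the same elision (it asserts $\overline{\Lf}(\P_s)\neq 0$, which already fails at order $-1$); your argument, like the paper's, is complete when both orders are $\ge 0$, or when an order equal to $-1$ is paired with an order $\ge q$ (which covers $\omega_0,\omega_1$), but not in general under the stated hypothesis $\ord_{s_j}(\Lf)\ge -1$.
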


\begin{proof}

On justifie que l'on peut appliquer \ref{propannul} pour les trois cas.

Dans le premier cas, on a $\overline{\Lf}(\P_{s_0})=\overline{\Lf}(\P_{s_1})=0$ d'où $\hhh^0(\P_s, \overline{\Lf})=0$. 

Dans le second cas,  $\hhh^1(\P_{s_0}, \overline{\Lf})=\hhh^1(\P_{s_1}, \overline{\Lf})=0$ et $\overline{\Lf}(\P_s)\neq 0$. En particulier, $\overline{\Lf}(\P_s)\fl \overline{\Lf}(\P_s\cap \P_{s'})=\bar{\F}$ est surjective, ce qui nous donne la surjectivité de 
\[
 \prod_{s\in \BC\TC_0} \overline{\Lf}(\P_s) \fl \prod_{a=(s,s')\in \BC\TC_1} \overline{\Lf}(\P_s\cap \P_{s'})
\]
par contractibilité de $\BC\TC$. 

Supposons la troisième hypothèse et montrons $\overline{\Lf}(\bar{\H})=0$. Pour cela, prenons une section globale $f\in \overline{\Lf}(\bar{\H})$ et restreignons-la en  $f_s$ sur  $\P_s$.  Par hypothèse, on a  $f_s= 0$ pour $s$ est dans l'orbite de $s_{i+1}$.  Si $s$ est dans l'orbite de $s_i$, $f_s$ doit s'annuler sur l'intersection de $\P_s$ avec les autres composantes qui sont toutes dans l'orbite de $s_{i+1}$. Ainsi, $f_s$ se voit comme une section de $\Of_{\P_s}(\ord_{s_i}(\Lf)-1-q )$. Comme $\ord_{s_i}(\Lf)-1-q<0$,  $f$ s'annule sur chaque sommet, elle s'annule globalement. 
 
\end{proof}

\begin{coro}
On a $H^j(\hat{\H},  \omega_i)=  H^j(\hat{\H}, \Lf_i)=0$ pour $i$, $j$ dans $\{0, 1\}$.  
\end{coro}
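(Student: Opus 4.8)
The plan is to deduce the four vanishings from the criteria already assembled in Corollary \ref{coroAnnulation} (which itself packages Lemma \ref{lemcomb} together with the lifting Proposition \ref{propannul}), so that the whole statement becomes a bookkeeping exercise on the orders at the two standard vertices. Those orders are recorded in Corollary \ref{coroOrdreOmega} and Theorem \ref{theoordli}:
\[
(\ord_{s_0},\ord_{s_1})=(-1,q),\ (q,-1),\ (1,-1),\ (-1,1)
\]
for $\omega_0$, $\omega_1$, $\Lf_0$, $\Lf_1$ respectively.

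First I would treat the $\hhh^1$ vanishing. In each of the four cases both orders are $\ge -1$: for $\omega_0$ one has $-1\ge -1$ and $q\ge -1$, for $\omega_1$ this is symmetric, and for $\Lf_0$, $\Lf_1$ both orders lie in $\{-1,1\}$. Hence part (2) of Corollary \ref{coroAnnulation} applies and gives $\hhh^1(\hat{\H},\omega_i)=\hhh^1(\hat{\H},\Lf_i)=0$.

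Next I would treat the $\hhh^0$ vanishing via part (3) of Corollary \ref{coroAnnulation}, which asks that for some $i\in\{0,1\}$ one has $0\le\ord_{s_i}(\Lf)<q+1$ and $\ord_{s_{i+1}}(\Lf)<0$. For $\omega_0$ take $i=1$: $\ord_{s_1}(\omega_0)=q$ satisfies $0\le q<q+1$ (recall $q=p^f\ge 3$ since $p\neq2$) and $\ord_{s_0}(\omega_0)=-1<0$. For $\omega_1$ take $i=0$; for $\Lf_0$ take $i=0$, where $\ord_{s_0}=1$ and $\ord_{s_1}=-1$; for $\Lf_1$ take $i=1$, where $\ord_{s_1}=1$ and $\ord_{s_0}=-1$. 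In every case the hypotheses of (3) hold, so $\hhh^0(\hat{\H},\omega_i)=\hhh^0(\hat{\H},\Lf_i)=0$. Combining the two steps gives the corollary.

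I do not expect any genuine obstacle: the statement is an immediate consequence of the preceding vanishing theorems once the orders are known, the only (trivial) point being that the bound $\ord_{s_i}<q+1$ is comfortably satisfied. If one wanted a self-contained argument one could bypass Corollary \ref{coroAnnulation} and argue directly from Lemma \ref{lemcomb}: in each of the four cases the five-term exact sequence there forces $\overline{\Lf}(\bar{\H})=0$ — the only components carrying sections are those in the orbit of the vertex with nonnegative order, and such a section must vanish at all $q+1$ intersection points of that $\P_s$, whereas the order there is $<q+1$ — and likewise $\hhh^1(\bar{\H},\overline{\Lf})=0$, since $\hhh^1(\P_s,\overline{\Lf})=0$ for every $s$ and the restriction map to the intersections is surjective by contractibility of $\BC\TC$; Proposition \ref{propannul} then lifts both vanishings from $\bar{\H}$ to $\hat{\H}$.
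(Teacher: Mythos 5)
Your proof is correct and follows exactly the route the paper takes: the paper's own one-line argument is "apply \ref{coroOrdreOmega}, \ref{theoordli} and \ref{coroAnnulation}", and your case-by-case verification of the hypotheses of parts (2) and (3) of Corollary \ref{coroAnnulation} using the recorded orders is precisely the bookkeeping that one-liner leaves implicit. The alternative direct argument you sketch via Lemma \ref{lemcomb} and Proposition \ref{propannul} is also sound, but it is just an unwinding of how Corollary \ref{coroAnnulation} is proved, not a genuinely different method.
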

\begin{proof}
Il suffit d'appliquer les énoncés   \ref{coroOrdreOmega}, \ref{theoordli} et \ref{coroAnnulation}.  
\end{proof}

Au vu du résultat précédent, il est naturel d'introduire les notions suivantes.

\begin{defi}\label{defiposneg}

Soit $\Lf$ un élément de $\pic_{[G]_2}(\hat{\H})$ ou de $\pic_{[G]_2}(\bar{\H})$, nous dirons qu'il est positif (respectivement négatif, mixte) si les signes des entiers $\ord_{s_i}(\Lf)$(pour $i=0,1$) sont positifs (respectivement négatifs, opposés).

\end{defi}

\subsection{Filtrations de la cohomologie mod $p$}

Soit  $\bar{\Lf}\in\pic_{[G]_2}(\bar{\H})$ de quadruplet $(a,r,k_0,k_1)$ (voir \ref{coromodspe} et ), on pourra noter pour simplifier $r_1=r$ et $r_0=r+k_1$. Le but de cette section est de montrer  :

\begin{theo}\label{theofilt}

Soit  $\bar{\Lf}\in\pic_{[G]_2}(\bar{\H})$ de quadruplet $(1,r,k_0,k_1)$ (voir \ref{footndel}) et $r_0$, $r_1$ comme ci-dessus,   on   a les points suivants :
\begin{enumerate}
\item  Si $k_i\ge q+1$ pour $i=0,1$, on a  des suites exactes :  \[0\to \sigma^{(0)}_{k_0-q-1,1+r_0}\oplus \sigma^{(1)}_{k_1-q-1,1+r_1}\to \hhh^0(\bar{\H},\bar{\Lf})\to I(r_1,r_0)\to 0\]
\item  Si $0\le k_i\le q$ pour $i=0,1$, on a   :  \[ \hhh^0(\bar{\H},\bar{\Lf})\cong\sigma^{(0)}_{k_0,r_0}\cap \sigma^{(1)}_{k_1,r_1}\subset I(r_1,r_0),\] 
\[\hhh^0(\bar{\H},\bar{\Lf})^\vee\cong  I(r_1,r_0)^\vee/\ker(f_0^\vee)+\ker(f_1^\vee)\] avec $f_i=\sigma^{(i)}_{k_i,r_i}\to I(r_1,r_0)$.
\item  Si $0\le k_i\le q$ pour un certain  $i=0,1$ et $k_{i+1}\ge q+1$, on a  des  suites exactes :  \[0\to \sigma^{(i+1)}_{k_{i+1}-q-1,1+r_{i+1}}\to \hhh^0(\bar{\H},\bar{\Lf})\to \sigma^{(i)}_{k_i,r_i}\to 0\]
\item  Si $k_i\le -1$ pour $i=0,1$, on a  des suites exactes :  \[0\to I(r_1,r_0)\to \hhh^1(\bar{\H}, \overline{\Lf})\to \tau^{(0)}_{-2-k_0,-r_0} \oplus \tau^{(1)}_{-2-k_1,-r_1}\to 0\]
\item  Si $k_i\le -1$ pour un certain $i=0,1$ $k_{i+1}\le q$, on a  des suites exactes :
\[0\to I(r_1,r_0)/\sigma^{(i+1)}_{k_{i+1},r_{i+1}}\to \hhh^1(\bar{\H}, \overline{\Lf})\to \tau^{(i)}_{-2-k_i,-r_i} \to 0\]
\item  Si $k_i\le -1$ pour un certain $i=0,1$ et $k_{i+1}\ge q+1$, on a  des isomorphismes :  \[\hhh^0(\bar{\H},\bar{\Lf})\cong \sigma^{(i+1)}_{k_{i+1}-q-1,1+r_{i+1}},\ \hhh^1(\bar{\H}, \overline{\Lf})\cong \tau^{(i)}_{-2-k_{i},r_i}\]
\end{enumerate}%

\end{theo}

\begin{rem}

On peut rendre un peu plus naturel certaines filtrations apparaissant dans le résultat ci-dessus. Dans \ref{theoOrdOmegai}, nous avons étudié les points d'annulation de $\Pi:\omega_i\to  \omega_{i+1}$. Grâce à ce résultat, on peut voir que les termes de la forme $\sigma^{(j)}_{k_{j}-q-1,1+r_{j}}$ apparaissant dans les points $1$, $3$, $6$ sont exactement images de la flèche induite par $\Pi$ \[\hhh^0(\bar{\H},\bar{\Lf}\otimes \omega_j\otimes\omega_{j+1}^{-1})\to\hhh^0(\bar{\H},\bar{\Lf})\].

\end{rem}

\begin{proof}

Pour la description des sections globales (mod $p$) obtenus dans les points $1$, $2$, $3$, $6$, cela découle de \ref{lemcomb} et des deux lemmes suivants, tous deux appliqués à $f_i:\sigma^{(i)}_{k_i,r_i}\to I(r_1,r_0)$  :

\begin{lem}\label{leminjsur}

La flèche $f_i$ est injective quand $k_i<q+1$ et surjective de noyau $\sigma^{(i)}_{k-q-1,1+r_i}$ sinon.

\end{lem}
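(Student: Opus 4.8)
The plan is to reduce $f_i$ to the geometric restriction map of \ref{lemcomb}, and then to a completely elementary statement about $\Of_{\P^1}(k)$ and its $q+1$ marked $\F$-rational points. Recall from the proof of \ref{lemcomb} that, under the identifications $\prod_{s\in\BC\TC_0^{(i)}}\bar{\Lf}(\P_s)\cong\sigma^{(i)}_{k_i,r_i}$ and $\prod_{a\in\BC\TC_1}\bar{\Lf}(\P_a)\cong I(r_1,r_0)$, the restriction map $\prod_s\bar{\Lf}(\P_s)\to\prod_a\bar{\Lf}(\P_a)$ breaks into two components, one on $\sigma^{(0)}$ and one on $\sigma^{(1)}$; one checks, exactly as in the construction following \ref{proppoids}, that $f_i$ is (up to a sign) this $\sigma^{(i)}$-component. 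Since every edge of $\BC\TC$ is incident to exactly one vertex of each parity, grouping the edges by the vertex of $\BC\TC_0^{(i)}$ they meet gives a factorisation $f_i=\prod_{s\in\BC\TC_0^{(i)}}(f_i)_s$, where $(f_i)_s\colon\bar{\Lf}(\P_s)\to\bigoplus_{y\in\P_s(\F)}\bar{\Lf}|_{\P_s}\otimes k(y)$ is the evaluation of a global section of $\bar{\Lf}|_{\P_s}\cong\Of_{\P_s}(k_i)$ (twisted by $\det^{r_i}$) at the $q+1$ marked points $\P_s(\F)$. This factorisation is $[G]_2$-equivariant for the transitive action of $[G]_2$ on $\BC\TC_0^{(i)}$ with stabiliser $\stab_{[G]_2}(s_i)$ equal to $G^\circ\varpi^\Z$ (for $i=1$) or $wG^\circ w^{-1}\varpi^\Z$ (for $i=0$); hence $f_i$ is injective (resp. surjective) if and only if each $(f_i)_s$ is, and $\ker f_i\cong\ind_{\stab_{[G]_2}(s_i)}^{[G]_2}\ker(f_i)_{s_i}$.

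First I would analyse the local map $(f_i)_{s_i}$. Let $D:=z_0z_1^q-z_0^qz_1\in\sym^{q+1}\bar{\F}^2$ be the Dickson polynomial; it vanishes simply at each point of $\P_{s_i}(\F)$ and nowhere else. The associated divisor sequence $0\to\Of_{\P_{s_i}}(k_i-q-1)\xrightarrow{\cdot D}\Of_{\P_{s_i}}(k_i)\to\Of_{\P_{s_i}}(k_i)|_{\P_{s_i}(\F)}\to 0$ yields on global sections an exact sequence ending $H^0(\Of(k_i-q-1))\to H^0(\Of(k_i))\xrightarrow{(f_i)_{s_i}}\bigoplus_{y\in\P_{s_i}(\F)}\Of(k_i)\otimes k(y)\to H^1(\Of(k_i-q-1))$. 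For $k_i\le q$ the group $H^0(\Of(k_i-q-1))$ vanishes, so $(f_i)_{s_i}$ is injective; for $k_i\ge q$ the group $H^1(\Of(k_i-q-1))$ vanishes, so $(f_i)_{s_i}$ is surjective, and for $k_i\ge q+1$ its kernel is the image of multiplication by $D$, hence abstractly $\cong H^0(\Of(k_i-q-1))\cong\sym^{k_i-q-1}\bar{\F}^2$. Since $g\cdot(D\cdot P)=\det(g)\,D\cdot(g\cdot P)$ for $g\in\bar{G}$, multiplication by $D$ is $\bar{G}$-equivariant from $\sym^{k_i-q-1}\bar{\F}^2\otimes\det$ into $\sym^{k_i-q-1}\bar{\F}^2$; taking into account the $\det^{r_i}$-twist of $\bar{\Lf}|_{\P_{s_i}}$ (and, for $i=0$, the $w$-conjugation) and inflating along $\stab_{[G]_2}(s_i)\to\bar{G}$, we get $\ker(f_i)_{s_i}\cong\sym^{k_i-q-1}\bar{\F}^2\otimes\det^{1+r_i}$ (resp. its $w$-twist).

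Putting this together: when $k_i<q+1$ every $(f_i)_s$ is injective, so $f_i$ is injective; when $k_i\ge q+1$ every $(f_i)_s$ is surjective, so $f_i=\prod_s(f_i)_s$ is surjective, and inducing the local kernel gives $\ker f_i\cong\ind_{\stab_{[G]_2}(s_i)}^{[G]_2}\big(\sym^{k_i-q-1}\bar{\F}^2\otimes\det^{1+r_i}\big)^{(w\text{ if }i=0)}=\sigma^{(i)}_{k_i-q-1,\,1+r_i}$. This is exactly the statement of \ref{leminjsur}.

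The only real difficulty I anticipate is bookkeeping: confirming that the twists conspire so that the kernel is $\sigma^{(i)}_{k_i-q-1,1+r_i}$ with precisely the index $1+r_i$. This requires being careful about the $\bar{G}$-action on $H^0(\P^1,\Of(k))=\sym^k\bar{\F}^2$, about how the intersection point of the standard edge sits among the $q+1$ points of $\P_{s_i}(\F)$ (this is where the relations $r_1=r$, $r_0=r+k_1$ from the proof of \ref{proppoids} feed in), and about the fact that the Dickson polynomial is a $\det$-semiinvariant of degree $q+1$ vanishing exactly on $\P^1(\F)$. No ingredient beyond \ref{lemcomb} and the cohomology of line bundles on $\P^1$ is needed.
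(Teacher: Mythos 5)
Votre preuve est correcte et suit essentiellement la même démarche que celle du texte : réduction de $f_i$, composante par composante, à l'évaluation des sections de $\Of_{\P_s}(k_i)$ aux $q+1$ points de $\P_s(\F)$, identification du noyau avec les multiples du polynôme $z_0z_1^q-z_0^qz_1$ (semi-invariant de poids $\det$, d'où le décalage $1+r_i$), puis induction à $[G]_2$. La seule différence est cosmétique : vous emballez l'argument d'interpolation/comptage de dimensions dans la suite exacte de diviseur $0\to\Of(k_i-q-1)\to\Of(k_i)\to\Of(k_i)|_{\P^1(\F)}\to 0$ et l'annulation de $\hhh^0$ et $\hhh^1$ de $\Of(k_i-q-1)$, là où le texte invoque la divisibilité explicite et les polynômes interpolateurs de Lagrange.
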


\begin{lem}\label{lemgenmod}

Étant donné trois modules $M_1$, $M_2$, $M_3$ sur un anneau $A$, deux morphismes $A$-linéaires  $(f_i:M_i\to M_3)_{i=1,2}$, on a  un  une suite exacte
\[0\to\ker f_1\oplus\ker f_2 \to \ker(f_1+f_2)\to \imm f_1\cap \imm f_2\to 0\]pour $f_1+f_2:M_1\oplus M_2\to  M_3$ la flèche naturelle.

\end{lem}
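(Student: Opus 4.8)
L'énoncé est un pur lemme d'algèbre (homologique) élémentaire, indépendant de toute la géométrie et de la théorie des représentations qui précèdent : il décrit le noyau de l'application somme $f_1+f_2\colon M_1\oplus M_2\to M_3$, $(m_1,m_2)\mapsto f_1(m_1)+f_2(m_2)$. Le plan est de construire explicitement l'application
\[
\pi\colon \ker(f_1+f_2)\longrightarrow \imm f_1\cap \imm f_2,\qquad (m_1,m_2)\longmapsto f_1(m_1),
\]
puis de vérifier successivement qu'elle est bien définie (à valeurs dans l'intersection), surjective, et de noyau exactement $\ker f_1\oplus\ker f_2$ ; ces trois vérifications donneront immédiatement l'exactitude de la suite annoncée.

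Concrètement, je procéderais ainsi. L'inclusion $\ker f_1\oplus\ker f_2\subset\ker(f_1+f_2)$ est immédiate. Si $(m_1,m_2)\in\ker(f_1+f_2)$, alors $f_1(m_1)=-f_2(m_2)$, donc $f_1(m_1)\in\imm f_1\cap\imm f_2$, ce qui montre que $\pi$ est bien à valeurs dans l'intersection. Pour la surjectivité, tout élément $x\in\imm f_1\cap\imm f_2$ s'écrit $x=f_1(m_1)=f_2(m_2')$ avec $m_1\in M_1$ et $m_2'\in M_2$, et alors $(m_1,-m_2')\in\ker(f_1+f_2)$ vérifie $\pi(m_1,-m_2')=x$. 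Enfin, si $\pi(m_1,m_2)=f_1(m_1)=0$, la relation $f_1(m_1)+f_2(m_2)=0$ force $f_2(m_2)=0$, d'où $(m_1,m_2)\in\ker f_1\oplus\ker f_2$ ; la réciproque est claire. La suite $0\to\ker f_1\oplus\ker f_2\to\ker(f_1+f_2)\xrightarrow{\pi}\imm f_1\cap\imm f_2\to 0$ est donc exacte.

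Il n'y a pas ici d'obstacle réel : les conventions de signe sont inoffensives puisque $\ker(-f_2)=\ker f_2$ et $\imm(-f_2)=\imm f_2$, de sorte que la même preuve vaut pour $f_1-f_2$, variante sous laquelle le lemme s'appliquera au différentiel de \v Cech de \ref{lemcomb}. On pourrait alternativement déduire la suite exacte du lemme du serpent appliqué au diagramme à lignes exactes $0\to M_1\to M_1\oplus M_2\to M_2\to 0$ situé au-dessus de $0\to M_3\xrightarrow{\id}M_3\to 0\to 0$, de flèches verticales $f_1$, $f_1+f_2$ et $0$ ; mais l'argument direct ci-dessus est le plus économique, et c'est celui que je rédigerais.
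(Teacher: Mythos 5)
Votre preuve est correcte et complète ; le texte original se contente d'écrire « Clair », et votre vérification explicite (bonne définition de $\pi$, surjectivité, calcul du noyau) est exactement l'argument élémentaire sous-entendu. Rien à redire.
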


\begin{proof}[Démonstration de \ref{leminjsur}]
Par \eqref{eq:ssjac} et la description de $\sym^k\bar{\F}^2$ en tant qu'espaces de polynômes homogènes On peut trouver des isomorphismes d'espaces vectoriels qui rendent le diagramme suivant commutatif
\[
\begin{tikzcd}
\sigma^{(i)}_{k_i,r_i} \ar[r,"f_i"]\ar[d, "\sim"] & I(r_1,r_0)\ar[d, "\sim"] \\
\prod_{s\in \BC\TC^{(i)}} \bar{\F}[X,Y]_{i} \ar[r, "\prod_s f_s"]  & \prod_{s\in \BC\TC^{(i)}}\prod_{[a_0,a_1]\in \P_s(\F)}\frac{\bar{\F}[X,Y]_{k_i}}{(a_0X+a_1Y)\bar{\F}[X,Y]_{k_i-1}} 
\end{tikzcd}
\] 
avec $\bar{\F}[X,Y]_k$ l'ensemble des polynômes homogènes de degré $k$. On peut alors raisonner sur chacun des morphismes $f_s:\bar{\F}[X,Y]_k\to \prod_{[a_0,a_1]\in \P_s(\F)}\frac{\bar{\F}[X,Y]_{k_i}}{(a_0X+a_1Y)\bar{\F}[X,Y]_{k_i-1}}$ pour $s\in \BC\TC^{(i)}$. Si l'on choisit un générateur pour chacune des $\bar{\F}$-droites $\frac{\bar{\F}[X,Y]_{k_i}}{(a_0X+a_1Y)\bar{\F}[X,Y]_{k_i-1}}$, l'application $f_s$ peut s'interpréter comme l'évaluation des polynômes homogènes de $\bar{\F}[X,Y]_{k_i}$ en les points rationnels $\P_s(\F)$ (évaluation qui n'a de sens qu'une fois ces choix réalisés). On en déduit qu'un polynôme homogène est dans le noyau de $f_s$ si et seulement si il est divisible par $X^qY-Y^qX=c\prod_{[a_0,a_1]\in \P_s(\F)} (a_0X+a_1Y)$. Cela prouve l'injectivité dans le premier cas et le calcul du noyau dans le second (notons  a la relation $g\cdot (X^qY-Y^qX)=\det g (X^qY-Y^qX)$ pour $g\in \bar{G}$.) La surjectivité dans le second cas   se prouve grâce aux polynômes interpolateurs de Lagrange ou en observant que le noyau a la bonne dimension. 

\end{proof}

\begin{proof}[Démonstration de \ref{lemgenmod}]
Clair.
\end{proof}

Les points $4.$, $5.$ et $6.$, quant à eux, s'obtiennent par application de \ref{lemcomb} (et de \ref{leminjsur} pour le point $6.$). Enfin, la deuxième assertion du point $2.$ est la version duale de la première.
\end{proof}

\begin{rem}

Dans l'énoncé du lemme \ref{leminjsur}, nous n'avons pas décrit le conoyau de $f_i$ dans le cas où cette dernière est une injection. La proposition \ref{lemjordmod} fournit une description dans le cas particulier où le corps résiduel de $K$ est  $\F_p$. Le cas général est plus compliqué (voir \cite[Proposition 1.1.]{diam}) et nous ne nous en servirons pas.

\end{rem}

Nous allons maintenant nous concentrer sur les sections globales mod $p$ des fibrés en droite $[G]_2$-équivariants positifs (voir \ref{defiposneg}) de poids $-1$ sur $\bar{\H}$. Ces derniers correspondent aux fibrés étudiés dans le point $3.$ du théorème précédent. Pour  énoncer le résultat obtenu, nous aurons besoin de la notion de représentation irréductible supersingulière de $[G]_2$. Il s'agit des représentations irréductibles de $[G]_2$ qui sont facteurs directs de la restriction d'une représentation irréductible supersingulière de $G$ (voir et comparer les théorèmes de classification \ref{theoclassg} et \ref{theoclassgdeux} pour plus de précisions). Le résultat que nous voulons montrer est  le suivant

\begin{theo}\label{theopoidsun}

Si $K=\Q_p$, la flèche \[\bar{\Lf}\mapsto \hhh^0(\bar{\H},\bar{\Lf})^\vee\] définit une bijection entre les fibrés en droite $[G]_2$-équivariants positifs de poids $-1$ sur $\bar{\H}$ et les représentations irréductibles supersingulières de  $[G]_2$.

\end{theo}

\begin{coro}

Si $K=\Q_p$ et $\Lf$ un fibré en droite $[G]_2$-équivariant positif de poids $-1$ sur $\hat{\H}$,   la représentation $ \hhh^0(\hat{\H},{\Lf})^\vee\otimes \Q_p$ est irréductible dont la réduction mod $p$ du réseau naturel $ \hhh^0(\hat{\H},{\Lf})^\vee$ est une $\bar{\F}$-représentation irréductible supersingulière.

\end{coro}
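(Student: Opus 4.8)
The plan is to reduce the statement to the special fibre, where Theorem~\ref{theopoidsun} applies, and then to lift irreducibility back to characteristic $0$ by a $\varpi$-adic Nakayama argument. Since $K=\Q_p$ we have $q=p$, and for $\Lf$ positive of weight $-1$ the integers $k_i:=\ord_{s_i}(\Lf)$ satisfy $k_i\ge 0$ and $k_0+k_1=q-1$; in particular the bundle is positive. First I would record the necessary integral input. Positivity gives $\hhh^1(\bar{\H},\bar{\Lf})=0$ (as in the proof of \ref{coroAnnulation}: on each $\P_s$ one has $\hhh^1=0$ because $k_i\ge 0$, and $\prod_s\bar{\Lf}(\P_s)\to\prod_{a=(s,s')}\bar{\Lf}(\P_s\cap\P_{s'})$ is surjective by contractibility of $\BC\TC$), whence by Proposition~\ref{propannul} also $\hhh^1(\hat{\H},\Lf)=0$ and, crucially, $\hhh^0(\bar{\H},\bar{\Lf})=\hhh^0(\hat{\H},\Lf)/\varpi\hhh^0(\hat{\H},\Lf)$. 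Moreover $M:=\hhh^0(\hat{\H},\Lf)$ is $\varpi$-torsion free (it injects into $\Of(\hat{\H}_a)$, as in the proof of \ref{propannul}) and, being the inverse limit of its sections over an exhaustion of $\hat{\H}$ by quasi-compact formal open subschemes, each of which is $\varpi$-adically complete and separated, it is itself $\varpi$-adically complete and separated. Hence $M$ is the unit ball of the $\breve{K}$-Banach representation $B:=M[1/\varpi]$ of $[G]_2$, with reduction $M/\varpi M=\hhh^0(\bar{\H},\bar{\Lf})$; by Theorem~\ref{theopoidsun} this reduction is an irreducible supersingular, hence admissible, smooth $\bar{\F}$-representation of $[G]_2$ (in particular nonzero, so $M\neq 0$).

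For the mod $p$ assertion, I would use that, $M$ being flat and $\varpi$-adically complete over $\OC_{\breve{K}}$ with admissible reduction, the tautological injection $M^\vee/\varpi M^\vee\hookrightarrow\homm_{\bar{\F}}(M/\varpi M,\bar{\F})$ is an isomorphism (standard formalism of $\OC_{\breve{K}}$-Banach representations; cf.\ \cite{psm}), that is $\hhh^0(\hat{\H},\Lf)^\vee/\varpi\cong\hhh^0(\bar{\H},\bar{\Lf})^\vee$. As the contragredient of an irreducible admissible representation is again irreducible, Theorem~\ref{theopoidsun} shows that $\hhh^0(\hat{\H},\Lf)^\vee/\varpi$ is an irreducible supersingular $\bar{\F}$-representation of $[G]_2$.

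The characteristic $0$ statement will follow once $B$ is shown to be topologically irreducible. Let $W\subseteq B$ be a nonzero closed $[G]_2$-stable $\breve{K}$-subspace and put $W^0:=W\cap M$, a $[G]_2$-stable $\OC_{\breve{K}}$-lattice in $W$ that is closed in $M$, hence $\varpi$-adically complete and separated. Because $W$ is a $\breve{K}$-subspace it is $\varpi$-saturated in $B$, so $\varpi M\cap W=\varpi W^0$ and the reduction map $W^0/\varpi W^0\to M/\varpi M$ is an injective $[G]_2$-homomorphism; by irreducibility of $M/\varpi M$ it is either $0$ or surjective. If surjective, then $W^0+\varpi M=M$, and successive approximation together with $\varpi$-adic completeness of $W^0$ forces $W^0=M$, so $W=B$. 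If zero, then $W^0\subseteq\varpi M$; replacing $W^0$ by $\varpi^{-1}W^0$ (again a closed $[G]_2$-stable lattice in $W$) and iterating yields $W^0\subseteq\bigcap_n\varpi^n M=0$, contradicting $W\neq 0$. Thus $B$ is topologically irreducible, and dualizing—via the anti-equivalence between admissible $\breve{K}$-Banach representations of $[G]_2$ and their continuous duals—one deduces that $B'=\hhh^0(\hat{\H},\Lf)^\vee\otimes\Q_p$ is irreducible.

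The geometric heart of the matter is entirely contained in Theorem~\ref{theopoidsun}, together with \ref{propannul} and \ref{coroAnnulation}; the corollary is then essentially a formal consequence. Accordingly, the only delicate point is the integral bookkeeping of the first three paragraphs—that $\hhh^0(\hat{\H},\Lf)$ really is the unit ball of an admissible $\breve{K}$-Banach representation, and that forming the $[G]_2$-contragredient commutes with reduction modulo $\varpi$. This is precisely where the admissibility of $\hhh^0(\bar{\H},\bar{\Lf})$, a by-product of \ref{theopoidsun}, is used, and it is the step one must treat with care.
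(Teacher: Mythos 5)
Your proof is correct and follows exactly the route the paper intends (the corollary is in fact stated there without an explicit proof): positivity gives the vanishing needed for Proposition~\ref{propannul}, so that $\hhh^0(\hat{\H},\Lf)/\varpi\cong\hhh^0(\bar{\H},\bar{\Lf})$ is irreducible supersingular by Theorem~\ref{theopoidsun}, and your $\varpi$-adic successive-approximation argument on a closed invariant subspace lifts irreducibility to the generic fibre. The only point deserving the care you already give it is the commutation of the contragredient with reduction mod $\varpi$, which rests on the admissibility of $\hhh^0(\bar{\H},\bar{\Lf})$.
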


L'étude de ces $\breve{\Q}_p$-représentations irréductibles fera l'objet d'un travail ultérieur.

\begin{rem}

D'après le point 3. de \ref{theofilt}, le théorème ci-dessus revient à prouver que la représentation qui $I(r_1,r_0)^\vee/\ker(f_0^\vee)+\ker(f_1^\vee)$ est un quotient de $\cind_{IZ}^{[G]_2}\mu_{-r_1,r_0}$ est une représentation supersingulière de $[G]_2$. Notons que pour un quotient analogue de $\cind_{IZ}^{G}\mu_{-r_1,r_0}$, cela a déjà été fait dans \cite[Theorem 1.1.]{anand} et nous pouvons voir ce résultat comme une adaptation de ce résultat.

\end{rem}

La preuve de ce  théorème occupera  les trois sections suivantes. Dans les deux  premières, nous rappelons la classification des de $\sln_2(K)$ par Abdellatif \cite{abd1} et Cheng \cite{cheng}. Dans la dernière, nous obtenons comme conséquence directe un résultat analogue pour les représentations irréductibles admissibles de $[G]_2$ (définissant au passage  la notion de représentation irréductible supersingulière pour $[G]_2$) puis  terminerons la preuve de ce résultat.

\subsection{Poids de Serre et algèbre de Hecke}

Rappelons que nous avons noté $q=p^f$ avec $\F=\F_q$ le corps résiduel de $K$. Nous avons d'abord besoin d'étudier les représentations de $\bar{G}=\gln_2(\F)$. Pour cela, écrivons et introduisons pour chaque  $f$-uplet $\vec{k}=(k_1,\cdots,k_f)\in \left\llbracket 0,p-1\right\rrbracket^f$ et $r\in\left\llbracket 0,q-1\right\rrbracket$ une représentation de $\bar{G}$ \[\sym^{\vec{k}}(\bar{\F}^2)\otimes{\det}^r :=\delta_a\otimes{\det}^r\otimes\sym^{k_1}(\bar{\F}^2)\otimes\fro^*\sym^{k_2}(\bar{\F}^2)\otimes\cdots \otimes\fro^{*,f-1}\sym^{k_f}(\bar{\F}^2)\] que l'on peut aussi voir comme une $G^\circ Z$-représentation  par inflation. Chacune de ces représentations est appelée poids de Serre. On a la classification suivante (voir par exemple \cite[Proposition 1.]{BL2}) :

\begin{theo}\label{theobarg}

Toute représentation irréductible de $\bar{G}$ et admissible de $G^\circ Z$ (où $\varpi\id$ agit trivialement) est un poids de Serre pour un unique couple $(\vec{k},r)$.

\end{theo}


Le point-clé dans l'étude qui va suivre est de comprendre la structure de la $G$-représentation $\delta_a\otimes\cind_{G^\circ Z}^G \sym^{\vec{k}}(\bar{\F}^2)\otimes{\det}^r=:\delta_a\otimes \cind_{G^\circ Z}^G \sigma$. Pour ce faire, nous allons introduire et décrire les propriétés de l'algèbre de Hecke de $\sigma$

\begin{defi}

Étant donné  $\sigma= \sym^{\vec{k}}(\bar{\F}^2)\otimes{\det}^r $ comme précédemment, nous définissons l'algèbre de Hecke comme étant 
\[\Hf(\sigma):=\en_G(\cind_{G^\circ Z}^G \sigma)\]
où le produit est donné par la composition.

\end{defi}

Fixons $\sigma= \sym^{\vec{r}}(\bar{\F}^2)\otimes\delta_a\otimes{\det}^r $ et écrivons $V_\sigma$ l'espace vectoriel sous-jacent. L'espace sous-jacent de $ \cind_{G^\circ Z}^G \sigma$ est alors $\bigoplus_{s\in\BC\TC_0=G/G^\circ Z}V_s$ où chaque $V_s$ est isomorphe (non-canoniquement) à $V_\sigma$. Si l'on voit $\bigoplus_{s\in\BC\TC_0}V_s$ comme un sous-espace de fonctions de $\homm_{\ens}(G,V_\sigma)$ on peut alors voir \[\Hf(\sigma)=\en_G(\cind_{G^\circ Z}^G \sigma)=\hom_{G^\circ Z}(\sigma,\cind_{G^\circ Z}^G \sigma|_{G^\circ Z})\] comme un sous espace de $\homm_{\ens}(V_\sigma\times G, V_\sigma)$ et même de $\homm_{\ens}(G,\en_{\bar{\F}\vectot}(V_\sigma))$ si on fixe la variable dans $G$ et on fait varier celle dans $V_\sigma$. Plus précisément, elle peut s'identifier à l'espace de fonctions $\varphi : G\to \en_{\bar{\F}\vectot}(V_\sigma)$ à support compact (modulo le centre) vérifiant
\begin{equation}\label{eq:heckphi}
\forall h_1,h_2\in G^\circ Z,\forall g\in G, \varphi(h_1 g h_2)=h_1\circ \varphi(g)\circ h_2
\end{equation} où $h_1$, $h_2$ sont vus comme des endomorphismes de $V_\sigma$. 
 La structure d'algèbre induite est donnée ici par le produit convolution. 
Nous allons nous servir de cette interprétation pour exhiber des éléments de l'algèbre de Hecke. Par la décomposition de Cartan \eqref{eq:cartan}, un élément $\varphi : G\to \en_{\bar{\F}\vectot}(V_\sigma)$ dans $\Hf(\sigma)$ est déterminé par ses valeurs en $\alpha^n:=\left( \begin{matrix}
1 &0 \\  0& \varpi^n
\end{matrix} \right)$ qui doivent être toutes nulles sauf un nombre fini. Les éléments $U_{\vec{k}}=U_{k_1}\otimes\cdots\otimes \fro^{*,f-1}U_{k_f}\in \en_{\bar{\F}\vectot}(V_\sigma)$ avec  
\[U_r : \sum_{i=0}^k a_i X^i Y^{k-i}\mapsto a_k Y^k,  \forall k\in\left\llbracket 0,p-1\right\rrbracket\]
 auront une importance fondamentale dans l'étude que nous voulons réaliser comme en témoigne le résultat suivant :

\begin{theo}\label{theoalghecke}

Soit $\sigma= \sym^{\vec{k}}(\bar{\F}^2)\otimes\delta_a\otimes{\det}^r $, on a les points suivants :
\begin{enumerate}
\item Pour tout $n$, il existe un unique $\varphi_n : G\to \en_{\bar{\F}\vectot}(V_\sigma)$ vérifiant \eqref{eq:heckphi} supporté en la double classe $G^\circ Z \alpha^n G^\circ$ qui envoie $\alpha^n$ sur $U_{\vec{k}}$ et nous noterons $T_n$ l'endomorphisme de $\cind_{G^\circ Z}^G \sigma$ associé,
\item L'ensemble des éléments $\varphi : G\to \en_{\bar{\F}\vectot}(V_\sigma)$ de $\Hf(\sigma)$ supporté en la double classe $G^\circ Z \alpha^n G^\circ$ est la droite $\bar{\F}\cdot \varphi_n$,
\item Pour $T=T_1$, on a les relations suivantes \[\forall n\ge 2, T_n =\begin{cases} T^n-T^{n-2}\si \vec{r}=0\\ T^n \sinon\end{cases}, \]
\item $\Hf(\sigma)$ est isomorphe à l'algèbre de polynômes $\bar{\F}[T]$.

\end{enumerate}

\end{theo}

\begin{proof}
La preuve de $1$ et $2$ peut être trouvé dans \cite[Lemma 7]{BL1}. Pour $3$, cela se fait par calcul direct (voir Proposition 8 de \textit{loc. cit.}). D'après $2$ et la décomposition de Cartan, on a \[\Hf(\sigma)\cong\bigoplus_{n\ge 0}T_n=\bar{\F}[T]\] où la dernière égalité est une application de $3$.
\end{proof}

Nous allons terminer cette section en étudiant l'action de $\Hf(\sigma)$ sur  $W:=\bigoplus_{s\in\BC\TC_0}V_s$ que l'on décomposera de la manière suivante : \[W=W_0\oplus W_1:=\bigoplus_{s\in\BC\TC_0^{(0)}}V_s\oplus \bigoplus_{s\in\BC\TC_0^{(1)}}V_s\] Plus précisément, pour un élément de l'algèbre de Hecke représenté par un polynôme $P(T)\in \bar{\F}[T]$, nous allons traduire certains invariants du polynôme $P$ en des  propriété de l'action de $P(T)$ sur $\bigoplus_{s\in\BC\TC_0}V_s$.  On rappelle que l'on a une notion de support pour les  éléments $v=(v_s)_{s\in\BC\TC}$ de $\bigoplus_{s\in\BC\TC_0}V_s$ voir \eqref{eq:supp} 
\begin{lem}\label{lemp}

Soit $v\in V_{s}$ pour $s=gs_1\in \BC\TC_0$ avec $s_1$ le sommet standard, $P(T)\in\bar{\F}[T]=\Hf(\sigma)$ et $\varphi : G\to \en_{\bar{\F}\vectot}(V_\sigma)$ l'application  vérifiant \eqref{eq:heckphi} associé.  on a \[\supp(P(T)\cdot v)=(g\supp \varphi )/G^\circ Z\]

\end{lem}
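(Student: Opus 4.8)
The plan is to first use the $[G]_2$-equivariance of the Hecke action to reduce to the standard vertex: writing $s=gs_1$ and $v=g\cdot v_0$ with $v_0\in V_{s_1}$, one has $P(T)\cdot v=g\cdot(P(T)\cdot v_0)$ and $\supp(g\cdot x)=g\cdot\supp(x)$, so it suffices to prove $\supp(P(T)\cdot v_0)=(\supp\varphi)/G^\circ Z$ for every nonzero $v_0\in V_\sigma$. Next I would pass to the Hecke basis: by Theorem \ref{theoalghecke} the family $\{T_m\}_{m\ge 0}$ is a $\bar{\F}$-basis of $\Hf(\sigma)=\bar{\F}[T]$, so if $P(T)=\sum_m d_m T_m$ then the associated function is $\varphi=\sum_m d_m\varphi_m$, where $\varphi_m$ is the function supported on the single Cartan double coset $G^\circ Z\alpha^m G^\circ$ sending $\alpha^m$ to $U_{\vec k}$. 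Consequently $\supp\varphi=\bigsqcup_{d_m\neq 0}G^\circ Z\alpha^m G^\circ$, and by the Cartan decomposition \eqref{eq:cartan} its image in $\BC\TC_0=G/G^\circ Z$ is exactly the union, over the $m$ with $d_m\neq 0$, of the spheres of radius $m$ around $s_1$.

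The inclusion $\subseteq$ is then formal: realizing $\cind_{G^\circ Z}^{G}\sigma$ as functions $G\to V_\sigma$ and $v_0$ as the generator supported on $G^\circ Z$ with value $v_0$ at the identity, the $V_{s'}$-component of $P(T)\cdot v_0$ at a vertex $s'=g'G^\circ Z$ reduces, after unwinding the convolution, to the single term $\varphi(g')\cdot v_0$, which vanishes as soon as $g'\notin\supp\varphi$; hence $\supp(P(T)\cdot v_0)\subseteq(\supp\varphi)/G^\circ Z$.

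The substance of the lemma is the reverse inclusion. Since the spheres of distinct radii are pairwise disjoint, the contributions of the different $\varphi_m$ with $d_m\neq 0$ live on disjoint sets of vertices and cannot cancel, so it is enough to show that for each such $m$ and each vertex $s'$ at distance $m$ from $s_1$ one has $\varphi_m(g_{s'})\cdot v_0\neq 0$ for one (equivalently any) representative $g_{s'}$. Here I would use the explicit shape $\varphi_m(h_1\alpha^m h_2)=\sigma(h_1)\circ U_{\vec k}\circ\sigma(h_2)$ coming from \eqref{eq:heckphi}, together with the fact (a restatement of \eqref{eq:cartan}) that $G^\circ Z\alpha^m G^\circ$ consists exactly of the $g\in G$ with $gs_1$ at distance $m$ from $s_1$, to express the value at $s'$ as an explicit $G^\circ$-twist of an $m$-fold composite of $U_{\vec k}$ evaluated on $v_0$, and then check by a direct computation in the model of $\sigma$ as symmetric powers (following the bookkeeping of Theorem \ref{theoalghecke} and of the proof of \ref{proppoids}) that it does not vanish. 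This last step — choosing the coset representatives realizing each radius-$m$ vertex and evaluating the iterated operator on $v_0$, the place where the hypothesis $v_0\neq0$ and the precise structure of $U_{\vec k}$ enter — is the only non-formal point and is where I expect the main difficulty to be; the equivariance reduction, the passage to the $\{T_m\}$-basis, the Cartan bookkeeping and the one-term convolution are all routine.
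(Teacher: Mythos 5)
Your reduction to the standard vertex, the decomposition $P=\sum_m d_mT_m$ with the spheres of distinct radii disjoint, and the one-term convolution giving the inclusion $\supp(P(T)\cdot v)\subseteq (g\supp\varphi)/G^\circ Z$ all match the paper's (one-line) argument, which rests on the identity $\varphi(h)(\tilde v)=h^{-1}w_{hs_1}$. The problem is precisely the step you yourself single out as the only non-formal point, namely that $\varphi_m(g_{s'})\cdot v_0\neq 0$ for every vertex $s'$ at distance $m$ from $s_1$: you defer it to ``a direct computation'', but that computation cannot succeed in general. By Théorème \ref{theoalghecke}, $\varphi_m(h_1\alpha^m h_2)=\sigma(h_1)\circ U_{\vec{k}}\circ\sigma(h_2)$ — a single twisted copy of $U_{\vec{k}}$, not an $m$-fold composite as you write (note $U_{\vec{k}}^2=0$ as soon as $\vec{k}\neq\vec{0}$, so an $m$-fold composite would be zero for $m\ge 2$) — and $U_{\vec{k}}$ has rank one. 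Hence whenever $\dim V_\sigma=\prod_j(k_j+1)>1$, the operator $\varphi_m(g_{s'})$ kills a hyperplane of $V_\sigma$, and for $v_0$ in that hyperplane the component of $P(T)\cdot v_0$ at $s'$ vanishes even though $s'\in(\supp\varphi)/G^\circ Z$. Concretely, for $\sigma=\sym^{k}\bar{\F}^2$ with $k\ge 1$ and $v_0=Y^{k}$ one has $U_{k}(v_0)=0$, so $T\cdot v_0$ is not supported on the whole sphere of radius $1$. The reverse inclusion therefore holds only for $v_0$ outside a finite union of hyperplanes (or when $\dim V_\sigma=1$); it is not a gap you could have closed by more computation.

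To be fair, the paper's own proof silently elides the same point and the lemma needs this caveat. What the downstream statements (Corollaire \ref{coropoly} and Lemme \ref{lemsupp}) actually use is weaker and is provable: the inclusion $\subseteq$ that you establish, together with the fact that the sphere of radius $\deg P$ does meet $\supp(P(T)\cdot v)$, i.e. $T_{\deg P}\cdot v\neq 0$; the latter follows from the torsion-freeness of $\cind_{G^\circ Z}^{G}\sigma$ over $\Hf(\sigma)$ proved in \cite{BL1}, not from any pointwise non-vanishing of $\varphi_m(g_{s'})$ on $v_0$. If you replace your last step by that statement, your argument becomes correct and suffices for every use the paper makes of the lemma; as written, it does not establish the asserted equality of supports.
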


\begin{proof}
Soit $P(T)$, $\varphi$ et $v$ comme dans l'énoncé, on a $v=g\tilde{v}$ avec $\tilde{v}\in V_{s_1}$ et \[\supp(P(T)\cdot v)=\supp(gP(T)\cdot \tilde{v})=g\supp(P(T)\cdot \tilde{v}).\] En écrivant $(w_s)_{s\in \BC\TC_0}=P(T)\cdot \tilde{v}\in \bigoplus_{s\in\BC\TC_0}V_s$, on a alors  \[\varphi(h)(\tilde{v})=h^{-1}w_{hs_1}\] pour tout $g\in G$ d'où $\supp(P(T)\cdot \tilde{v})=\supp \varphi /G^\circ Z$.
\end{proof}
\begin{coro}\label{coropoly}

\begin{enumerate}
\item $\bar{\F}[T^2]$ est l'ensemble des éléments  de $\Hf(\sigma)$ qui préservent $W_0$, $W_1$ et   $T\bar{\F}[T^2]$ est l'ensemble des éléments qui les échangent,
\item Soit $P(T)\in\bar{\F}[T]=\Hf(\sigma)$ et $v\in V_{s}$ pour $s\in \BC\TC_0$, on a $\deg P=\max_{s'\in \supp(P(T)\cdot v)}d(s',s)$ pour $d$ la distance naturelle entre les sommets  sur le graphe $\BC\TC$.
\end{enumerate}

\end{coro}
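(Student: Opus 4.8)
The plan is to read everything off Lemma~\ref{lemp} and the structure of $\Hf(\sigma)\cong\bar{\F}[T]$ from Theorem~\ref{theoalghecke}, together with the description (following the Cartan decomposition~\eqref{eq:cartan}) of the double coset $G^{\circ}Z\alpha^{n}G^{\circ}/G^{\circ}Z$ as the set of vertices at distance exactly $n$ from the standard vertex $s_1$. Translating by $g\in G$ and applying Lemma~\ref{lemp}, for $v\in V_s\setminus\{0\}$ with $s=gs_1$ the element $T_n\cdot v$ is supported exactly on the sphere $S_n(s):=\{s'\in\BC\TC_0 : d(s',s)=n\}$, which is non-empty; in particular $T_n\cdot v\neq 0$. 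Since every vertex of $S_n(s)$ lies in $\BC\TC_0^{(j)}$ with $j\equiv i+n\ (2)$ when $s\in\BC\TC_0^{(i)}$, the operator $T_n$ sends $W_i$ into $W_{i+n}$, indices in $\Z/2\Z$.

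For part~(1): by Theorem~\ref{theoalghecke}.3 the two bases $\{T^m\}_{m\ge 0}$ and $\{T_m\}_{m\ge 0}$ of $\bar{\F}[T]$ are related by a unipotent triangular relation $T^{N}=T_{N}+\sum_{m<N,\ m\equiv N\ (2)}c_m T_m$, so each $T^n$ is an $\bar{\F}$-combination of $T_m$'s with $m\equiv n\ (2)$, and hence by the previous paragraph it preserves $W=W_0\oplus W_1$ when $n$ is even and interchanges $W_0$ and $W_1$ when $n$ is odd. Thus $\bar{\F}[T^2]$ consists of operators preserving each $W_i$ and $T\bar{\F}[T^2]$ of operators sending each $W_i$ into $W_{i+1}$. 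Conversely, write an arbitrary $\Psi\in\Hf(\sigma)$ as $\Psi_{+}+\Psi_{-}$ with $\Psi_{+}\in\bar{\F}[T^2]$ and $\Psi_{-}\in T\bar{\F}[T^2]$. If $\Psi$ preserves $W_0$ and $W_1$ then so does $\Psi_{-}=\Psi-\Psi_{+}$; but $\Psi_{-}$ also sends $W_i$ into $W_{i+1}$, so $\Psi_{-}(W_i)\subset W_i\cap W_{i+1}=0$, i.e.\ $\Psi_{-}$ acts by $0$ on $W$. As $\Hf(\sigma)=\en_G(\cind_{G^{\circ}Z}^G\sigma)$ acts faithfully on $W$, we get $\Psi_{-}=0$ and $\Psi\in\bar{\F}[T^2]$; the argument when $\Psi$ interchanges $W_0$ and $W_1$ is the same with the roles of $\Psi_{+}$ and $\Psi_{-}$ reversed.

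For part~(2): fix $v\in V_s\setminus\{0\}$ and write $P(T)=\sum_{n=0}^{N}a_n T^n$ with $a_N\neq 0$, so $\deg P=N$. Rewriting $P$ in the basis $\{T_m\}$ via the triangular relation above gives $P=\sum_{m=0}^{N}b_m T_m$ with $b_N=a_N\neq 0$. Hence $P(T)\cdot v=\sum_{m}b_m\,(T_m\cdot v)$, and by the first paragraph each non-zero summand $b_m(T_m\cdot v)$ (i.e.\ $b_m\neq 0$) is supported exactly on $S_m(s)$. Since the spheres $S_m(s)$ are pairwise disjoint, $\supp(P(T)\cdot v)=\bigcup_{m:\,b_m\neq 0}S_m(s)$; this set is contained in $\bigcup_{m\le N}S_m(s)$ and meets $S_N(s)$, whence $\max_{s'\in\supp(P(T)\cdot v)}d(s',s)=N=\deg P$.

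The only point that genuinely needs care is the assertion that $T_m\cdot v$ is supported on the \emph{entire} sphere $S_m(s)$ — not merely on a non-empty subset of it that could depend on $v$ — for otherwise the contributions of different degrees could conceivably cancel the radius-$N$ part in part~(2), and the parity argument in part~(1) would need $T_n\cdot v\neq 0$ anyway. This is exactly what Lemma~\ref{lemp} gives (the support being $g\,\supp\varphi_m/G^{\circ}Z$, independent of the chosen $v\neq 0$), combined with the identification $\supp\varphi_m=G^{\circ}Z\alpha^{m}G^{\circ}$ coming from~\eqref{eq:cartan}. Everything else is elementary bookkeeping inside $\bar{\F}[T]$.
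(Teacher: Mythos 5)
Votre preuve est correcte et suit essentiellement la même route que celle du texte : les deux points se ramènent au lemme \ref{lemp}, à la relation triangulaire entre $\{T^n\}$ et $\{T_n\}$ issue du théorème \ref{theoalghecke}.3, et à l'identification de $G^\circ Z\alpha^n G^\circ/G^\circ Z$ avec la sphère de rayon $n$, suivies de la décomposition $\bar{\F}[T]=\bar{\F}[T^2]\oplus T\bar{\F}[T^2]$ pour la réciproque du point (1). Le soin supplémentaire que vous apportez (support de $T_m\cdot v$ égal à la sphère entière indépendamment de $v\neq 0$, fidélité de l'action pour conclure $\Psi_-=0$) ne fait qu'expliciter ce que la démonstration du texte laisse implicite.
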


\begin{proof}

Comme \[{G^\circ Z}\alpha{G^\circ}/{G^\circ Z}=\{s'\in\BC\TC_0: (s',s_1)\in\BC\TC_1\},\] on a $\supp(T\cdot v)=\{s'\in\BC\TC_0: (s',s)\in\BC\TC_1\}$ pour $v\in V_{s}$ d'après le résultat précédent. Donc $T$ échange les facteurs directs $W_0$, $W_1$. Ainsi, $\bar{\F}[T^2]$ préserve ces espaces  là où $T\bar{\F}[T^2]$ les échange. Comme on a $\bar{\F}[T]=\bar{\F}[T^2]\oplus T\bar{\F}[T^2]$, ces deux facteurs directs sont exactement les ensembles d'éléments qui préservent et échangent respectivement  les deux sous-espaces $W_0$, $W_1$.

Soit $P(T)$, $\varphi$ et $v$ comme dans \ref{lemp}, d'après le point $3$ de \ref{theoalghecke}, on a \[\deg P=\min \{n: P(T)\in \bigoplus_{k\le n}\bar{\F} \cdot T_k\}=\max \{n : \varphi(\alpha^n)\neq 0\}=\max_{s'\in \supp(P(T)\cdot v)}d(s',s)\] où la dernière égalité découle de  \[(\bigsqcup_{k\le n} G^\circ Z \alpha^k G^\circ)/G^\circ Z=\{s=gs_1\in \BC\TC_0 :d(s,s_1)\le n\}.\] 
\end{proof}


\subsection{Représentations irréductibles de $G$ et $\sln_2(K)$\label{ssecg}}

Nous rappelons ici la description des représentations irréductibles admissibles de réalisée par Bartel-Livné \cite{BL2, BL1}, Breuil \cite{breuil2}.... Le point de départ de cette classification provient de l'observation fondamentale suivante

\begin{theo}[\cite{BL2} Proposition 32., Corollary 31.]

Toute représentation irréductible admissible de $G$ est un  quotient d'une représentation \[V(a,r,\vec{k},\lambda):=\delta_a\otimes{\det}^r \otimes(\cind_{{G^\circ Z}}^G \sym^{\vec{k}}\bar{\F}/T-\lambda)\] pour $a\in\bar{\F}$, $r\in \left\llbracket 0,q-1\right\rrbracket$, $\vec{k}\in \left\llbracket 0, p-1\right\rrbracket^f$ et $\lambda\in\bar{\F}$.

\end{theo}


Le théorème de classification s'énonce ainsi.

\begin{theo}[\cite{BL2} Theorem 30.]\label{theoclassg}
\begin{enumerate}
\item Pour tous $(a,r,\vec{k},\lambda)$ avec $\lambda\neq 0$, la représentation $V(a,r,\vec{k},\lambda)$ admet un unique quotient irréductible $W(a,r,\vec{k},\lambda)$ qui est exactement $V(a,r,\vec{k},\lambda)$ quand $(\vec{k},\lambda)\notin\{\vec{0},\overrightarrow{p-1}\}\times\{\pm 1\}$ avec $\vec{k}=(k,\cdots,k)\in \left\llbracket 0, p-1\right\rrbracket^f$.
\item Les représentations irréductibles admissibles de $G$ peuvent être décomposé en quatre familles disjointes :
\begin{enumerate}
\item Les caractères $\psi$ de $G$ correspondant à une représentation de la forme  $W(a,r,\vec{0},\pm 1)$.
\item La série spéciale\footnote{$\stb$ pour Steinberg} $\psi\otimes (\cind_{B}^G \mathbbm{1}_B)/\mathbbm{1}_G=: \psi\otimes \stb$ qui correspond  à une représentation de la forme $W(a,r,\overrightarrow{p-1},\pm 1)$
\item La série principale  $\psi\otimes \cind_{B}^G \mu_1\otimes\mu_2$ avec $\mu_1\neq \mu_2$ isomorphe à $V(a,r,\vec{k},\lambda)$ quand  $(\vec{k},\lambda)\notin\{\vec{0},\overrightarrow{p-1}\}\times\{\pm 1\}$ et $\lambda\neq 0$.
\item Les représentations supersingulières obtenues comme quotient de $V(a,r,\vec{k},0)$.
\end{enumerate}
\end{enumerate}

\end{theo}


Parmi les quatre familles précédentes, les représentations supersingulières forment la famille la plus mystérieuse. Toutefois, elles admettent une description explicite dans lorsque $K=\Q_p$.

\begin{theo}\label{theoclassgsupsing}

Si $K=\Q_p$, la représentation $V(a,r,{k},0)$ est irréductible et on a des isomorphismes $V(a,r,{k},0)\cong V(-a,r,{k},0)\cong V(a,r+k,{p-1-k},0)\cong V(-a,r+k,{p-1-k},0)$ et ce sont les seuls isomorphismes entre les représentations supersingulières $V(a,r,{k},0)$.

\end{theo}

Plaçons nous maintenant dans le cas d'un corps $K$ général où on peut donner une description similaire aux représentations irréductibles de $\sln_2(K)$. L'énoncé suivant a été prouvé indépendamment  par Abdellatif \cite{abd1} et Cheng \cite{cheng}.

\begin{theo}\label{theoclasssl}

Les représentations irréductibles admissibles de $\sln_2(K)$ peuvent être séparées en quatre familles disjointes
\begin{enumerate}[label=(\alph*)]
\item La représentation triviale $\mathbbm{1}_{ \sln_2(K)}$
\item La représentation spéciale $\spg:=\stb|_{ \sln_2(K)}$
\item La série principale $\cind_{B\cap  { \sln_2(K)}}^{ \sln_2(K)} \mu\otimes \mathbbm{1}$
\item Les représentations  supersingulières qui sont quotients de $V(a,r,\vec{k},0)|_{ \sln_2(K)}$.
\end{enumerate}
\end{theo}

Pour les familles $(a)$, $(b)$, $(c)$ pour  $\sln_2(K)$, toutes les représentations considérées sont des restrictions de représentations de $G$ dans les familles analogues $(a)$, $(b)$, $(c)$. De plus, deux représentations irréductibles de $G$ de $(a)$, $(b)$, $(c)$ ont même restrictions si l'une est obtenue à partir de l'autre par torsion d'un caractère.

Comme dans le cas de $G$, très peu est connu sur les représentations supersingulières de  $\sln_2(K)$ sur un corps général. Quand $K=\Q_p$, ces représentations sont totalement comprises ;

\begin{theo}\label{theoclassslsupsing}

La restriction à  $\sln_2(\Q_p)$ de toutes représentations supersingulières $V(a,r,{k},0)$ de $G$ se décompose en une somme directe de deux représentations irréductibles \[V(a,r,{k},0)|_{ \sln_2(\Q_p)}\cong\pi_k^0\oplus\pi_k^1\] soumises aux isomorphismes $\pi_k^0\cong \pi_{p-1-k}^1$ qui sont  les seuls entre ces représentations. En particulier, on a exactement $p$  représentations supersingulières $(\pi_k^0)_{0\le k\le p-1}$ de $\sln_2(K)$.

\end{theo}

\begin{proof}
La preuve a été écrite dans \cite[Théorème 0.7.3.]{abd1}  \cite[Theorem 1.7.]{cheng} mais nous rappelons la construction des deux facteurs directs $\pi_k^0$, $\pi_k^1$ dont nous nous servirons dans la section suivante. Notons que l'on a la décomposition suivante (qui est celle apparaissant dans \ref{coropoly}.1.) \begin{align*}
\cind_{{G^\circ Z}}^G\sym^k \bar{\F}^2&=\cind_{G^\circ Z}^{[G]_2}\sym^k \bar{\F}^2 \oplus \cind_{wG^\circ w^{-1} Z}^{[G]_2}(\sym^k \bar{\F}^2)^w\\
&=\cind_{G^\circ\cap  \sln_2(K)}^{ \sln_2(K)}\sym^k \bar{\F}^2 \oplus \cind_{wG^\circ w^{-1} \cap{ \sln_2(K)}}^{ \sln_2(K)}(\sym^k \bar{\F}^2)^w=V_1(k)\oplus V_0(k)
\end{align*}

L'opérateur $T$ définit dans \ref{theoalghecke}.1. permutent ces deux termes \ref{coropoly}.1. ce qui fournit une décomposition $V(a,r,{k},0)|_{ \sln_2(K)}=V_1(k)/TV_0(k)\oplus V_0(k)/TV_1(k)$. Les termes recherchés  sont alors $\pi_k^i\cong V_i(k)/TV_{i+1}(k)$.
\end{proof}

\subsection{Représentations irréductibles de $[G]_2$ et preuve de \ref{theopoidsun} \label{ssecgdeux}}

Nous allons maintenant décrire les représentations irréductibles de $[G]_2$. Nous verrons la classification obtenue comme une conséquence directe des résultats des deux sections précédentes. Il est aussi possible de recopier mot pour mot la preuve pour  $\sln_2(K)$ mais le lien entre toutes ces classifications motive ces résultats. Pour pouvoir relier les représentations de $G$ à celles de  $[G]_2$, nous aurons  besoin de l'énoncé général suivant :

\begin{prop}\label{propinddeux}

Soit un groupe $G$ et $H$ un sous-groupe normal d'indice $2$. Soit $\rho$ est une représentation irréductible de $G$ (resp. $H$), 

\begin{itemize}
\item La restriction $\rho|_H$ (resp.  l'induite  $\ind^G_H \rho$) est irréductible ou se décompose en la somme directe de deux représentations irréductibles.
\item Si $\rho|_H$ (resp.  $\ind^G_H \rho$) se décompose i.e. $\rho|_H=\rho_1\oplus \rho_2$ (resp. $\cind^G_H \rho=\rho_1\oplus \rho_2$) alors $\rho=\ind_H^G\rho_i$ (resp. $\rho=\rho_i|_H$) pour $i=1,2$.
\item En particulier, toute représentation irréductible de $G$ (resp. de $H$) est un facteur direct de la restriction (resp. de l'induite) d'une représentation irréductible de $H$ (resp. de $G$)
\end{itemize}

\end{prop}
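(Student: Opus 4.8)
The statement is the classical Clifford theory for an index-2 normal subgroup, so the plan is to run the standard Mackey/Clifford argument, being careful that everything works over $\bar{\F}$ (no semisimplicity of the group algebra is needed, only that $[G:H]=2$ is invertible since $p\neq 2$, and Frobenius reciprocity). First I would fix a generator $w$ of $G/H$, so $G=H\sqcup wH$, and recall the two adjunctions: $\homm_G(\ind_H^G\rho,\pi)\cong\homm_H(\rho,\pi|_H)$ and, because $H$ has finite index, $\ind_H^G$ and $\cind_H^G$ agree and are both left and right adjoint to restriction. The conjugate representation $\rho^w$ (notation \eqref{eq:w}) of $H=wHw^{-1}$ is the key auxiliary object: Mackey's formula gives $(\ind_H^G\rho)|_H\cong\rho\oplus\rho^w$, and dually $(\rho_1|_H)$ for $\rho_1$ irreducible over $G$ controls the restriction side.

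For the restriction half: let $\rho$ be irreducible over $G$. Then $\rho|_H$ is nonzero and, by Frobenius reciprocity, $\homm_H(\rho|_H,\rho|_H)\hookrightarrow\homm_G(\rho,\ind_H^G(\rho|_H))$; more directly, $w$ acts on $\rho$ and permutes the isotypic/$H$-submodule structure, so any proper nonzero $H$-subspace $U\subsetneq\rho$ has $U+wU$ a $G$-subspace, hence $U+wU=\rho$, and $U\cap wU$ is a $G$-subspace, hence $0$; therefore $\rho=U\oplus wU$ with $U,wU$ irreducible over $H$ (any proper $H$-subspace of $U$ would, by the same argument applied inside $\rho$, be forced to be $0$ or $U$). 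This gives the dichotomy: either $\rho|_H$ is irreducible, or $\rho|_H=\rho_1\oplus\rho_2$ with $\rho_2\cong\rho_1^w$. In the decomposable case, $\homm_H(\rho_i,\rho|_H)\neq 0$ gives by adjunction a nonzero $G$-map $\ind_H^G\rho_i\to\rho$, which is surjective since $\rho$ is irreducible; comparing dimensions ($\dim\ind_H^G\rho_i=2\dim\rho_i=\dim\rho$) it is an isomorphism, so $\rho\cong\ind_H^G\rho_i$.

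For the induction half: let $\rho$ be irreducible over $H$. By Mackey, $(\ind_H^G\rho)|_H\cong\rho\oplus\rho^w$. If $\rho\not\cong\rho^w$ then $\homm_G(\ind_H^G\rho,\ind_H^G\rho)\cong\homm_H(\rho,\rho\oplus\rho^w)\cong\bar{\F}$, so $\ind_H^G\rho$ is indecomposable with one-dimensional endomorphism ring, and since its restriction is semisimple of length $2$ it is in fact irreducible. If $\rho\cong\rho^w$, then $\homm_G(\ind_H^G\rho,\ind_H^G\rho)$ is $2$-dimensional; one produces two idempotents using the isomorphism $\rho\xrightarrow{\sim}\rho^w$ together with the $\Z/2$-action (here $p\neq 2$ is used to split $\bar{\F}[\Z/2]$), giving $\ind_H^G\rho=\rho_1\oplus\rho_2$ with each $\rho_i$ irreducible over $G$ (a $G$-subrepresentation would restrict to an $H$-subrepresentation of $\rho\oplus\rho^w\cong\rho^{\oplus 2}$, and a length count forces irreducibility of the $\rho_i$). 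Then $\rho_i|_H$ contains $\rho$, and $\dim\rho_i=\dim\rho$, so $\rho_i|_H\cong\rho$. The third bullet is then immediate in both directions: given $\pi$ irreducible over $H$, it is a summand of $(\ind_H^G\pi)|_H$, and $\ind_H^G\pi$ is a sum of irreducibles over $G$ by the induction half; conversely given $\rho$ irreducible over $G$, pick any irreducible $H$-summand $\rho_1$ of $\rho|_H$, then $\rho$ is a summand (in fact a quotient) of $\ind_H^G\rho_1$.

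The only genuinely delicate point is the decomposable case of induction when $\rho\cong\rho^w$: one must check that the extra endomorphism, built from a chosen $H$-isomorphism $\phi:\rho\to\rho^w$, can be normalized (rescaling $\phi$) so that its square is the identity and then diagonalized over $\bar{\F}$ — this is exactly where the hypothesis $p\neq 2$ enters, since otherwise $\bar{\F}[\Z/2]$ is not semisimple and one could get a non-split self-extension rather than a direct sum. Everything else is a bookkeeping exercise with Frobenius reciprocity and dimension counts, and since all the representations in question will ultimately be admissible, the finiteness needed to run dimension arguments is available after restricting to suitable compact-mod-center subgroups.
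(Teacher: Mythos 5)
Your argument for the restriction half is essentially the paper's: you form $U+wU$ and $U\cap wU$ for a proper nonzero $H$-subrepresentation $U$ and use irreducibility of $\rho$ over $G$ to force $\rho=U\oplus wU$, exactly as the paper does with $\sigma+g\sigma$ and $\sigma\cap g\sigma$ (the paper then gets irreducibility of $U$ by noting that $\ind_H^G U'=U'+wU'$ would be a proper nonzero $G$-subobject for any $0\neq U'\subsetneq U$, which is a touch cleaner than your modular-law remark, but both work). Where you genuinely diverge is the induction half. The paper picks an irreducible $G$-subobject $\sigma\subset\ind_H^G\rho$ (using finite length over $H$), reads off from the Jordan--Hölder series of $(\ind_H^G\rho)|_H=\rho\oplus\rho^w$ that either $\sigma$ is everything or $\sigma|_H\cong\rho$ or $\rho^w$, and concludes; you instead compute $\en_G(\ind_H^G\rho)$ via Mackey and Frobenius reciprocity, splitting into the cases $\rho\not\cong\rho^w$ and $\rho\cong\rho^w$ and producing explicit idempotents in the latter. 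Your route is longer but buys something the paper's write-up leaves implicit: in the case where $\ind_H^G\rho$ is reducible, the paper only exhibits an irreducible subobject and does not spell out why the extension actually splits into a \emph{direct sum} of two irreducibles; your normalization of $\phi:\rho\iso\rho^w$ so that the associated endomorphism squares to the identity, followed by diagonalization, is precisely the missing step, and it correctly isolates $p\neq 2$ as the hypothesis that rules out a non-split self-extension.

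Two small cautions. First, your dimension counts ($\dim\ind_H^G\rho_i=2\dim\rho_i=\dim\rho$) do not literally apply to the infinite-dimensional smooth representations in play; you flag this yourself, and it is repaired either by passing to $K$-invariants for compact open $K$ (admissibility) or, more directly, by observing that the natural map $\ind_H^G\rho_1\to\rho$ has image $\rho_1+w\rho_1=\rho$ and kernel detected by $\rho_1\cap w\rho_1=0$. Second, in the case $\rho\not\cong\rho^w$ the inference ``indecomposable with one-dimensional endomorphism ring and semisimple restriction of length $2$, hence irreducible'' is stated too quickly: the clean argument is that any proper nonzero $G$-subobject would restrict to one of the two isotypic components of $\rho\oplus\rho^w$, which $w$ interchanges, a contradiction. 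Neither point affects the correctness of the plan.
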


\begin{proof}
Étudions pour commencer une représentation irréductible $\rho$ de $G$. Si la restriction $\rho|_H$ n'est pas irréductible, donnons-nous une  sous-représentation stricte $\sigma\neq 0$. Il suffit de prouver l'identité $\rho|_H= \sigma \oplus g\sigma$ avec $g\in G\setminus H$ qui entraîne les points $1$ et $2$ pour $G$. On sait que  l'espace $\sigma + g\sigma$ admet une action du groupe $G$ tout entier en tant que quotient de $\ind_H^G\sigma$, il s'agit d'une sous-représentation de $\rho$ qui doit être égale à $\rho$ tout entier par irréductibilité. L'intersection $\sigma \cap g\sigma$ est aussi stable par $G$ et est strictement incluse dans $\rho$ par hypothèse. On a donc $\sigma \cap g\sigma=0$ et $\rho=\sigma \oplus g\sigma=\ind_H^G\sigma$. Pour tout sous-objet $\sigma'$ strict de $\sigma$, $\ind_H^G\sigma'$ est un sous-objet $\sigma$ strict de $\rho$ qui est donc trivial. Les représentations $\sigma$ et $g\sigma$ sont donc irréductibles. Pour le troisième point, on vient de voir que lorsque la restriction se décompose, on a $\rho=\ind_H^G\sigma$. Quand la restriction est irréductible, on peut toujours voir $\rho$ comme un  quotient et un sous objet de $\ind_H^G(\rho|_H)$.

Intéressons-nous maintenant à une représentation irréductible $\rho$ de $H$. Prenons un sous-$G$-objet  irréductible  $\sigma$ de $\ind_H^G\rho$. Trouver un tel sous-objet  est possible car les représentations $\rho$ et $\ind_H^G\rho$ sont de longueur finie sur $H$. L'étude de la série de Jordan-H\"older de $\ind_H^G\rho =\rho\oplus g\rho$ (avec $g\in G\setminus H$)  montre que $\sigma=\rho\oplus g\rho$ ou $\sigma|_H\cong\rho$; $\sigma|_H\cong g\rho$. Dans le premier cas, on a $\sigma=\ind_H^G\rho$ est irréductible. Dans les autres, on a $\ind_H^G\rho =\ind_H^G(\sigma|_H)$ et $\rho$ vaut $\sigma|_H$ ou $(\sigma|_H)^g$ ce  qui prouve les points $1.$, $2.$. Le troisième point découle du deuxième comme précédemment.
\end{proof}

La classification est la suivante:


\begin{theo}\label{theoclassgdeux}

Les représentations irréductibles admissibles de $[G]_2$ peuvent être décomposées en quatre familles disjointes :

\begin{enumerate}
\item Les caractères $\psi$.
\item La série spéciale $ \psi\otimes \stb|_{[G]_2}$.
\item La série principale  $\psi\otimes \cind_{B}^G \mu_1\otimes\mu_2|_{[G]_2}$ avec $\mu_1\neq \mu_2$.
\item Les représentations supersingulières obtenues comme facteurs directs  de $V(a,r,\vec{k},0)|_{[G]_2}$.
\end{enumerate}

\end{theo}

\begin{proof}
D'après \ref{propinddeux}, il suffit d'étudier la restriction des représentations apparaissant dans le théorème \ref{theoclassg}. Pour les familles $(a)$, $(b)$, $(c)$, la restriction à $[G]_2$ est déjà irréductible  car elle l'est lorsqu'on restreint à $\sln_2(K)$ d'après \ref{theoclasssl}. Par définition, les représentations supersingulières de $[G]_2$ sont exactement les facteurs directs des représentations supersingulières de $G$.
\end{proof}

Toujours lorsque  $K=\Q_p$, on peut décrire explicitement les représentations supersingulières.

\begin{theo}\label{theoclassgdeuxsupersing}

Si $K=\Q_p$, la restriction à $[G]_2$ des représentations  supersingulières $V(a,r,{k},0)$ de $G$ se décompose en somme directe de deux représentations irréductibles \[V(a,r,{k},0)|_{[G]_2}\cong \pi_{a^2,k,r}^0\oplus \pi_{a^2,k,r}^1.\] Ces représentations sont soumises aux isomorphismes $\pi_{a^2,k,r}^0\cong\pi_{a^2, p-1-k,r+k}^1$ qui sont les seuls entre ces représentations.

\end{theo}

\begin{proof}
Prenons une $G$-représentation supersingulière admissible $V(a,r,{k},0)$, étudions sa restriction à $[G]_2$. Cette dernière se décomposent en une somme directe de deux  $\sln_2(K)$-représentations $\pi_k^0$, $\pi_k^1$ (voir \ref{theoclasssl}). De par leur construction rappelée dans la preuve de \ref{theoclasssl}, on peut les voir naturellement comme des restrictions de représentations $\pi_{a^2,k,r}^i|_{\sln_2(K)}\cong \pi_k^i$ de $[G]_2$. L'énoncé \ref{propinddeux} permet alors de conclure.

Pour les isomorphismes entre les représentations supersingulières de $ [G]_2$, cela découle des isomorphismes analogues entre les représentations  $V(a,r,{k},0)$.
\end{proof}

\begin{rem}

Il est amusant d'observer que lorsqu'on restreint une $G$-représentation de la famille $(a)$, $(b)$, $(c)$, on obtient un représentation irréductible de $[G]_2$ de la famille analogue  qui sont toutes obtenues de cette manière. La contraposé montre que si une représentation irréductible de $G$ se décompose en une somme de deux représentations irréductibles de $[G]_2$, alors elle est supersingulière. On peut se demander par exemple si la réciproque est vraie i.e. si la restriction d'une représentation supersingulière de $G$ se décompose. Quand $K=\Q_p$, le résultat \ref{theoclassgdeuxsupersing} montre que c'est bien le cas  mais je ne saurais me prononcer dans le cas général.

\end{rem}

Nous allons maintenant procéder à la preuve du théorème \ref{theopoidsun}. Nous allons en fait montrer résultat un peu plus général et supposer $K$ totalement  ramifié i.e. $\F=\F_p$. En suivant les notations de la preuve de \ref{theoclassslsupsing}, on écrira \[V_0(a,r,k)=\delta_{a}\otimes{\det}^r\otimes \cind_{wG^\circ w^{-1}Z}^{[G]_2}(\sym^{k}\bar{\F}^2)^w,\]
\[V_1(a,r,k)=\delta_{a}\otimes{\det}^r\otimes \cind_{G^\circ Z}^{[G]_2}\sym^{k}\bar{\F}^2,\] de telle manière que $V(a,r,k,0)|_{[G]_2}=V_0(a^2,r,k)\oplus V_1(a^2,r,k)$ et, quand $K=\Q_p$, $\pi^i_{a,k,r}\cong V_i(a,r,k)/TV_{i+1}(a,r,k)$. Nous voulons établir:
\begin{theo}\label{theopoidsunram}

Si $K$ est totalement ramifié, $\bar{\Lf}$ un fibré en droite $[G]_2$-équivariant positif de poids $-1$ sur $\bar{\H}$ de quadruplet $(a,r_1,k_0,k_1)=(a,r_1,p-1-k_1,k_1)$ avec $r_0=r_1+k_1$, on a un isomorphisme $[G]_2$-équivariant \[\hhh^0(\bar{\H},\bar{\Lf})^{\vee}\cong V_1(a,-r_0,k_1)/TV_{0}(a,-r_0,k_1)\cong V_0(a,-r_1,k_0)/TV_{1}(a,-r_1,k_0).\]
En particulier,  $\hhh^0(\bar{\H},\bar{\Lf})^{\vee}\cong \pi^0_{a,k_0,-r_1} \cong \pi^1_{a,k_1,-r_0}$ si $K=\Q_p$
\end{theo}

 Il est à noter que lorsque $K$ n'est pas $\Q_p$, la représentation $\hhh^0(\bar{\H},\bar{\Lf})^{\vee}$ est loin d'être irréductible et même de longueur finie.

D'après \ref{footndel}, on peut supposer $a=1$. D'après le point $3$ de \ref{theofilt}, on sait que la représentation $\hhh^0(\bar{\H},\bar{\Lf})^{\vee}$ est \[\hhh^0(\bar{\H},\bar{\Lf})^\vee\cong  I(r_1,r_0)^\vee/\ker f_0^\vee+\ker f_1^\vee\] avec $f_i :\sigma^{(i)}_{k_i,r_i}\to I(r_1,r_0)$. De manière équivalente, la représentation recherchée est aussi le conoyau de la composée suivante :\[\varphi:\ker f_i^\vee\flinj I(r_1,r_0)^\vee\flsur I(r_1,r_0)^\vee/\ker f_{i+1}^\vee\cong \sigma^{(i+1),\vee}_{k_{i+1},r_{i+1}}.\] On veut comprendre cette flèche. Pour cela, on commence par les résultats intermédiaires classiques  suivants :

\begin{lem}

On a $(\sym^k \bar{\F}^2)^\vee\cong \sym^k \bar{\F}^2\otimes {\det}^{-k}$ d'où un isomorphisme \[\sigma^{(i),\vee}_{k_{i},r_{i}}\cong V_i(1,-r_{i+1},k_i)\et I(r_1,r_0)^\vee\cong\cind_{I Z}^{[G]_2}\mu_{-r_1,-r_0}\]

\end{lem}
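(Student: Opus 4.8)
Ce lemme est de nature formelle ; il se ramène à trois points dont aucun n'est profond. Le premier est l'identité $(\sym^k\bar\F^2)^\vee\cong\sym^k\bar\F^2\otimes\det^{-k}$ entre $\bar G$-représentations. Le plan est de partir de l'accouplement parfait donné par le produit extérieur $\bar\F^2\times\bar\F^2\to\wedge^2\bar\F^2=\det$, qui fournit un isomorphisme $\bar G$-équivariant $(\bar\F^2)^\vee\cong\bar\F^2\otimes\det^{-1}$. En lui appliquant le foncteur $\sym^k$ et en utilisant $\sym^k(W\otimes L)\cong\sym^k(W)\otimes L^{\otimes k}$ pour $L$ une droite, on obtient $\sym^k((\bar\F^2)^\vee)\cong\sym^k\bar\F^2\otimes\det^{-k}$. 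Il ne reste qu'à identifier $\sym^k((\bar\F^2)^\vee)$ à $(\sym^k\bar\F^2)^\vee$, ce qui est licite dans le domaine $k\le p-1$ — là où $\sym^k$ coïncide avec les puissances divisées $\Gamma^k$, de sorte que $(\sym^k\bar\F^2)^\vee\cong\Gamma^k((\bar\F^2)^\vee)\cong\sym^k((\bar\F^2)^\vee)$ ; c'est bien le cas pertinent, car dans le contexte de \ref{theopoidsunram} ($K$ totalement ramifié, donc $q=p$) et pour $\bar\Lf$ positif de poids $-1$ on a $k_0+k_1=q-1=p-1$ et $k_0,k_1\ge 0$.

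Les deux points restants reposeront sur le fait classique suivant : si $H$ est un sous-groupe ouvert d'un groupe localement profini unimodulaire $\Gamma$ et $V$ une $H$-représentation lisse de dimension finie, l'accouplement $(\phi,f)\mapsto\sum_{H\backslash\Gamma}\langle\phi(\gamma),f(\gamma)\rangle$ (somme finie puisque $\phi$ est à support compact modulo $H$, chaque terme ne dépendant pas du représentant de la classe $H\gamma$) identifie canoniquement $\cind_H^\Gamma(V^\vee)$ à la contragrédiente $(\ind_H^\Gamma V)^\vee$. Appliqué à $\Gamma=[G]_2$ et au sous-groupe ouvert intervenant dans la définition de $\sigma^{(i)}_{k_i,r_i}$, puis combiné au premier point et aux identités $(\rho^w)^\vee=(\rho^\vee)^w$ et $\det^w=\det$, cela donnera pour $i=1$
\[\sigma^{(1),\vee}_{k_1,r_1}\cong\cind_{G^\circ Z}^{[G]_2}\bigl(\sym^{k_1}\bar\F^2\otimes\det^{-k_1-r_1}\bigr)=V_1(1,-r_0,k_1),\]
où l'on a reconnu $V_1(1,-r_0,k_1)$ grâce à $r_0=r_1+k_1$. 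Le cas $i=0$ est identique après conjugaison par $w$ : on obtient $\cind_{wG^\circ w^{-1}Z}^{[G]_2}\bigl((\sym^{k_0}\bar\F^2)^w\otimes\det^{-k_0-r_0}\bigr)$, que l'on identifie à $V_0(1,-r_1,k_0)$ via $-k_0-r_0=-(p-1)-r_1\equiv -r_1\pmod{q-1}$ (on utilise ici $q=p$ et $\det^{q-1}=1$ sur $\bar G$) ; dans les deux cas l'exposant vaut bien $-r_{i+1}$. Enfin, le même fait d'induction appliqué au caractère de l'arête standard — qui vaut $\mu_{r_1,r_0}$ d'après \ref{lemcomb} — donnera $I(r_1,r_0)^\vee\cong\cind_{IZ}^{[G]_2}\mu_{r_1,r_0}^{-1}=\cind_{IZ}^{[G]_2}\mu_{-r_1,-r_0}$.

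Il n'y a pas d'obstacle véritable : tout découle d'énoncés standard de théorie des représentations lisses et de la combinatoire des poids de Serre de $\gln_2$. Les seuls points demandant de la vigilance sont de rester dans le domaine $k_i\le p-1$ pour garantir l'auto-dualité tordue de $\sym^{k_i}$, et de mener avec soin la comptabilité des torsions par $\det$ — en particulier les relations $r_0=r_1+k_1$ et $\det^{q-1}=1$, qui font coller les exposants.
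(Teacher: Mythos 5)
Votre démonstration est correcte, et sa seconde moitié (dual d'une induite $=$ induite compacte de la contragrédiente, puis comptabilité des torsions par $\det$ via $r_0=r_1+k_1$ et $k_0+k_1\equiv 0\pmod{q-1}$) coïncide avec celle du texte. Pour la première identité $(\sym^k\bar{\F}^2)^\vee\cong\sym^k\bar{\F}^2\otimes{\det}^{-k}$, vous suivez en revanche une route différente : vous partez de l'accouplement $\wedge^2\bar{\F}^2\cong\det$ pour obtenir $(\bar{\F}^2)^\vee\cong\bar{\F}^2\otimes{\det}^{-1}$, appliquez $\sym^k$, puis identifiez $(\sym^k V)^\vee$ à $\sym^k(V^\vee)$ via la coïncidence $\Gamma^k\cong\sym^k$ pour $k\le p-1$. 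Le texte procède autrement : la contragrédiente d'une irréductible de dimension finie est irréductible de même dimension, donc c'est un poids de Serre d'après la classification \ref{theobarg}, et on détermine la torsion en regardant l'action du tore diagonal sur la base duale. Les deux arguments reposent au fond sur la même restriction $k\le p-1$ (irréductibilité de $\sym^k\bar{\F}^2$ d'un côté, inversibilité des $\binom{k}{a}$ modulo $p$ de l'autre), restriction que vous justifiez correctement par la positivité et le poids $-1$ ($k_0+k_1=q-1=p-1$) ; votre version a l'avantage d'être explicite et de ne pas invoquer la classification, au prix du point — légèrement délicat en caractéristique $p$ — sur les puissances divisées. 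Les isomorphismes $(\rho^w)^\vee=(\rho^\vee)^w$ et ${\det}^w=\det$ que vous utilisez pour le cas $i=0$ sont bien ceux qui rendent l'argument uniforme ; le calcul des exposants est exact.
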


\begin{proof}
Pour la première partie de l'énoncé, il suffit de voir que le dual d'une représentation irréductible de dimension finie est  irréductible de même dimension. Par la classification des représentations irréductibles de $\bar{G}$ données dans \ref{theobarg}, le dual $(\sym^k \bar{\F}^2)^\vee$ est $\sym^k \bar{\F}^2$ à un caractère près. Étudier l'action des matrices diagonales sur la base duale donne le résultat. Pour la deuxième partie, cela provient du faite que le dual d'une induite est l'induite compacte du dual.
\end{proof}

\begin{lem}\label{lemjordmod}

On a une suite exacte \[0\to\sym^k\bar{\F}^2\otimes{\det}^r\to\ind_{IZ}^{G^\circ Z}\mu_{r,r+k}  \to \sym^{p-1-k}\bar{\F}^2\otimes {\det}^{r+k}\to 0\] d'où \[0\to V_1(1,r,k)\to\cind_{IZ}^{[G]_2}\mu_{r,r+k}  \to V_1(1,k+r,p-1-k)\to 0\]\[0\to V_0(1,r,k)\to\cind_{IZ}^{[G]_2}\mu_{r+k,r}  \to V_0(1,k+r,p-1-k)\to 0\] \[0\to\cind_{G^\circ Z}^{G}\sym^k\bar{\F}^2\otimes{\det}^r\to\cind_{IZ\sqcup wIZ}^{G}\mu_{r,r+k} \to\cind_{G^\circ Z}^{G} \sym^{p-r-1}\bar{\F}^2\otimes{\det}^{k+r}\to 0.\]
Notons que les morphismes de la deuxième suite exacte sont obtenus en restreignant les morphismes de la troisième.
\end{lem}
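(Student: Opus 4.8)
The first exact sequence is the classical description of the (mod $p$) principal series of $\bar G=\gln_2(\F_p)$ (cf. \cite{BL1,BL2,breuil2}); the plan is to prove it directly and then obtain the three others purely formally, by applying exact induction functors. First I would reduce: since $\F=\F_p$, every character of $\bar T$ is some $\mu_{n,m}$ with $n,m\in\Z/(p-1)\Z$, one has $\OC_K^*\id\subset I$ so that $G^\circ Z/IZ\cong\bar G/\bar B\cong\P^1(\F_p)$, and $\mu_{r,r+k}$ is inflated from $\bar T$; hence $\ind_{IZ}^{G^\circ Z}\mu_{r,r+k}$ is the inflation of $\ind_{\bar B}^{\bar G}\mu_{r,r+k}$, and after tensoring by $\det^r$ it is enough to produce an exact sequence $0\fl\sym^k\bar\F^2\fl\ind_{\bar B}^{\bar G}\mu_{0,k}\fl\sym^{p-1-k}\bar\F^2\otimes\det^k\fl0$ of $\bar G$-representations.

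For the two maps I would use Frobenius reciprocity for the finite group $\bar G$ (where $\ind=\cind$ is biadjoint to restriction). A short computation identifies the $\bar U$-coinvariants of $\sym^k\bar\F^2$ with the line carrying the character $\mu_{0,k}$, which yields a nonzero $\bar G$-morphism $\iota\colon\sym^k\bar\F^2\fl\ind_{\bar B}^{\bar G}\mu_{0,k}$; it is injective because $\sym^k\bar\F^2$ is an irreducible weight of $\bar G$ for $0\le k\le p-1$ (\ref{theobarg}). Dually, the $\bar U$-invariant line of $\sym^{p-1-k}\bar\F^2\otimes\det^k$ has $\bar T$-character $\mu_{p-1-k,0}\cdot\mu_{k,k}=\mu_{p-1,k}=\mu_{0,k}$ — this last identity is the one place where $\F=\F_p$ is genuinely used, through $a^{p-1}=1$ for $a\in\F_p^*$ — so Frobenius reciprocity provides a nonzero, hence surjective, morphism $\pi\colon\ind_{\bar B}^{\bar G}\mu_{0,k}\fl\sym^{p-1-k}\bar\F^2\otimes\det^k$. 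To conclude, $\pi\circ\iota$ is a $\bar G$-morphism between two non-isomorphic irreducibles ($\sym^k\bar\F^2$ and $\sym^{p-1-k}\bar\F^2\otimes\det^k$ are never isomorphic for $0\le k\le p-1$, again by \ref{theobarg}), hence vanishes, so $\imm\iota\subseteq\ker\pi$; and $\dim\ind_{\bar B}^{\bar G}\mu_{0,k}=|\P^1(\F_p)|=p+1=(k+1)+(p-k)=\dim\imm\iota+\dim\ker\pi$ forces $\imm\iota=\ker\pi$, giving the first sequence.

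The remaining three sequences then follow by functoriality. Applying the exact functor $\cind_{G^\circ Z}^{G}(-)$ to the first sequence, together with the transitivity $\cind_{G^\circ Z}^{G}\circ\ind_{IZ}^{G^\circ Z}=\cind_{IZ}^{G}$ (legitimate since $IZ$ is of finite index in $G^\circ Z$) and the identification $\cind_{IZ}^{G}\mu_{r,r+k}=\cind_{IZ\sqcup wIZ}^{G}\mu_{r,r+k}$ (the set $IZ\sqcup wIZ$ being a group, as $w^2=\varpi\id\in Z$), yields the $G$-sequence. Restricting it to $[G]_2$ and extracting the two $G/[G]_2$-isotypic components — using $\cind_{G^\circ Z}^{G}\sigma|_{[G]_2}=\cind_{G^\circ Z}^{[G]_2}\sigma\oplus\cind_{wG^\circ w^{-1}Z}^{[G]_2}\sigma^w=V_1\oplus V_0$ and the analogous splitting of the term induced from $IZ\sqcup wIZ$ — produces the two $[G]_2$-sequences in the $V_0,V_1$ notation, whose morphisms are by construction restrictions of those of the $G$-sequence. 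I do not expect a serious obstacle here: the entire mathematical content lies in the first sequence, and everything after it is formal exactness of induction. The part demanding the most care is the bookkeeping — fixing conventions so that the $\bar B$-socle/cosocle characters come out as claimed (this is where the $a^{p-1}=1$ reduction enters), and then keeping track of which isotypic piece ($V_0$ versus $V_1$) and which character ($\mu_{r,r+k}$ versus $\mu_{r+k,r}=\mu_{r,r+k}^{w}$) attaches to each of the last three sequences.
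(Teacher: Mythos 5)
Your proposal is correct and, for the three induced sequences, takes exactly the paper's route: both deduce them from the first sequence by exactness and transitivity of compact induction, together with the splitting of $\cind_{G^\circ Z}^{G}(-)|_{[G]_2}$ and $\cind_{IZ}^{G}(-)|_{[G]_2}$ into the two pieces indexed by $G/[G]_2$ (which is also what fixes the pairing of $V_1$ with $\mu_{r,r+k}$ and of $V_0$ with $\mu_{r+k,r}=\mu_{r,r+k}^w$, using $wIw^{-1}=I$). The only substantive difference is at the first sequence, which the paper simply cites from \cite[Proposition 1.1.]{diam}: you instead prove it via Frobenius reciprocity applied to the $\bar{B}$-coinvariant line of $\sym^k\bar{\F}^2$ and the $\bar{U}$-invariant line of $\sym^{p-1-k}\bar{\F}^2\otimes{\det}^k$, plus a dimension count; this is the standard argument and is complete as written — the identity $\mu_{p-1,k}=\mu_{0,k}$ is indeed the unique point where $\F=\F_p$ is used, and the vanishing of the composite is guaranteed by the uniqueness statement of \ref{theobarg} even in the borderline case $k=(p-1)/2$, where the twist ${\det}^{(p-1)/2}$ is the nontrivial Legendre character. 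One notational caveat, which your write-up inherits from the paper itself: $\mu_{r,r+k}$ does not extend to the group $IZ\sqcup wIZ$ unless $k\equiv 0\pmod{p-1}$, so the middle term of the fourth sequence must be read as $\cind_{IZ}^{G}\mu_{r,r+k}=\cind_{IZ\sqcup wIZ}^{G}\bigl(\ind_{IZ}^{IZ\sqcup wIZ}\mu_{r,r+k}\bigr)$; with that reading your identification of the middle terms is the correct one.
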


\begin{proof}
La première  suite exacte de l'énoncé est bien connue \cite[Proposition 1.1.]{diam} et le reste découle de  l'exactitude de l'induite compacte.
\end{proof}

\begin{rem}

Ces deux résultats utilisent de manière cruciale le faite que $K$ est totalement ramifiée et montrent que le résultat est faux pour un corps général. Une description explicite de $\hhh^0(\bar{\H},\bar{\Lf})^{\vee}$ est peut-être encore  possible mais demandera plus de travail et de calcul combinatoire.

\end{rem}


D'après les deux résultats précédents, on a $\ker f_i^\vee\cong V_i(1,-r_i,p-1-k_i)=V_i(1,-r_i,k_{i+1})$ et on s'est ramené à étudier le noyau de la flèche obtenue par composition \[\tilde{\varphi}:V_i(1,-r_i,k_{i+1})\to \cind_{IZ}^{[G]_2}\mu_{1,-r_1,r_0}\to V_{i+1}(1,-r_i,k_{i+1}).\] La dernière  suite exacte dans le lemme précédent montre que cette dernière est la restriction d'un élément de \[\Hf(\sym^{k_{i+1}}\bar{\F}^2\otimes{\det}^{-r_i})=\en_G(\cind_{G^\circ Z}^{G}\sym^r\bar{\F}^2\otimes{\det}^{-r_i})=\bar{\F}[T].\] D'après \ref{theoalghecke}.3., il lui correspond un polynôme $P(T)$ en l'opérateur $T$ définit dans \textit{loc.cit}. Par construction, $\tilde{\varphi}$ échange les deux sous-représentations $V_{i}(1,-r_i,k_{i+1})$, $V_{i+1}(1,-r_i,k_{i+1})$ de $[G]_2$ et $P\in T\bar{\F}[T^2]$ d'après \ref{theoalghecke}.1. Le lemme \ref{lemsupp} qui va suivre permet d'appliquer le résultat \ref{theoalghecke}.2. et prouver que $P$ est non-nul de degré $1$. Cela montre que $\tilde{\varphi}$ est la restriction de $\lambda T\in \en_G(\cind_{G^\circ Z}^{G}\sym^r\bar{\F}^2\otimes\chi)$ avec $\lambda \neq 0$ de ce qui termine la preuve de \ref{theopoidsunram}. 

Avant d'énoncer le lemme, nous aimerions rappeler que l'espace sous-jacent des représentations  $V_{j}(1,-r_i,k_{i+1})$ et $\cind_{IZ}^{[G]_2}\mu_{-r_1,r_0}$ est relié aux sommets et aux arêtes de l'arbre de Bruhat-Tits (voir \eqref{eq:ssjac}) et que tout élément de ces espaces admet alors un support (voir \eqref{eq:supp}) qui est un sous-ensemble fini de $\BC\TC_0^{(j)}$ ou de $ \BC\TC_1$. 

\begin{lem}\label{lemsupp}

Prenons  un élément non-nul $v_s \in V_s\subset\oplus_{s\in \BC\TC_0^{(i)}}V_s$  supporté en un sommet $s\in \BC\TC_0^{(i)}$, son image $\tilde{\varphi}(v)$ est non-nulle supportée en  les voisins de $s$ dans $\BC\TC_0^{(i+1)}$ i.e. les sommets $s'$ pour lesquels $\{s,s'\}$ forme une arête dans $\BC\TC_1$.

\end{lem}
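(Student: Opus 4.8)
Lemme \ref{lemsupp} — plan de preuve.

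The plan is to trace through the construction of $\tilde\varphi$ and compute supports step by step, using that each of the three arrows factoring $\tilde\varphi$ is support-controlled. First I would unwind the definition: $\tilde\varphi$ is the composite
\[
V_i(1,-r_i,k_{i+1})\xhookrightarrow{\;\iota\;}\cind_{IZ}^{[G]_2}\mu_{-r_1,r_0}\xrightarrow{\;\mathrm{pr}\;}V_{i+1}(1,-r_i,k_{i+1}),
\]
where $\iota$ is the injection of the first non-trivial term of the exact sequence of \ref{lemjordmod} (the one with $V_j(1,r,k)\hookrightarrow\cind_{IZ}^{[G]_2}\mu_{r,r+k}\twoheadrightarrow V_j(1,k+r,p-1-k)$), and $\mathrm{pr}$ is the surjection of the same exact sequence read for the opposite parity. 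The point is that all three objects have underlying spaces indexed by simplices of $\BC\TC$: $V_i(\cdots)$ by $\BC\TC_0^{(i)}$, $\cind_{IZ}^{[G]_2}\mu$ by $\BC\TC_1$ (the cosets $[G]_2/IZ$ being the edges), and $V_{i+1}(\cdots)$ by $\BC\TC_0^{(i+1)}$. So I need to check that $\iota$ sends a vector supported at a single vertex $s\in\BC\TC_0^{(i)}$ to a vector supported on the edges incident to $s$, and that $\mathrm{pr}$ sends a vector supported on an edge $a=\{s,s'\}$ to a vector supported at its two endpoints — hence at the single endpoint in $\BC\TC_0^{(i+1)}$.

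The core of the argument is therefore the geometric meaning of the two maps in the exact sequence $0\to\sym^k\bar\F^2\otimes{\det}^r\to\ind_{IZ}^{G^\circ Z}\mu_{r,r+k}\to\sym^{p-1-k}\bar\F^2\otimes{\det}^{r+k}\to 0$ of \ref{lemjordmod}, restricted to a single vertex and its star. Here I would reuse exactly the dictionary from the proof of \ref{proppoids} and of \ref{leminjsur}: for a fixed vertex $s$, $\ind_{IZ}^{G^\circ Z}\mu$ is the space $\prod_{y\in\P_s(\F)}(\bar\F[X,Y]_{k}/(a_0X+a_1Y)\bar\F[X,Y]_{k-1})$ of "values at the $\F$-rational points of $\P_s$", the injection $\sym^k\bar\F^2\to\ind$ is evaluation (so its image, for $v$ supported at $s$, hits every point of $\P_s(\F)$, i.e. every edge through $s$ — this needs that evaluation of a nonzero polynomial of degree $\le p-1 < q+1$ at $\P_s(\F)=\P^1(\F_p)$ cannot vanish identically when $v\neq 0$, which is precisely the injectivity already proved in \ref{leminjsur}), and dually the projection $\ind\twoheadrightarrow\sym^{p-1-k}$ at the level of a single edge $\overline{\H}_a\cong V(X)\cup V(Y)$ restricts a section to the two components $\P_s\cap\P_{s'}\subset\P_s$ and $\P_s\cap\P_{s'}\subset\P_{s'}$, hence its support lands on $\{s,s'\}$. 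Composing, $\tilde\varphi(v_s)$ is supported in $\bigcup_{a\ni s}(\text{endpoints of }a)\cap\BC\TC_0^{(i+1)}=\{s'\in\BC\TC_0^{(i+1)}:\{s,s'\}\in\BC\TC_1\}$, the neighbours of $s$ of opposite parity.

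Finally I must check the support is exactly that set, not a proper subset — i.e. that $\tilde\varphi(v_s)\neq 0$ at each neighbour $s'$. For this I would argue at the single edge $a=\{s,s'\}$: the image $\iota(v_s)$ restricted to the component of $\overline{\H}_a$ lying in $\P_s$ is the evaluation of a nonzero homogeneous polynomial, in particular nonzero at the distinguished point $\P_s\cap\P_{s'}$ unless that polynomial is divisible by the corresponding linear form; but $\iota(v_s)$, being globally in the image of evaluation, is nonzero at *every* point of $\P_s(\F)$ simultaneously only when $v_s$ is a multiple of $X^qY-Y^qX$, which is impossible in degree $k_{i+1}\le p-1<q+1$ for $v_s\neq 0$. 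Hence $\iota(v_s)$ is nonzero at $\P_s\cap\P_{s'}$, and $\mathrm{pr}$ is an isomorphism on the component through $\P_{s'}$ onto the summand $V_{s'}$ when restricted appropriately (again by the degree bound), so $\tilde\varphi(v_s)_{s'}\neq 0$. The main obstacle I anticipate is purely bookkeeping: keeping straight which parity each map changes, and matching the twist-by-$\det$ data ($r_i$ versus $r_{i+1}=r_i+k_i$) so that the two exact sequences of \ref{lemjordmod} glue into the stated composite — once the indices are pinned down the geometry is the same vertex/star/edge computation already used twice in the paper.
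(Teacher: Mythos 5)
Votre démarche est exactement celle du papier : décomposer $\tilde\varphi$ en $\tilde\varphi_2\circ\tilde\varphi_1$, identifier les espaces sous-jacents aux sommets et arêtes de $\BC\TC$, interpréter l'injection comme l'évaluation de \ref{leminjsur} et la projection comme son dual, puis suivre les supports. Les deux premiers paragraphes établissent correctement que $\tilde\varphi_1(v_s)$ est non nul (injectivité) et supporté sur les arêtes contenant $s$, et que $\tilde\varphi_2$ envoie un élément porté par une arête sur un élément porté par son extrémité dans $\BC\TC_0^{(i+1)}$ ; comme des arêtes distinctes par $s$ ont des extrémités distinctes dans $\BC\TC_0^{(i+1)}$, aucune compensation n'est possible et la composée est non nulle, supportée dans les voisins de $s$ — c'est tout ce que le lemme affirme et tout ce dont la preuve de \ref{theopoidsunram} a besoin (via \ref{coropoly}.2, cela borne $\deg P$ par $1$, et $P\neq 0$ dans $T\bar{\F}[T^2]$ donne l'égalité).

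En revanche, votre dernier paragraphe contient une erreur : vous affirmez que le support est \emph{exactement} l'ensemble des voisins, en déduisant de « $\iota(v_s)$ n'est pas identiquement nul sur $\P_s(\F)$ » que « $\iota(v_s)$ est non nul en \emph{chaque} point de $\P_s(\F)$ ». C'est confondre « ne s'annule pas partout » et « ne s'annule nulle part » : un polynôme homogène non nul de degré $k$ avec $1\le k\le p-1$ peut s'annuler en jusqu'à $k$ points rationnels (prenez $v_s=X^{k}$, qui s'annule en $[1:0]$), donc $\tilde\varphi(v_s)$ peut très bien s'annuler en certains voisins de $s$. L'énoncé renforcé est faux en général ; seule la version « non nul et supporté \emph{dans} les voisins » est vraie, et elle découle déjà de vos deux premiers paragraphes (non-annulation en au moins un point, absence de compensation entre arêtes distinctes). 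L'erreur est donc localisée à un renforcement superflu et ne compromet pas la conclusion, mais l'argument tel qu'écrit dans ce paragraphe ne tient pas.
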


\begin{proof}
Par définition, $\tilde{\varphi}$ est la composition de deux fonctions \[\tilde{\varphi}_1:V_{i}(1,-r_i,k_{i+1})\to \cind_{IZ}^{[G]_2}\mu_{-r_1,-r_0}\ \ \et\ \ \tilde{\varphi}_2: \cind_{IZ}^{[G]_2}\mu_{-r_1,-r_0}\to V_{i+1}(1,-r_i,k_{i+1}).\] Étudions d'abord $\tilde{\varphi}_1(v_s)$ avec $v_s$ comme dans l'énoncé. D'après \ref{leminjsur}, on peut voir $\tilde{\varphi}_1(v_s)$ comme « l'évaluation » de $v_s$ (vu comme un polynôme homogène de degré $k_1$ sur $\P_s$) en les différents points rationnels $y\in \P_s(\F_p)$ qui sont en bijection avec les arêtes $a$ de $\BC\TC_1=[G]_2/IZ$ contenant le sommet $s$. En particulier, $\tilde{\varphi}_1(v_s)$ est non-nul par injectivité de $\tilde{\varphi}_1$ et supportée en les arrêtes adjacentes à $s$. Soit $v_{a_0}$ un élément non nul de $\oplus_{s\in \BC\TC_1}V_a$  supporté en une unique arête $a_0$, un raisonnement dual permet de montrer que $\tilde{\varphi}_2(v_{a_0})$  s'enverra sur un élément non nul de $\oplus_{s\in \BC\TC_0^{(i+1)}}V_s$ supporté en le sommet de $a_0$ dans $ \BC\TC_0^{(i+1)}$. Par composition, on voit que $\tilde{\varphi}(v_s)$ est non-nulle et supportée en les voisins de $s$ dans $\BC\TC_0^{(i+1)}$. 
\end{proof} 

\begin{rem}

Une autre preuve de \ref{theopoidsunram} qui nous a été communiquée par Zhixiang Wu (dont les commentaires ont largement inspiré ce résultat) pourrait consister à prendre un générateur de la représentation $V_{i}(1,-r_i,k_{i+1})i$ et calculer explicitement son image en suivant les deux morphismes intermédiaires $\tilde{\varphi}_1$ et $\tilde{\varphi}_2$. L'intérêt de cette preuve plus calculatoire est qu'elle permet de montrer que l'endomorphisme $\tilde{\varphi}$ est bien $T$ et non un multiple.

\end{rem}
%



\bibliographystyle{alpha}
\bibliography{cohcohhhat_v1}
\end{document}